\DeclareMathOperator*{\argmin}{arg\,min}
\DeclareMathOperator*{\argmax}{arg\,max}
\DeclareMathOperator*{\esssup}{ess.sup}
\DeclareMathOperator*{\interior}{int}
\DeclareMathOperator{\Lip}{Lip}
\newcommand{\diff}{\textup{d}}
\DeclareMathOperator{\osc}{osc}
\newcommand{\wt}[1]{\widetilde{#1}}
\newcommand{\bv}[1]{\mathbf{#1}}
\def\l{\left}
\def\r{\right}
\newcommand{\pO}{\partial\Omega} 
\newcommand{\Oc}{\overline{\Omega}}
\newcommand{\Bc}{\overline{B}}
\newcommand{\mR}{\mathbb{R}}
\def\mRd{\mathbb{R}^d}
\def\Nh{\mathcal{N}_h}
\newcommand{\Nhi}[1]{\mathcal{N}_{h,#1}^I}
\newcommand{\Nhb}[1]{\mathcal{N}_{h,#1}^b}
\def\Nhj{\mathcal{N}_{h_j}}
\newcommand{\vertex}{{\mathtt{z}_h}}
\newcommand{\othervertex}{{\mathtt{z}_h'}}
\def\ve{\varepsilon}
\def\uve{u_{\frakh}}
\def\wtNhxi{\widetilde{\mathcal N}_\frakh(\vertex)}
\newcommand{\De}{\mathrm{D}}
\DeclareMathOperator{\dist}{dist}
\DeclareMathOperator{\diam}{diam}
\newcommand{\USC}{\mathrm{USC}}
\newcommand{\LSC}{\mathrm{LSC}}
\DeclareMathOperator{\trace}{tr}
\def\Th{\mathcal{T}_h}
\def\Oh{\Omega_h}
\def\Vh{\mathbb{V}_h}
\def\Xh{\mathbb{X}_h}
\def\mS{\mathbb{S}}
\def\St{\mathbb S_{\theta}}
\def\interp{\mathcal I_h}
\newcommand{\calC}{{\mathcal{C}}}
\newcommand{\frakh}{{\mathfrak{h}}}
\newcommand{\bT}{{\mathbb{T}}}
\newtheorem{Theorem}{Theorem}[section]
\newtheorem{Lemma}[Theorem]{Lemma}
\newtheorem{Corollary}[Theorem]{Corollary}
\theoremstyle{definition}
\newtheorem{Remark}[Theorem]{Remark}
\newtheorem{Definition}[Theorem]{Definition}
\newtheorem{Example}[Theorem]{Example}
\numberwithin{equation}{section}
\begin{document}

\title[Normalized infinity Laplacian]{Convergent, with rates, methods for normalized infinity Laplace, and related, equations}

\author[W.~Li]{Wenbo Li}
\address[W.~Li]{Department of Mathematics, University of Tennessee, Knoxville TN 37996 USA.}
\curraddr{Institute of Computational Mathematics and Scientific/Engineering Computing of the Chinese Academy of Sciences, Beijing 100190 China.}
\email{liwenbo@lsec.cc.ac.cn}

\author[A.J.~Salgado]{Abner J.~Salgado}
\address[A.J.~Salgado]{Department of Mathematics, University of Tennessee, Knoxville TN 37996 USA.}
\email{asalgad1@utk.edu}

\date{Draft version of \today}

\makeatletter
\@namedef{subjclassname@2020}{%
  \textup{2020} Mathematics Subject Classification}
\makeatother

\subjclass[2020]{Primary:
65N06,
65N12,
65N15.
Secondary: 35J94, 35J70, 35J87, 35D40.
}

\keywords{Normalized infinity Laplacian;
Optimal Lipschitz extensions;
Tug of war games;
Monotonicity;
Obstacle problems;
Viscosity solutions;
Degenerate elliptic equations;
Finsler norms.}

\begin{abstract}
We propose a monotone, and consistent numerical scheme for the approximation of the Dirichlet problem for the normalized Infinity Laplacian, which could be related to the family of so--called two--scale methods. We show that this method is convergent, and prove rates of convergence. These rates depend not only on the regularity of the solution, but also on whether or not the right hand side vanishes. Some extensions to this approach, like obstacle problems and symmetric Finsler norms are also considered.
\end{abstract}

\maketitle



\section{Introduction} \label{sec:Intro}

We begin our discussion by presenting two motivations for the type of problems we wish to study.

\subsection{Motivation: Optimal Lipschitz extensions} \label{sub:MotivateLinf}
The fundamental problem in the calculus of variations \cite{MR2361288} is the minimization of a functional $\mathcal{I}$, i.e., we wish to find
\[
  \min\left\{ \mathbb{I}[w]: w \in \mathcal{A} \right\},
\]
where $\mathcal{A}$ is the \emph{admissible set}, that is a subset of a function space over a domain $\Omega \subset \mR^d$ ($d\geq1$) and encodes, for instance, boundary behavior of the function. A simple example of this scenario is
\begin{equation}
\label{eq:minimizeW1p}
  p \in (1,\infty), \qquad \mathbb{I}_p[w] = \frac1{|\Omega|}\int_\Omega |\De w(x) |^p \diff x, \qquad \mathcal{A}_g^p = \left\{ w \in W^{1,p}(\Omega): w_{|\partial\Omega} = g \right\},
\end{equation}
whose Euler Lagrange equation gives rise to the boundary value problem (BVP)
\begin{equation}
\label{eq:pLaplaceBVP}
  \Delta_p u  = 0, \text{ in } \Omega, \qquad u_{|\partial\Omega} = g.
\end{equation}
Here we introduced the so--called $p$--Laplacian
\[
  \Delta_p w(x) =\De\cdot(|\De w(x)|^{p-2}\De w(x) )  =  |\De w(x)|^{p-2}\Delta w(x) + (p-2)|\De w(x)|^{p-4}\De w(x) \otimes \De w(x): \De^2 w(x),
\]
where $\otimes$ denotes tensor product, and $:$ is the Frobenius inner product, i.e.,
\[
  (\mathbf{a} \otimes \mathbf{b})_{i,j} = \mathbf{a}_i \mathbf{b}_j, \qquad \bv{A}:\bv{B} = \trace (\bv{A}^\intercal\bv{B}) = \sum_{i,j=1}^d \bv{A}_{i,j}\bv{B}_{i,j}.
\]
Since $\mathbb{I}_p$ is strictly convex ($p>1$), to minimize $\mathbb{I}_p$ it is sufficient to solve \eqref{eq:pLaplaceBVP}. Owing to the divergence structure of the $p$--Laplacian, \eqref{eq:pLaplaceBVP} is a variational problem and it can be tackled with the notion of weak solutions. Thus, the PDE \cite{MR3014456,MR3764541,MR3424623,MR2194585} and approximation theory \cite{MR1930132,MR1192966,MR2317830,MR2383205,MR2911397,MR2135783} for this problem is, to a very large extent, well developed.

Let us consider now, instead, the problem of minimizing the $L^\infty$--norm of the gradient,
\begin{equation}
\label{eq:minimizeLip}
  \mathbb{I}_\infty[w] = \| \De w \|_{L^\infty(\Omega;\mR^d)} = \esssup \{ |\De w(x)|: x \in \Omega \}
\end{equation}
over the admissible set $\mathcal{A}_g^\infty$. It is not completely evident now how to carry variations for $\mathbb{I}_\infty$, so obtaining the Euler Lagrange equation is not immediate. Using that, as $p \uparrow \infty$, we have $\mathbb{I}_p[w]^{1/p} \to \mathbb{I}_\infty[w]$ one may argue that the Euler Lagrange equation for \eqref{eq:minimizeLip} is the limit, as $p \uparrow \infty$, of \eqref{eq:pLaplaceBVP}. Divide $\Delta_p w$ by $(p-2)|\De w(x)|^{p-4}$ to obtain that, as $p \uparrow \infty$,
\begin{align*}
  \frac1{(p-2)|\De w|^{p-4}}\Delta_p w(x) &= \frac{|\De w(x)|^2}{p-2}\Delta w (x)+ \De w(x) \otimes \De w(x) : \De^2 w(x) \\
  &\to \Delta_\infty w (x)= \De w(x) \otimes \De w(x) : \De^2 w(x).
\end{align*}
The operator $\Delta_\infty$ is the \emph{$\infty$--Laplacian}. Heuristically, to minimize \eqref{eq:minimizeLip}, we solve the BVP 
\begin{equation}
\label{eq:ELinfLap}
  \Delta_\infty u  = 0, \text{ in } \Omega, \qquad u_{|\partial\Omega} = g.
\end{equation}
This derivation was originally discussed by G.~Aronsson \cite{MR0196551,MR0203541,MR0240690} (see also \cite{MR0237962,MR0217665}), who considered the problem of \emph{minimal Lipschitz extensions}: given $g \in C^{0,1}(\partial\Omega)$ find $u \in C^{0,1}(\bar\Omega)$ such that $u_{|\partial\Omega} = g$ and its Lipschitz constant is as small as possible. He arrived at \eqref{eq:ELinfLap} via the aforementioned approximation procedure. He showed existence of solutions and provided explicit examples that show the optimal expected regularity for solutions of \eqref{eq:ELinfLap}: $u \in C^{1,1/3}_{\mathrm{loc}}$. Uniqueness, stability, regularity, and further properties of solutions were not known to him.

The reason for these shortcomings is that, since the $\infty$--Laplacian does not have divergence structure, the correct way of interpreting \eqref{eq:ELinfLap} is via the theory of viscosity solutions \cite{MR1118699,MR2084272}, which did not exist at the time. This theory made it possible to obtain uniqueness, stability, and some properties of solutions \cite{MR3289084,MR3467690}. However, the regularity theory for \eqref{eq:ELinfLap} is limited: the best known results being interior $C^{1,\alpha}$ regularity \cite{MR2393071,MR3592681,MR1886623} and $C^1$ regularity in two dimensions \cite{MR2185662,MR3117143,MR3477071,MR3193978,MR3119202,MR3008417}. For further historical comments we refer to \cite[Section 8]{MR2408259}.

Although a full theory for problem \eqref{eq:ELinfLap} is lacking, we see that this problem appears as a prototype for the \emph{calculus of variations in $L^\infty$} \cite{MR3289084}. The minimal Lipschitz extension problem can also serve as a model for inpainting \cite{MR1669524,Casas1996}.

The ensuing applications justify the need for numerical schemes, which can be proved to converge, and some results exist in this direction. For instance, since solutions to \eqref{eq:ELinfLap} can be obtained as limits, when $p\uparrow\infty$, of solutions to \eqref{eq:pLaplaceBVP}, one idea is to discretize \eqref{eq:pLaplaceBVP} with standard techniques and then pass to the limit. This has been explored in \cite{MR3841500}, see also \cite{MR3617088,MR3569561}, where convergence of this method is established. The nature of this passage to the limit, and the limited available regularity of the solution to \eqref{eq:ELinfLap} makes obtaining rates of convergence very unlikely. The direct approximation of \eqref{eq:ELinfLap} is established in \cite{MR2137000,MR3061067} via wide stencil finite difference methods. To ascertain convergence the classical theory of Barles and Souganidis \cite{MR1115933} is invoked; see however \cite{MR3823876}. Rates of convergence for this method are not provided.

\subsection{Motivation: Tug of war games} \label{sub:TugOfWar}
A variant of the infinity Laplacian also appears in some \emph{tug of war games} \cite{MR2449057,MR3280579,RossiBook,MR2846345} as we now describe. Fix a step size $\varepsilon>0$, and place a token at $x_0 \in \Omega$. Two players, at stage $k \geq 0$, are allowed to choose points $x_k^I, x_k^{II} \in \bar B_\ve(x_k) \cap \bar\Omega$, respectively. A fair coin is tossed: heads means $x_{k+1} = x_k^I$, while tails gives $x_{k+1}=x_k^{II}$. If $x_{k+1} \in \Omega$ the game continues, but if $x_{k+1} \in \partial\Omega$, the game ends and Player II pays Player I the amount
\[
  P = g(x_{k+1}) + \frac{\varepsilon^2}2 \sum_{j=0}^k f(x_j),
\]
where the terminal payoff $g$, and running cost $f$ are given. Player I attempts to maximize $P$, while Player II to minimize it. The expected payoff (value) $u^\varepsilon$ of this game satisfies the relation
\begin{equation}
\label{eq:meanvalue}
  2u^\varepsilon(x) - \left( \sup_{y \in \Bc_\ve(x) \cap \bar\Omega} u^\varepsilon(y) + \inf_{y \in \Bc_\ve(x) \cap \bar\Omega} u^\varepsilon(y)\right) = \varepsilon^2 f(x),
\end{equation}
which in the (formal) limit $\varepsilon \downarrow0$ gives
\begin{equation}
\label{eq:BVPNinfLaplace}
  -\Delta_\infty^\diamond u = f, \ \text{ in } \Omega, \qquad u_{|\partial\Omega} = g.
\end{equation}
The operator $\Delta_\infty^\diamond$ is defined as
\[
  \Delta_\infty^\diamond w (x)= \frac1{|\De w(x)|^2} \Delta_\infty w(x) = \frac{\De w(x)}{|\De w(x)|} \otimes \frac{\De w(x)}{|\De w(x)|}:\De^2w(x), 
\]
and, for obvious reasons, is known as the \emph{normalized} $\infty$--Laplacian.

The authors of \cite{MR2846345} used \eqref{eq:meanvalue} as starting point to develop a semi--discrete scheme for \eqref{eq:BVPNinfLaplace}. Namely, they defined the operator
\begin{equation}
\label{eq:discInfLap}
  \Delta_{\infty,\varepsilon}^\diamond w(x) = \frac1{\varepsilon^2} \left( \sup_{y \in \Bc_\ve(x) \cap \bar\Omega} w(y) - 2w(x) + \inf_{y \in \Bc_\ve(x) \cap \bar\Omega} w(y)\right),
\end{equation}
and studied the problem
\begin{equation}
\label{eq:GameInfLapEps}
  -\Delta_{\infty,\varepsilon}^\diamond  u = f, \text{ in } \Omega, \qquad u_{|\partial\Omega} = g.
\end{equation}
Existence and uniqueness of solutions, as well as convergence with rates, as $\varepsilon \to 0$, are established for $f>0$.

\subsection{Goals and organization}

Despite the fact that the approximation theory for viscosity solutions is rather primitive \cite{MR3049920,MR3653852}, the purpose of this manuscript is to develop convergent, with rates, numerical schemes for \eqref{eq:ELinfLap}, \eqref{eq:BVPNinfLaplace}, and related problems. Let us describe how we organize our discussion in order to achieve these goals.

We begin by establishing the notation and main assumptions we shall operate under in \Cref{sec:Notation}. In \Cref{sub:InfLapVisco}, we also review the existing theory around \eqref{eq:BVPNinfLaplace}. The spatial discretization, and our numerical scheme \emph{per se}, are presented in \Cref{sub:Mesh} and \Cref{sub:TwoScale}, respectively. The analysis of the method begins in \Cref{sec:ExistenceAndSuch}, where we establish monotonicity and comparison principles. The comparison principles need to distinguish whether $f$ is of constant sign or $f\equiv0$. These properties expediently give us, as shown in \Cref{sub:ExistenceUniqueness}, existence, uniqueness, and stability of discrete solutions. The interior consistency of the discrete operator is studied next, in \Cref{sub:Consistency}. This, together with a barrier argument, and an approach \emph{\`a la} Barles and Souganidis \cite{MR1115933} give us convergence; see \Cref{sub:convergence}.

Having established convergence the next step is to provide, under realistic regularity assumptions, rates of convergence. This is the main goal of \Cref{sec:Rates}. Once again, we need to distinguish between the cases when $f$ is of constant sign or $f\equiv0$. In the first scenario we obtain a rate that is, at best, $\mathcal{O}(h^{1/3})$; whereas in the second $\mathcal{O}(h^{1/4})$. To the best of our knowledge, these are the first of their kind for this type of problems.

In \Cref{sec:Variants} we then turn our attention to some variants of our main theme. First, in \Cref{sub:PointClouds} a meshless discretization is briefly discussed. This only requires us to have a cloud of points that is, in a sense, sufficiently dense in the domain. Similar results as in the previous case can be obtained in this case. A variant of the tug of war game presented in \Cref{sub:TugOfWar} which leads to an obstacle problem for the normalized infinity Laplacian is then discussed in \Cref{sub:Obstacle}. A convergent numerical scheme is presented and analyzed. The rates, in this case, are the same as for the Dirichlet problem. We emphasize that these are the first of their kind. The last variation we consider is detailed in \Cref{sub:Finsler}, where we discuss the so--called Finsler infinity Laplacian. This is an operator that arises from a variant of our tug of war game, and can be thought of as an anisotropic version of our original operator. For its Dirichlet problem we sketch the construction of a convergent scheme.

\Cref{sec:Solution} details some practical aspects. We explain why the usually adopted approaches for the solution of the ensuing discrete problems may not work, and discuss an alternative. A convergent fixed point iteration is presented and analyzed. Finally, \Cref{sec:Numerics} presents some numerical experiments that illustrate our theory.


\section{Notation and preliminaries}\label{sec:Notation}

Throughout our work $\Omega \subset \mR^d$, with $d \geq 1$, will be a bounded domain whose boundary is at least continuous; see \cite[Definition 1.2.1.2]{MR775683}. For a point $z \in \mRd$ and $r>0$ by $B_r(z)$ and $\Bc_r(z)$ we denote, respectively, the open and closed balls centered at $z$ and of radius $r$. For a vector $\bv{v} \in \mR^d$, by $|\bv{v}|$ we denote its Euclidean norm. If $\bv{M} \in \mR^{d\times d}$, then $|\bv{M}|$ is the induced operator norm.

We will follow standard notation and terminology regarding function spaces. In particular, if $w : \Omega \to \mR$ is sufficiently smooth and $x \in \Omega$, we denote by $\De w(x) \in \mR^d$ and $\De^2 w(x) \in \mR^{d \times d}$ the gradient and Hessian, respectively, of $w$ at $x$.

The relation $A \lesssim B$ shall mean that $A \leq c B$ for a constant $c$, whose value may change at each occurrence, but does not depend on $A$, $B$, nor the discretization parameters. By $A \gtrsim B$ we mean $B \lesssim A$, and $A \approx B$ means $A \lesssim B \lesssim A$. The Landau symbol is $\mathcal O$.

We assume that our data, satisfies:
\begin{enumerate}[\textbf{RHS}.1]
  \item \label{Ass.RHS.cont} The right hand side $f \in C(\Omega) \cap L^\infty(\Omega)$.
  \item \label{Ass.RHS.f<>0}Either:
  \begin{enumerate}[\textbf{RHS}.2a]
    \item \label{Ass.RHS.sign} $\sup\{f(x): x \in \Omega\} <0$ or $\inf\{f(x): x \in \Omega\} >0$.
    \item \label{Ass.RHS.zero} $f \equiv 0$.
  \end{enumerate}
\end{enumerate}

In addition,
\begin{enumerate}[\textbf{BC}.1]
  \item \label{Ass.BC.cont} The boundary datum $g \in C(\partial\Omega)$.
  \item \label{Ass.BC.ext} For every $\ve>0$ we have an approximation $\wt{g}_\ve \in C(\Oc)$ to $g$, such that if for some $\alpha \in [0,1]$ we have $g \in C^{0,\alpha}(\partial\Omega)$, then $\wt{g}_\ve \in C^{0,\alpha}(\Oc)$; and, in addition, 
  \[
    \| g - \wt{g}_\ve \|_{L^\infty(\partial\Omega)} \lesssim \ve^\alpha.
  \]
\end{enumerate}
We comment that Assumption~\textbf{BC}.\ref{Ass.BC.ext} can be realized, for instance, via sup-- or inf--convolutions at level $\ve$; see \cite{MR934877},  \cite[Definition 2.1]{MR1017328}, and \cite[Proposition 7.11]{MR3653852}. Further assumptions will be introduced as needed.

\subsection{The normalized infinity Laplacian} \label{sub:InfLapVisco}
We note that for $x \in \Omega$, even if $w \in C^\infty(\Omega)$ the expression $\Delta_\infty^\diamond w(x)$ does not make sense whenever the gradient of $w$ vanishes at $x$. For this reason we define the upper and lower semicontinuous envelopes, respectively, of the mapping $x \mapsto \Delta_\infty^\diamond w(x)$ to be
\begin{equation}\label{eq:DefOfDeltaPlus}
  \Delta_{\infty}^+ w(x) = \begin{dcases}
    \Delta_\infty^\diamond w(x), & \De w(x) \neq \boldsymbol0, \\
    \lambda_{\max}(\De^2 w(x)), & \De w(x) = \boldsymbol0,
  \end{dcases}
\end{equation}
and
\begin{equation}
\label{eq:DefOfDeltaMinus}
  \Delta_{\infty}^- w(x) = \begin{dcases}
    \Delta_\infty^\diamond w(x),  & \De w(x) \neq \boldsymbol0, \\
    \lambda_{\min}(\De^2 w(x)), &  \De w(x) = \boldsymbol0.
  \end{dcases}
\end{equation}
Here, for a symmetric matrix $\bv{M}\in \mR^{d \times d}$, by $\lambda_{\max}(\bv{M})$ and $\lambda_{\min}(\bv{M)}$ we denote the largest and smallest eigenvalues of $\bv{M}$. We recall that these can be characterized as
\[
  \lambda_{\max}(\bv{M}) = \max_{|\bv{v}| = 1} \bv{v}^\intercal \bv{M v}, \qquad 
  \lambda_{\min}(\bv{M}) = \min_{|\bv{v}| = 1} \bv{v}^\intercal \bv{ M v}.
\]

The operators $\Delta_\infty$ and $\Delta_\infty^\diamond$ are degenerate elliptic in the sense of \cite{MR1118699}. The theory of viscosity solutions yields existence and uniqueness of solutions to \eqref{eq:ELinfLap}; see \cite{MR3289084}. Armed with the envelopes \eqref{eq:DefOfDeltaPlus} and \eqref{eq:DefOfDeltaMinus}, existence of solutions can also be asserted for \eqref{eq:BVPNinfLaplace}; see \cite{MR2846345,MR3467690}. Uniqueness can only be guaranteed when Assumption~\textbf{RHS}.\ref{Ass.RHS.f<>0} holds \cite{MR2372480}. Finally, if $f \equiv 0$, we have that the viscosity solutions to \eqref{eq:ELinfLap} and \eqref{eq:BVPNinfLaplace} coincide; see \cite[Remark 2.2]{MR2846345}.

\subsection{Space discretization}\label{sub:Mesh}

We now begin to describe our method for the approximation to the solution of \eqref{eq:BVPNinfLaplace}. We begin by introducing some notation. For $r>0$ we set
\[
  \Omega^{(r)} = \left\{ x \in \Omega : \dist(x,\pO) > r \right\}.
\]

Let  $\{\Th\}_{h>0}$ be a family of unstructured meshes that, for each $h>0$, consist of closed simplices $T$. This family is quasiuniform in the usual, finite element, sense \cite{MR1930132}. For each $h>0$ we set
\[
  h_T = \diam(T) , \quad h = \max_{T \in \Th}h_T, \quad \Oh = \interior \bigcup\left\{ T : T \in \Th\right\},
\]
where by $\interior A$ we denote the interior of the set $A \subset \mRd$. The set of nodes of $\Th$ is denoted by $\Nh$.

We need to quantify how $\Omega_h$ approximates $\Omega$. To do so we assume that, for all $h>0$,
\[
  \Omega^{(h)} \subset \Omega_h \subset \Omega.
\]
For $r>0$, the set $\Nhi{r} = \Nh \cap \Omega^{(2r)}$ is the set of interior nodes, and $\Nhb{r} = \Nh \setminus \Nhi{r}$ is the set of boundary nodes. Note that the condition above guarantees that, if $h$ is sufficiently small and $r \geq h$, then neither of these sets is empty.

Let $\Vh$ be the space of continuous piecewise linear functions subject to $\Th$, and $\interp : C(\Oc) \to \Vh$ be the Lagrange interpolant, i.e.,
\[
  \interp w(x) = \sum_{\vertex \in \Nh} w(\vertex) \widehat{\varphi}_\vertex(x), \qquad \forall w \in C(\Oc).
\]
Here $\{\widehat{\varphi}_\vertex\}_{\vertex \in \Nh} \subset \Vh$ is the canonical (hat) basis of $\Vh$, i.e., $\widehat{\varphi}_\vertex(\othervertex) = \delta_{\vertex,\othervertex}$ for all $\vertex, \othervertex \in \Nh$. We recall that, as a consequence, these functions are nonnegative and, moreover, form a partition of unity on $\Oc$.

\subsection{The numerical method}\label{sub:TwoScale}

Here and in what follows we will call the meshsize $h$ the fine scale. The coarse scale shall be given by $\varepsilon \in [h,\diam(\Omega)]$. Inspired by \eqref{eq:discInfLap} we could consider a fully discrete operator of the form
\[
  (w,\vertex) \mapsto -\Delta_{\infty,\varepsilon}^\diamond \interp w(\vertex), \qquad \forall w \in C(\Oc), \quad \forall \vertex \in \Nhi{\ve}.
\]
We notice, however, that even for a function $w_h \in \Vh$ the computation of $\max_{x \in \Bc_{\ve}(\vertex) } w_h(x)$ and $\min_{x \in \Bc_{\ve}(\vertex) } w_h(x)$ can be complicated in practice. To simplify this calculation we observe that, for $\ve$ sufficiently small,
\[
  \max_{x \in \Bc_{\ve}(\vertex) } w(x) \approx \max \left\{ w(\vertex), \max_{x \in \partial B_{\ve}(\vertex)} w(x)\right\}, \quad \forall w \in C^2(\Bc_\ve(\vertex)),
\]
and similar for the minimum. Indeed, if the maximum occurs at $x \in \interior B_\ve(\vertex)$, then we must have $\De w(x) = \boldsymbol{0}$, so that, as $\ve \downarrow 0$,
\[
  w(\vertex) = w(x) + \De w(x)^\intercal (\vertex-x) + \mathcal{O}(\ve^2) = w(x) + \mathcal{O}(\ve^2).
\]
As a final simplification, we discretize $\partial \Bc_{\ve}(\vertex)$. Let $\mS$ be the unit sphere in $\mRd$. For $\theta \leq 1$ we introduce a discretization $\St \subset \mS$: For any $\bv{v} \in \mS$ there is $\bv{v}_\theta \in \St$ such that
\begin{equation*}
  |\bv{v} - \bv{v}_{\theta} | \leq \theta.
\end{equation*}
We further assume that $\St$ is symmetric, i.e., if $\bv{v}_{\theta} \in \St$, then $-\bv{v}_{\theta} \in \St$ as well. We will then, instead of $\partial B_\ve(\vertex)$, only consider points 
of the form $\vertex + \ve \bv{v}_{\theta}$ for $\bv{v}_{\theta} \in \St$. 

With the previous simplifications at hand set $\frakh = (h,\ve,\theta)$. For $\vertex \in \Nhi{\ve}$ we define
\[
  \mathcal{N}_{\frakh}(\vertex) = \{\vertex\} \cup \{\vertex + \ve \bv{v}_\theta:  \bv{v}_\theta \in \St\},
\]
and, for $w\in C(\Oc)$,
\[
  S_{\frakh}^+ w(\vertex) = \frac{1}{\ve} \l( \max_{x \in \mathcal{N}_{\frakh}(\vertex)} w(x) - w(\vertex) \r),  \quad 
  S_{\frakh}^- w(\vertex) = \frac{1}{\ve} \l( w(\vertex) - \min_{x \in \mathcal{N}_{\frakh}(\vertex)} w(x)  \r).
\]
Finally, our fully discrete operator is defined, for $w \in C(\Oc)$, by
\begin{equation}\label{eq:def-fullydiscrete-op1}
  -\Delta_{\infty, \frakh}^\diamond w(\vertex) = -\frac1\ve \left( S_\frakh^+ \interp w(\vertex) - S_\frakh^- \interp w(\vertex) \right), \qquad \forall \vertex \in \Nhi{\ve}.
\end{equation}

For future reference we define, for $\vertex \in \Nh$, 
\[
  \wt{\mathcal{N}}_\frakh(\vertex) = \left\{ \vertex \right\} \cup \left\{ \othervertex \in \Nh : \exists \bv{v}_\theta \in \St, \widehat{\varphi}_{\othervertex}(\vertex + \ve \bv{v}_\theta) > 0 \right\} \subset \Nh.
\]
This set is such that, for $w_h \in \Vh$, $S_\frakh^\pm w_h(\vertex)$ is uniquely defined by the restriction of $w_h$ to $\wt{\mathcal{N}}_\frakh(\vertex)$.

Under Assumption~\textbf{BC}.\ref{Ass.BC.ext} we will now define our numerical scheme. Our numerical method reads: Find $u_\frakh \in \Vh$ such that
\begin{equation}\label{eq:inf-laplace-discrete}
  -\Delta_{\infty, \frakh}^\diamond u_\frakh (\vertex) = 
  f(\vertex), \quad \forall \vertex \in \Nhi{\ve}, \qquad u_\frakh(\vertex) = \wt{g}_\ve(\vertex), \quad \forall \vertex \in \Nhb{\ve}.
\end{equation}

Of importance in our analysis will be the concepts of discrete sub- and supersolution.

\begin{Definition}[subsolution]\label{def:Subsolution}
The function $w_h \in \Vh$ is a discrete subsolution (supersolution) of \eqref{eq:inf-laplace-discrete} if
\[
  -\Delta_{\infty,\frakh}^\diamond w_h(\vertex) \leq (\geq) f(\vertex), \quad \forall \vertex \in \Nhi{\ve}, \qquad w_h(\vertex) \leq (\geq) \wt{g}_\ve(\vertex), \quad \vertex \in \Nhb{\ve}.
\]
Therefore, a discrete solution of \eqref{eq:inf-laplace-discrete} is both a discrete sub- and supersolution.
\end{Definition}

\section{The discrete problem}\label{sec:ExistenceAndSuch}
We now begin to study the properties of our scheme. The definition of \eqref{eq:def-fullydiscrete-op1} immediately implies the monotonicity of the operator $-\Delta^\diamond_{\infty, \frakh}$ in the following sense.

\begin{Lemma}[monotonicity] \label{lemma:monotonicity}
Let $\vertex \in \Nhi{\ve}$ be an interior node and $w_h, v_h \in \Vh$. If
\begin{equation}
\label{eq:LocEqForMonotonicity}
  (w_h - v_h)(\vertex) \ge (w_h - v_h)(\othervertex), \quad \forall \othervertex \in \wt{\mathcal{N}}_\frakh(\vertex),
\end{equation}
then
\[
S_{\frakh}^+ w_h(\vertex) \le S_{\frakh}^+ v_h(\vertex), \quad 
S_{\frakh}^- w_h(\vertex) \ge S_{\frakh}^- v_h(\vertex),
\]
and
\begin{equation}
\label{eq:DiscInfLapIsMonotone}
  -\Delta^\diamond_{\infty, \frakh} w_h(\vertex) \ge -\Delta^\diamond_{\infty, \frakh} v_h(\vertex).
\end{equation}
\end{Lemma}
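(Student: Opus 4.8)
The plan is to reduce the whole statement to a single pointwise comparison of $w_h$ and $v_h$ on the finite stencil $\mathcal{N}_{\frakh}(\vertex)$, and then extract the three asserted inequalities one after another. I would set $c = (w_h-v_h)(\vertex)$, so that hypothesis \eqref{eq:LocEqForMonotonicity} reads $w_h(\othervertex) \le v_h(\othervertex) + c$ for every $\othervertex \in \wt{\mathcal{N}}_\frakh(\vertex)$. The key claim to establish is that this ``self-improves'' to
\[
  w_h(x) \le v_h(x) + c, \qquad \forall x \in \mathcal{N}_{\frakh}(\vertex).
\]
For $x = \vertex$ this is an equality. For $x = \vertex + \ve\bv{v}_\theta$ with $\bv{v}_\theta \in \St$ I would first note that $\mathcal{N}_{\frakh}(\vertex) \subset \Oh$: since $\vertex \in \Nhi{\ve}$ we have $\dist(\vertex,\pO) > 2\ve$, hence $\dist(x,\pO) > \ve \ge h$ and $x \in \Omega^{(h)} \subset \Oh$. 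Therefore the hat basis is a nonnegative partition of unity at $x$, and since $w_h = \interp w_h$ and $v_h = \interp v_h$, expanding $w_h(x) = \sum_{\othervertex\in\Nh} w_h(\othervertex)\widehat{\varphi}_\othervertex(x)$ and using that every node $\othervertex$ with $\widehat{\varphi}_\othervertex(x) > 0$ lies in $\wt{\mathcal{N}}_\frakh(\vertex)$ — which is exactly how that set was defined — gives
\[
  w_h(x) = \sum_{\othervertex \,:\, \widehat{\varphi}_\othervertex(x) > 0} w_h(\othervertex)\,\widehat{\varphi}_\othervertex(x) \le \sum_{\othervertex \,:\, \widehat{\varphi}_\othervertex(x) > 0} \bigl(v_h(\othervertex) + c\bigr)\widehat{\varphi}_\othervertex(x) = v_h(x) + c .
\]

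With the claim in hand, taking the maximum over $x \in \mathcal{N}_{\frakh}(\vertex)$ yields $\max_x w_h(x) \le \max_x v_h(x) + c$; subtracting the identity $w_h(\vertex) = v_h(\vertex) + c$ and dividing by $\ve$ gives $S_\frakh^+ w_h(\vertex) \le S_\frakh^+ v_h(\vertex)$. Taking the minimum instead gives $\min_x w_h(x) \le \min_x v_h(x) + c$, so that $w_h(\vertex) - \min_x w_h(x) \ge v_h(\vertex) + c - \min_x v_h(x) - c = v_h(\vertex) - \min_x v_h(x)$, i.e. $S_\frakh^- w_h(\vertex) \ge S_\frakh^- v_h(\vertex)$. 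Combining these and using the definition \eqref{eq:def-fullydiscrete-op1},
\[
  -\Delta^\diamond_{\infty,\frakh} w_h(\vertex) = -\frac1\ve\bigl( S_\frakh^+ w_h(\vertex) - S_\frakh^- w_h(\vertex) \bigr) \ge -\frac1\ve\bigl( S_\frakh^+ v_h(\vertex) - S_\frakh^- v_h(\vertex) \bigr) = -\Delta^\diamond_{\infty,\frakh} v_h(\vertex),
\]
the inequality following from $S_\frakh^+ w_h(\vertex) - S_\frakh^- w_h(\vertex) \le S_\frakh^+ v_h(\vertex) - S_\frakh^- v_h(\vertex)$ together with the negative prefactor $-1/\ve$.

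The only step with genuine content is the self-improvement claim: passing from a node-wise inequality on $\wt{\mathcal{N}}_\frakh(\vertex)$ to an inequality at the (generally non-nodal) stencil points of $\mathcal{N}_{\frakh}(\vertex)$. This works precisely because $\wt{\mathcal{N}}_\frakh(\vertex)$ was defined to collect every node whose hat function is positive at some stencil point, and because those hat functions form a nonnegative partition of unity there; everything afterwards is bookkeeping with the definitions of $S_\frakh^\pm$ and one sign change. The one auxiliary fact worth double-checking is the containment $\mathcal{N}_{\frakh}(\vertex) \subset \Oh$, which rests on $\vertex \in \Nhi{\ve}$, $\ve \ge h$, and $\Omega^{(h)} \subset \Oh$.
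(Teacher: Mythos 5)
Your proof is correct and follows essentially the same route as the paper: the core step in both is exactly the same self-improvement of the nodal inequality on $\wt{\mathcal{N}}_\frakh(\vertex)$ to all of $\mathcal{N}_\frakh(\vertex)$ via the nonnegative partition of unity and the definition of $\wt{\mathcal{N}}_\frakh(\vertex)$. The only cosmetic difference is that you extract $S^\pm_\frakh$ monotonicity by shifting with the constant $c$, whereas the paper evaluates at a maximizer $x_0$ of $w_h$; both are equivalent bookkeeping.
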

\begin{proof}
It simply follows by definition of the operators.

We observe, first of all, that \eqref{eq:LocEqForMonotonicity} implies the same condition over $x \in \mathcal{N}_{\frakh}(\vertex)$. Indeed, let $\psi_h = w_h - v_h \in \Vh$ and $x \in \mathcal{N}_{\frakh}(\vertex)$, so that
\[
  \psi_h(x) = \sum_{\othervertex \in \wt{\mathcal{N}}_\frakh(\vertex)} \psi_h(\othervertex) \widehat{\varphi}_{\othervertex}(x) \leq 
    \sum_{\othervertex \in \wt{\mathcal{N}}_\frakh(\vertex)} \psi_h(\vertex) \widehat{\varphi}_{\othervertex}(x) = \psi_h(\vertex) \sum_{\othervertex \in \wt{\mathcal{N}}_\frakh(\vertex)} \widehat{\varphi}_{\othervertex}(x) = \psi_h(\vertex),
\]
where we used the partition of unity property of the hat basis functions.

Let now $x_0 \in \mathcal{N}_{\frakh}(\vertex)$ satisfy
\[
  w_h(x_0) = \max_{x \in \mathcal{N}_\frakh(\vertex)} w_h(x),
\]
so that
\[
  \ve S_\frakh^+ v_h(\vertex)  = \max_{x \in \mathcal{N}_\frakh(\vertex)} v_h(x) - v_h(\vertex) \geq v_h(x_0) - v_h(\vertex) \geq w_h(x_0) - w_h(\vertex) = \ve S_\frakh^+ w_h(\vertex).
\]
Similarly, $S_\frakh^- v_h(\vertex) \leq S_\frakh^- w_h(\vertex)$. These two relations immediately imply \eqref{eq:DiscInfLapIsMonotone}.
\end{proof}

\subsection{Comparison}\label{sub:comparison}

The monotonicity presented in \Cref{lemma:monotonicity} leads to the following discrete comparison principle.

\begin{Theorem}[comparison I] \label{thm:simple-DCP}
Assume that $w_h,v_h \in \Vh$ satisfy
\begin{equation}\label{eq:assump-thm-discrete-comparison2}
  -\Delta^\diamond_{\infty, \frakh} w_h(\vertex) \le -\Delta^\diamond_{\infty, \frakh} v_h(\vertex) \quad \forall \vertex \in \Nhi{\ve},
\end{equation}
and, moreover, one of the following holds:
\begin{enumerate}[1.]
  \item For every $\vertex \in \Nhi{\ve}$, \eqref{eq:assump-thm-discrete-comparison2} holds with strict inequality or $-\Delta^\diamond_{\infty, \frakh} w_h(\vertex) < 0$.
  \item For every $\vertex \in \Nhi{\ve}$, \eqref{eq:assump-thm-discrete-comparison2} holds with strict inequality or $-\Delta^\diamond_{\infty, \frakh} v_h(\vertex) > 0$.
\end{enumerate}
Then,
\[
  \max_{\vertex \in \Nh} \left[ w_h(\vertex) - v_h(\vertex) \right] = \max_{\vertex \in \Nhb{\ve}} \left[ w_h(\vertex) - v_h(\vertex) \right].
\]
\end{Theorem}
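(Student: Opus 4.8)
The plan is to argue by contradiction in the spirit of the classical Barles--Souganidis maximum-principle argument, using the monotonicity of \Cref{lemma:monotonicity} at the interior node where $w_h - v_h$ attains its maximum. Suppose, for contradiction, that
\[
  M := \max_{\vertex \in \Nh} \left[ w_h(\vertex) - v_h(\vertex) \right] > \max_{\vertex \in \Nhb{\ve}} \left[ w_h(\vertex) - v_h(\vertex) \right].
\]
Then the set of maximizers of $w_h - v_h$ over $\Nh$ is nonempty and contained in $\Nhi{\ve}$; pick any such maximizer $\vertex_\star \in \Nhi{\ve}$. By definition of $\vertex_\star$ we have $(w_h - v_h)(\vertex_\star) \ge (w_h - v_h)(\othervertex)$ for every $\othervertex \in \Nh$, in particular for every $\othervertex \in \wt{\mathcal{N}}_\frakh(\vertex_\star)$, so hypothesis \eqref{eq:LocEqForMonotonicity} of \Cref{lemma:monotonicity} holds at $\vertex_\star$. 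The lemma then yields $-\Delta^\diamond_{\infty,\frakh} w_h(\vertex_\star) \ge -\Delta^\diamond_{\infty,\frakh} v_h(\vertex_\star)$.

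Combining this with the standing hypothesis \eqref{eq:assump-thm-discrete-comparison2} at $\vertex_\star$ forces the equality
\[
  -\Delta^\diamond_{\infty,\frakh} w_h(\vertex_\star) = -\Delta^\diamond_{\infty,\frakh} v_h(\vertex_\star).
\]
This already contradicts the strict-inequality alternative in either case~1 or case~2, so we are left to derive a contradiction when the second alternative holds, i.e.\ (case~1) $-\Delta^\diamond_{\infty,\frakh} w_h(\vertex_\star) < 0$, or (case~2) $-\Delta^\diamond_{\infty,\frakh} v_h(\vertex_\star) > 0$. By the displayed equality these two subcases are symmetric, so consider case~1: we have $-\Delta^\diamond_{\infty,\frakh} w_h(\vertex_\star) < 0$, that is, $S_\frakh^+ \interp w_h(\vertex_\star) > S_\frakh^- \interp w_h(\vertex_\star)$, equivalently (since $w_h \in \Vh$ so $\interp w_h = w_h$)
\[
  \max_{x \in \mathcal{N}_\frakh(\vertex_\star)} w_h(x) + \min_{x \in \mathcal{N}_\frakh(\vertex_\star)} w_h(x) > 2 w_h(\vertex_\star).
\]
On the other hand, the proof of \Cref{lemma:monotonicity} shows that the node-maximum hypothesis propagates to all of $\mathcal{N}_\frakh(\vertex_\star)$: $(w_h - v_h)(x) \le (w_h - v_h)(\vertex_\star) = M$ for every $x \in \mathcal{N}_\frakh(\vertex_\star)$. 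The crux is to exploit this to show $w_h(\vertex_\star)$ is \emph{itself} the max of $w_h$ over $\mathcal{N}_\frakh(\vertex_\star)$, or something nearly as strong, and thereby contradict the displayed strict inequality.

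To make that last step precise, I would look at the node $x_1 \in \mathcal{N}_\frakh(\vertex_\star)$ realizing $\max_{x} w_h(x)$; then $w_h(x_1) - w_h(\vertex_\star) = (w_h - v_h)(x_1) - M + v_h(x_1) - v_h(\vertex_\star) \le v_h(x_1) - v_h(\vertex_\star) \le \max_x v_h(x) - v_h(\vertex_\star) = \ve S_\frakh^+ v_h(\vertex_\star)$, and symmetrically $w_h(\vertex_\star) - w_h(x_2) \ge \ve S_\frakh^- v_h(\vertex_\star)$ for $x_2$ the node realizing the min of $w_h$. Subtracting gives exactly $S_\frakh^+ w_h(\vertex_\star) - S_\frakh^- w_h(\vertex_\star) \le S_\frakh^+ v_h(\vertex_\star) - S_\frakh^- v_h(\vertex_\star)$, i.e.\ $-\Delta^\diamond_{\infty,\frakh} w_h(\vertex_\star) \ge -\Delta^\diamond_{\infty,\frakh} v_h(\vertex_\star)$ — which is just the monotonicity we already had. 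So the naive estimate only re-derives $\ge$, and the real work is to upgrade it: I expect one must argue that if $-\Delta^\diamond_{\infty,\frakh} w_h(\vertex_\star) < 0$ then the min over $\mathcal{N}_\frakh(\vertex_\star)$ of $w_h$ is strictly below $w_h(\vertex_\star)$, hence the min of $v_h$ is also strictly below $v_h(\vertex_\star)$ (using that $v_h - w_h$ attains its max over $\mathcal{N}_\frakh(\vertex_\star)$ at $\vertex_\star$, so $v_h$ at the $w_h$-min node is $\le v_h(\vertex_\star)$, and one pushes the inequality to be strict), forcing $-\Delta^\diamond_{\infty,\frakh} v_h(\vertex_\star) > -\Delta^\diamond_{\infty,\frakh} w_h(\vertex_\star)$ unless $v_h$ is "flat from above" on $\mathcal{N}_\frakh(\vertex_\star)$ in a way that again contradicts \eqref{eq:assump-thm-discrete-comparison2}. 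Handling this dichotomy cleanly — tracking which of the max/min nodes of $w_h$ and $v_h$ coincide with $\vertex_\star$ and which do not — is the main obstacle; the rest is bookkeeping with the definitions of $S_\frakh^\pm$ and the partition-of-unity propagation from the lemma.
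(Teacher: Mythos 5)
Your first step — contradiction, an interior maximizer $\vertex_\star$ of $w_h-v_h$, monotonicity from \Cref{lemma:monotonicity} giving $-\Delta^\diamond_{\infty,\frakh}w_h(\vertex_\star)\ge -\Delta^\diamond_{\infty,\frakh}v_h(\vertex_\star)$, hence equality, hence a contradiction whenever the strict-inequality alternative holds at $\vertex_\star$ — is exactly the paper's first case. The gap is the remaining alternative (say $-\Delta^\diamond_{\infty,\frakh}w_h(\vertex_\star)<0$), which you leave as a plan rather than a proof, and the plan as sketched cannot be closed: from the data on the stencil of an \emph{arbitrary} maximizer no contradiction is extractable, because $v_h$ may simply coincide with $w_h$ on $\mathcal{N}_\frakh(\vertex_\star)$ (then $S_\frakh^{\pm}w_h(\vertex_\star)=S_\frakh^{\pm}v_h(\vertex_\star)$ and $w_h-v_h\equiv m$ on the stencil, perfectly consistent with $-\Delta^\diamond_{\infty,\frakh}w_h(\vertex_\star)<0$). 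Your own computation, as you note, only reproduces the monotonicity inequality; the "upgrade" you hope for is false at the level of a single, arbitrarily chosen maximizer.

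There are two ways to finish. The paper's proof of this theorem avoids the local analysis entirely: since $-\Delta^\diamond_{\infty,\frakh}$ is positively homogeneous, $-\Delta^\diamond_{\infty,\frakh}\bigl((1+\beta)w_h\bigr)(\vertex)=(1+\beta)\,\bigl(-\Delta^\diamond_{\infty,\frakh}w_h(\vertex)\bigr)$; at nodes where $-\Delta^\diamond_{\infty,\frakh}w_h(\vertex)<0$ this is strictly below $-\Delta^\diamond_{\infty,\frakh}w_h(\vertex)\le -\Delta^\diamond_{\infty,\frakh}v_h(\vertex)$, and at nodes where the inequality is already strict it stays strict for $\beta$ small (finitely many nodes). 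So $(1+\beta)w_h$ and $v_h$ satisfy the strict case, and letting $\beta\downarrow 0$ concludes. Alternatively, your direct route can be salvaged, but only by refining the choice of maximizer as in the paper's proof of \Cref{thm:DCP}: among the maximizers $E$ of $w_h-v_h$ take $\vertex\in F$ maximizing $w_h$ over $E$; the case analysis with the nodal/interpolated stencil points (the analogue of \eqref{eq:discrete-comparison-proof2}--\eqref{eq:discrete-comparison-proof3}) then yields $S_\frakh^{+}w_h(\vertex)=0$, and since the strict-inequality alternative is ruled out at $\vertex$ by the equality of operators, the sign condition gives $0\le S_\frakh^{-}w_h(\vertex)<S_\frakh^{+}w_h(\vertex)=0$, a contradiction — note this uses the strictness of the sign and needs no connectivity assumption, unlike \Cref{thm:DCP}. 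Either of these closes the argument; as written, your proposal does not.
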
	
\begin{proof}
We first prove that the theorem is true if \eqref{eq:assump-thm-discrete-comparison2} holds with strict inequality for every $\vertex \in \Nhi{\ve}$. We assume by contradiction that
\[
  m = \max_{\vertex \in \Nh} \left[ w_h(\vertex) - v_h(\vertex) \right] > \max_{\vertex \in \Nhb{\ve}} \left[ w_h(\vertex) - v_h(\vertex) \right],
\]
and consider the set of points
\[
  E = \{ \vertex \in \Nh: (w_h - v_h)(\vertex) = m \} \subset \Nhi{\ve}.
\]
Clearly, the set $E$ is nonempty because of the definition of $m$. For any $\vertex \in E$, we have
\begin{equation}\label{eq:simple-DCP-proof0}
  w_h(\vertex) - v_h(\vertex) \ge w_h(\othervertex) - v_h(\othervertex)\quad \forall \othervertex \in \Nh. 
\end{equation}
By \Cref{lemma:monotonicity}, this implies that
\[
  -\Delta^\diamond_{\infty, \frakh} w_h(\vertex) \ge -\Delta^\diamond_{\infty, \frakh} v_h(\vertex),
\]
which is a contradiction to our assumption.

Next, consider the case that \eqref{eq:assump-thm-discrete-comparison2} does not hold with strict inequality. By symmetry, we only need to prove the result under the assumption that $-\Delta^\diamond_{\infty, \frakh} w_h(\vertex) < 0$ for all $\vertex \in \Nhi{\ve}$.
Then, for any constant $\beta > 0$ that is small enough, we have
\[
  -\Delta^\diamond_{\infty, \frakh} \l( (1+\beta)w_h \r)(\vertex) < -\Delta^\diamond_{\infty, \frakh} v_h(\vertex) ,
\]
for any $\vertex \in \Nhi{\ve}$. Applying the comparison principle we just proved to the functions $(1+\beta)w_h$ and $v_h$, we obtain that
\[
  \max_{\vertex \in \Nh} \l[ (1+\beta) w_h(\vertex) - v_h(\vertex) \r] = \max_{\vertex \in \Nhb{\ve}} \l[ (1+\beta) w_h(\vertex) - v_h(\vertex) \r].
\]
Letting $\beta \downarrow 0$ finishes our proof.
\end{proof}

The comparison of \Cref{thm:simple-DCP} will be enough to study the case when the right hand side $f$ satisfies Assumption~\textbf{RHS}.\ref{Ass.RHS.sign}. However, when Assumption~\textbf{RHS}.\ref{Ass.RHS.zero} holds, we need a refinement. If this is the case, we require the set of vertices, $\Nh$, to satisfy an additional condition, regarding its connectivity.
\begin{enumerate}[\textbf{M}.1]
  \item \label{Ass.M.connect} For any $\emptyset \neq S \subset \Nhi{\ve}$ there are $\vertex \in S$ and $\othervertex \in \Nh \setminus S$ so that
  \[
    \othervertex \in \wt{\mathcal{N}}_\frakh(\vertex).
  \]
\end{enumerate}
We comment that Assumption~\textbf{M}.\ref{Ass.M.connect} can be satisfied provided $h, \ve$ are sufficiently small compared to $\diam(\Omega)$, and $\ve $ sufficiently big compared to $h$.

With this condition at hand we can now prove another comparison principle. The following result is a discrete counterpart of \cite[Corollary 2.8]{MR2846345}.

\begin{Theorem}[comparison II]\label{thm:DCP}
Let $w_h, v_h \in \Vh$ satisfy
\begin{equation}\label{eq:assump-thm-discrete-comparison}
  -\Delta^\diamond_{\infty, \frakh} w_h(\vertex) \le -\Delta^\diamond_{\infty, \frakh} v_h(\vertex), \quad \forall \vertex \in \Nhi{\ve}.
\end{equation}
Further, assume that $-\Delta^\diamond_{\infty, \frakh} w_h(\vertex) \le 0$ or $-\Delta^\diamond_{\infty, \frakh} v_h(\vertex) \ge 0$ for every $\vertex \in \Nhi{\ve}$. If Assumption~\textbf{M}.\ref{Ass.M.connect} holds, then
\[
  \max_{\vertex \in \Nh} \left[ w_h(\vertex) - v_h(\vertex) \right] = \max_{\vertex \in \Nhb{\ve}}\left[ w_h(\vertex) - v_h(\vertex) \right].
\]
\end{Theorem}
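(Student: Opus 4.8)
The plan is to run the contradiction argument of \Cref{thm:simple-DCP}, but to track a smaller ``bad set''. Put $\psi_h = w_h - v_h$ and $m = \max_{\vertex \in \Nh}\psi_h(\vertex)$, and suppose for contradiction that $m > \max_{\vertex\in\Nhb{\ve}}\psi_h(\vertex)$, so that $E := \{\vertex\in\Nh : \psi_h(\vertex)=m\}$ is nonempty and $E \subset \Nhi{\ve}$. Because $-\Delta_{\infty,\frakh}^\diamond$ is odd (as $S_\frakh^{\pm}(-z_h)=S_\frakh^{\mp} z_h$), the substitution $(w_h,v_h)\mapsto(-v_h,-w_h)$ interchanges the two alternatives in the hypothesis and changes nothing else, so we may assume the first one, i.e.\ $-\Delta_{\infty,\frakh}^\diamond w_h(\vertex)\le 0$ for all $\vertex\in\Nhi{\ve}$. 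Finally, since $E$ is finite, set $W := \max_{\vertex\in E}w_h(\vertex)$ and $E_W := \{\vertex\in E : w_h(\vertex)=W\}$, a nonempty subset of $\Nhi{\ve}$.

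I would then show that $E_W$ is invariant under the neighbourhood relation, i.e.\ $\wt{\mathcal{N}}_\frakh(\vertex)\subseteq E_W$ for every $\vertex\in E_W$; once this is in hand, applying Assumption~\textbf{M}.\ref{Ass.M.connect} to $S=E_W$ yields the contradiction. Fix $\vertex\in E_W$. Since $\psi_h(\vertex)=m\ge\psi_h(\othervertex)$ for all $\othervertex\in\wt{\mathcal{N}}_\frakh(\vertex)$, \Cref{lemma:monotonicity} together with \eqref{eq:assump-thm-discrete-comparison} forces $-\Delta_{\infty,\frakh}^\diamond w_h(\vertex)=-\Delta_{\infty,\frakh}^\diamond v_h(\vertex)$; combined with the inequalities $S_\frakh^+ w_h(\vertex)\le S_\frakh^+ v_h(\vertex)$ and $S_\frakh^- w_h(\vertex)\ge S_\frakh^- v_h(\vertex)$ from that lemma, this gives $S_\frakh^{\pm} w_h(\vertex)=S_\frakh^{\pm} v_h(\vertex)$, and the partition-of-unity estimate in its proof gives $\psi_h(x)\le m$ for all $x\in\mathcal{N}_{\frakh}(\vertex)$.

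The crucial observation is that the maximality of $W$ now forces $S_\frakh^+ w_h(\vertex)=0$. Let $x_0\in\mathcal{N}_{\frakh}(\vertex)$ realize $\max_{x\in\mathcal{N}_{\frakh}(\vertex)}w_h(x)$. Running the argument of \Cref{lemma:monotonicity} with $S_\frakh^+ w_h(\vertex)=S_\frakh^+ v_h(\vertex)$ shows $\psi_h(x_0)=m$, so expanding $m=\psi_h(x_0)=\sum_{\othervertex}\psi_h(\othervertex)\widehat{\varphi}_\othervertex(x_0)$ (with $\psi_h(\othervertex)\le m$) shows every node $\othervertex$ with $\widehat{\varphi}_\othervertex(x_0)>0$ lies in $E$; writing $w_h(x_0)$ as the same convex combination produces one such node $\othervertex\in\wt{\mathcal{N}}_\frakh(\vertex)$ with $w_h(\othervertex)\ge w_h(x_0)\ge w_h(\vertex)=W$, and $\othervertex\in E$ forces $w_h(\othervertex)\le W$, hence $\max_{\mathcal{N}_{\frakh}(\vertex)}w_h=w_h(\vertex)$, i.e.\ $S_\frakh^+ w_h(\vertex)=0$. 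Then
\[
  0 \;\le\; \frac{1}{\ve}\,S_\frakh^- w_h(\vertex) \;=\; -\Delta_{\infty,\frakh}^\diamond w_h(\vertex) \;\le\; 0,
\]
the last step being the standing sign hypothesis, so $S_\frakh^- w_h(\vertex)=0$ as well; thus $w_h\equiv W$ on $\mathcal{N}_{\frakh}(\vertex)$, and since $S_\frakh^{\pm} v_h(\vertex)=0$ also $v_h$ is constant there, whence $\psi_h\equiv m$ on $\mathcal{N}_{\frakh}(\vertex)$. A final partition-of-unity expansion at the points $\vertex+\ve\bv{v}_\theta$, $\bv{v}_\theta\in\St$ (where now $\psi_h=m$ and $w_h=W$, and the contributing nodes already lie in $E$ by the previous sentence), upgrades this to $\psi_h(\othervertex)=m$ and $w_h(\othervertex)=W$, i.e.\ $\othervertex\in E_W$, for every $\othervertex\in\wt{\mathcal{N}}_\frakh(\vertex)$, which is the claimed invariance.

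The step I expect to be the main obstacle is precisely the one highlighted above: passing from ``$S_\frakh^+ w_h(\vertex)=0$'' to genuine local constancy of $w_h$ on the patch $\mathcal{N}_{\frakh}(\vertex)$. The vanishing of $S_\frakh^+ w_h(\vertex)$ comes for free from the maximality of $W$ on $E$, but turning it into $w_h\equiv\mathrm{const}$ requires the one-sided sign $-\Delta_{\infty,\frakh}^\diamond w_h(\vertex)\le 0$ to also kill $S_\frakh^- w_h(\vertex)$ — this is exactly why the weaker hypothesis of \Cref{thm:simple-DCP} does not suffice here, and why Assumption~\textbf{M}.\ref{Ass.M.connect} (the discrete analogue of the connectedness that comparison for $-\Delta^\diamond_\infty u=0$ genuinely needs) must be invoked.
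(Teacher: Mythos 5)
Your proposal is correct and follows essentially the same path as the paper's proof: reduce by symmetry to the case $-\Delta^\diamond_{\infty,\frakh} w_h \le 0$, consider the set $E$ of maximizers of $w_h - v_h$ and within it the set $E_W$ of maximizers of $w_h$ (the paper's set $F$), deduce $S^{\pm}_{\frakh} w_h = S^{\pm}_{\frakh} v_h$ there, use maximality of $W$ to force $S^{+}_{\frakh} w_h(\vertex) = 0$, invoke the sign hypothesis to kill $S^{-}_{\frakh} w_h(\vertex)$, conclude local constancy and the invariance $\wt{\mathcal{N}}_\frakh(\vertex)\subset E_W$, and contradict Assumption~\textbf{M}.\ref{Ass.M.connect}. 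The only differences are cosmetic: you justify the symmetry reduction explicitly via oddness of the operator, prove the subclaim $S^{+}_{\frakh} w_h(\vertex)=0$ directly through the equality case of the monotonicity chain rather than by contradiction, and compress the paper's two-case argument for $\wt{\mathcal{N}}_\frakh(\vertex)\subset E$ into a single partition-of-unity expansion.
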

\begin{proof}
The proof proceeds in the same way as \cite[Proof of Theorem 2.7]{MR2846345}. By symmetry, we only need to prove the theorem under the assumption $-\Delta^\diamond_{\infty, \frakh} w_h(\vertex) \le 0$. We assume by contradiction that
\[
  m = \max_{\vertex \in \Nh} \left[ w_h(\vertex) - v_h(\vertex) \right] > \max_{\vertex \in \Nhb{\ve}} \left[ w_h(\vertex) - v_h(\vertex) \right].
\]
Consider the set of points
\[
  E = \{ \vertex \in \Nh: (w_h - v_h)(\vertex) = m \} \subset \Nhi{\ve}.
\]
Clearly, the set $E$ is nonempty because of the definition of $m$. For any $\vertex \in E$, we have
\begin{equation}\label{eq:discrete-comparison-proof0}
  w_h(\vertex) - v_h(\vertex) \ge w_h(\othervertex) - v_h(\othervertex), \quad \forall \othervertex \in \Nh. 
\end{equation}
By \Cref{lemma:monotonicity}, this implies that
\[
  S_{\frakh}^+ w_h(\vertex) \le S_{\frakh}^+ v_h(\vertex), \qquad
  S_{\frakh}^- w_h(\vertex) \ge S_{\frakh}^- v_h(\vertex).
\]
However, from \eqref{eq:assump-thm-discrete-comparison} and the definition of $-\Delta^\diamond_{\infty, \frakh}$, we have
\[
  S_{\frakh}^- w_h(\vertex) - S_{\frakh}^+ w_h(\vertex) \geq S_{\frakh}^- v_h(\vertex) - S_{\frakh}^+ v_h(\vertex).
\]
Therefore we must have
\begin{equation}\label{eq:discrete-comparison-proof1}
  S_\frakh^+ w_h(\vertex) = S_\frakh^+ v_h(\vertex), \; \qquad \;
  S_\frakh^- w_h(\vertex) = S_\frakh^- v_h(\vertex), \qquad \forall \vertex \in E.
\end{equation}
Furthermore, let $l = \max_{\vertex \in E} w_h(\vertex)$ and consider the set
\[
  F = \{ \vertex \in E: w_h(\vertex) = l\}.
\]

We claim that $S_{\frakh}^+ w_h(\vertex) = 0$ for any $\vertex \in F$. We argue by contradiction and suppose that there exists such a point $\vertex \in F$ with $S_{\frakh}^+ w_h(\vertex) > 0$. Then, either
there exists $\othervertex \in \wtNhxi$ with
\begin{equation}\label{eq:discrete-comparison-proof2}
  w_h(\othervertex) - w_h(\vertex) = \ve  S_{\frakh}^+ w_h(\vertex) > 0,
\end{equation}
or there is $\bv{v}_\theta \in \St$ such that
\begin{equation}\label{eq:discrete-comparison-proof3}
w_h(\vertex + \ve \bv{v}_\theta) - w_h(\vertex) = \sum_{\othervertex \in \wtNhxi \cap T} \widehat{\varphi}_{\othervertex} (\vertex + \ve \bv{v}_\theta ) \l( w_h(\othervertex) - w_h(\vertex) \r) = \ve  S_{\frakh}^+ w_h(\vertex) > 0,
\end{equation}
where $T$ is a simplex containing $\vertex + \ve \bv{v}_\theta$. If \eqref{eq:discrete-comparison-proof2} holds, then $w_h(\othervertex) > w_h(\vertex) = l$ and thus $\othervertex \notin E$. This implies that
\[
w_h(\vertex) - v_h(\vertex) > w_h(\othervertex) - v_h(\othervertex),
\]
which leads to
\begin{equation}\label{eq:discrete-comparison-proof-contradict1}
\ve S_{\frakh}^+ w_h(\vertex) = w_h(\othervertex) - w_h(\vertex) < v_h(\othervertex) - v_h(\vertex) \le 	\ve S_{\frakh}^+ v_h(\vertex),
\end{equation}
but this contradicts \eqref{eq:discrete-comparison-proof1}. If \eqref{eq:discrete-comparison-proof3} holds, then choose $\othervertex  \in T \cap \wtNhxi$ with
\[
  \widehat{\varphi}_\othervertex (\vertex + \ve \bv{v}_\theta) \l( w_h(\othervertex) - w_h(\vertex) \r) > 0,
\]
i.e.,
\[
\widehat{\varphi}_\othervertex (\vertex + \ve \bv{v}_\theta)  > 0, \quad w_h(\othervertex) > w_h(\vertex) = l.
\]
So we, once again, deduce that $\othervertex \notin E$ and
\[
  w_h(\vertex) - v_h(\vertex) > w_h(\othervertex) - v_h(\othervertex).
\] 
The previous inequality, combined with \eqref{eq:discrete-comparison-proof0}, gives
\begin{equation}\label{eq:discrete-comparison-proof-contradict2}
\begin{aligned}
  \ve S_{\frakh}^+ w_h(\vertex) &= \sum_{\othervertex \in T \cap \wtNhxi} \widehat{\varphi}_\othervertex (\vertex + \ve \bv{v}_\theta) \l( w_h(\othervertex) - w_h(\vertex) \r) \\
&< \sum_{\othervertex \in T \cap \wtNhxi} \widehat{\varphi}_\othervertex (\vertex + \ve \bv{v}_\theta)  \l( v_h(\othervertex) - v_h(\vertex) \r) \le \ve S_{\frakh}^+ v_h(\vertex),
\end{aligned}
\end{equation}
which contradicts \eqref{eq:discrete-comparison-proof1}.
This proves the claim that $S_{\frakh}^+ w_h(\vertex) = 0$ for any $\vertex \in F$.

Thanks to the assumption that $-\Delta^\diamond_{\infty, \frakh} w_h(\vertex) \le 0$, we have
\[
\ve S_{\frakh}^- w_h(\vertex) \le \ve S_{\frakh}^+ w_h(\vertex).
\]
This leads to
\[
\ve S_{\frakh}^- w_h(\vertex) \le \ve S_{\frakh}^+ w_h(\vertex) = 0, \quad \forall \vertex \in F.
\]
So for any $\vertex \in F$, we have $\ve S_{\frakh}^- w_h(\vertex) = \ve S_{\frakh}^+ w_h(\vertex) = 0$. This implies that
\begin{equation}\label{eq:discrete-comparison-proof4}
  w_h(\vertex + \ve \bv{v}_\theta) = w_h(\vertex), \quad \forall \bv{v}_\theta \in \St.
\end{equation}
We further claim that $\wt{\mathcal{N}}_\frakh(\vertex) \subset E$. In fact, if there exists some $\othervertex \in \wt{\mathcal{N}}_\frakh(\vertex) \setminus E$, we argue in the same way as we did in \eqref{eq:discrete-comparison-proof-contradict1} or \eqref{eq:discrete-comparison-proof-contradict2} to get a contradiction. From $\wt{\mathcal{N}}_\frakh(\vertex) \subset E$, we see that $w_h(\othervertex) \le w_h(\vertex) = l$ for any $\othervertex \in \wt{\mathcal{N}}_\frakh(\vertex)$. Combine this with \eqref{eq:discrete-comparison-proof4} to obtain that
\[
  w_h(\othervertex) = w_h(\vertex) = l, \quad \forall \othervertex \in \wt{\mathcal{N}}_\frakh(\vertex)
\]
and thus $\wt{\mathcal{N}}_\frakh(\vertex) \subset F \subset \Nhi{\ve}$. To summarize, we have shown that if $\vertex \in F$, then we necessarily must have that $\wtNhxi \subset F$.

To conclude we observe that, by Assumption~\textbf{M}.\ref{Ass.M.connect}, we can find $\vertex \in F$ and $\othervertex \in \Nh \setminus F$, for which
\[
\othervertex \in \wt{\mathcal{N}}_\frakh(\vertex),
\]
which is a contradiction.
\end{proof}

\subsection{Existence, uniqueness, and stability}\label{sub:ExistenceUniqueness}

We are now in position to study the existence, uniqueness, and stability of $u_\frakh \in \Vh$, the solution of \eqref{eq:inf-laplace-discrete}. We begin with an \emph{a priori}, that is stability, estimate.

\begin{Lemma}[stability]\label{lemma:stability}
Assume \textbf{RHS}.\ref{Ass.RHS.cont}--\ref{Ass.RHS.f<>0}, and \textbf{BC}.\ref{Ass.BC.cont}--\ref{Ass.BC.ext}. Moreover, if \textbf{RHS}.\ref{Ass.RHS.zero} suppose, in addition, that for all $h>0$, the mesh $\Th$ is such that \textbf{M}.\ref{Ass.M.connect} holds. If $u_\frakh \in \Vh$ solves \eqref{eq:inf-laplace-discrete}, then we have
\[
  \Vert \uve \Vert_{L^{\infty}(\Oh)} \le C \Vert f \Vert_{L^{\infty}(\Oh)} + \max_{\vertex \in \Nhb{\ve}} |\wt{g}_\ve(\vertex)|,
\]
where the constant $C$ depends only on $\Omega$, but is independent of the parameters $\frakh=(h,\ve,\theta)$ that define our scheme.
\end{Lemma}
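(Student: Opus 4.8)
The plan is to sandwich $u_\frakh$ between discrete super- and subsolutions built from a paraboloid, and then invoke the comparison principles of \Cref{sub:comparison}. I shall detail the upper bound; the lower one follows by applying it to $-u_\frakh$, which solves \eqref{eq:inf-laplace-discrete} with data $-f$, $-\wt{g}_\ve$, and these satisfy the same hypotheses. Fix $x_0 \in \Oc$, set $R = \diam(\Omega)$ (so $\Omega \subset \Bc_R(x_0)$), let $M = 2\|f\|_{L^\infty(\Oh)}$, and define the concave quadratic
\[
  w(x) = \max_{\vertex\in\Nhb{\ve}}|\wt{g}_\ve(\vertex)| + \frac M2\bigl(R^2 - |x - x_0|^2\bigr), \qquad x \in \mRd .
\]
I take $\overline{v}_\frakh = \interp w \in \Vh$ as barrier. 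At every boundary node $|\vertex - x_0| \le R$, hence $\overline{v}_\frakh(\vertex) = w(\vertex) \ge \max_{\Nhb{\ve}}|\wt{g}_\ve| \ge \wt{g}_\ve(\vertex)$; moreover $w \le \max_{\Nhb{\ve}}|\wt{g}_\ve| + \tfrac12 MR^2 = \max_{\Nhb{\ve}}|\wt{g}_\ve| + \|f\|_{L^\infty(\Oh)}(\diam\Omega)^2$ on $\Oc$.

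The heart of the matter is to show that $\overline{v}_\frakh$ is a discrete supersolution in the sense of \Cref{def:Subsolution}. Writing
\[
  -\Delta^\diamond_{\infty, \frakh}\overline{v}_\frakh(\vertex) = \frac1{\ve^2}\Bigl(2\overline{v}_\frakh(\vertex) - \max_{x\in\mathcal N_\frakh(\vertex)}\overline{v}_\frakh(x) - \min_{x\in\mathcal N_\frakh(\vertex)}\overline{v}_\frakh(x)\Bigr),
\]
I estimate the bracket using two elementary facts. First, since $|\bv{v}_\theta| = 1$, the exact paraboloid satisfies $\bigl(w(\vertex + \ve\bv{v}_\theta) - w(\vertex)\bigr) + \bigl(w(\vertex - \ve\bv{v}_\theta) - w(\vertex)\bigr) = -M\ve^2$ for every $\bv{v}_\theta \in \St$; together with the symmetry of $\St$ this yields, after distinguishing whether $\max_{\mathcal N_\frakh(\vertex)}w$ equals $w(\vertex)$ or is attained at some $\vertex + \ve\bv{v}_\theta$ (using the $\pm$ pairing in the latter case),
\[
  2w(\vertex) - \max_{x\in\mathcal N_\frakh(\vertex)}w(x) - \min_{x\in\mathcal N_\frakh(\vertex)}w(x) \ge \tfrac12 M\ve^2 .
\]
Second, because $w$ is concave, $\interp w(\vertex) = w(\vertex)$ and $\interp w(\vertex + \ve\bv{v}_\theta) \le w(\vertex + \ve\bv{v}_\theta)$ --- here one uses that $\vertex \in \Nhi{\ve}$ forces $\vertex + \ve\bv{v}_\theta \in \Omega^{(\ve)} \subset \Oh$, so this point lies in a simplex of $\Th$ and the interpolant there is a convex combination of nodal values --- so passing from $w$ to $\overline{v}_\frakh = \interp w$ lowers both the maximum and the minimum over $\mathcal N_\frakh(\vertex)$ and can only increase the bracket. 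Therefore $-\Delta^\diamond_{\infty, \frakh}\overline{v}_\frakh(\vertex) \ge \tfrac12 M = \|f\|_{L^\infty(\Oh)} \ge f(\vertex)$ for every $\vertex \in \Nhi{\ve}$.

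It remains to compare $u_\frakh$ and $\overline{v}_\frakh$. We have $-\Delta^\diamond_{\infty, \frakh}u_\frakh(\vertex) = f(\vertex) \le -\Delta^\diamond_{\infty, \frakh}\overline{v}_\frakh(\vertex)$ on $\Nhi{\ve}$, so \eqref{eq:assump-thm-discrete-comparison2} holds. If $\sup_\Omega f < 0$, then $-\Delta^\diamond_{\infty, \frakh}u_\frakh(\vertex) = f(\vertex) < 0$; if $\inf_\Omega f > 0$, then $-\Delta^\diamond_{\infty, \frakh}\overline{v}_\frakh(\vertex) \ge \|f\|_{L^\infty(\Oh)} > 0$; in either case \Cref{thm:simple-DCP} applies. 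If $f \equiv 0$, then $M = 0$, $\overline{v}_\frakh$ is constant, $-\Delta^\diamond_{\infty, \frakh}u_\frakh(\vertex) = 0$, and \Cref{thm:DCP} applies --- this is where Assumption~\textbf{M}.\ref{Ass.M.connect} enters. In every case $\max_{\Nh}(u_\frakh - \overline{v}_\frakh) = \max_{\Nhb{\ve}}(u_\frakh - \overline{v}_\frakh) \le 0$ (the last inequality since $u_\frakh = \wt{g}_\ve \le \max_{\Nhb{\ve}}|\wt{g}_\ve| \le \overline{v}_\frakh$ on $\Nhb{\ve}$), so $u_\frakh(\vertex) \le \overline{v}_\frakh(\vertex) \le \max_{\Nhb{\ve}}|\wt{g}_\ve| + \|f\|_{L^\infty(\Oh)}(\diam\Omega)^2$ at every node; since $u_\frakh$ is piecewise linear, this bounds $\sup_{\Oh}u_\frakh$. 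Applying the same bound to $-u_\frakh$ gives the matching lower bound, and the claim follows with $C = (\diam\Omega)^2$.

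The step I expect to be most delicate is the discrete supersolution estimate of the second paragraph: one must extract a bound on $2\overline{v}_\frakh(\vertex) - \max - \min$ that is genuinely of order $M\ve^2$, whereas a crude estimate of $\max_{\mathcal N_\frakh(\vertex)}\overline{v}_\frakh - \overline{v}_\frakh(\vertex)$ loses a full power of $\ve$ (being only $O(M\ve\,\diam\Omega)$). The required cancellation is obtained by pairing $\bv{v}_\theta$ with $-\bv{v}_\theta$ through the exact second-difference identity, which is exactly why the symmetry of $\St$ and the choice of an honest quadratic --- so that the coefficient of $\ve^2$ is $|\bv{v}_\theta|^2 = 1$ --- are both essential; the interpolation error, happily, has the favorable sign and is harmless.
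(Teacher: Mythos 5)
Your proof is correct and follows essentially the same route as the paper: a concave quadratic discrete supersolution, the observation that concavity makes the interpolation error act in the favorable direction, and then the comparison principles (\Cref{thm:simple-DCP} or \Cref{thm:DCP} according to how \textbf{RHS}.\ref{Ass.RHS.f<>0} holds). The only, harmless, differences are that the paper tilts the paraboloid by a large linear term $\bv{p}$ so that the stencil maximum is forced onto the sphere, whereas you treat the center-maximum case directly via the $\pm\bv{v}_\theta$ pairing at the cost of a factor $2$ absorbed into $M$, and you obtain the lower bound from the symmetry $u_\frakh \mapsto -u_\frakh$ instead of an explicit discrete subsolution.
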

\begin{proof}	
Without loss of generality we may assume that $0 \in \Omega$. Let us define the barrier function $\varphi(x) = \bv{p}^\intercal x - \frac12 |x|^2 + A$, where $\bv{p}\in \mRd$ and $A \in \mR$ are to be specified. We choose the vector $\bv{p}$ to guarantee that, for any $\vertex \in \Nhi{\ve}$,
\begin{equation}\label{eq:lemma-stab-proof}
  \max_{x \in \mathcal{N}_\frakh(\vertex) } \varphi(x)
  = \max\left\{  \max_{\bv{v}_\theta \in \St} \varphi(\vertex  + \ve \bv{v}_\theta ), \varphi(\vertex) \right\}
  =  \max_{\bv{v}_\theta \in \St} \varphi(\vertex  + \ve \bv{v}_\theta ). 
\end{equation}
To achieve this, it suffices to have
\[
  \max_{\bv{v}_\theta \in \St} \varphi(\vertex  + \ve \bv{v}_\theta ) > \varphi(\vertex).
\]	
Since, for $x \in \mathcal{N}_\frakh(\vertex)$, 
\[
\begin{aligned}
  \varphi(x) & = \bv{p}^\intercal (x - \vertex) + \bv{p}^\intercal \vertex - \frac12 \l( |x-\vertex|^2 + 2(x - \vertex)^\intercal \vertex + |\vertex|^2 \r) + A \\
  & = (\bv{p} - \vertex)^\intercal (x - \vertex) - \frac12 |x - \vertex|^2  + \l( \bv{p}^\intercal \vertex - \frac12 |\vertex|^2 + A\r),
\end{aligned}
\]
we then have
\[
  \varphi(\vertex+ \ve \bv{v}_\theta ) - \varphi(\vertex ) = \ve(\bv{p} - \vertex)^\intercal \bv{v}_\theta - \frac12 \ve^2.
\]
By the definition of $\St$, there is $\bar{\bv{v}}_\theta \in \St$ such that
\[
  \l|\bar{\bv{v}}_\theta - \frac{\bv{p} - \vertex }{|\bv{p}-\vertex |} \r| \le \theta \le 1.
\]
With this choice we then have
\[
\ve(\bv{p} - \vertex)^\intercal \bar{\bv{v}}_\theta - \frac12 \ve^2
\ge \frac{\ve}{2} |\bv{p}-\vertex| - \frac12 \ve^2.
\]
Consequently, if we are able to choose $\bv{p}$ such that $|\bv{p} - \vertex| > \ve$, for all $\vertex \in \Nhi{\ve}$, we will have shown that 
\[
  \max_{\bv{v}_\theta \in \St} \varphi(\vertex + \ve \bv{v}_\theta) > \varphi(\vertex)
\]
To this aim, one could simply choose $\bv{p} \in \mRd$ such that for any $x \in \Omega$
\[
|\bv{p}| \ge 2\diam(\Omega) \ge |x| + \ve,
\]
where we, trivially, assumed that $\ve \le \diam(\Omega)$. Once $\bv{p}$ is chosen, we let $A$ be sufficiently big so that $\varphi \geq 0$ on $\Oc$.

Now, owing to \eqref{eq:lemma-stab-proof}, for any $\vertex \in \Nhi{\ve}$ there exists $\bv{v}_{\theta} \in \St$ such that
\[
  \varphi(\vertex  + \ve \bv{v}_{\theta}) = \max_{x \in \mathcal{N}_{\frakh}(\vertex) } \varphi(x).
\]
Since we assumed that $\St$ is symmetric, then  $-\bv{v}_{\theta} \in \St$, and we have
\[
  \varphi(\vertex - \ve \bv{v}_{\theta}) \ge \min_{x \in \mathcal{N}_{\frakh}(\vertex) } \varphi(x),
\]
and thus
\[
  \begin{aligned}
    &2\varphi(\vertex) - \max_{x \in \mathcal{N}_{\frakh}(\vertex) } \varphi(x) - \min_{x \in \mathcal{N}_{\frakh}(\vertex) } \varphi(x)
    = 2\varphi(\vertex) - \varphi(\vertex + \ve \bv{v}_{\theta}) - \min_{x \in \mathcal{N}_{\frakh}(\vertex) } \varphi(x) \\
    &\ge 2\varphi(\vertex) - \varphi(\vertex + \ve \bv{v}_{\theta}) - \varphi(\vertex - \ve \bv{v}_{\theta})
    = \ve^2.
  \end{aligned}
\]
Combine this with the concavity of $\varphi$ to arrive at
\[
\begin{aligned}
-\Delta_{\infty, \frakh}^\diamond \varphi (\vertex) &= \frac{1}{\ve^2} \l( 2\varphi(\vertex) - \max_{x \in \mathcal{N}_{\frakh}(\vertex) } \interp\varphi(x) - \min_{x \in \mathcal{N}_{\frakh}(\vertex) } \interp \varphi(x) \r) \\
& \ge \frac{1}{\ve^2} \l( 2\varphi(\vertex ) - \max_{x \in \mathcal{N}_{\frakh}(\vertex) } \varphi(x) - \min_{x \in \mathcal{N}_{\frakh}(\vertex) }  \varphi(x) \r)
\ge \frac{1}{\ve^2} \ve^2 = 1.
\end{aligned}
\]
This implies that the function
\[
  w_h = \Vert f \Vert_{L^{\infty}(\Oh)} \interp \varphi + \max_{\vertex \in \Nhb{\ve}} \wt{g}_\ve(\vertex) \in \Vh,
\]
satisfies
\[
  -\Delta_{\infty, \frakh}^\diamond w_h (\vertex) \geq f(\vertex), \quad \forall \vertex \in \Nhi{\ve}, \qquad w_h(\vertex) \geq \wt{g}_\ve(\vertex), \quad \forall \vertex \in \Nhb{\ve},
\]
i.e., it is a discrete supersolution. If \textbf{RHS}.\ref{Ass.RHS.sign} holds then we invoke \Cref{thm:simple-DCP}, otherwise, i.e., when \textbf{RHS}.\ref{Ass.RHS.zero} holds, by \Cref{thm:DCP}, we have
\[
  \max_{\vertex \in \Nh} \l( \uve(\vertex) - w_h(\vertex) \r) = \max_{\vertex  \in \Nhb{\ve}} \l( \uve(\vertex) -w_h(\vertex) \r)  = 
\max_{\vertex  \in \Nhb{\ve}} \l( \wt{g}_\ve(\vertex) -w_h(\vertex) \r) \le 0.
\]
Therefore,
\begin{align*}
\max_{\vertex \in \Nh} \uve(\vertex) &\le \Vert f \Vert_{L^{\infty}(\Oh)} \max_{\vertex \in \Nh} \varphi(\vertex)
+  \max_{\vertex \in \Nhb{\ve}} \wt{g}_\ve(\vertex) \\
&\le C  \Vert f \Vert_{L^{\infty}(\Oh)} + 
\max_{\vertex \in \Nhb{\ve}} \wt{g}_\ve(\vertex) ,
\end{align*}
where the constant $C$ depends only on the domain $\Omega$. This gives an upper bound for $\uve$.
Similarly, we can obtain a lower bound by using $-\Vert f \Vert_{L^{\infty}(\Oh)} \interp \varphi + \min_{\vertex \in \Nhb{\ve}} \wt{g}_\ve(\vertex)$ as a discrete subsolution. Combining the lower and upper bounds together finishes the proof.
\end{proof}

Let us now show that \eqref{eq:inf-laplace-discrete} has a unique solution.

\begin{Lemma}[existence, uniqueness]\label{lemma:ExistUnique}
Assume \textbf{RHS}.\ref{Ass.RHS.cont}--\ref{Ass.RHS.f<>0}, and \textbf{BC}.\ref{Ass.BC.cont}--\ref{Ass.BC.ext}. Moreover, if \textbf{RHS}.\ref{Ass.RHS.zero} holds, suppose that for all $h>0$ the mesh $\Th$ satisfies \textbf{M}.\ref{Ass.M.connect}. For any choice of parameters $\frakh=(h,\ve,\theta)$, there exists a unique $\uve \in \Vh$ that solves \eqref{eq:inf-laplace-discrete}.
\end{Lemma}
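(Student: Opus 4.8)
The plan is to read off uniqueness directly from the comparison principles of \Cref{sub:comparison}, and to obtain existence by Perron's method, whose two ingredients --- a comparison principle and an ordered pair of sub/supersolutions --- are already at hand. \textbf{Uniqueness.} If $u_\frakh,\hat u_\frakh\in\Vh$ both solve \eqref{eq:inf-laplace-discrete}, then $-\Delta^\diamond_{\infty,\frakh}u_\frakh(\vertex)=f(\vertex)=-\Delta^\diamond_{\infty,\frakh}\hat u_\frakh(\vertex)$ for every $\vertex\in\Nhi{\ve}$, so the hypothesis $-\Delta^\diamond_{\infty,\frakh}u_\frakh\le-\Delta^\diamond_{\infty,\frakh}\hat u_\frakh$ holds with equality; under \textbf{RHS}.\ref{Ass.RHS.sign} the constant sign of $f$ makes alternative~1 or~2 of \Cref{thm:simple-DCP} hold at every interior node, while under \textbf{RHS}.\ref{Ass.RHS.zero} one has $-\Delta^\diamond_{\infty,\frakh}u_\frakh(\vertex)=0\le0$ and \textbf{M}.\ref{Ass.M.connect} is assumed, so \Cref{thm:DCP} applies. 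Since $u_\frakh=\hat u_\frakh=\wt g_\ve$ on $\Nhb{\ve}$, comparison yields $\max_{\Nh}(u_\frakh-\hat u_\frakh)=\max_{\Nhb{\ve}}(u_\frakh-\hat u_\frakh)=0$, hence $u_\frakh\le\hat u_\frakh$; swapping the two functions gives $u_\frakh=\hat u_\frakh$.

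\textbf{Existence.} I would take as starting point the barriers built in the proof of \Cref{lemma:stability}: a discrete supersolution $\overline w_h\in\Vh$ and a discrete subsolution $\underline w_h\in\Vh$ of \eqref{eq:inf-laplace-discrete}. Let $\mathcal{A}\subset\Vh$ be the set of \emph{all} discrete subsolutions; it is nonempty ($\underline w_h\in\mathcal{A}$), and repeating the comparison step above with $\overline w_h$ in the supersolution role shows $v_h\le\overline w_h$ on $\Nh$ for every $v_h\in\mathcal{A}$. Hence $u_\frakh(\vertex):=\sup\{v_h(\vertex):v_h\in\mathcal{A}\}$, $\vertex\in\Nh$, is a well-defined element of $\Vh$ with $\underline w_h\le u_\frakh\le\overline w_h$, and it suffices to show that $u_\frakh$ is at the same time a discrete sub- and a discrete supersolution. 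That $u_\frakh$ is a subsolution relies on two facts: (i) \Cref{lemma:monotonicity}, applied to $p_h$ and $\max(p_h,q_h)$ --- at a node $\vertex$ where $\max(p_h,q_h)(\vertex)=p_h(\vertex)$ the function $p_h-\max(p_h,q_h)$ vanishes there and is $\le0$ elsewhere --- shows the nodal maximum of two discrete subsolutions is a discrete subsolution, hence so is any finite one; (ii) since $\Vh$ is finite dimensional and $w_h\mapsto-\Delta^\diamond_{\infty,\frakh}w_h(\vertex)=\ve^{-2}\bigl(2w_h(\vertex)-\max_{\mathcal{N}_\frakh(\vertex)}w_h-\min_{\mathcal{N}_\frakh(\vertex)}w_h\bigr)$ is continuous on $\Vh$ (a linear functional minus the convex functional $\max_{\mathcal{N}_\frakh(\vertex)}$ minus the concave functional $\min_{\mathcal{N}_\frakh(\vertex)}$), an increasing sequence of discrete subsolutions bounded above by $\overline w_h$ converges in $\Vh$ to a discrete subsolution. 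A diagonal construction over the finitely many nodes displays $u_\frakh$ as exactly such a limit, so $u_\frakh\in\mathcal{A}$.

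It then remains to rule out failure of the supersolution property, which I would do by contradiction. If it fails, either $u_\frakh(\vertex)<\wt g_\ve(\vertex)$ at some $\vertex\in\Nhb{\ve}$, or $-\Delta^\diamond_{\infty,\frakh}u_\frakh(\vertex)<f(\vertex)$ at some $\vertex\in\Nhi{\ve}$; in each case let $\tilde u_\frakh\in\Vh$ be obtained from $u_\frakh$ by raising its value at that single node $\vertex$ by some $\delta>0$ (with $\delta=\wt g_\ve(\vertex)-u_\frakh(\vertex)$ in the boundary case). Raising one nodal value raises $\tilde u_\frakh$ pointwise on $\Oh$, so $u_\frakh-\tilde u_\frakh\le0$ everywhere and vanishes at every node $\othervertex\ne\vertex$; by \Cref{lemma:monotonicity} then $-\Delta^\diamond_{\infty,\frakh}\tilde u_\frakh(\othervertex)\le-\Delta^\diamond_{\infty,\frakh}u_\frakh(\othervertex)\le f(\othervertex)$ for every interior node $\othervertex\ne\vertex$, and the boundary inequalities for $\tilde u_\frakh$ are preserved. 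In the boundary case this already contradicts the maximality of $u_\frakh$, since $\tilde u_\frakh\in\mathcal{A}$ and $\tilde u_\frakh(\vertex)>u_\frakh(\vertex)$. In the interior case one still has to control $-\Delta^\diamond_{\infty,\frakh}\tilde u_\frakh(\vertex)$ at the bumped node: since the perturbation raises $2w_h(\vertex)$ by $2\delta$, raises $\max_{\mathcal{N}_\frakh(\vertex)}$ by at most $\delta$, and never lowers $\min_{\mathcal{N}_\frakh(\vertex)}$, one gets $-\Delta^\diamond_{\infty,\frakh}\tilde u_\frakh(\vertex)\le-\Delta^\diamond_{\infty,\frakh}u_\frakh(\vertex)+2\delta\ve^{-2}\le f(\vertex)$ once $\delta$ is taken small relative to the strictly positive quantity $f(\vertex)+\Delta^\diamond_{\infty,\frakh}u_\frakh(\vertex)$. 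Again $\tilde u_\frakh\in\mathcal{A}$ with $\tilde u_\frakh(\vertex)>u_\frakh(\vertex)$, the desired contradiction. Thus $u_\frakh$ is a discrete supersolution and therefore solves \eqref{eq:inf-laplace-discrete}.

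I expect the main obstacle to be precisely this interior bump: at every node other than the perturbed one the subsolution inequality survives for free by \Cref{lemma:monotonicity}, but at the perturbed node it does not (there \Cref{lemma:monotonicity} only bounds $-\Delta^\diamond_{\infty,\frakh}\tilde u_\frakh(\vertex)$ from below), so one must carefully trade the quantitative slack $f(\vertex)+\Delta^\diamond_{\infty,\frakh}u_\frakh(\vertex)>0$ against the $\mathcal{O}(\delta\ve^{-2})$ error introduced by the perturbation. Everything else --- the diagonal/limit argument yielding $u_\frakh\in\mathcal{A}$, and the containment of all discrete subsolutions below $\overline w_h$ --- is a routine consequence of \Cref{sub:comparison} and the finite dimensionality of $\Vh$.
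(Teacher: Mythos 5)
Your proof is correct and follows essentially the same route as the paper: uniqueness is read off from \Cref{thm:simple-DCP} or \Cref{thm:DCP} exactly as in the paper, and existence is the same discrete Perron argument (supremum of the subsolution set, then a single-node bump $u_\frakh+\delta\widehat{\varphi}_\vertex$ to contradict maximality if the equation or boundary condition fails). The extra details you supply --- closure of subsolutions under nodal maxima and increasing limits, and the quantitative bound $-\Delta^\diamond_{\infty,\frakh}\tilde u_\frakh(\vertex)\le-\Delta^\diamond_{\infty,\frakh}u_\frakh(\vertex)+2\delta\ve^{-2}$ at the bumped node --- are just explicit versions of steps the paper treats as standard.
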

\begin{proof}
Uniqueness follows, when \textbf{RHS}.\ref{Ass.RHS.sign} holds, immediately from \Cref{thm:simple-DCP}. Similarly, when \textbf{RHS}.\ref{Ass.RHS.zero} is valid, we invoke \Cref{thm:DCP} to get uniqueness. To show existence we employ a, somewhat standard, discrete version of Perron's method \cite[Section 6.1]{MR2777537}. Consider the set of discrete subsolutions
\[
  W_h = \left\{ w_h \in \Vh : \; -\Delta^\diamond_{\infty, \frakh} w_h(\vertex) \le f(\vertex) \; \forall \vertex \in \Nhi{\ve}; \; w_h(\vertex) \leq \wt{g}_\ve(\vertex) \; \forall \vertex \in \Nhb{\ve} \right\}.
\]
From the proof of \Cref{lemma:stability}, we see that the set $W_h$ is nonempty and bounded from above.
Define $v_h \in \Vh$ as
\[
  v_h(\vertex) = \sup_{w_h \in W_h} w_h(\vertex), \qquad \forall \vertex \in \Nh.
\]
A standard argument based on discrete monotonicity implies that $v_h$ is also a discrete subsolution, i.e., $v_h \in W_h$. In addition, it satisfies
\begin{equation}\label{eq:existence-proof}
-\Delta^\diamond_{\infty, \frakh} v_h(\vertex) = f(\vertex), \quad \forall \vertex \in \Nhi{\ve}, \qquad v_h(\vertex) = \wt{g}_\ve(\vertex), \quad \forall \vertex \in \Nhb{\ve},
\end{equation}
i.e., $v_h$ is a discrete solution. To see this assume, for instance, that the equation is not satisfied at some $\vertex \in \Nhi{\ve}$. Then, we can consider the function
\[
  \wt{v}_h = v_h + \beta \widehat{\varphi}_{\vertex},
\]
with some positive $\beta > 0$. For $\beta > 0$ small enough we have that $\wt{v}_h \in W_h$ since, by monotonicity,
\[
  -\Delta^\diamond_{\infty, \frakh} \wt{v}_h(\othervertex) \le -\Delta^\diamond_{\infty, \frakh} v_h(\othervertex) \le f(\othervertex),
\]
for any $\othervertex \neq \vertex$. This contradicts with our definition of $v_h$ since $\wt{v}_h(\vertex) > v_h(\vertex)$ and $\wt{v}_h \in W_h$. Similarly, if there is $\vertex \in \Nhb{\ve}$ where $v_h(\vertex) < \wt{g}_\ve(\vertex)$ we can again define
\[
  \wt{v}_h = v_h + \beta \widehat{\varphi}_{\vertex},
\]
which for $\beta>0$ but small enough is a subsolution. Once again we reach a contradiction, since $\wt{v}_h \in W_h$ and $\wt{v}_h(\vertex) > v_h(\vertex)$. Consequently, \eqref{eq:existence-proof} holds and $v_h$ is a discrete solution.
\end{proof}

\subsection{Interior consistency}\label{sub:Consistency}

Let us now study the interior consistency of scheme \eqref{eq:inf-laplace-discrete}. We must consider two cases, depending on whether the gradient of our function vanishes or not.

\begin{Lemma}[consistency I]\label{lemma:consistency}
Let $\vertex \in \Nhi{\ve}$ and $\varphi \in C^3(\overline{B}_{\ve}(\vertex))$ be such that $\De\varphi(\vertex) \neq \boldsymbol0$. There exists a constant $C > 0$ depending only on $\varphi$ such that
\[
  \l| -\Delta_{\infty}^\diamond \varphi (\vertex) + \Delta_{\infty, \frakh}^\diamond \varphi (\vertex) \r| \le
  C \l( \ve + \theta + (h/\ve)^2 \r).
\]
\end{Lemma}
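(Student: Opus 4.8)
The plan is to compare $-\Delta_{\infty,\frakh}^\diamond\varphi(\vertex)$ with $-\Delta_\infty^\diamond\varphi(\vertex)$ by chaining together three approximations, each contributing one of the error terms. First I would account for the replacement of $\varphi$ by its interpolant $\interp\varphi$: since $\varphi\in C^3$, a standard interpolation estimate gives $\|\varphi-\interp\varphi\|_{L^\infty(T)}\lesssim h_T^2\|\De^2\varphi\|_{L^\infty(T)}\lesssim h^2$ on any simplex $T$ meeting $\mathcal N_\frakh(\vertex)$. Because $S_\frakh^\pm$ acts on the values at the points of $\mathcal N_\frakh(\vertex)$ (through $\interp\varphi$) and each evaluation carries an $h^2$ error, dividing by $\ve^2$ (note $-\Delta_{\infty,\frakh}^\diamond$ has the $1/\ve^2$ scaling from $\frac1\ve(S^+-S^-)$) yields an error of size $\mathcal O((h/\ve)^2)$. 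So it suffices to estimate $\bigl|-\Delta_\infty^\diamond\varphi(\vertex)+\frac1{\ve^2}\bigl(2\varphi(\vertex)-\max_{x\in\mathcal N_\frakh(\vertex)}\varphi(x)-\min_{x\in\mathcal N_\frakh(\vertex)}\varphi(x)\bigr)\bigr|$ for the smooth function $\varphi$ itself.

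Next I would handle the discretization of the sphere. Since $\De\varphi(\vertex)\neq\boldsymbol0$, for $\ve$ small (depending on $\|\De\varphi\|$ and $\|\De^2\varphi\|$ via the argument already given in the text) the maximum of $\varphi$ over $\Bc_\ve(\vertex)$ is attained on $\partial B_\ve(\vertex)$, and in fact near the direction $\bv n=\De\varphi(\vertex)/|\De\varphi(\vertex)|$. Writing the Taylor expansion
\[
  \varphi(\vertex+\ve\bv v)=\varphi(\vertex)+\ve\,\De\varphi(\vertex)^\intercal\bv v+\tfrac{\ve^2}2\,\bv v^\intercal\De^2\varphi(\vertex)\bv v+\mathcal O(\ve^3),
\]
one sees $\max_{|\bv v|=1}\varphi(\vertex+\ve\bv v)=\varphi(\vertex)+\ve|\De\varphi(\vertex)|+\tfrac{\ve^2}2\,\bv n^\intercal\De^2\varphi(\vertex)\bv n+\mathcal O(\ve^3)$, and the maximizing direction is within $\mathcal O(\ve)$ of $\bv n$. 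Replacing the continuous sphere by $\St$ changes the maximizing direction by at most $\theta$; since the first-order term $\ve\,\De\varphi(\vertex)^\intercal\bv v$ has gradient in $\bv v$ of size $\ve|\De\varphi(\vertex)|$, a direction perturbation of size $\theta$ costs $\mathcal O(\ve\theta)$ in the value of $\max_{\bv v_\theta\in\St}\varphi(\vertex+\ve\bv v_\theta)$, hence $\mathcal O(\theta/\ve)$ after dividing by $\ve^2$ — wait, more carefully: one should compare with the continuous max, and because the continuous maximizer is $\mathcal O(\ve)$-close to $\bv n$ and $\St$ is $\theta$-dense, the discrete max and continuous max differ by $\mathcal O(\ve\theta+\ve^2\theta)$; dividing by $\ve^2$ gives $\mathcal O(\theta/\ve + \theta)=\mathcal O(\theta)$ after using $\ve\le\diam(\Omega)$ and $\theta\le 1$, so one absorbs this into $C\theta$. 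The same reasoning applies to the minimum, using the symmetry of $\St$.

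Finally, with the continuous extrema in hand, I would combine the two Taylor expansions: using $\bv n$ for the max-direction and $-\bv n$ (or the antipodal minimizer) for the min-direction, the first-order terms cancel and
\[
  2\varphi(\vertex)-\max_{\Bc_\ve(\vertex)}\varphi-\min_{\Bc_\ve(\vertex)}\varphi=-\ve^2\,\bv n^\intercal\De^2\varphi(\vertex)\bv n+\mathcal O(\ve^3)=-\ve^2\,\Delta_\infty^\diamond\varphi(\vertex)+\mathcal O(\ve^3),
\]
so dividing by $\ve^2$ produces $-\Delta_\infty^\diamond\varphi(\vertex)+\mathcal O(\ve)$. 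Adding the three contributions gives the claimed bound $C(\ve+\theta+(h/\ve)^2)$, with $C$ depending on $\|\varphi\|_{C^3}$ and $|\De\varphi(\vertex)|^{-1}$. The main obstacle — and the place needing genuine care rather than routine Taylor bookkeeping — is the sphere-discretization step: one must quantify precisely how far the continuous maximizer lies from $\bv n$ and how the $\theta$-density of $\St$ interacts with the $\ve$-scaling so that the net contribution is $\mathcal O(\theta)$ and not something like $\mathcal O(\theta/\ve)$; this hinges on the fact that, near its maximum, $\bv v\mapsto\varphi(\vertex+\ve\bv v)$ is flat to first order in the tangential directions once one is at the true maximizer, so a $\theta$-perturbation of direction only costs quadratically in the relevant sense. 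I would also need the lower bound $|\De\interp\varphi(\vertex)|\gtrsim|\De\varphi(\vertex)|$ to be irrelevant here since we never divide by the discrete gradient — the operator is defined through $S_\frakh^\pm$ directly — which is a simplification worth noting.
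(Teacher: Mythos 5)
There is a genuine gap in your sphere--discretization step, and it sits exactly where you yourself flag the difficulty. Your first estimate gives a loss of $\mathcal O(\ve\theta\,|\De\varphi(\vertex)|)$ in the value of the maximum, hence $\mathcal O(\theta/\ve)$ after dividing by $\ve^2$, and the assertion that $\mathcal O(\theta/\ve+\theta)=\mathcal O(\theta)$ ``after using $\ve\le\diam(\Omega)$ and $\theta\le1$'' is false: $\ve\le\diam(\Omega)$ bounds $1/\ve$ from \emph{below}, so $\theta/\ve$ can be arbitrarily larger than $\theta$. Your proposed repair via flatness at the true maximizer does not close the gap either. At the continuous maximizer $\vertex+\ve\bv{v}^*$ the gradient is \emph{normal} to the sphere, $\De\varphi(\vertex+\ve\bv{v}^*)=\lambda\bv{v}^*$ with $\lambda\approx|\De\varphi(\vertex)|$, so moving to the nearest $\bv{v}_\theta\in\St$ costs $\ve\lambda\,|\bv{v}^{*\intercal}(\bv{v}_\theta-\bv{v}^*)|+\mathcal O(\ve^2\theta^2)\lesssim \ve\theta^2|\De\varphi(\vertex)|+\ve^2\theta^2$; divided by $\ve^2$ this is $\mathcal O(\theta^2/\ve+\theta^2)$, which is still not $\mathcal O(\ve+\theta)$ unless $\theta\lesssim\ve$ --- an assumption the lemma does not make (and your argument never invokes). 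Comparing the discrete extrema with the continuous extrema separately therefore cannot yield the stated bound; the errors in the max and the min do not cancel exactly, so the $\theta^2/\ve$ term survives.

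The paper's proof avoids this comparison altogether. It introduces the discrete maximizing direction $\bv{w}$ (over $\St\cup\{\boldsymbol0\}$), and from $0\le\varphi(\vertex+\ve\bv{w})-\varphi(\vertex+\ve\bv{v}_\theta)$ together with $(\bv{v}-\bv{w})^\intercal\bv{v}\ge\tfrac12|\bv{v}-\bv{w}|^2$ (valid since $|\bv{w}|\le1=|\bv{v}|$) it derives the key estimate $|\bv{v}-\bv{w}|\lesssim \ve\,|\De\varphi(\vertex)|^{-1}+\theta$. It then exploits the symmetry of $\mathcal{N}_\frakh(\vertex)$ to compare the \emph{symmetric pairs} $\varphi(\vertex\pm\ve\bv{w})$ and $\varphi(\vertex\pm\ve\bv{v})$: in each pair the first-order Taylor terms cancel exactly, so the discrepancy is only $\mathcal O(\ve^2|\bv{w}-\bv{v}|)$, which after dividing by $\ve^2$ gives $\mathcal O(\ve+\theta)$; the discrete min is handled one-sidedly via $\min_{x\in\mathcal{N}_\frakh(\vertex)}\varphi(x)\le\varphi(\vertex-\ve\bv{w})$, and the lemma is proved as two one-sided inequalities. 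Your treatment of the interpolation error ($\mathcal O((h/\ve)^2)$) and of the purely continuous Taylor step ($\mathcal O(\ve)$, with constant depending on $|\De\varphi(\vertex)|^{-1}$) matches the paper, but to complete your argument you must replace the direct ``discrete max vs.\ continuous max'' comparison by the symmetric-pairing device (or add the hypothesis $\theta\lesssim\ve$, which would weaken the lemma).
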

\begin{proof}
The proof follows that of \cite[Lemma 4.2]{MR2846345}. It suffices to show
\begin{equation}\label{eq:proof-consist-one-direction}
-\Delta_{\infty}^\diamond \varphi (\vertex) \le -\Delta_{\infty, \frakh}^\diamond \varphi (\vertex) 
+ C \l( \ve + \theta + (h/\ve)^2 \r).
\end{equation}
The other direction can be proved in a similar way. Now, to prove \eqref{eq:proof-consist-one-direction}, let
\[
  \bv{v} =  \frac{\De\varphi(\vertex)}{|\De\varphi(\vertex)|}, \quad
  \bv{v}_{\theta} \in \argmin_{\bv{v}_{\theta} \in \St} |\bv{v} - \bv{v}_{\theta}|, \quad
  \bv{w} \in \argmax_{ \bv{v} \in \St \cup \{\boldsymbol0\} : \vertex + \ve \bv{v} \in \mathcal{N}_{\frakh}(\vertex)} \varphi(\vertex + \ve \bv{v}),
\]
then $|\bv{v}| = |\bv{v}_{\theta}| = 1$ and $|\bv{w}| \leq 1$. By definition of $\St$, we have $|\bv{v}_{\theta} - \bv{v}| \le \theta$ and, in addition,
\[
  0 \le (\bv{v} - \bv{v}_{\theta})^\intercal \bv{v} = \frac12 |\bv{v} - \bv{v}_{\theta}|^2 \le \frac12 \theta^2.
\]
From the choice of $\bv{w}, \bv{v}_{\theta}$, we apply Taylor expansion and obtain
\[
\begin{aligned}
  0 &\le \varphi(\vertex + \ve \bv{w}) -  \varphi(\vertex + \ve \bv{v}_{\theta}) \\
  &\le \ve  \De \varphi(\vertex)^\intercal(\bv{w} - \bv{v}_{\theta}) + C \ve^2 \Vert \De^2 \varphi \Vert_{L^{\infty}(B_{\ve}(\vertex);\mR^{d \times d})}|\bv{w} - \bv{v}_{\theta}| \\
  &= \ve |\De \varphi(\vertex)| (\bv{w} - \bv{v}_{\theta})^\intercal \bv{v} + C \ve^2 \Vert \De^2 \varphi \Vert_{L^{\infty}(B_{\ve}(\vertex);\mR^{d \times d})} |\bv{w} - \bv{v}_{\theta}| \\
  &\le \ve |\De \varphi(\vertex)| (\bv{w} - \bv{v})^\intercal \bv{v} + C \ve^2 \Vert \De^2 \varphi \Vert_{L^{\infty}(B_{\ve}(\vertex);\mR^{d \times d})} |\bv{w} - \bv{v}| \\
  & \quad + \ve |D\varphi(\vertex)| (\bv{v} - \bv{v}_{\theta})^\intercal \bv{v} + C \ve^2 \Vert \De^2 \varphi \Vert_{L^{\infty}(B_{\ve}(\vertex);\mR^{d \times d})}  |\bv{v}_{\theta} - \bv{v}| \\
  &\le \ve |\De\varphi(\vertex)| (\bv{w} - \bv{v})^\intercal \bv{v} + C \ve^2 \Vert \De^2 \varphi \Vert_{L^{\infty}(B_{\ve}(\vertex);\mR^{d \times d})} |\bv{w} - \bv{v}|
  + \ve \theta^2 |\De\varphi(\vertex)| \\ &+ C \ve^2 \theta \Vert \De^2 \varphi \Vert_{L^{\infty}(B_{\ve}(\vertex);\mR^{d \times d})}.
\end{aligned}
\]
Dividing by $\ve|\De\varphi(\vertex)|$ implies
\[
  (\bv{v} - \bv{w})^\intercal \bv{v} \le  |\De\varphi(\vertex)|^{-1} \l( C \ve|\bv{w} - \bv{v}| + \ve\theta \r) \Vert \De^2 \varphi \Vert_{L^{\infty}(B_{\ve}(\vertex);\mR^{d \times d})}
+ \theta^2.
\]
Combining with the fact that, since $|\bv{w}| \leq 1 = |\bv{v}|$,
\[
  (\bv{v} - \bv{w})^\intercal \bv{v}  = \frac12 |\bv{v} - \bv{w}|^2 + \frac12 \l( |\bv{v}|^2 - |\bv{w}|^2 \r) \ge \frac12 |\bv{v} - \bv{w}|^2
\]
we obtain
\[
\frac12 |\bv{v} - \bv{w}|^2 \le |\De\varphi(\vertex)|^{-1} \l( C \ve|\bv{w} - \bv{v}| + \ve\theta \r) \Vert \De^2 \varphi \Vert_{L^{\infty}(B_{\ve}(\vertex);\mR^{d \times d})}
+ \theta^2.
\]
which leads to
\[
  |\bv{v} - \bv{w}| \lesssim \ve |\De \varphi(\vertex)|^{-1} + \theta \l( \Vert \De^2 \varphi \Vert_{L^{\infty}(B_{\ve}(\vertex);\mR^{d \times d})} + 1\r).
\]
Therefore using Taylor expansion again we see that
\[
\begin{aligned}
&\varphi(\vertex + \ve \bv{w}) + \varphi(\vertex - \ve \bv{w}) - \l( \varphi(\vertex + \ve \bv{v}) + \varphi(\vertex - \ve \bv{v}) \r) \lesssim  \ve^2 |\bv{w} - \bv{v}| \Vert \De^2 \varphi \Vert_{L^{\infty}(\Omega;\mR^{d \times d})} \\
&\lesssim \ve^3 |\De \varphi(\vertex)|^{-1}\Vert D^2 \varphi \Vert_{L^{\infty}(\Omega;\mR^{d \times d})}  + \ve^2 \theta \l( \Vert \De^2 \varphi \Vert_{L^{\infty}(\Omega;\mR^{d \times d})}^2 + 1\r).
\end{aligned}
\]	
Combine this with the facts that
\[
\l| -\Delta_{\infty}^\diamond \varphi(\vertex) - \frac{1}{\ve^2} \l( 2\varphi(\vertex) - \varphi(\vertex + \ve \bv{v}) - \varphi(\vertex - \ve \bv{v}) \r) \r| \lesssim \ve \Vert \De^3 \varphi \Vert_{L^{\infty}(B_{\ve}(\vertex);\mR^{d \times d \times d})},
\]
and that, since $\mathcal{N}_{\frakh}(\vertex)$ is symmetric,
\begin{multline*}
2\varphi(\vertex) - \max_{x \in \mathcal{N}_{\frakh}(\vertex)} \varphi(x) - \min_{x \in \mathcal{N}_{\frakh}(\vertex)} \varphi(x) = 2 \varphi(\vertex) - \varphi(\vertex + \ve \bv{w}) - \min_{x \in \mathcal{N}_{\frakh}(\vertex)}  \varphi(x) \\
\ge 2 \varphi(\vertex) - \varphi(\vertex + \ve \bv{w}) - \varphi(\vertex - \ve \bv{w}),
\end{multline*}
we arrive at
\begin{equation}\label{eq:proof-consist}
\begin{aligned}
&-\Delta_{\infty}^\diamond \varphi(\vertex) - \frac{1}{\ve^2} \l( 2\varphi(\vertex) - \!\!\! \max_{x \in \mathcal{N}_{\frakh}(\vertex)} \!\!\! \varphi(x) - \!\!\! \min_{x \in \mathcal{N}_{\frakh}(\vertex)} \!\!\! \varphi(x) \r) \\
&\lesssim \ve \Vert \De^3 \varphi \Vert_{L^{\infty}(B_{\ve}(\vertex);\mR^{d \times d \times d})}
+ \ve |\De\varphi(\vertex)|^{-1} \Vert \De^2 \varphi \Vert_{L^{\infty}(\Omega;\mR^{d \times d})} + \theta \l( \Vert \De^2 \varphi \Vert^2_{L^{\infty}(\Omega;\mR^{d \times d})} + 1\r).
\end{aligned}
\end{equation}
Finally, recalling the definition of $-\Delta^\diamond_{\infty, \frakh}$, we have
\[
\begin{aligned}
\l| \l( 2\varphi(\vertex) - \max_{x \in \mathcal{N}_{\frakh}(\vertex)}  \varphi(x) -  \min_{x \in \mathcal{N}_{\frakh}(\vertex)}  \varphi(x) \r) + \Delta^\diamond_{\infty, \frakh}\varphi(\vertex) \r| & \le 2 \Vert \varphi - \interp \varphi \Vert_{L^{\infty}(B_{\ve}(\vertex))} \\
&\lesssim h^2 \Vert \De^2 \varphi \Vert_{L^{\infty}(B_{\ve}(\vertex);\mR^{d \times d})},
\end{aligned}
\]
which combined with  \eqref{eq:proof-consist} implies the desired result, i.e.,
\[
-\Delta_{\infty}^\diamond \varphi(\vertex) + \Delta^\diamond_{\infty, \frakh} \varphi(\vertex)
\le C \l( \ve + \theta + \frac{h^2}{\ve^2} \r).
\]
The constant $C$ depends on $|\De\varphi(\vertex)|, \Vert \De^2 \varphi \Vert^2_{L^{\infty}(\Omega;\mR^{d \times d})}, \Vert \De^3 \varphi \Vert^2_{L^{\infty}(B_{\ve}(\vertex);\mR^{d \times d \times d})}$.
\end{proof}

\Cref{lemma:consistency} controls the consistency error at an interior vertex $\vertex$ in terms of the discretization parameters provided that $\De \varphi(\vertex) \neq \boldsymbol0$. When $\De \varphi(\vertex) = \boldsymbol0$, the derivation of a consistency error is not standard. For simplicity, we assume that $\varphi$ is a quadratic function and obtain the following estimate between the discrete operator $-\Delta^\diamond_{\infty, \frakh}$ and the lower and upper semicontinuous envelope operators $-\Delta_{\infty}^+, -\Delta_{\infty}^+$.

\begin{Lemma}[consistency II]\label{lemma:consistency-p0}
Let $\vertex \in \Nhi{\ve}$, and $\varphi$ be a quadratic polynomial. If $\varphi$ has no strict local maximum in $B_\ve(\vertex)$, then
\[
  -\Delta^\diamond_{\infty, \frakh} \varphi (\vertex)  \ge
  -\lambda_{\max}(\De^2 \varphi) - C\left( \frac{h^2}{\ve^2} + \theta^2 \right),
\]
where the constant $C$ depends on the dimension $d$, $\De^2 \varphi$ and the shape regularity constant of the mesh $\Th$.
\end{Lemma}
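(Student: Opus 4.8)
The plan is to first strip off the interpolation, reducing the discrete operator to a finite-dimensional optimization over $\St\cup\{\boldsymbol0\}$, and then to argue by cases according to the sign structure of $\De^2\varphi$. The hypothesis that $\varphi$ has no strict local maximum in $B_\ve(\vertex)$ is used only, but crucially, in the ``most concave'' case.

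Write $\bv{M}=\De^2\varphi$, a constant symmetric matrix, and $\bv{b}=\De\varphi(\vertex)$, so that, $\varphi$ being quadratic,
\[
  \varphi(\vertex+\ve\bv{v})-\varphi(\vertex) = \ve\,\bv{b}^\intercal\bv{v} + \tfrac12\ve^2\,\bv{v}^\intercal\bv{M}\bv{v} =: q(\bv{v}), \qquad \bv{v}\in\mR^d,
\]
and note the identity $q(\bv{v})+q(-\bv{v})=\ve^2\bv{v}^\intercal\bv{M}\bv{v}$. A standard Lagrange interpolation bound gives $\|\varphi-\interp\varphi\|_{L^\infty(\Oh)}\lesssim h^2|\bv{M}|$ with constant depending only on $d$ and the shape regularity of $\Th$, and since $\mathcal{N}_\frakh(\vertex)\subset\overline{B}_{\ve}(\vertex)\subset\Oh$, I would first obtain
\[
  -\Delta^\diamond_{\infty,\frakh}\varphi(\vertex) \;\ge\; \frac1{\ve^2}\Bigl(2\varphi(\vertex)-\max_{x\in\mathcal{N}_\frakh(\vertex)}\varphi(x)-\min_{x\in\mathcal{N}_\frakh(\vertex)}\varphi(x)\Bigr) - C\frac{h^2}{\ve^2} = -\frac1{\ve^2}\bigl(q_{\max}+q_{\min}\bigr) - C\frac{h^2}{\ve^2},
\]
where $q_{\max},q_{\min}$ are the maximum and minimum of $q$ over $\St\cup\{\boldsymbol0\}$. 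Since $q(\boldsymbol0)=0$ there is a dichotomy: either \textbf{(A)} $\max_{\bv{v}_\theta\in\St}q(\bv{v}_\theta)\ge0$, or \textbf{(B)} $q(\bv{v}_\theta)<0$ for every $\bv{v}_\theta\in\St$.

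Case \textbf{(A)} is immediate: pick $\bv{w}\in\St$ with $q(\bv{w})=q_{\max}$; the symmetry of $\St$ gives $q_{\min}\le q(-\bv{w})$, so $q_{\max}+q_{\min}\le q(\bv{w})+q(-\bv{w})=\ve^2\bv{w}^\intercal\bv{M}\bv{w}\le\ve^2\lambda_{\max}(\bv{M})$, which is exactly what is needed (with no $\theta^2$ term). In case \textbf{(B)} we have $q_{\max}=0$, $q_{\min}<0$, hence $q_{\max}+q_{\min}<0$ and so $-\Delta^\diamond_{\infty,\frakh}\varphi(\vertex)\ge -Ch^2/\ve^2$; if moreover $\lambda_{\max}(\bv{M})\ge0$ this already yields the estimate, since $-\lambda_{\max}(\bv{M})\le0$. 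Thus it remains only to treat case \textbf{(B)} with $\bv{M}$ negative definite, which is the heart of the matter. In that case $\varphi$ is strictly concave with a unique strict maximum at $x^*=\vertex-\bv{M}^{-1}\bv{b}$, and the hypothesis forces $x^*\notin B_\ve(\vertex)$, i.e. $|\bv{M}^{-1}\bv{b}|\ge\ve$; in particular $\bv{b}\ne\boldsymbol0$, since otherwise $x^*=\vertex\in B_\ve(\vertex)$. I would extract two consequences of \textbf{(B)}. First, testing $q$ against the $\bv{v}_\theta\in\St$ closest to $\bv{b}/|\bv{b}|$ and using $\theta\le1$ gives $q(\bv{v}_\theta)\ge\tfrac\ve2|\bv{b}|-\tfrac12\ve^2|\bv{M}|$, so $|\bv{b}|<\ve|\bv{M}|$. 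Second, let $\bv{v}^*=-\bv{M}^{-1}\bv{b}/|\bv{M}^{-1}\bv{b}|$ be the unit direction from $\vertex$ toward $x^*$; using $\bv{b}=-\bv{M}(x^*-\vertex)$ and concavity one computes $q(\bv{v}^*)=|(\bv{v}^*)^\intercal\bv{M}\bv{v}^*|\,\ve\bigl(|x^*-\vertex|-\tfrac\ve2\bigr)\ge\tfrac12\ve^2|\lambda_{\max}(\bv{M})|$, where I used $|x^*-\vertex|\ge\ve$ and, $\bv{M}$ being negative definite, $|(\bv{v}^*)^\intercal\bv{M}\bv{v}^*|\ge|\lambda_{\max}(\bv{M})|$. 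Testing $q$ against the $\bv{v}_\theta^*\in\St$ closest to $\bv{v}^*$, the elementary bound $|q(\bv{v}_\theta^*)-q(\bv{v}^*)|\le(\ve|\bv{b}|+\ve^2|\bv{M}|)\theta\le 2\ve^2|\bv{M}|\theta$ (the last step by the first consequence) together with $q(\bv{v}_\theta^*)<0$ forces $\tfrac12|\lambda_{\max}(\bv{M})|<2|\bv{M}|\theta$, hence
\[
  -\lambda_{\max}(\bv{M}) = |\lambda_{\max}(\bv{M})| = \frac{|\lambda_{\max}(\bv{M})|^2}{|\lambda_{\max}(\bv{M})|} < \frac{16|\bv{M}|^2}{|\lambda_{\max}(\bv{M})|}\,\theta^2.
\]
Combining this with $-\Delta^\diamond_{\infty,\frakh}\varphi(\vertex)\ge -Ch^2/\ve^2$ and enlarging $C$ — which is allowed since $C$ may depend on $\De^2\varphi$, $d$ and the shape regularity — closes the argument.

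The main obstacle is exactly this last case: a concave quadratic whose ``cap'' is too flat for the stencil $\St$ to detect. There the cheap estimate only says $-\Delta^\diamond_{\infty,\frakh}\varphi(\vertex)\gtrsim -h^2/\ve^2$, which is useless against the strictly positive target $-\lambda_{\max}(\De^2\varphi)$; one must invoke the no-strict-local-maximum hypothesis to conclude that, in this situation, $-\lambda_{\max}(\De^2\varphi)$ is itself of size $\mathcal{O}(\theta^2)$. Obtaining the quadratic rather than linear dependence on $\theta$ hinges on testing in the direction of the critical point, on the symmetry of $\St$, and on the identity $|\lambda_{\max}|=|\lambda_{\max}|^2/|\lambda_{\max}|$, which upgrades an $\mathcal{O}(\theta)$ bound on $|\lambda_{\max}(\De^2\varphi)|$ to an $\mathcal{O}(\theta^2)$ one.
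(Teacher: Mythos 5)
Your proposal is correct, but it is organized differently from the paper's proof. The paper argues uniformly in all cases: using the hypothesis it places a maximizer of $\varphi$ over $\Bc_\ve(\vertex)$ on the sphere $\partial B_\ve(\vertex)$, exploits the Lagrange--multiplier identity $\De\varphi(\vertex+\ve\bv{v})=\lambda\bv{v}$, $\lambda\ge0$, to prove the key claim that the nearby stencil direction satisfies $\varphi(\vertex+\ve\bv{v}_\theta)\ge\varphi(\vertex)-\tfrac34\ve^2\theta^2|\bv{M}|$ (so the discrete maximum is, up to $\mathcal{O}(\ve^2\theta^2)$, attained at a sphere point of the stencil), and then concludes by the same antipodal pairing $-\bv{w}_\theta^\intercal\bv{M}\bv{w}_\theta\ge-\lambda_{\max}(\bv{M})$ that you use; this yields a constant of size $\approx|\bv{M}|$. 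You instead case--split on the sign of $q$ over $\St$: when some stencil value is $\ge q(\boldsymbol0)$ the pairing argument alone gives the bound (with no $\theta^2$ term and no use of the hypothesis), and the hypothesis is invoked only in the residual case ($q<0$ on $\St$, $\bv{M}$ negative definite), where you do not show that the scheme approximates $-\lambda_{\max}(\bv{M})$ at all, but rather that the critical point lying outside $B_\ve(\vertex)$ forces $|\lambda_{\max}(\bv{M})|<4|\bv{M}|\theta$, hence $-\lambda_{\max}(\bv{M})\le 16|\bv{M}|^2\theta^2/|\lambda_{\max}(\bv{M})|$, so the trivial lower bound $-Ch^2/\ve^2$ suffices. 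All the individual estimates you use (the $\theta\le1$ bound $\bv{u}^\intercal\bv{v}_\theta\ge\tfrac12$, the identity $q(\bv{v}^*)=|(\bv{v}^*)^\intercal\bv{M}\bv{v}^*|\,\ve(r-\ve/2)$ with $r\ge\ve$, the Lipschitz bound for $q$ on the sphere) check out. The trade--off is the constant: yours degenerates like $|\bv{M}|^2/|\lambda_{\max}(\bv{M})|$ as $\lambda_{\max}(\bv{M})\to0^-$, whereas the paper's is uniform over quadratics with bounded Hessian; this is admissible under the literal statement (which allows $C$ to depend on $\De^2\varphi$) and is harmless in the only place the lemma is used (\Cref{lemma:consistency-combined}, Case 2, where the test function is fixed and $\lambda_{\max}(\De^2\varphi)\ge0$, so your degenerate case never even arises), but it is worth being aware that your bound is not uniform in the way the paper's is.
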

\begin{proof} Without loss of generality, we can assume that $\varphi(x) = \bv{p}^\intercal x + \frac12 x^\intercal \bv{M} x$.
Since $\varphi$ has no strict local maximum in $B_\ve(\vertex)$, we have
\[
\max_{x \in \Bc_{\ve}(\vertex)} \varphi(x) = \varphi(\vertex + \ve \bv{v}),
\]
for some $|\bv{v}| = 1$. Let $\bv{v}_{\theta} \in \St$ be such that $|\bv{v} - \bv{v}_{\theta}| \le \theta$, and 
we claim that
\begin{equation}\label{eq:lemma-consist-p0-proof0}
  \varphi(\vertex + \ve \bv{v}_{\theta}) \ge \varphi(\vertex) -\frac34 \ve^2 \theta^2 \vert \bv{M} \vert. 
\end{equation}
Without loss of generality, we could further assume that $\vertex = 0$. This implies that $\varphi(\vertex) = 0$ and we only need to show that
\[
  \varphi(\ve \bv{v}_{\theta}) \ge -\frac{3\ve^2 \theta^2}4 \vert \bv{M} \vert.
\]
From the choice of $\bv{v}$, we have
\begin{equation}\label{eq:lemma-consist-p0-proof1}
\De \varphi(\ve \bv{v}) = \bv{p} + \ve \bv{Mv} = \lambda \bv{v},
\end{equation}
for some $\lambda \ge 0$. Moreover,
\begin{equation}\label{eq:lemma-consist-p0-proof2}
0 = \varphi(0) \le \varphi(\ve \bv{v}) =
  \bv{p}^\intercal \ve \bv{v} + \frac12 (\ve \bv{v})^\intercal \bv{M} (\ve \bv{v}) 
  = (\lambda \bv{v} - \ve \bv{Mv})^\intercal \ve \bv{v} + \frac{\ve^2}2 \bv{v}^\intercal\bv{Mv}
  = \lambda \ve - \frac{\ve^2}2  \bv{v}^\intercal \bv{Mv}.
\end{equation}
Now write $\varphi(\ve \bv{v}_{\theta})$ as
\[
\begin{aligned}
&\varphi(\ve \bv{v}_{\theta}) = 
\varphi(\ve \bv{v}) + \ve\De\varphi(\ve \bv{v})^\intercal(\bv{v}_{\theta} - \bv{v}) + \frac{\ve^2}{2} (\bv{v}_{\theta} - \bv{v})^\intercal\bv{M} (\bv{v}_{\theta} - \bv{v}) \\
&= \l( \lambda \ve - \frac{\ve^2}2 \bv{v}^\intercal\bv{Mv} \r) + \ve(\bv{p} + \ve \bv{Mv})^\intercal(\bv{v}_{\theta} - \bv{v}) + \frac{\ve^2}{2} (\bv{v}_{\theta} - \bv{v})^\intercal \bv{M} (\bv{v}_{\theta} - \bv{v}) \\
&= \l( \lambda \ve - \frac{\ve^2}2 \bv{v}^\intercal\bv{Mv} \r) + \lambda \ve \bv{v}^\intercal (\bv{v}_{\theta} - \bv{v})
+ \frac{\ve^2}{2} (\bv{v}_{\theta} - \bv{v})^\intercal \bv{M} (\bv{v}_{\theta} - \bv{v}) \\
&= \l( \lambda \ve - \frac{\ve^2}2 \bv{v}^\intercal\bv{Mv} \r) - \frac{\lambda \ve}2 |\bv{v}_{\theta} - \bv{v}|^2
+ \frac{\ve^2}{2} (\bv{v}_{\theta} - \bv{v})^\intercal \bv{M} (\bv{v}_{\theta} - \bv{v}) \\
&\geq \l( \lambda \ve - \frac{\ve^2}2 \bv{v}^\intercal\bv{Mv} \r) - \frac{\lambda \ve \theta^2}2 
- \frac{\ve^2\theta^2}{2} |\bv{M} | \\
&= \left( 1 - \frac12 \theta^2 \right) \l( \lambda \ve - \frac{\ve^2}2 \bv{v}^\intercal\bv{Mv} \r) - \frac{\ve^2 \theta^2}4 \bv{v}^\intercal\bv{Mv}  - \frac{\ve^2 \theta^2}2 \vert \bv{M} \vert  \\
& \ge - \frac{\ve^2 \theta^2}4 \bv{v}^\intercal\bv{Mv}  - \frac{\ve^2 \theta^2}2 \vert \bv{M} \vert
\geq - \frac{3\ve^2 \theta^2}4 \vert \bv{M} \vert,
\end{aligned}
\]
where we used \eqref{eq:lemma-consist-p0-proof1}, \eqref{eq:lemma-consist-p0-proof2}, and the fact that, since $\theta \le 1$, we have $1 - \theta^2/2 \ge 0$.

Now, choose $\bv{w}_{\theta}$ that satisfies
\[
  \max_{\bv{w} \in \St} \varphi(\vertex + \ve \bv{w}) = \varphi(\vertex + \ve \bv{w}_{\theta}).
\]
Thanks to \eqref{eq:lemma-consist-p0-proof0}, we have
\[
  \varphi(\vertex + \ve \bv{w}_{\theta}) \ge \max_{x \in \mathcal{N}_{\frakh}(\vertex)} \varphi(x) -\frac{3 \ve^2 \theta^2}4 \vert \bv{M} \vert.
\]
Therefore, since $\mathcal{N}_\frakh(\vertex)$ is symmetric,
\[
\begin{aligned}
-\Delta_{\infty, \frakh}^\diamond \varphi(\vertex) 
&= \frac{1}{\ve^2} \l( 2\varphi(\vertex) - \min_{x \in \mathcal{N}_\frakh(\vertex)} \interp \varphi(x) - \max_{x \in \mathcal{N}_\frakh(\vertex)} \interp \varphi(x)   \r) \\
&\ge \frac{1}{\ve^2} \l( 2\varphi(\vertex) - \min_{x \in \mathcal{N}_\frakh(\vertex)} \varphi(x) - \max_{x \in \mathcal{N}_\frakh(\vertex)} \varphi(x) - Ch^2  \r) \\
&\ge \frac{1}{\ve^2} \l( 2\varphi(\vertex) - \varphi(\vertex - \ve \bv{w}_{\theta}) - \varphi(\vertex + \ve \bv{w}_{\theta}) - C h^2 -  C\ve^2 \theta^2 \r) \\
& = -\bv{w}_{\theta}^\intercal\bv{M w}_{\theta} - Ch^2 \ve^{-2} - C \theta^2 \ge  -\lambda_{\max}(\bv{M}) - Ch^2 \ve^{-2} - C \theta^2.
\end{aligned}
\]
This finishes the proof of the lemma.
\end{proof}

Let $\{ \frakh_j = (h_j, \ve_j,\theta_j) \}_{j=1}^\infty$ be such that $\frakh_j \to (0,0,0)$ as $j \uparrow \infty$. Let $w_j \in \mathbb{V}_{h_j}$. We define the upper and lower semicontinuous envelopes to be
\begin{equation}
  \label{eq:def-uo-uu}
  \begin{aligned}
    \overline{w}(x) &= \limsup_{j \uparrow \infty, \mathcal{N}_{h_j} \ni \mathtt{z}_{h_j} \to x} w_j(\mathtt{z}_{h_j}), \\
    \underline{w}(x) &= \liminf_{j \uparrow \infty, \mathcal{N}_{h_j} \ni \mathtt{z}_{h_j} \to x} w_j(\mathtt{z}_{h_j}).
  \end{aligned}
\end{equation}
By construction $\overline{w} \in \USC(\Oc)$ and $\underline{w} \in \LSC(\Oc)$.

With the aid of interior consistency we can show that the lower and upper semicontinuous envelopes of a sequence of discrete solutions to \eqref{eq:inf-laplace-discrete} are a sub- and supersolution, respectively. 

\begin{Lemma}[envelopes]\label{lemma:consistency-combined}
Assume the right hand side $f$ satisfies Assumptions \textbf{RHS}.\ref{Ass.RHS.cont}--\ref{Ass.RHS.f<>0}. Assume, in addition, that if \textbf{RHS}.\ref{Ass.RHS.zero} holds, then then for all $h>0$ the mesh $\Th$ satisfies Assumption~\textbf{M}.\ref{Ass.M.connect}. Let $\{ \frakh_j = (h_j, \ve_j,\theta_j) \}_{j=1}^\infty$ be a sequence of discretization parameters such that $\frakh_j \to (0,0,0)$ as $j \uparrow \infty $ and, in addition,
\begin{equation}
\label{eq:RelationsForConvergence}
  \frac{h_j}{\ve_j} \to 0.
\end{equation}
Let $u_{\frakh_j} \in \mathbb{V}_{h_j}$ be the sequence of discrete solutions to \eqref{eq:inf-laplace-discrete}. Then, the function $\overline{u}$, defined in \eqref{eq:def-uo-uu}, is a subsolution, i.e., in the viscosity sense we have
\begin{equation}\label{eq:thm-convergence-subsol}
  -\Delta_{\infty}^+ \overline{u} \le f \quad \text{ in } \Omega.
\end{equation}
Similarly, $\underline{u}$, defined in \eqref{eq:def-uo-uu}, is a supersolution, i.e., in the viscosity sense we have
\[
  -\Delta_{\infty}^- \underline{u} \ge f \quad \text{ in } \Omega.
\]
\end{Lemma}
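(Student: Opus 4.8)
The plan is to run the standard Barles--Souganidis argument for our \emph{monotone}, \emph{consistent}, and \emph{stable} scheme \eqref{eq:inf-laplace-discrete}. By the symmetry between the two envelopes \eqref{eq:DefOfDeltaPlus}--\eqref{eq:DefOfDeltaMinus} and between the roles of $\max$ and $\min$ in $S_\frakh^\pm$, it suffices to prove the subsolution property \eqref{eq:thm-convergence-subsol} for $\overline u$; the claim for $\underline u$ follows by applying it to the functions $-u_{\frakh_j}$, which solve \eqref{eq:inf-laplace-discrete} with data $(-f,-\wt{g}_{\ve_j})$ — still admissible — together with the fact that $-\underline u$ is the upper envelope of $\{-u_{\frakh_j}\}$ and the identity $-\Delta_\infty^+(-w)=\Delta_\infty^- w$. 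So I fix $x_0\in\Omega$ and $\varphi\in C^2$ with $\overline u-\varphi$ attaining a local maximum at $x_0$, and must show $-\Delta_\infty^+\varphi(x_0)\le f(x_0)$. Two harmless normalizations: adding $|x-x_0|^4$ makes the maximum \emph{strict} without changing $\De\varphi(x_0)$ or $\De^2\varphi(x_0)$, and mollifying gives $\varphi\in C^3$; in the degenerate case below I shall further pass to the quadratic $x\mapsto\varphi(x_0)+\frac12(x-x_0)^\intercal\De^2\varphi(x_0)(x-x_0)$ (plus an auxiliary $\eta|x-x_0|^2$ with $\eta\downarrow0$ to retain strictness), the discarded remainder being lower order in the consistency estimates.

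Next I would localize the relaxed limit. Since the $u_{\frakh_j}$ are uniformly bounded (\Cref{lemma:stability}, using \textbf{RHS}.\ref{Ass.RHS.cont} and \textbf{BC}.\ref{Ass.BC.ext}) and $\overline u-\varphi$ has a strict local maximum at $x_0$, a routine argument produces nodes $\mathtt{y}_j\in\Nhj$ attaining $\max\{(u_{\frakh_j}-\interpj\varphi)(\othervertex):\othervertex\in\Nhj\cap\Bc_\rho(x_0)\}$ for a fixed $\Bc_\rho(x_0)\subset\Omega$, with $\mathtt{y}_j\to x_0$ and $u_{\frakh_j}(\mathtt{y}_j)\to\overline u(x_0)$ as $j\uparrow\infty$. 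Because $x_0$ is interior and $\frakh_j\to(0,0,0)$, for $j$ large $\mathtt{y}_j$ is an interior node and $\wt{\mathcal N}_{\frakh_j}(\mathtt{y}_j)\subset\Bc_\rho(x_0)$, so \eqref{eq:LocEqForMonotonicity} holds with $w_h=u_{\frakh_j}$, $v_h=\interpj\varphi$. Since the discrete operator interpolates its argument first, $-\Delta_{\infty,\frakh_j}^\diamond(\interpj\varphi)(\mathtt{y}_j)=-\Delta_{\infty,\frakh_j}^\diamond\varphi(\mathtt{y}_j)$, and \Cref{lemma:monotonicity} combined with the discrete equation \eqref{eq:inf-laplace-discrete} gives the key inequality
\[
  f(\mathtt{y}_j)=-\Delta_{\infty,\frakh_j}^\diamond u_{\frakh_j}(\mathtt{y}_j)\ge-\Delta_{\infty,\frakh_j}^\diamond\varphi(\mathtt{y}_j).
\]
If $\De\varphi(x_0)\ne\boldsymbol0$, then $\De\varphi(\mathtt{y}_j)\ne\boldsymbol0$ for $j$ large, and \Cref{lemma:consistency} at $\mathtt{y}_j$ — whose constant stays bounded as $\mathtt{y}_j\to x_0$ — yields $-\Delta_{\infty,\frakh_j}^\diamond\varphi(\mathtt{y}_j)\ge-\Delta_\infty^\diamond\varphi(\mathtt{y}_j)-C(\ve_j+\theta_j+(h_j/\ve_j)^2)$; letting $j\uparrow\infty$ with \eqref{eq:RelationsForConvergence}, the continuity of $f$, and of $x\mapsto\Delta_\infty^\diamond\varphi(x)$ near $x_0$, gives $f(x_0)\ge-\Delta_\infty^\diamond\varphi(x_0)=-\Delta_\infty^+\varphi(x_0)$. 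If $\De\varphi(x_0)=\boldsymbol0$ and $\De^2\varphi(x_0)$ is \emph{not} negative definite, then the reduced quadratic has no strict local maximum anywhere, so \Cref{lemma:consistency-p0} applies at $\mathtt{y}_j$ and the key inequality plus passage to the limit gives $f(x_0)\ge-\lambda_{\max}(\De^2\varphi(x_0))=-\Delta_\infty^+\varphi(x_0)$.

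The remaining case — $\De\varphi(x_0)=\boldsymbol0$ with $\De^2\varphi(x_0)$ negative definite, equivalently $\overline u$ having a quadratically strict local maximum at $x_0$ — is the crux, because \Cref{lemma:consistency-p0} cannot be invoked directly: its hypothesis that $\varphi$ have no strict local maximum in $B_{\ve_j}(\mathtt{y}_j)$ fails exactly when $\mathtt{y}_j$ lies within $\ve_j$ of $x_0$. If $|\mathtt{y}_j-x_0|\ge\ve_j$ along a subsequence, the apex of the reduced quadratic lies outside $B_{\ve_j}(\mathtt{y}_j)$, \Cref{lemma:consistency-p0} does apply, and the previous argument carries over; so the genuine work is the complementary regime $|\mathtt{y}_j-x_0|<\ve_j$, which I would treat by inspecting the discrete equation at the maximal node, where $\max_{x\in\mathcal N_{\frakh_j}(\mathtt{y}_j)}u_{\frakh_j}(x)=u_{\frakh_j}(\mathtt{y}_j)$ (the interpolated values at $\mathtt{y}_j+\ve_j\bv{v}_\theta$ being convex combinations of nodal values $\le u_{\frakh_j}(\mathtt{y}_j)$), so that $u_{\frakh_j}(\mathtt{y}_j)-\min_{x\in\mathcal N_{\frakh_j}(\mathtt{y}_j)}u_{\frakh_j}(x)=\ve_j^2 f(\mathtt{y}_j)$, together with the sign hypothesis \textbf{RHS}.\ref{Ass.RHS.f<>0}: under \textbf{RHS}.\ref{Ass.RHS.zero} one shows such a configuration cannot persist, by propagating the resulting equality along $\wt{\mathcal N}_{\frakh_j}$-neighbours as in \Cref{thm:DCP} until it reaches $\partial\Bc_\rho(x_0)$, contradicting strict maximality; under \textbf{RHS}.\ref{Ass.RHS.sign} one must squeeze more from the equation — e.g.\ by comparing $u_{\frakh_j}$ near $\mathtt{y}_j$ with a quadratic barrier whose curvature is governed by $f(\mathtt{y}_j)$ — in order to recover $-\lambda_{\max}(\De^2\varphi(x_0))$ in the limit, and this is the point I expect to be the main obstacle. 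Granting it, \eqref{eq:thm-convergence-subsol} holds and, as noted, the supersolution statement $-\Delta_\infty^-\underline u\ge f$ follows from the reduction recorded in the first paragraph.
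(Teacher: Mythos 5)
Your overall architecture (symmetry reduction, localization of the relaxed upper limit to nodes $\mathtt{z}_{h_j}\to x_0$ maximizing $u_{\frakh_j}-\interpj\varphi$, the monotonicity inequality $f(\mathtt{z}_{h_j})\ge-\Delta^\diamond_{\infty,\frakh_j}\varphi(\mathtt{z}_{h_j})$, and the two cases handled via \Cref{lemma:consistency} and \Cref{lemma:consistency-p0}) coincides with the paper's proof. The genuine gap is your Case 3, $\De\varphi(x_0)=\boldsymbol0$ with $\De^2\varphi(x_0)$ negative definite, which you yourself flag as unresolved. Your partial sketch there is not salvageable as written: in the regime $|\mathtt{z}_{h_j}-x_0|<\ve_j$ you assert that $\max_{x\in\mathcal N_{\frakh_j}(\mathtt{z}_{h_j})}u_{\frakh_j}(x)=u_{\frakh_j}(\mathtt{z}_{h_j})$ because neighbouring nodal values are $\le u_{\frakh_j}(\mathtt{z}_{h_j})$, but $\mathtt{z}_{h_j}$ maximizes $u_{\frakh_j}-\interpj\varphi$, not $u_{\frakh_j}$; neighbours closer to the apex of the paraboloid have larger $\varphi$-values, so their $u_{\frakh_j}$-values may exceed $u_{\frakh_j}(\mathtt{z}_{h_j})$, and the ``equation at a local maximum of $u_{\frakh_j}$'' you want to exploit need not hold. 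Moreover, as the Remark following the lemma shows, the scheme is genuinely \emph{inconsistent} at such configurations (the semidiscrete operator applied to $-\tfrac12|x|^2$ gives $\tfrac12$ instead of $1$, independently of $\ve$), so no refinement of the local consistency estimate at $\mathtt{z}_{h_j}$, nor a case split on the sign of $f$, can close this case at the discrete level.

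The paper's resolution is a continuous-level perturbation of the test function that sidesteps the discrete scheme entirely in this case. For small $y\neq\boldsymbol0$ set $\varphi_y(x)=\tfrac12(x-y)^\intercal\bv{M}(x-y)$ with $\bv{M}=\De^2\varphi(x_0)$ (after the normalization $x_0=0$, $\overline u(0)=\varphi(0)=0$). Then $\overline u-\varphi_y$ attains a local maximum at some $x_y\in B_\delta(0)$, and the chain $\overline u(x_y)-\varphi_y(x_y)\ge\overline u(0)-\varphi_y(0)>\overline u(0)-\varphi(0)>\overline u(y)-\varphi(y)>\overline u(y)-\varphi_y(y)$ — which uses precisely the strict negative definiteness of $\bv{M}$, via $\varphi_y(0)=\varphi(y)<0=\varphi(0)=\varphi_y(y)$, and the strictness of the original maximum — shows $x_y\neq y$, hence $\De\varphi_y(x_y)\neq\boldsymbol0$. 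Your Case 1 (extended to non-strict maxima by the standard perturbation remark) then yields $-\lambda_{\max}(\bv{M})\le-\Delta_\infty^+\varphi_y(x_y)\le f(x_y)$, and letting $y\to0$, so that $x_y\to0$, continuity of $f$ gives $-\Delta_\infty^+\varphi(0)=-\lambda_{\max}(\bv{M})\le f(0)$. Note this argument is uniform in how \textbf{RHS}.\ref{Ass.RHS.f<>0} is satisfied; the sign of $f$, the connectivity assumption, and the comparison machinery you invoke play no role here. Incorporating this device in place of your unfinished discrete analysis would complete the proof; the rest of your proposal is sound.
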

\begin{proof}
By symmetry, it suffices to show \eqref{eq:thm-convergence-subsol}, as the proof of the fact that $\underline{u}$ is a supersolution follows in the same way.

We need to prove that for a quadratic polynomial $\varphi$ such that $\overline{u} - \varphi$ attains a strict local maximum at $x_0 \in \Omega$, then
\[
-\Delta_{\infty}^+ \varphi(x_0) \le f(x_0).
\]
Without loss of generality, we may assume that $x_0 = 0$ and $\overline{u}(0) = \varphi(0) = 0$. Therefore, $\varphi(x) = \bv{p}^\intercal x + \frac12 x^\intercal \bv{M}x$ where $\bv{p} = \De\varphi(0)$ and $\bv{M} = \De^2 \varphi(0)$. Since the maximum is strict, there is $\delta_0 > 0$ such that $\overline{u}(0) - \varphi(0) > \overline{u}(x) - \varphi(x)$ for any $x \in \Bc_{\delta_0}(0) \setminus \{0\}$. 
By the definition of $\overline{u}$, for any $0<\delta\leq \delta_0$ and large enough $j$ we can select $\mathtt{z}_{h_j} \in \mathcal{N}_{h_j,\ve_j}^I \cap \Bc_{\delta}(0)$ such that
\[
  u_{\frakh_j}(\mathtt{z}_{h_j}) - \mathcal{I}_{h_j}\varphi(\mathtt{z}_{h_j}) = \max_{\mathcal{N}_{h_j} \cap \Bc_{\delta}(0)} (u_{\frakh_j} - \mathcal{I}_{h_j}\varphi) ,
\]
and $\mathtt{z}_{h_j} \to 0$ as $j \uparrow \infty$. Using the monotonicity of $-\Delta_{\infty, \frakh_j}^\diamond$, presented in \Cref{lemma:monotonicity}, we have
\begin{equation}\label{eq:proof-lem-convergence}
-\Delta^\diamond_{\infty, \frakh_j} \varphi(\mathtt{z}_{h_j}) \le -\Delta^\diamond_{\infty, \frakh_j} u_{\frakh_j}(\mathtt{z}_{h_j}) \le f(\mathtt{z}_{h_j}).
\end{equation}
To conclude the proof, we will discuss three cases separately. 

\emph{Case 1:} $\bv{p} \neq \boldsymbol0$. By \Cref{lemma:consistency} and the upper semicontinuity of $\Delta_{\infty}^+$, we obtain that
\[
\begin{aligned}
-\Delta_{\infty}^+ \varphi(0) &\leq \lim_{j \uparrow \infty} -\Delta_{\infty}^+ \varphi(\mathtt{z}_{h_j}) \le \limsup_{j \uparrow \infty} \l( -\Delta^\diamond_{\infty, \frakh_j} \varphi (\mathtt{z}_{h_j}) + C \l( \ve_j + \theta_j + (h_j/\ve_j)^2 \r) \r) \\
&\le \limsup_{j \uparrow \infty} f(\mathtt{z}_{h_j}) = f(0),
\end{aligned} 
\]
because of \eqref{eq:RelationsForConvergence}. We remark that by a standard perturbation argument, we also have $-\Delta_{\infty}^+ \varphi(0) \le f(0)$ if $\overline{u} - \varphi$ attains a non-strict local maximum at $0 \in \Omega$.

\emph{Case 2:} $\bv{p} = \boldsymbol0$, and $\bv{M}$ is not strictly negative definite. The assumptions imply that $\varphi$ has no strict local maximum in $B_\ve(\mathtt{z}_{h_j})$. We apply \Cref{lemma:consistency-p0} and obtain
\[
\begin{aligned}
-\Delta_{\infty}^+ \varphi(0) &= -\lambda_{\max}(\bv{M}) \le \limsup_{j \uparrow \infty} \l( -\Delta^\diamond_{\infty, \frakh_j} \varphi (\mathtt{z}_{h_j}) + C \l( \theta_j^2 + (h_j/\ve_j)^2 \r) \r) \\
&\le \limsup_{j \uparrow \infty} f(\mathtt{z}_{h_j}) = f(0)
\end{aligned} 
\]
because of \eqref{eq:RelationsForConvergence}.

\emph{Case 3:} $\bv{p} = \boldsymbol0$ and $\bv{M}$ is strictly negative definite. Let $0 \neq y \in \mRd$ and define $\varphi_y(x) = \tfrac12 (x-y)^\intercal \bv{M} (x-y)$. For $|y|$ sufficiently small, we know that $\overline{u} - \varphi_y$ attains a local maximum at $x_y \in B_\delta(0)$. Since $\bv{M}$ is strictly negative definite, we have $\varphi(y) = \varphi_y(0) < 0 =\varphi(0) = \varphi_y(y)$ and
\[
\overline{u}(x_y) - \varphi_y(x_y) \geq
\overline{u}(0) - \varphi_y(0)
> \overline{u}(0) - \varphi(0) 
> \overline{u}(y) - \varphi(y)
> \overline{u}(y) - \varphi_y(y).
\]

This shows that $x_y \neq y$. Therefore, $\De \varphi_y( x_y) \neq \boldsymbol0$. Consequently, by using the results we have obtained for Case 1, we have
\[
-\lambda_{\max}(\bv{M}) \le -\Delta_{\infty}^+ \varphi(x_y) \le f(x_y).
\]
Letting $y \to 0$, we have $x_y \to 0$ and 
\[
-\Delta_{\infty}^+ \varphi(0) = -\lambda_{\max}(\bv{M}) \le \lim_{y \to 0} f(x_y) = f(0),
\]
because of $f \in C(\Oc)$. 
\end{proof}

\begin{Remark}[consistency]
Notice that, in the proof of \Cref{lemma:consistency-combined}, the case when $\bv{p}=\boldsymbol0$ and $\bv{M}$ is strictly negative definite required a nonstandard proof. This is due to the rather surprising fact that, in this case our scheme is not consistent.
To see this it suffices to consider the semidiscrete scheme \eqref{eq:discInfLap}. Further discretization, i.e., using \eqref{eq:def-fullydiscrete-op1}, will only add consistency terms depending on $\frakh$. Let $\varphi(x) = -\tfrac12|x|^2$ and  assume $x=0$ is an interior point, so that $B_\ve(x) \subset \Omega$. We have
\[
  \varphi(0) = 0, \qquad \max_{x \in \Bc_\ve(0) } \varphi(x) = 0, \qquad \min_{x \in \Bc_\ve(0) } \varphi(x) = -\frac12 \max_{x \in \Bc_\ve(0) } |x|^2 = -\frac12 \ve^2
\]
so that
\[
  -\Delta_{\infty,\varepsilon}^\diamond \varphi(0) = \frac12 < -\Delta_\infty^+ \varphi(0) = 1,
\]
independently of the value of $\ve$. The strategy that we follow, and that greatly simplifies that of \cite[Theorem 2.11]{MR2846345}, is to move away from such a point to one where the consistency issue does not arise.
\end{Remark}

\subsection{Boundary behavior and convergence}\label{sub:convergence}

To prove convergence of discrete solutions we need, in addition to interior consistency, to control the behavior of discrete solutions $\uve$ near $\partial\Omega$. As before, we will assume that we have a sequence of discretization parameters $\{\frakh_j = (h_j, \ve_j, \theta_j) \}_{j=1}^{\infty}$ satisfying that
\begin{equation}\label{eq:para-condition}
  \lim_{j \uparrow \infty} \left[ h_j + \ve_j + \theta_j + \frac{h_j}{\ve_j} + \frac{\theta_j}{\ve_j} \right] = 0.
\end{equation}
Let us prove that $\overline{u}$ and $\underline{u}$, defined in \eqref{eq:def-uo-uu},  coincide with $g$ at the boundary $\partial \Omega$.

\begin{Lemma}[boundary behavior]\label{lemma:bdry}
Let all the conditions of \Cref{lemma:consistency-combined} be fulfilled. Assume, in addition, that the boundary datum $g$ satisfies Assumptions \textbf{BC}.\ref{Ass.BC.cont}--\ref{Ass.BC.ext}. Let $\{\frakh_j\}_{j=1}^\infty$ be a sequence of discretization parameters that satisfies \eqref{eq:para-condition}. The functions $\overline{u}, \underline{u}$, defined as in \eqref{eq:def-uo-uu} through the family $u_{\frakh_j} \in \mathbb{V}_{h_j}$ of discrete solutions, satisfy $\overline{u}(x) = \underline{u}(x) = g(x)$ for all $x \in \partial \Omega$.
\end{Lemma}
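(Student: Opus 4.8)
The plan is to reduce the statement to two one--sided estimates at an arbitrary boundary point and prove each by a barrier argument anchored at an exterior point, combined with the discrete comparison principles of \Cref{thm:simple-DCP} and \Cref{thm:DCP}. Since $\underline u \le \overline u$ on $\Oc$ by construction, it suffices to fix $x_0\in\pO$ and prove $\overline u(x_0)\le g(x_0)$ and $\underline u(x_0)\ge g(x_0)$; these are symmetric, so I only describe the supersolution barrier yielding the first one, suppressing the index $j$ and writing $\frakh=(h,\ve,\theta)$ for a generic element of the given sequence, which satisfies \eqref{eq:para-condition}. Because $\pO$ is continuous in the sense of \cite[Definition 1.2.1.2]{MR775683}, near $x_0$ the boundary is, after a rotation, the graph of a continuous function with some modulus $\omega$; pushing a short distance $t>0$ off $\Oc$ transversally to the graphing hyperplane produces $z_t\in\mRd\setminus\Oc$ with $|z_t-x_0|=t$ and, crucially, $\dist(z_t,\Oc)\ge\kappa(t):=\min\{t/2,\ \omega^{-1}(t/2)\}>0$, a fixed positive number once $t$ is fixed. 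Given $\eta>0$, continuity of $g$ at $x_0$ and Assumption~\textbf{BC}.\ref{Ass.BC.ext} (which controls $\|g-\wt g_\ve\|_{L^\infty(\pO)}$ and, through the sup-/inf-convolution construction behind it, the oscillation of $\wt g_\ve$ at scale $\ve$ near $\pO$) furnish $\rho=\rho(\eta)>0$ and a quantity $\sigma(\ve)\downarrow0$ with $\wt g_\ve(\vertex)\le g(x_0)+\eta+\sigma(\ve)$ for all $\vertex\in\Nhb{\ve}$ with $|\vertex-x_0|\le\rho$. For $t$ small enough that $\kappa(t)\le\rho/4$, I set
\[
  \beta_\ve(x)=g(x_0)+\eta+\sigma(\ve)+\psi\bigl(|x-z_t|-\kappa(t)\bigr),\qquad \psi(s)=Ls-\tfrac12\bigl(\|f\|_{L^\infty(\Omega)}+1\bigr)s^2,
\]
with $L$ a large constant depending only on $\eta$ (through $\rho$), $\diam(\Omega)$, $\|f\|_{L^\infty(\Omega)}$ and $\|g\|_{L^\infty(\pO)}$ — but neither on $\frakh$ nor on $t$ — chosen so that $\psi$ is concave and increasing on $[0,\diam(\Omega)+t]$, with $\psi'\ge1$ there and $\psi(\rho/2)\ge 2\|g\|_{L^\infty(\pO)}+1$.

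Next I would check that $\interp\beta_\ve\in\Vh$ is a discrete supersolution of \eqref{eq:inf-laplace-discrete}. The boundary inequality $\interp\beta_\ve(\vertex)=\beta_\ve(\vertex)\ge\wt g_\ve(\vertex)$ holds at every $\vertex\in\Nhb{\ve}$: for $|\vertex-x_0|\le\rho$ since $\psi\ge0$ and $\wt g_\ve(\vertex)\le g(x_0)+\eta+\sigma(\ve)$, and for $|\vertex-x_0|>\rho$ since then $|\vertex-z_t|-\kappa(t)\ge\rho/2$ and monotonicity of $\psi$ gives $\beta_\ve(\vertex)\ge g(x_0)+\psi(\rho/2)\ge\sup_{\Oc}\wt g_\ve\ge\wt g_\ve(\vertex)$. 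For the interior inequality, since $z_t\notin\Oc$ the function $\beta_\ve$ minus its constant part is $C^\infty$ on $\Oc$, radial about $z_t$, with gradient of magnitude $\psi'(|x-z_t|-\kappa(t))\ge1$ and all derivative norms bounded in terms of $L,\|f\|_{L^\infty(\Omega)},\kappa(t)$ only, \emph{independently of $\frakh$}. Hence \Cref{lemma:consistency} applies at each $\vertex\in\Nhi{\ve}$, and since a direct computation gives $-\Delta_\infty^\diamond\beta_\ve(x)=-\psi''(|x-z_t|-\kappa(t))=\|f\|_{L^\infty(\Omega)}+1$ throughout $\Oc$,
\[
  -\Delta^\diamond_{\infty, \frakh}\interp\beta_\ve(\vertex)\ge\|f\|_{L^\infty(\Omega)}+1-C(t)\bigl(\ve+\theta+(h/\ve)^2\bigr),
\]
with $C(t)$ independent of $\frakh$. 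By \eqref{eq:para-condition} the error vanishes along the sequence, so for $\frakh$ small $-\Delta^\diamond_{\infty, \frakh}\interp\beta_\ve(\vertex)\ge\|f\|_{L^\infty(\Omega)}\ge f(\vertex)$ and, in particular, $-\Delta^\diamond_{\infty, \frakh}\interp\beta_\ve(\vertex)>0$; thus $\interp\beta_\ve$ is a discrete supersolution meeting the extra sign hypothesis of \Cref{thm:simple-DCP} (its case~2) under \textbf{RHS}.\ref{Ass.RHS.sign}, and likewise the sign hypothesis of \Cref{thm:DCP} under \textbf{RHS}.\ref{Ass.RHS.zero}, where Assumption~\textbf{M}.\ref{Ass.M.connect} is in force.

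Then I would apply the pertinent comparison principle to $\uve$ and $\interp\beta_\ve$; using $\uve=\wt g_\ve\le\interp\beta_\ve$ on $\Nhb{\ve}$ this yields $\uve(\vertex)\le\beta_\ve(\vertex)$ for every $\vertex\in\Nh$. Evaluating along any sequence $\Nh\ni\vertex\to x_0$ as $\frakh\to(0,0,0)$, and noting $\beta_\ve(\vertex)\to g(x_0)+\eta+\psi(t-\kappa(t))$ while $\sigma(\ve)\to0$, the definition of $\overline u$ in \eqref{eq:def-uo-uu} gives
\[
  \overline u(x_0)\le g(x_0)+\eta+\psi\bigl(t-\kappa(t)\bigr)\le g(x_0)+\eta+Lt;
\]
letting $t\downarrow0$ with $L$ fixed and then $\eta\downarrow0$ gives $\overline u(x_0)\le g(x_0)$. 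The mirror construction, using a subsolution barrier anchored at an exterior point on the other side, gives $\underline u(x_0)\ge g(x_0)$, and together with $\underline u\le\overline u$ this is the claim.

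The main obstacle is the geometric step: extracting from the bare continuity of $\pO$ an exterior point $z_t$ whose distance to $\Oc$ is bounded below by a positive constant, which is what keeps the cone $x\mapsto|x-z_t|$ — hence the barrier — smooth with derivative bounds uniform in $h,\ve,\theta$, so that \Cref{lemma:consistency} applies with an error absorbed by \eqref{eq:para-condition}. Anchoring the cone at $x_0\in\pO$ itself would not do, since its Hessian blows up like $\dist(\cdot,x_0)^{-1}$ at the tip and the resulting interpolation error in $-\Delta^\diamond_{\infty, \frakh}$ is of order $h^2/\ve^3$, not obviously controlled by \eqref{eq:para-condition}. A second, routine but necessary, point is the $\ve$--uniform control of the oscillation of $\wt g_\ve$ near $\pO$ that produces $\sigma(\ve)\downarrow0$; for merely continuous $g$ this is read off from the sup-/inf-convolution structure behind Assumption~\textbf{BC}.\ref{Ass.BC.ext}. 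Everything else is bookkeeping, provided the order of limits — first $\frakh\to(0,0,0)$, then $t\downarrow0$, then $\eta\downarrow0$ — is respected.
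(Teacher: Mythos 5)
Your argument is correct in substance and rests on the same core machinery as the paper's proof — a concave radial barrier anchored at a point strictly outside $\Oc$, interior consistency (\Cref{lemma:consistency}) applied to that smooth barrier, and the discrete comparison principles — but it is organized genuinely differently. The paper first reduces to smooth (hence Lipschitz) boundary data by approximation and comparison, then runs the barrier argument only at boundary points satisfying an exterior ball condition (so the anchor $z_0$ has clearance $R/2$ fixed once and for all), and finally reaches arbitrary boundary points through a density argument using the Lipschitz bound of the barrier. You avoid both reductions: continuity of $g$ at $x_0$ enters directly through the radius $\rho(\eta)$ together with a slope $L$ (independent of $t$ and $\frakh$) large enough to dominate $\wt{g}_\ve$ away from $x_0$, and instead of an exterior ball you extract, from the local graph representation of the merely continuous boundary, an exterior anchor $z_t$ with $|z_t-x_0|=t$ and clearance $\kappa(t)>0$; the price is a consistency constant $C(t)$ that blows up as $t\downarrow0$, which you absorb correctly by taking $j$ large for each fixed $t$ and respecting the order of limits ($\frakh\to0$, then $t\downarrow0$, then $\eta\downarrow0$). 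This yields a pointwise proof at every boundary point without the smoothing and density steps, while the paper's route keeps all barrier constants independent of the anchor at the cost of those two extra reductions. Two caveats, neither fatal: the estimates $\wt{g}_\ve(\vertex)\le g(x_0)+\eta+\sigma(\ve)$ for nodes of $\Nhb{\ve}$ near $x_0$ and $\sup_{\Oc}\wt{g}_\ve\le \|g\|_{L^\infty(\pO)}+1$ far from $x_0$ use equicontinuity/uniform-convergence properties of the family $\{\wt{g}_\ve\}$ that go beyond the literal statement of \textbf{BC}.\ref{Ass.BC.ext} when $g$ is only continuous and must be read off the sup-/inf-convolution realization — but the paper's own proof leans on the same unstated properties, so this is a shared rather than a new gap. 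Finally, two small repairs: to guarantee $|\vertex-z_t|-\kappa(t)\ge\rho/2$ at far boundary nodes you should also impose $t\le\rho/4$ (the condition $\kappa(t)\le\rho/4$ alone does not imply it), and the subsolution barrier for $\underline{u}$ is obtained by flipping the sign of $\psi$ about the same exterior anchor $z_t$, not by choosing an anchor ``on the other side,'' where no exterior point exists.
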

\begin{proof} By symmetry it suffices to prove that $\overline{u}(x) = g(x)$. From the fact that $u_{\frakh_j}(\mathtt{z}_{h_j}) = \wt{g}_\ve(\mathtt{z}_{h_j})$ for all $\mathtt{z}_{h_j} \in \mathcal{N}_{h_j,\ve_j}^b$ we have
\[
  \overline{u}(x) \ge \lim_{\ve_j \to 0} \wt{g}_{\ve_j}(x) = g(x), \quad \forall x \in \partial \Omega.
\]
So, it only remains to prove that $\overline{u}(x) \le g(x)$ for any $x \in \partial \Omega$. 

First, we claim that it is sufficient to prove this for a smooth $\wt{g}_{\ve_j}$. To see this we employ the fact that, for any $\delta>0$, we have a function $g_{s,\delta} \in C^{\infty}(\Oc)$ such that $\|\wt{g}_{\ve_j} - g_{s,\delta} \|_{L^\infty(\Omega)} \le \delta$. Let the functions $\overline{u}_s$ be associated with the data $(f,g_{s,\delta})$. If we know that
$\overline{u}_s(x) \le g_{s,\delta}(x)$ for $x \in \pO$, then 
\[
\overline{u}(x) \le \overline{u}_s(x) + \delta \le g_{s,\delta}(x) + \ve \le \wt{g}_{\ve_j}(x) + 2\delta,
\]
where the first inequality follows from $\wt{g}_{\ve_j} \le g_{s,\delta} + \delta$ and the comparison principle of \Cref{thm:simple-DCP} or \Cref{thm:DCP}, depending on how \textbf{RHS}.\ref{Ass.RHS.f<>0} is satisfied. Since $\delta$ is arbitrary, this implies that $\overline{u}(x) \le g(x)$ for any $x \in \partial \Omega$.

Thus, assuming that $g$ is smooth, we prove $\overline{u}(x_0) \le g(x_0)$ for any $x_0 \in \pO$ satisfying the exterior ball condition, i.e., there are $y_0 \in \mRd$ and $0 < R < 1$ such that
\[
x_0 \in \partial B_R(y_0), \quad \interior B_R(y_0) \cap \Omega = \emptyset.
\]
We may write $y_0 = x_0 + R \bv{w}$ for some $|\bv{w}| = 1$. The exterior ball condition implies that
\begin{equation}\label{eq:bdry-behavior-proof0}
  |(x-x_0) - R\bv{w}| \ge R \quad \implies \quad \bv{w}^\intercal (x - x_0) \le \frac{|x - x_0|^2}{2R},
\end{equation}
for any $x \in \Omega$. Let $z_0 = x_0 + (R/2) \bv{w}$ and consider the following barrier function
\[
\varphi(x) = q\l( |x - z_0|\r) = a + b|x - z_0| - \frac{c}{2} |x - z_0|^2,
\]
where $q$ is a quadratic function to be determined. Fix $c = \sup_{x \in \Omega} f(x) + 1$ and assume that $b > c$, we claim that for $j$ large enough,
\[
-\Delta_{\infty, \frakh_j}^\diamond \varphi (\mathtt{z}_{h_j})  \ge \sup_{x \in \Omega} f(x) \ge f(\mathtt{z}_{h_j}),
\]
for all $\mathtt{z}_{h_j} \in \mathcal{N}^I_{h_j,\ve_j}$. To see this, notice that a simple calculation leads to
\begin{equation}\label{eq:bdry-behavior-proof1}
-\Delta_{\infty}^\diamond \varphi (\mathtt{z}_{h_j}) = c.
\end{equation}
Since $\varphi$ is smooth away from $z_0$, we obtain from \Cref{lemma:consistency} that
\[
\l| \Delta_{\infty}^\diamond \varphi (\mathtt{z}_{h_j}) - \Delta_{\infty, \frakh_j}^\diamond \varphi (\mathtt{z}_{h_j}) \r| \le
C_R \l( \ve_j + \theta_j + (h_j/\ve_j)^2 \r),
\]
for any $\mathtt{z}_{h_j} \in \mathcal{N}^I_{h_j,\ve_j}$ where $C_R$ is a constant depending on $R$. Combining this with \eqref{eq:bdry-behavior-proof1} and using the assumption in \eqref{eq:para-condition} we see that
\[
-\Delta_{\infty, \frakh_j}^\diamond \varphi (\mathtt{z}_{h_j}) \ge -\Delta_{\infty}^\diamond \varphi (\mathtt{z}_{h_j}) - C_R \l( \ve_j + \theta_j + (h_j/\ve_j)^2 \r) \ge c - 1 \ge \sup_{x \in \Omega} f(x),
\]
for $j$ large enough and any $\mathtt{z}_{h_j} \in \mathcal{N}^I_{h_j,\ve_j}$. Now we choose the constants $a$ and $b$ to guarantee that
\begin{equation}\label{eq:bdry-behavior-proof2}
\varphi(x) \ge \wt{g}_{\ve_j}(x),
\end{equation}
for any $x \in \Omega$. Since $\wt{g}_{\ve_j}$ is smooth, we let $L$ be the Lipschitz constant of $\wt{g}_{\ve_j}$. It then suffices to require that
\begin{equation}\label{eq:bdry-behavior-proof3}
  \varphi(x) \ge \wt{g}_{\ve_j}(x_0) + L|x - x_0| \ge \wt{g}_{\ve_j}(x).
\end{equation}
Let, for some $b_0 > 0$, 
\[
b = b_0 + c \l( \diam(\Omega) + \frac{R}{2} \r).
\]
Then the function $q(t) = a + bt - (c/2)t^2$ satisfies
\[
b \ge q'(t) = b - ct \ge b_0,
\]
for $t \in [0, \diam(\Omega) + \frac{R}{2}]$. Recall that by \eqref{eq:bdry-behavior-proof0} we have for any $x \in \Omega$
\[
\begin{aligned}
|x - z_0|^2 &= |(x_0 - z_0) + (x - x_0)|^2 = |x_0 - z_0|^2 + |x - x_0|^2 + 2 (x_0 - z_0)^\intercal (x - x_0) \\ 
&= \frac{R^2}{4} + |x - x_0|^2 - R\bv{w}^\intercal (x - x_0)
\ge \frac{R^2}{4} + |x - x_0|^2 - \frac{|x - x_0|^2}{2}
= \frac{R^2}{4} + \frac{|x - x_0|^2}{2}.
\end{aligned}
\]
This implies that
\[
\begin{aligned}
\varphi(x) - \varphi(x_0) = q(|x - z_0|) - q(|x_0 - z_0|)
\ge b_0 \l(\sqrt{\frac{R^2}{4} + \frac{|x - x_0|^2}{2}} - \frac{R}{2} \r).
\end{aligned}
\]
We choose the constant $a$ such that $\varphi(x_0) = \wt{g}_{\ve_j}(x_0) + L \beta$
for some $\beta > 0$. From the inequality above, a sufficient condition for \eqref{eq:bdry-behavior-proof3} to be accomplished is
\begin{equation}\label{eq:bdry-behavior-proof4}
L \beta + b_0 \l(\sqrt{\frac{R^2}{4} + \frac{|x - x_0|^2}{2}} - \frac{R}{2} \r)
\ge L |x - x_0|,
\end{equation}
which clearly holds for any $x$ with $|x - x_0| \le \beta$. In addition, it is enough to have
\[
b_0 \sqrt{\frac{R^2}{4} + \frac{|x - x_0|^2}{2}} \ge L |x - x_0| + b_0 \frac{R}{2}.
\]
Simple calculations reveal that it suffices to require
\[
b_0 \ge \max\l\{ 2L, \; \frac{4LR}{|x-x_0|} \r\}.
\]
Therefore, if we have
\begin{equation}\label{eq:bdry-behavior-proof5}
b_0 \ge \max\l\{ 2L, \; \frac{4L}{\beta} \r\},
\end{equation}
then \eqref{eq:bdry-behavior-proof4} and \eqref{eq:bdry-behavior-proof2} mush hold because of $0 < R < 1$.

Once \eqref{eq:bdry-behavior-proof2} is true, by the comparison principle of \Cref{thm:simple-DCP} or \Cref{thm:DCP} we see that $u_{\frakh_j}(\mathtt{z}_{h_j}) \le \varphi(\mathtt{z}_{h_j})$ for any $\mathtt{z}_{h_j} \in \mathcal{N}_{h_j}$ and thus
\begin{equation*}
\overline{u}(x) = \limsup_{j \uparrow \infty, \Nhj \ni \mathtt{z}_{h_j} \to x} u_{\frakh_j}(\mathtt{z}_{h_j}) \le \limsup_{j \uparrow \infty, \Nhj \ni \mathtt{z}_{h_j} \to x} \varphi(\mathtt{z}_{h_j}) \le \varphi(x), \quad \forall x \in B_\beta(x_0).
\end{equation*}
This simply shows that
\[
\overline{u}(x_0) \le 
g(x_0) + L \beta.
\]
Since $\beta > 0$ can be chosen arbitrarily small with $b_0$ chosen afterwards, we have $\overline{u}(x_0) \le g(x_0)$ for any $x_0 \in \pO$ satisfying the exterior ball condition.

Finally, let us consider the point $x \in \pO$ where the exterior ball condition may not hold. Noticing that from the previous discussions, if the exterior ball condition is satisfied at $x_0 \in \pO$ $x \in B_\beta(x_0)\setminus \{x_0\}$, then
\[
\begin{aligned}
\overline{u}(x) &\le \varphi(x) \le \varphi(x_0) + b|x - x_0|
= g(x_0) + L \beta + b|x - x_0| \\
&\le g(x) + L|x - x_0| + L \beta + b |x - x_0|.
\end{aligned}
\]
For any $\delta > 0$, we may choose $\beta$ such that $L \beta < \delta/2$. Then we choose $b_0$ satisfying \eqref{eq:bdry-behavior-proof5} and obtain the parameter $b$. Since $\pO$ is assumed continuous and compact, there always exist an $x_0 \in \pO$ satisfying the exterior ball condition with $|x - x_0|<\tfrac\delta{2(L+b)}$. Consequently, 
\[
\overline{u}(x) \le g(x) + \delta,
\]
for any $\delta > 0$. This proves that $\overline{u}(x) \le g(x)$ for any $x \in \pO$ and finishes the proof.
\end{proof}

Gathering all our previous results we can assert convergence of our numerical scheme.

\begin{Theorem}[convergence]\label{thm:convergence}
Assume that the right hand side $f$ satisfies Assumptions \textbf{RHS}.\ref{Ass.RHS.cont}--\ref{Ass.RHS.f<>0}. Assume that the boundary datum $g$ satisfies Assumptions \textbf{BC}.\ref{Ass.BC.cont}--\ref{Ass.BC.ext}. Let $\{\frakh_j\}_{j=1}^\infty$ be a sequence of discretization parameters that satisfies \eqref{eq:para-condition}; and, if \textbf{RHS}.\ref{Ass.RHS.zero} holds, it additionally satisfies \textbf{M}.\ref{Ass.M.connect} for all $j \in \mathbb{N}$. In this framework we have that the sequence $\{u_{\frakh_j} \in \mathbb{V}_{h_j}\}_{j=1}^\infty$ of solutions to \eqref{eq:inf-laplace-discrete} converges uniformly, as $j \uparrow \infty$, to $u$, the solution of \eqref{eq:BVPNinfLaplace}.
\end{Theorem}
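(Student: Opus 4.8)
The plan is to run the Barles--Souganidis argument \cite{MR1115933} on the building blocks already assembled: the stability bound of \Cref{lemma:stability}, the interior consistency packaged in \Cref{lemma:consistency-combined}, the boundary attainment of \Cref{lemma:bdry}, and the comparison principle for \eqref{eq:BVPNinfLaplace} at the continuous level. First I would set up the half--relaxed limits. By \Cref{lemma:stability} the family $\{u_{\frakh_j}\}$ is bounded in $L^\infty$ uniformly in $j$, so the envelopes $\overline{u},\underline{u}$ of \eqref{eq:def-uo-uu} are finite; by construction $\overline{u}\in\USC(\Oc)$, $\underline{u}\in\LSC(\Oc)$, and trivially $\underline{u}\le\overline{u}$ pointwise on $\Oc$. (One uses here that a $\Vh$ function attains its extrema at nodes, so the envelopes built from nodal values agree with those built from the piecewise linear functions.)

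Next, observe that the standing hypothesis \eqref{eq:para-condition}, together with Assumption~\textbf{M}.\ref{Ass.M.connect} in case \textbf{RHS}.\ref{Ass.RHS.zero}, is exactly what is required to apply \Cref{lemma:consistency-combined} and \Cref{lemma:bdry}. The former gives that $\overline{u}$ is a viscosity subsolution of $-\Delta_{\infty}^{+}\cdot=f$ in $\Omega$ and $\underline{u}$ a viscosity supersolution of $-\Delta_{\infty}^{-}\cdot=f$ in $\Omega$; the latter gives $\overline{u}(x)=\underline{u}(x)=g(x)$ for all $x\in\partial\Omega$, so in particular $\overline{u}\le\underline{u}$ on $\partial\Omega$.

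Now I would invoke the comparison principle for the continuous problem \eqref{eq:BVPNinfLaplace} — available precisely because Assumption~\textbf{RHS}.\ref{Ass.RHS.f<>0} holds; see \cite{MR2846345,MR2372480} and the discussion in \Cref{sub:InfLapVisco} (in the homogeneous case \textbf{RHS}.\ref{Ass.RHS.zero} one additionally uses \cite[Remark 2.2]{MR2846345} identifying the solution with that of \eqref{eq:ELinfLap}). This comparison, applied to the USC subsolution $\overline{u}$ and the LSC supersolution $\underline{u}$ which satisfy $\overline{u}\le\underline{u}$ on $\partial\Omega$, yields $\overline{u}\le\underline{u}$ on all of $\Oc$. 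Combined with $\underline{u}\le\overline{u}$ we get $\overline{u}=\underline{u}=:u$. Being simultaneously USC and LSC, $u\in C(\Oc)$; it is at once a sub- and a supersolution with matching envelopes, hence a viscosity solution of \eqref{eq:BVPNinfLaplace} with $u=g$ on $\partial\Omega$, and by uniqueness it is \emph{the} solution of \eqref{eq:BVPNinfLaplace}.

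Finally I would upgrade the identity of the half--relaxed limits to uniform convergence by the usual argument: since $u\in C(\Oc)$ and $h_j\to0$, $\interpj u\to u$ uniformly on $\Oc$, so it suffices to bound $u_{\frakh_j}-\interpj u$, which (being in $\Vh$) attains its extrema at nodes; if uniform convergence failed there would be $\eta>0$, a subsequence, and nodes $\mathtt{z}_{h_j}\in\Nhj$ with $\mathtt{z}_{h_j}\to x_0\in\Oc$ and, say, $u_{\frakh_j}(\mathtt{z}_{h_j})\ge u(\mathtt{z}_{h_j})+\eta$ along it, whence $\overline{u}(x_0)\ge u(x_0)+\eta$ by continuity of $u$, contradicting $\overline{u}=u$ (the reverse case is symmetric, using $\underline{u}$). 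The main obstacle is not really in this theorem — the technical weight is in the lemmas already proved — but rather in the bookkeeping of envelopes: one must be careful that the discrete limits are sub/supersolutions in exactly the $\Delta_{\infty}^{\pm}$ sense for which the continuous comparison principle of \cite{MR2372480,MR2846345} is valid, in both the constant-sign case \textbf{RHS}.\ref{Ass.RHS.sign} and the homogeneous case \textbf{RHS}.\ref{Ass.RHS.zero}.
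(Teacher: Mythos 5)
Your proposal is correct and follows essentially the same route as the paper's proof: the Barles--Souganidis framework, with \Cref{lemma:consistency-combined} giving that $\overline{u},\underline{u}$ are sub-/supersolutions, \Cref{lemma:bdry} giving classical attainment of the boundary datum, and a comparison principle for \eqref{eq:BVPNinfLaplace} with classical Dirichlet conditions yielding $\overline{u}\le\underline{u}$, hence equality and uniform convergence. Your added detail on upgrading the identity of the half-relaxed limits to uniform convergence is a standard step the paper leaves implicit, and your citations for the continuous comparison principle differ only cosmetically from those the paper uses.
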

\begin{proof}
The proof follows under the framework of \cite{MR1115933}. To be specific, since our approximation scheme \eqref{eq:inf-laplace-discrete} is monotone (\Cref{lemma:monotonicity}), stable (\Cref{lemma:stability}) and consistent (\Cref{lemma:consistency} and \Cref{lemma:consistency-p0}), the argument in \cite{MR1115933} implies that $\overline{u}$ and $\underline{u}$ are a subsolution and supersolution to the continuous problem \eqref{eq:BVPNinfLaplace}, respectively; see \Cref{lemma:consistency-combined}.

Notice, in addition, that \Cref{lemma:bdry} implies that the Dirichlet boundary condition is attained in the classical sense for $\overline{u}$ and $\underline{u}$. Recall that \eqref{eq:BVPNinfLaplace} admits a comparison principle for Dirichlet boundary conditions in the classical sense: see \cite[Theorem 3.3]{MR2475319}, \cite{MR2592977}, and the discussions after \cite[Theorem 2.18]{MR2846345}, we thus have $\overline{u} \le \underline{u}$. This yields, since $\overline{u} \ge \underline{u}$ by definition, that $\overline{u} = \underline{u}$ and the uniform convergence of $u_{\frakh_j}$ to the solution $u$.
\end{proof}

\section{Rates of convergence}\label{sec:Rates}

In this section, we prove convergence rates for solutions of \eqref{eq:inf-laplace-discrete}. To be able to do this, additional regularity on the solution of \eqref{eq:BVPNinfLaplace} must be required. However, we recall that, as we mentioned in Section~\ref{sec:Intro}, the regularity theory for \eqref{eq:BVPNinfLaplace} is far from complete. Nevertheless, we have that, if there is $\alpha \in (0,1]$ for which $f \in C^{0,\alpha}(\Omega)$ and $g \in C^{0,\alpha}(\pO)$, then $u \in C^{0,\alpha}(\Oc)$; see the references mentioned in the Introduction, and \cite[Proposition 6.4]{MR2846345} for the case \textbf{RHS}.\ref{Ass.RHS.sign}, and Propositions 3.11 and 6.7 of \cite{MR2846345} for \textbf{RHS}.\ref{Ass.RHS.zero}.

In this section we will tacitly assume that \textbf{RHS}.\ref{Ass.RHS.cont}--\ref{Ass.RHS.f<>0}, \textbf{BC}.\ref{Ass.BC.cont}--\ref{Ass.BC.ext} are valid. We will also assume that, if \textbf{RHS}.\ref{Ass.RHS.zero}, then the meshes satisfy \textbf{M}.\ref{Ass.M.connect}. As a final assumption, we posit that there is $\alpha \in (0,1]$ such that $f \in C^{0,\alpha}(\Oc)$, $g \in C^{0,\alpha}(\pO)$, and $u \in C^{0,\alpha}(\Oc)$. 

We immediately note a consequence of this assumed regularity, which will be used repeatedly. For any $\vertex \in \Nhb{\ve}$, by our definition of $\Nhb{\ve}$, there exists 
$x \in \pO$ satisfying $|x - \vertex| \le 2\ve$. Then from the regularity assumptions on $g$ and $u$ we get, for any $z \in \overline{\Omega}$,
\begin{equation*}
  \begin{aligned}
    | \wt{g}_{\ve}(\vertex) - u(z) | &\le | \wt{g}_{\ve}(\vertex) - \wt{g}_{\ve}(x) | + 
    |\wt{g}_{\ve}(x) - g(x)| + |u(x) - u(z)| \\
    &\le (2\ve)^{\alpha} |\wt{g}_{\ve}|_{C^{0,\alpha}(\Oc)} + \Vert \wt{g}_{\ve} - g \Vert_{L^{\infty}(\Omega)} + |x - z|^{\alpha} |u|_{C^{0,\alpha}(\Oc)}.
  \end{aligned}
\end{equation*}
If, in addition, $|\vertex - z| \lesssim \ve$, then $|x - z| \lesssim \ve$ and the inequality above implies that
\begin{equation}\label{eq:bdry-gve-u}
\begin{aligned}
| \wt{g}_{\ve}(\vertex) - u(z) | \lesssim \ve^{\alpha} \l( |\wt{g}_{\ve}|_{C^{0,\alpha}(\Oc)} + |u|_{C^{0,\alpha}(\Oc)} \r)  + \Vert \wt{g}_{\ve} - g \Vert_{L^{\infty}(\Omega)} \lesssim \ve^{\alpha},
\end{aligned}
\end{equation}
where we used \textbf{BC}.\ref{Ass.BC.ext}. This shows that the boundary condition we enforce for the discrete problem induces an error of no more than $\mathcal{O}(\ve^{\alpha})$.

Let us, for convenience, provide here some notation. Given a function $w: \Omega \to \mR$, we define its local Lipschitz constant at $x \in \Omega$ as
\begin{equation}\label{eq:local-Lip}
  L(w, x) = \lim_{r \downarrow 0} \Lip(w, B_r(x)), \qquad \Lip(w,E) = \sup_{x,y \in E:  x \ne y} \frac{ |w(x)- w(y)|}{|x-y|}.
\end{equation}
We also define the operators $S_{\ve}^+,\  S_{\ve}^-$ as
\begin{equation}\label{eq:def-semi-S}
  S_{\ve}^+ w(x) = \frac1\ve \left( \max_{x' \in \overline{B}_\ve(x)} w(x') - w(x) \right), \quad
  S_{\ve}^- w(x) = \frac1\ve \left( w(x) - \min_{x' \in \overline{B}_\ve(x)} w(x') \right).
\end{equation}

We are now ready to prove rates of convergence. We split our discussions depending on how \textbf{RHS}.\ref{Ass.RHS.f<>0} is fulfilled. 

\subsection{Convergence rates for the inhomogeneous problem} \label{sub:Ratesfsign}

\begin{Theorem}[error estimate for the inhomogeneous problem]\label{Thm:inhomo-error}
Let $u$ be the viscosity solution of \eqref{eq:BVPNinfLaplace} and $\uve$ be the solution of \eqref{eq:inf-laplace-discrete}. Under our running assumptions suppose that \textbf{RHS}.\ref{Ass.RHS.sign} holds, and  that $\frakh$ is sufficiently small, and such that
\[
  \frac{ (\ve\theta)^\alpha + h^\alpha}{\ve^2},
\]
is small enough. Then
\[
  \Vert u-\uve \Vert_{L^{\infty}(\Omega_h)} \lesssim \ve^{\alpha} +
\frac{\theta^{\alpha}}{\ve^{2-\alpha}} + \frac{h^{\alpha}}{\ve^2},
\]
where the implied constant depends on the dimension $d$, the domain $\Omega$, the shape regularity of the mesh $\Th$, $\min_{x \in \Oc} |f(x)|$, and the $C^{0,\alpha}$ norms of the data $f$, $g$ and the solution $u$.
\end{Theorem}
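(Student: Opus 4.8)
The plan is to run a doubling-of-variables/barrier argument directly against the discrete scheme, comparing $u_\frakh$ with the regularized exact solution, and using the discrete comparison principle (\Cref{thm:simple-DCP}) to reduce the global error to a sum of a consistency error and a boundary error.

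First I would set up the comparison. Fix a small parameter $\mu>0$ and consider the sup-convolution $u^\mu(x) = \sup_{y \in \Oc}\{u(y) - \tfrac1{2\mu}|x-y|^2\}$ of the viscosity solution $u$ (or, if it is cleaner, work with $u$ mollified at scale $\mu$ and localize away from $\pO$). The key facts I need about $u^\mu$ are: (i) $\|u - u^\mu\|_{L^\infty(\Oc)} \lesssim \mu^{\alpha/(2-\alpha)}$-type bound coming from the $C^{0,\alpha}$ regularity — more precisely $0 \le u^\mu - u \lesssim (\mu|u|_{C^{0,\alpha}}^2)^{1/(2-\alpha)}$, or I simply use $\|u-u^\mu\|_\infty \lesssim \mu^{\alpha}$-scaling appropriately; (ii) $u^\mu$ is semiconvex, hence twice differentiable a.e., with $\De^2 u^\mu \ge -\mu^{-1}\bv{I}$; (iii) $u^\mu$ remains (almost) a viscosity subsolution, so that $-\Delta_\infty^+ u^\mu \le \sup f + (\text{small error from the shift})$ in the appropriate sense. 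Then the semidiscrete operator $S_\ve^\pm$ applied to $u^\mu$ can be compared to the continuous one. The subtle point is that $-\Delta_{\infty,\ve}^\diamond$ is only a second-difference operator, so I evaluate $\max$ and $\min$ over $\Bc_\ve$ of $u^\mu$; using semiconvexity and the pointwise Hessian bound, a Jensen/Taylor argument gives $-\Delta_{\infty,\ve}^\diamond u^\mu(x) \le -\Delta_\infty^+ u^\mu(x) + C(\ve + \mu^{-1}\ve^{?})$ wherever $\De u^\mu \ne 0$, and when $\De u^\mu = 0$ one uses that $u^\mu$ has no strict interior max near such points unless $\De^2 u^\mu$ is negative definite — but because $\sup f < 0$ (Assumption~\textbf{RHS}.\ref{Ass.RHS.sign}) the degenerate-Hessian scenario contributes a genuinely helpful sign, which is exactly why the inhomogeneous rate beats the homogeneous one.

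Next I pass from the semidiscrete to the fully discrete operator. The further discretizations — restricting to $\mathcal{N}_\frakh(\vertex)$ rather than all of $\Bc_\ve(\vertex)$, which costs $\mathcal{O}(\ve\theta)$ in the argument of $u^\mu$ hence $\mathcal{O}(\theta/\ve \cdot |\De u^\mu|)$ or $\mathcal{O}(\theta^\alpha \ve^{\alpha-1}\cdot\ve^{-1})$ after the $\ve^{-2}$ normalization, and interpolating $u^\mu$ onto $\Vh$, which costs $\|u^\mu - \interp u^\mu\|_\infty \lesssim h^2 \|\De^2 u^\mu\|_\infty \lesssim h^2/\mu$ hence $h^2/(\mu\ve^2)$ — are tracked exactly as in the proof of \Cref{lemma:consistency}. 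Collecting terms, $w_h := \interp u^\mu + (\text{constant offset})$ becomes a discrete supersolution of \eqref{eq:inf-laplace-discrete} once the offset absorbs the consistency surplus and the boundary gap $\mathcal{O}(\ve^\alpha)$ from \eqref{eq:bdry-gve-u}. The second alternative of \Cref{thm:simple-DCP} applies because $-\Delta_{\infty,\frakh}^\diamond w_h > 0$ (again using $\sup f<0$ and that the consistency perturbations are small), so comparison yields $u_\frakh - w_h \le \max_{\Nhb{\ve}}(u_\frakh - w_h) \le 0$ up to the boundary error, i.e. $u_\frakh \le \interp u^\mu + C(\text{offset})$ on all of $\Nh$. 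The symmetric construction with the inf-convolution $u_\mu$ gives the lower bound.

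Finally I optimize. Adding back $\|u - u^\mu\|_\infty$ and writing the total as $\ve^\alpha + \theta^\alpha\ve^{\alpha-2}\cdot(\text{from the }\theta\text{-discretization}) + h^2\mu^{-1}\ve^{-2} + \mu^{\alpha/(2-\alpha)} + \dots$, I choose $\mu$ to balance the interpolation/semiconvexity term $h^2/(\mu\ve^2)$ against the sup-convolution error $\mu^{\alpha}$ (or $\mu^{\alpha/(2-\alpha)}$), which produces the $h^\alpha/\ve^2$ contribution in the statement, and similarly the $\theta$-term optimizes to $\theta^\alpha/\ve^{2-\alpha}$; the smallness hypothesis on $((\ve\theta)^\alpha + h^\alpha)/\ve^2$ is precisely what guarantees $w_h$ is a strict discrete supersolution so that \Cref{thm:simple-DCP} can be invoked. \textbf{The main obstacle} I anticipate is controlling the consistency of the bare second-difference operator $-\Delta_{\infty,\ve}^\diamond$ against a merely semiconvex, non-$C^3$ competitor: one cannot quote \Cref{lemma:consistency} (which needs $C^3$ and nonvanishing gradient) nor \Cref{lemma:consistency-p0} (which needs an exact quadratic) directly, so the honest version requires either (a) a careful max/min-of-a-semiconvex-function estimate exploiting the a.e. Hessian bound together with a covering/measure argument on where $\De u^\mu$ is small, or (b) working with a $C^\infty$ mollification and absorbing the extra $\|\De^3\|$-blowup $\sim\mu^{-2}$ into the optimization — which changes the exponent bookkeeping but, with $\sup f<0$ killing the bad degenerate case, still lands on $\mathcal{O}(h^{1/3})$ at best when $\alpha=1$.
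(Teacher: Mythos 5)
Your route (sup--convolution regularization plus a pointwise consistency estimate for the regularized function) diverges from the paper's proof, and the step you yourself flag as ``the main obstacle'' is a genuine gap, not a technicality. Semiconvexity of $u^\mu$ gives only a one--sided Taylor bound, and the maximizing and minimizing points of $u^\mu$ over $\Bc_\ve(\vertex)$ need not lie in opposite directions, so the first--order terms in the second difference do not cancel; neither \Cref{lemma:consistency} nor \Cref{lemma:consistency-p0} applies, and your fallback (b) of mollifying reintroduces the constant of \Cref{lemma:consistency}, which degenerates like $|\De\varphi(\vertex)|^{-1}$ near critical points of $u$ --- and there is no lower bound on $|\De u|$ even when $f$ has a sign (e.g.\ $u(x,y)=(1-x^2)/2$, $f\equiv 1$, has a whole critical segment). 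The claim that the sign of $f$ ``kills'' the degenerate case is not substantiated: in the paper the sign of $f$ is used elsewhere entirely. The paper's proof bypasses pointwise consistency for nonsmooth competitors altogether: it works with the ball--max $u^{\ve}(x)=\max_{y\in\Bc_\ve(x)}u(y)$, which interacts exactly with the scheme's max/min structure, and imports the PDE information through the Lipschitz--estimate (comparison--with--cones type) result of \cite[Lemma 5.2]{MR2846345}, valid for merely $C^{0,\alpha}$ solutions; this yields $S_\ve^- u^\ve(\vertex)-S_\ve^+u^\ve(\vertex)\le \ve f(\vertex)+C\ve^{1+\alpha}$ directly, and the interpolation error is then measured in the $C^{0,\alpha}$ seminorm, producing the $h^\alpha/\ve^2$ term (your $h^2/(\mu\ve^2)$ plus $\mu$--optimization does not obviously reproduce the stated bound).

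A second, independent gap is the absorption step: you propose that $\interp u^\mu$ plus a \emph{constant offset} becomes a discrete supersolution ``once the offset absorbs the consistency surplus.'' But $-\Delta^\diamond_{\infty,\frakh}$ annihilates constants, so an additive constant changes nothing in the interior inequality, and because the operator is nonlinear you cannot simply add a barrier corrector either. This is precisely where Assumption \textbf{RHS}.2a enters the paper's argument: one rescales multiplicatively, considering $(1-I_1)u^\ve$ with $I_1\approx(\ve^\alpha+\theta^\alpha\ve^{\alpha-2}+h^\alpha\ve^{-2})/\min_{\Oc}f$, and uses the positive $1$--homogeneity of the discrete operator so that the surplus is dominated by $I_1 f$; the theorem's smallness hypothesis guarantees $I_1<1$, after which \Cref{thm:simple-DCP} closes the argument, with the boundary error handled by \eqref{eq:bdry-gve-u}. (Minor but symptomatic: your orientation is flipped --- the sup--convolution preserves \emph{sub}solutions, so it yields the \emph{lower} bound on $\uve$, the inf--convolution the upper bound.) Without a quantitative consistency estimate for your regularized function and with only an additive constant available to absorb it, the comparison step cannot be completed as written.
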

\begin{proof}
For convenience, we assume that $\min_{x \in \Oc}f(x) > 0$. The proof for the case when right hand is strictly negative follows in a similar way.

Let $u$ be the solution to the continuous problem \eqref{eq:BVPNinfLaplace}. To derive error estimates, we consider the function $u^{\ve}$ defined as
\begin{equation}
\label{eq:DefOfUSuperEps}
  u^{\ve}(x) = \max_{y \in \overline{B}_\ve(x)} u(y), \qquad x \in \Omega^{(\ve)}.
\end{equation}
For $\vertex \in \Nhi{\ve}$ let the points $y \in \overline{B}_\ve(\vertex)$ and $y' \in \overline{B}_\ve(y)$ be defined by
\[
  u(y) = u^{\ve}(\vertex), \qquad u(y') = u^{\ve}(y).
\]
Since $\vertex \in \Nhi{\ve} = \Nh \cap \Omega^{(2 \ve)}$, we have $y, y' \in \Omega$.

According to \cite[Lemma 5.2]{MR2846345}, $u$ is locally Lipschitz and
\[
  S_{\ve}^+ u(\vertex) -  \frac{\ve}{2} f^{\ve}(\vertex) \le L(u, y) \le S_{\ve}^+ u(y) + \frac{\ve}{2} f^{\ve}(y).
\]
This implies that
\[
  \ve S_{\ve}^- u^{\ve}(\vertex) = u^{\ve}(\vertex) - \min_{x \in \overline{B}_\ve(\vertex)} u^{\ve}(x) \le u(y) - u(\vertex) = \ve S_{\ve}^+ u(\vertex)  \le \ve \l( L(u, y) + \frac{\ve}{2} f^{\ve}(\vertex) \r),
\]
and
\[
  \ve S_{\ve}^+ u^{\ve}(\vertex) =  \max_{x \in \overline{B}_\ve(\vertex)} u^{\ve}(x) - u^{\ve}(\vertex) \ge u^{\ve}(y) - u(y) = 
  \ve S_{\ve}^+ u(y) \ge \ve \l( L(u, y) - \frac{\ve}{2} f^{\ve}(y) \r).
\]
This implies that
\begin{equation}\label{eq:thm-error1-proof1}
  S_{\ve}^- u^{\ve}(\vertex) - S_{\ve}^+ u^{\ve}(\vertex) \le
  \frac{\ve}{2} \l( f^{\ve}(\vertex) + f^{\ve}(y) \r) \le \ve f(\vertex) + C \ve^{1+\alpha} |f|_{C^{0,\alpha}(\overline{B}_{2\ve}(\vertex))}.
\end{equation}
Since $\mathcal{N}_{\frakh}(\vertex) \subset \overline{B}_\ve(\vertex)$, we have
\begin{equation}\label{eq:thm-error1-proof2}
\begin{aligned}
  & \left(  S_\frakh^- u^{\ve}(\vertex) - S_\frakh^+ u^{\ve}(\vertex) \right) - \left( S_{\ve}^- u^{\ve}(\vertex) - S_{\ve}^+ u^{\ve}(\vertex) \right) \\
  & = \frac1\ve \left( \max_{x \in \overline{B}_\ve(\vertex)} u^{\ve}(x) - \max_{x \in \mathcal{N}_{\frakh}(\vertex)} u^{\ve}(x) - \min_{x \in \mathcal{N}_{\frakh}(\vertex)} u^{\ve}(x) + \min_{x \in \overline{B}_\ve(\vertex)} u^{\ve}(x) \right) \\
  & \le \frac1\ve \left( \max_{x \in \overline{B}_\ve(\vertex)} u^{\ve}(x) - \max_{x \in \mathcal{N}_{\frakh}(\vertex)} u^{\ve}(x) \right).
\end{aligned}
\end{equation}
To control the right hand side, we notice that
\[
  \overline{B}_{2\ve}(\vertex) = \bigcup_{y \in \partial \overline{B}_\ve(\vertex)} \overline{B}_\ve(y), \quad
  \max_{x \in \overline{B}_\ve(\vertex)} u^{\ve}(x) = \max_{x \in \overline{B}_{2\ve}(\vertex)} u(x) = \max_{y \in \partial \overline{B}_\ve(\vertex)} \max_{x \in \overline{B}_\ve(y)} u(x),
\]
which guarantees that for some $\vertex + \ve \bv{v} \in \partial \overline{B}_\ve(\vertex)$ with $|\bv{v}| = 1$, 
\[
  \max_{x \in \overline{B}_\ve(\vertex)} u^{\ve}(x) = \max_{x \in \overline{B}_\ve(\vertex + \ve \bv{v})} u(x) = u^{\ve}(\vertex + \ve \bv{v}).
\]
By definition of $\St$, there exists $\bv{v}_{\theta} \in \St$ such that $|\bv{v}_{\theta} - \bv{v}| \le \theta$.	
Since $\vertex + \ve \bv{v}_{\theta} \in \mathcal{N}_{\frakh}(\vertex)$,
\begin{equation}\label{eq:thm-error1-proof3}
  \max_{x \in \overline{B}_\ve(\vertex)} u^{\ve}(x) - \max_{x \in \mathcal{N}_{\frakh}(\vertex)} u^{\ve}(x)
  \le u^{\ve}(\vertex + \ve \bv{v}) - u^{\ve}(\vertex + \ve \bv{v}_{\theta})
  \lesssim (\ve \theta)^{\alpha} |u|_{C^{0,\alpha}(\overline{B}_{2\ve}(\vertex))}.
\end{equation}

Recall that 
\[
  -\Delta_{\infty, \frakh}^\diamond w(\vertex) = -\frac1\ve \left( S_\frakh^+ \interp w(\vertex) - S_\frakh^- \interp w(\vertex) \right),
\]
we also have
\[
  \left| -\Delta_{\infty, \frakh}^\diamond u^{\ve}(\vertex) - \frac1\ve \left(  S_\frakh^- u^{\ve}(\vertex) - S_\frakh^+ u^{\ve}(\vertex) \right) \right|
  \lesssim \frac1{\ve^2} \Vert u^{\ve} - \interp u^{\ve} \Vert_{L^{\infty}(B_\ve(\vertex))}
  \lesssim  \frac{h^{\alpha}}{\ve^2} |u|_{C^{0,\alpha}(\overline{B}_{2\ve}(\vertex))}.
\]
Combine now \eqref{eq:thm-error1-proof1}---\eqref{eq:thm-error1-proof2}---\eqref{eq:thm-error1-proof3} to obtain
\[
-\Delta_{\infty, \frakh}^\diamond u^{\ve}(\vertex) \le  f(\vertex) + C\ve^{\alpha} |f|_{C^{0,\alpha}(\overline{B}_{2\ve}(\vertex))} + C \frac{(\ve \theta)^{\alpha}}{\ve^2}  |u|_{C^{0,\alpha}(\overline{B}_{2\ve}(\vertex))} + C \frac{h^{\alpha}}{\ve^2}  |u|_{C^{0,\alpha}(\overline{B}_{2\ve}(\vertex))},
\]
leading to
\[
  -\Delta_{\infty, \frakh}^\diamond u^{\ve}(\vertex) \le f(\vertex) + C\ve^{\alpha} |f|_{C^{0,\alpha}(\Oc)} + C \left( \frac{\theta^{\alpha}}{\ve^{2-\alpha}} 
  + \frac{h^{\alpha}}{\ve^2} \right)
  |u|_{C^{0,\alpha}(\Oc)}.
\]
Choose
\begin{equation}\label{eq:inhomo-error-proof-I1}
  I_1 = \frac{C}{ \min_{x \in \Oc} f(x) } \l( \ve^{\alpha} |f|_{C^{0,\alpha}(\Oc)} +  \left( \frac{\theta^{\alpha}}{\ve^{2-\alpha}} 
+ \frac{h^{\alpha}}{\ve^2} \right) |u|_{C^{0,\alpha}(\Oc)} \r).
\end{equation}
Observe that, for sufficiently small $\frakh = (h,\ve,\theta)$, we have $I_1 < 1$. Consequently, 
\[
\begin{aligned}
  -\Delta_{\infty, \frakh}^\diamond[ (1 - I_1 ) u^{\ve} ](\vertex) 
  &\le (1 - I_1) \l( f(\vertex) + C\ve^{\alpha} |f|_{C^{0,\alpha}(\Oc)} + C \left( \frac{\theta^{\alpha}}{\ve^{2-\alpha}} 
  + \frac{h^{\alpha}}{\ve^2} \right)
  |u|_{C^{0,\alpha}(\Oc)} \r) \\
  &\le f(\vertex) = -\Delta_{\infty, \frakh}^\diamond \uve (\vertex).
\end{aligned}
\]
By the discrete comparison principle of \Cref{thm:simple-DCP} we have, for all $\vertex \in \Nhi{\ve}$,
\[
\begin{aligned}
  \uve(\vertex) &\ge (1 - I_1 ) u^{\ve}(\vertex) + \min_{\othervertex \in \Nhb{\ve}} [\wt{g}_\ve(\othervertex) - (1 - I_1 ) u^{\ve}(\othervertex) ] \\
  &\ge u^{\ve}(\vertex) - I_1 \max_{x \in \overline{\Omega}} u(x) + I_1
  \min_{\othervertex \in \Nhb{\ve}} u^{\ve} (\othervertex)
  + \min_{\othervertex \in \Nhb{\ve}} (\wt{g}_\ve(\othervertex) - u^{\ve}(\othervertex) ) \\
  &\ge u(\vertex) - I_1 \osc_{\Omega} u - C \l( |\wt{g}_\ve|_{C^{0,\alpha}(\Oc)} + |u|_{C^{0,\alpha}(\Oc)} \r) \ve^{\alpha} - \Vert g - \wt{g}_{\ve} \Vert_{L^{\infty}(\pO)},
\end{aligned}
\]
where $\osc_{\Omega} u = \max_{x \in \Oc} u(x) - \min_{x \in \Oc} u(x)$ and we used \eqref{eq:bdry-gve-u} in the derivation.
We can obtain a similar upper bound for $\uve$ and hence
\[
\Vert u-\uve \Vert_{L^{\infty}(\Omega_h)} \lesssim \ve^{\alpha} + \frac{\theta^{\alpha}}{\ve^{2-\alpha}} + \frac{h^{\alpha}}{\ve^2}.
\]
This proves the result.
\end{proof}

By properly scaling all discretization parameters, the previous result allows us to obtain explicit rates of convergence. The following result is the first explicit rate of convergence for problems involving the normalized $\infty$--Laplacian.

\begin{Corollary}[convergence rates ($f \not\equiv0$)] \label{cor:RatesInhom}
Under the same setting and assumptions as in \Cref{Thm:inhomo-error}, we can choose 
\[
  \ve \approx h^{\frac{\alpha}{2+\alpha}}, \qquad \ve\theta \approx h,
\]
to obtain
\[
  \Vert u - \uve \Vert_{L^{\infty}(\Omega_h)} \lesssim   
  h^{\frac{\alpha^2}{2+\alpha}}.
\]
In particular, if $u \in C^{0,1}(\Oc)$, we have the error estimate
\[
  \| u - \uve \|_{L^\infty(\Omega_h)} \lesssim h^{1/3}.
\]
\end{Corollary}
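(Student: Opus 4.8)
The plan is to simply insert the proposed scaling into the error bound of \Cref{Thm:inhomo-error} and optimize the exponents. Recall that that theorem asserts, for $\frakh$ sufficiently small (and such that $((\ve\theta)^\alpha + h^\alpha)/\ve^2$ is small), that
\[
  \Vert u - \uve \Vert_{L^{\infty}(\Omega_h)} \lesssim \ve^\alpha + \frac{\theta^\alpha}{\ve^{2-\alpha}} + \frac{h^\alpha}{\ve^2}.
\]
First I would impose the relation $\ve\theta \approx h$, i.e.\ $\theta \approx h/\ve$. Substituting this into the middle term gives $\theta^\alpha \ve^{-(2-\alpha)} \approx (h/\ve)^\alpha \ve^{-(2-\alpha)} = h^\alpha \ve^{-2}$, so the second and third terms coincide up to a multiplicative constant, and the bound collapses to $\Vert u - \uve \Vert_{L^{\infty}(\Omega_h)} \lesssim \ve^\alpha + h^\alpha \ve^{-2}$.

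Next I would balance the two surviving terms: setting $\ve^\alpha = h^\alpha \ve^{-2}$ forces $\ve^{2+\alpha} = h^\alpha$, that is $\ve \approx h^{\alpha/(2+\alpha)}$, which is precisely the prescribed choice. With this $\ve$ one computes $\ve^\alpha = h^{\alpha^2/(2+\alpha)}$ and $h^\alpha\ve^{-2} = h^{\alpha - 2\alpha/(2+\alpha)} = h^{\alpha^2/(2+\alpha)}$, so the bound becomes $\Vert u - \uve \Vert_{L^{\infty}(\Omega_h)} \lesssim h^{\alpha^2/(2+\alpha)}$, the claimed rate. Specializing to $\alpha = 1$ gives $\ve \approx h^{1/3}$ and $\theta \approx h^{2/3}$, hence $\Vert u - \uve \Vert_{L^\infty(\Omega_h)} \lesssim h^{1/3}$.

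The only point requiring care — and the closest thing to an obstacle here — is verifying that these choices are admissible and that the standing hypotheses of \Cref{Thm:inhomo-error} are satisfied once $h$ is small. Since $\alpha/(2+\alpha) < 1$, we have $\ve = h^{\alpha/(2+\alpha)} \ge h$ for $h$ small, and trivially $\ve \le \diam(\Omega)$; moreover $\theta \approx h/\ve = h^{2/(2+\alpha)} \to 0$, so $\theta \le 1$ eventually. Finally, using $\ve\theta \approx h$, the smallness quantity satisfies $((\ve\theta)^\alpha + h^\alpha)/\ve^2 \approx h^\alpha\ve^{-2} = h^{\alpha^2/(2+\alpha)} \to 0$, so the running smallness assumption of \Cref{Thm:inhomo-error} holds for all sufficiently small $h$. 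This closes the argument; the implied constant is inherited verbatim from \Cref{Thm:inhomo-error}.
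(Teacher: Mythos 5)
Your proposal is correct and follows the same route as the paper, whose proof is simply to substitute the prescribed scalings into the bound of \Cref{Thm:inhomo-error}; your additional check that $\ve\in[h,\diam(\Omega)]$, $\theta\le 1$, and the smallness condition hold for small $h$ is a welcome but routine verification. Nothing further is needed.
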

\begin{proof}
Use the indicated scalings.
\end{proof}

\subsection{Convergence rates for the homogeneous problem}\label{sub:Ratesfzero}

Let us now obtain convergence rates in the case that Assumption \textbf{RHS}.\ref{Ass.RHS.zero} holds. First of all, to understand why different arguments are needed in this case consider two problems of the form \eqref{eq:BVPNinfLaplace}, with the same boundary condition but different right hand sides:
\[
  -\Delta^\diamond_{\infty} u_1(x) = f_1(x), \qquad 
  -\Delta^\diamond_{\infty} u_2(x) = f_2(x), \qquad 
  \forall x \in \Omega.
\]
If $f_1(x), f_2(x) \ge f_0>0$ for all $x \in \Oc$, then we have the stability result
\[
\Vert u_1 - u_2 \Vert_{L^{\infty}(\Omega)} \lesssim \dfrac{1}{f_0} \Vert f_1 - f_2 \Vert_{L^{\infty}(\Omega)};
\]
see \cite[Proposition 6.3]{MR2846345}.
However, in general, if $f_1, f_2$ are not bounded away from zero, it is not possible to have a stability estimate of the form
\[
\Vert u_1 - u_2 \Vert_{L^{\infty}(\Omega)} \lesssim \Vert f_1 - f_2 \Vert_{L^{\infty}(\Omega)}.
\]
In fact, if this could be proved, then the error estimates that we prove below can be improved. The optimal case would also give  $\mathcal{O}(h^{1/3})$.

\begin{Theorem}[error estimate for the homogeneous problem]\label{Thm:homo-error}
Let $u$ be the viscosity solution of \eqref{eq:BVPNinfLaplace} and $\uve$ be the solution of \eqref{eq:inf-laplace-discrete}. Under our running assumptions suppose that \textbf{RHS}.\ref{Ass.RHS.zero} and, for all $h>0$, \textbf{M}.\ref{Ass.M.connect} holds. If $\frakh$ is sufficiently small, such that $\ve - \ve \theta - h > 0$, and
\[
  (2h + \ve \theta)/\ve^2,
\]
can be made sufficiently small, then we have
\[
\Vert u- \uve \Vert_{L^{\infty}(\Omega_h)} \lesssim \ve^{\alpha} 
+ \frac{\sqrt{2h + \ve \theta}}{\ve},
\]
where the implied constant depends only on the dimension $d$, the domain $\Omega$, the shape regularity of the family $\{\Th\}_{h>0}$, and the $C^{0,\alpha}$ norms of the data $g$ and the solution $u$.
\end{Theorem}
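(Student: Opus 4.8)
The plan is to mimic the architecture of the proof of \Cref{Thm:inhomo-error} — regularize $u$ by sup-/inf-convolution, show the regularization is an (almost) discrete sub-/supersolution, and then compare with $\uve$ — but the multiplicative contraction $u^\ve \rightsquigarrow (1-I_1)u^\ve$ that absorbed the consistency defect when $f$ was of constant sign is now unavailable, since $\min_{\Oc}|f| = 0$. It will be replaced by an appeal to \Cref{thm:DCP}, which needs no strict inequality but forces us to carry the defect along, at the price of a weaker rate. As before it suffices to bound $u-\uve$ from above on $\Omega_h$, the lower bound following by running the argument with the inf-convolution $u_\ve(x) = \min_{y \in \Bc_\ve(x)} u(y)$. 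Two facts about $u$ will be used repeatedly: being infinity harmonic it is \emph{locally} Lipschitz, and the interior gradient estimate together with $\osc_{\Bc_{2\ve}(\vertex)} u \le (2\ve)^\alpha |u|_{C^{0,\alpha}(\Oc)}$ gives $\Lip(u,\Bc_\ve(\vertex)) \lesssim \ve^{\alpha-1}|u|_{C^{0,\alpha}(\Oc)}$ for every $\vertex \in \Nhi{\ve}$; moreover $u$, and each of its sup-convolutions, enjoys comparison with cones.

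Step 1 (semidiscrete subsolution). Set $u^\ve(x) = \max_{y\in\Bc_\ve(x)}u(y)$ on $\Omega^{(\ve)}$. Repeating the computation in the proof of \Cref{Thm:inhomo-error} but with $f\equiv 0$ in \cite[Lemma 5.2]{MR2846345} (which then reads $S_\ve^+ u(\vertex) \le L(u,y) \le S_\ve^+ u(y)$ with $u(y) = u^\ve(\vertex)$) yields $\ve S_\ve^- u^\ve(\vertex) \le u(y) - u(\vertex) = \ve S_\ve^+ u(\vertex) \le \ve L(u,y)$ and $\ve S_\ve^+ u^\ve(\vertex) \ge u^\ve(y) - u(y) = \ve S_\ve^+ u(y) \ge \ve L(u,y)$, hence $S_\ve^- u^\ve(\vertex) \le S_\ve^+ u^\ve(\vertex)$; equivalently $-\Delta_{\infty,\ve}^\diamond u^\ve(\vertex) \le 0$ for every $\vertex \in \Nhi{\ve}$.

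Step 2 (fully discrete defect). Next I would compare $-\Delta_{\infty,\ve}^\diamond u^\ve(\vertex)$ with $-\Delta_{\infty,\frakh}^\diamond \interp u^\ve(\vertex)$. Since $u^\ve$ is infinity subharmonic, $\max_{\Bc_\ve(\vertex)} u^\ve$ is attained on $\partial B_\ve(\vertex)$, so it is matched by some $\vertex + \ve \bv v_\theta \in \mathcal N_\frakh(\vertex)$ up to a direction error $\le \theta$; combining this with the $\mathcal O(h)$ Lagrange interpolation error, and keeping track that the nodes of $\wt{\mathcal N}_\frakh(\vertex)$ reach out only to distance $\ve + h$ — here the hypothesis $\ve - \ve\theta - h > 0$ ensures those nodes still lie in $\Omega$, so $\interp u^\ve$ is defined on $\mathcal N_\frakh(\vertex)$, and that $\mathcal N_\frakh(\vertex)$ genuinely straddles $\vertex$ — one obtains
\[
  \bigl(S_\frakh^- \interp u^\ve - S_\frakh^+ \interp u^\ve\bigr)(\vertex) - \bigl(S_\ve^- u^\ve - S_\ve^+ u^\ve\bigr)(\vertex) \lesssim \frac{\ve\theta + h}{\ve}\,\Lip(u,\Bc_{2\ve}(\vertex)).
\]
With Step 1 this gives $-\Delta_{\infty,\frakh}^\diamond \interp u^\ve(\vertex) \le \sigma$ for all $\vertex \in \Nhi{\ve}$, where $\sigma \approx (2h + \ve\theta)/\ve^2$ (up to the relevant local Lipschitz norm of $u$) is nonnegative and, by hypothesis, small.

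Step 3 (the crux, and the main obstacle). Now $\interp u^\ve$ is only an \emph{approximate} discrete subsolution, and because $\Delta_{\infty,\frakh}^\diamond$ is neither linear nor variational the defect $\sigma$ can be cancelled neither by a rescaling (as in \Cref{Thm:inhomo-error}) nor by an additive concave shift — the $\max/\min$ structure of the operator defeats the latter. I expect the argument to rest instead on a quantitative, slack-tolerant refinement of the proof of \Cref{thm:DCP} applied to the pair $(\interp u^\ve, \uve)$ (recall $-\Delta_{\infty,\frakh}^\diamond \uve \equiv 0 \ge 0$, so the structural hypothesis of \Cref{thm:DCP} is met): at an interior maximum $\vertex$ of $\interp u^\ve - \uve$ monotonicity forces $S_\frakh^\pm \interp u^\ve(\vertex)$ to agree with $S_\frakh^\pm \uve(\vertex)$ up to $\mathcal O(\sigma\ve)$, so the level sets $E \supset F$ of that proof propagate \emph{approximately} rather than exactly; the symmetry of $\St$ and the straddling condition $\ve - \ve\theta - h > 0$ introduce a curvature/concavity gain which, when the approximate level sets are chained along an $\mathcal N_\frakh$-adjacent path joining $\vertex$ to $\Nhb{\ve}$ — such a path exists and has length $\mathcal O(\diam(\Omega)/\ve)$ by Assumption~\textbf{M}.\ref{Ass.M.connect} — accumulates to $\mathcal O(\diam(\Omega)\sqrt{\sigma\ve^2}/\ve) = \mathcal O(\sqrt{2h+\ve\theta}/\ve)$ rather than to the naive $\mathcal O(\diam(\Omega)\sigma\ve)$; the square root is exactly the balance between the second-difference nature of the defect and the number of layers. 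Absorbing the boundary mismatch $\max_{\Nhb{\ve}}(\interp u^\ve - \uve) \lesssim \ve^\alpha$ via \eqref{eq:bdry-gve-u} and $|u^\ve - u| \lesssim \ve^\alpha$, and using $u^\ve \ge u$ at the nodes, then yields $u(\vertex) - \uve(\vertex) \lesssim \ve^\alpha + \sqrt{2h + \ve\theta}/\ve$. The matching lower bound comes from the symmetric argument applied to $u_\ve$, again closing the boundary terms with \eqref{eq:bdry-gve-u}. The delicate point — and the place where the whole proof lives or dies — is making the slack propagation in Step 3 quantitative with the correct $\sqrt{\cdot}$ gain; Steps 1–2 are routine variants of \Cref{Thm:inhomo-error}.
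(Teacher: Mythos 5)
Your Steps 1--2 (sup-convolve, measure the fully discrete defect of order $\beta=(2h+\ve\theta)/\ve^2$) are fine and parallel to the paper, but the crux, Step 3, is a genuine gap: you never produce a mechanism that absorbs the defect, you only conjecture a ``slack-tolerant'' refinement of \Cref{thm:DCP} with a chaining argument and assert, without any computation, that it yields a $\sqrt{2h+\ve\theta}/\ve$ loss. Nothing in the proof of \Cref{thm:DCP} supports this: that argument is rigid (it propagates \emph{exact} level sets via exact equalities of $S_\frakh^{\pm}$ at the maximum set), and no quantitative version with per-layer slack is formulated, let alone one with a square-root gain --- indeed your own bookkeeping is inconsistent, since the ``naive'' linear accumulation $\sigma\ve\cdot\diam(\Omega)/\ve=(2h+\ve\theta)\diam(\Omega)/\ve$ would be \emph{smaller} than the bound you are trying to prove, which signals that the heuristic is not tracking the real obstruction. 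Moreover, you dismiss precisely the device that makes the theorem work: the paper does not correct additively in space, but composes with a convex quadratic in the \emph{value} variable, $q(t)=t+a(t-m_\gamma)^2$, and this only produces a usable gain if the function being composed has local Lipschitz constant bounded \emph{below}. Since $u$ itself may be flat, the paper first replaces $u$ by the Armstrong--Smart approximation $v_\gamma$ of \cite{MR2372480} (infinity subharmonic, $v_\gamma=g$ on $\pO$, $L(v_\gamma,\cdot)\ge\gamma$, $u-\gamma\diam(\Omega)\le v_\gamma\le u$), then takes the sup-convolution at radius $\ve+h$ (not $\ve$, so that nodal interpolation causes no loss in the $S_\frakh^-$ bound), and shows that $q(v_\gamma^{\ve+h})$ is an \emph{exact} discrete subsolution provided $a\gamma\ve^2\gtrsim 2h+\ve\theta$: the convexity gain $ab^2(\delta^2+(\ve+h)^2)$, with $b\ge\gamma$ and $\delta=\ve-\ve\theta-h$, dominates the asymmetry $(2h+\ve\theta)b(1+2a(v_\gamma-m_\gamma))$. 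Then \Cref{thm:DCP} is applied as is, and the error incurred by the modification is $a(M-m)^2+\gamma\diam(\Omega)+ (\ve+h)^\alpha|u|_{C^{0,\alpha}(\Oc)}$ plus the boundary terms handled by \eqref{eq:bdry-gve-u}; balancing $a=\gamma=\sqrt{3\beta}$ is what produces the rate $\sqrt{2h+\ve\theta}/\ve$. So the square root comes from trading the size of the perturbation ($a+\gamma$) against the constraint $a\gamma\gtrsim\beta$, not from accumulating slack along $\mathcal{N}_\frakh$-paths; without the non-flat approximation $v_\gamma$ and the convex composition (or some genuine substitute), your outline cannot be completed.
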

\begin{proof}	
Similar to the proof of \Cref{Thm:inhomo-error}, we aim to construct a discrete subsolution. To achieve this, first, we employ the approximation introduced in \cite[Theorem 2.1]{MR2372480}; see also the proof of Theorem 2.19 in \cite{MR2846345}. This result asserts, for $\gamma>0$, the existence of $v_{\gamma} \in C(\overline{\Omega}) \cap W^{1,\infty}_{\mathrm{loc}}(\Omega)$ with the following properties:
\[
  -\Delta_{\infty}^{\diamond} v_{\gamma} \le 0,  \text{ in } \Omega, \quad
    v_{\gamma} = g,  \text{ on } \partial \Omega, \quad
    L(v_{\gamma},\cdot) \ge \gamma,  \text{ in } \Omega, \quad
    u - \gamma \diam(\Omega) \le v_{\gamma} \le u,  \text{ in } \Omega,
\]
where $L(v_{\gamma}, \cdot)$ is the local Lipschitz constant defined in \eqref{eq:local-Lip}. Let $m = \min_{x \in \Oc} u(x)$, and $ M = \max_{x \in \Oc} u(x)$. We have
\[
  m_{\gamma} = m - \gamma \diam(\Omega) \le v_{\gamma} \le M.
\]

Define the function
\[
  v_{\gamma}^{\ve + h}(x) = \max_{y \in \overline{B}_{\ve+h}(x)} v_{\gamma}(y), \quad x \in \Omega^{(\ve+h)},
\]
and a discrete subsolution of the form $\uve^{-} = q(v_{\gamma}^{\ve + h})$, where $q(t) = t + a (t - m_{\gamma})^2$ is a quadratic function with a parameter $a > 0$ to be determined. Nevertheless, since $a>0$ this implies that $q'(t) \ge 0$ for $t \in [m_{\gamma}, M]$. Consider a vertex $\vertex \in \Nhi{\ve}$, and let $y \in \overline{B}_{\ve+h}(\vertex,)$ be such that
\[
  v_{\gamma}(y) = v_{\gamma}^{\ve+h}(\vertex) = \max_{x \in \overline{B}_{\ve+h}(\vertex)} v_{\gamma}(x).
\]
Recall that
\[
  S_{\frakh}^- \interp w(\vertex) = \frac{1}{\ve} \l( w(\vertex) - \min_{x \in \mathcal{N}_{\frakh}(\vertex)} \interp w(x) \r).
\]
Consider a point $x \in \mathcal{N}_{\frakh}(\vertex)$ and an element $T \ni x$.
Then for any vertex $\othervertex \in T \cap \Nh$, we have
\[
  |\vertex - \othervertex| \le |\vertex - x| + |x -  \othervertex| \le \ve + h,
\]
which implies that
\[
  v_{\gamma}^{\ve + h}(\othervertex) \ge v_{\gamma}(\vertex).
\]
Using the monotonicity of the quadratic function $q$ we see that
\begin{equation*}
  \begin{aligned}
  S_{\frakh}^- \interp \uve^{-}(\vertex)
  &= \frac{1}{\ve} \l( q(v_{\gamma}^{\ve + h}(\vertex)) - \min_{x \in \mathcal{N}_{\frakh}(\vertex)} \interp q(v_{\gamma}^{\ve + h}(x)) \r)
  \le \frac{1}{\ve} \l( q(v_{\gamma}(y)) - q(v_{\gamma}(\vertex)) \r).
  \end{aligned}
\end{equation*}	
For convenience, let $b = S_{\ve + h}^+ v_{\gamma}(\vertex)$ where the operator is defined in \eqref{eq:def-semi-S}, then
\[
  v_{\gamma}(y) = v_{\gamma}(\vertex) + (\ve + h) b,
\]
and thus 
\begin{equation}\label{eq:homo-error-proof0}
  \ve S_{\frakh}^- \interp \uve^{-}(\vertex) \le 
  q(v_{\gamma}(y)) - q(v_{\gamma}(y) - (\ve + h)b).
\end{equation}	
By \cite[Lemma 5.2]{MR2846345}, we have
\begin{equation*}
  S_{\delta}^+ v_{\gamma}(y) \ge L(v_{\gamma}, y) \ge b = S_{\ve + h}^+ v_{\gamma}(\vertex) \ge L(v_{\gamma}, \vertex) \ge \gamma,
\end{equation*}
for any $\delta > 0$. Let $\delta = \ve - \ve \theta - h > 0$ and $y' \in \overline{B}_\delta(y)$ such that
\[
  v_{\gamma}(y') = \max_{x \in \overline{B}_\delta(y)} v_{\gamma}(x) = v_{\gamma}(y) + \delta S_{\delta}^+ v_{\gamma}(y) 
\ge v_{\gamma}(y) + \delta b.
\]
Since
\[
  |y' - \vertex| \le |y' - y| + |y - \vertex| \le \ve - \ve \theta - h + \ve + h = 2\ve - \ve \theta,
\]
we claim that there exists $\bv{v}_{\theta} \in \St$ such that $x' = \vertex + \ve \bv{v}_{\theta} \in \mathcal{N}_{\frakh}(\vertex)$ satisfies $|x' - y'| \le \ve$. To see this, let $\bv{v} \in \mS$ be such that $y' - \vertex = |y' - \vertex| \bv{v}$ and choose $\bv{v}_{\theta} \in \St$ such that $|\bv{v}_{\theta} - \bv{v}| \le \theta$. If $|y' - \vertex| \ge \ve \theta$, then we have
\[
|x' - y'| \le |x' - (\vertex + \ve \bv{v})| + |(\vertex + \ve \bv{v}) - y'|
\le \ve |\bv{v}_{\theta} - \bv{v}| + | \ve - |y' - \vertex| |
\le \ve \theta + (\ve - \ve \theta) = \ve.
\] 
On the other hand, if $|y' - \vertex| < \ve \theta$, then 
\[
\begin{aligned}
  |x' - y'| &\le |x' - (\vertex + |y' - \vertex| \bv{v}_{\theta})| + |(\vertex + |y' - \vertex| \bv{v}_{\theta}) - y'| \\
  &\le |\ve - |y' - \vertex| | + |y' - \vertex| | \bv{v}_{\theta} - \bv{v} | 
  \le \ve - |y' - \vertex| + |y' - \vertex| \theta \le \ve,
\end{aligned}
\]
because of $\theta \le 1$.

Let now $T \in \Th$ be such that $x' \in T$. Then for any vertex $\othervertex \in T$,
\[
  |y' - \othervertex| \le |y' - x'| + |x' - \othervertex| \le \ve + h \le \ve + h.
\]
This implies that
\[
  v_{\gamma}^{\ve+h}(\othervertex) \ge v_{\gamma}(y') \ge v_{\gamma}(y) + \delta b.
\]
By the monotonicity of $q$, we also have
\[
  q(v_{\gamma}^{\ve+h}(\othervertex)) \ge q(v_{\gamma}(y) + \delta b),
\]
which implies that
\[
  \uve^{-}(x') = \interp q(v_{\gamma}^{\ve+h}(x')) \ge  q(v_{\gamma}(y) + \delta b).
\]
Hence,
\begin{equation}\label{eq:homo-error-proof1}
  \ve S_\frakh^+ \interp \uve^{-}(\vertex) \ge \uve^{-}(x') - \uve^{-}(\vertex) \ge q(v_{\gamma}(y) + \delta b) - q(v_{\gamma}(y)).
\end{equation}

Let us now choose the parameters $a$ and $\gamma$ to guarantee that
\[
  -\Delta_{\infty, \frakh}^\diamond \uve^{-}(\vertex) = -\frac1\ve \left( S_\frakh^+ \interp \uve^{-}(\vertex) - S_\frakh^- \interp \uve^{-}(\vertex) \right) \le 0,
\]
i.e., $S_\frakh^+ \interp \uve^{-}(\vertex) \ge S_\frakh^- \interp \uve^{-}(\vertex)$. Upon combining \eqref{eq:homo-error-proof0} and \eqref{eq:homo-error-proof1} we see that, to achieve this, it suffices to ensure
\[
  q(v_{\gamma}(y) + \delta b) - q(v_{\gamma}(y)) \ge q(v_{\gamma}(y)) - q(v_{\gamma}(y) - (\ve + h)b).
\]
Since
\[
\begin{aligned}
  & q(v_{\gamma}(y) + \delta b) - q(v_{\gamma}(y))
  = \delta b (1 + 2a (v_{\gamma}(y) - m_{\gamma}) ) + a (\delta b)^2, \\
  & q(v_{\gamma}(y)) - q(v_{\gamma}(y) - (\ve + h)b)
  = (\ve + h) b (1 + 2a (v_{\gamma}(y) - m_{\gamma}) ) - a ((\ve+h)b)^2,
\end{aligned}
\]
it is enough to require
\[
\begin{aligned}
  0 &\le a (\delta b)^2 + a ((\ve+h)b)^2 -(\ve + h) b (1 + 2a (v_{\gamma}(y) - m_{\gamma}) ) +\delta b (1 + 2a (v_{\gamma}(y) - m_{\gamma}) ) \\
  &= ab^2 (\delta^2 + (\ve + h)^2 ) - (2h + \ve \theta) b (1 + 2a (v_{\gamma}(y) - m_{\gamma}) ).
\end{aligned}
\]
Since $b \ge \gamma$ and
\[
  ab (\delta^2 + (\ve + h)^2 ) - (2h + \ve \theta) (1 + 2a (v_{\gamma}(y) - m_{\gamma}) )
  \ge ab \ve^2 - (2h + \ve \theta) (1 + 2a (M - m + \gamma) ),
\]
then we always have $-\Delta_{\infty, \frakh}^\diamond \uve^{-}(\vertex) \le 0$ provided that
\begin{equation}\label{eq:homo-error-proof2}
  \frac13 a \gamma \ve^2 \ge 2h + \ve \theta, \quad
  \frac13 a \gamma \ve^2 \ge (2h + \ve \theta) 2a (M- m), \quad
  \frac13 a \gamma \ve^2 \ge (2h + \ve \theta) 2a \gamma.
\end{equation}
This can be achieved by requiring that $\beta = (2h + \ve \theta)/\ve^2$ is small enough and satisfies
\[
  2(M -m) \sqrt{3\beta} \le 1, \quad \beta \le 1/6.
\]
Choosing $a = \gamma = \sqrt{3 \beta}$ guarantees \eqref{eq:homo-error-proof2} and thus $-\Delta_{\infty, \frakh}^\diamond \uve^{-}(\vertex) \le 0$. Consequently, $\uve^-$ is a subsolution and we can apply the comparison principle of \Cref{thm:DCP} to obtain
\begin{equation}\label{eq:homo-error-proof3}
\begin{aligned}
  \uve(\vertex) &\ge \uve^{-}(\vertex) + \min_{\othervertex \in \Nhb{\ve}} (\uve(\othervertex) - \uve^{-}(\othervertex) ), \quad \forall \vertex \in \Nhi{\ve}.
\end{aligned}
\end{equation}
Notice that, for any vertex $\vertex \in \Nh$,
\[
\begin{aligned}
  &|\uve^{-}(\vertex) - u(\vertex)| \le
  | q(v_{\gamma}^{\ve + h}(\vertex)) - v_{\gamma}^{\ve + h}(\vertex)|
  + |v_{\gamma}^{\ve + h}(\vertex) - u^{\ve + h}(\vertex)| + |u^{\ve + h}(\vertex) - u(\vertex)| \\
  &\le a(M-m + \gamma)^2 + \gamma \diam(\Omega)+ (\ve + h)^{\alpha} |u|_{C^{0,\alpha}(\Oc)}
  \lesssim a(M- m)^2 + \gamma\diam(\Omega) + (\ve + h)^{\alpha} |u|_{C^{0,\alpha}(\Oc)} \\
  &\lesssim \l( (M - m)^2 + \diam(\Omega) \r) \sqrt{\beta} + \ve^{\alpha} |u|_{C^{0,\alpha}(\Oc)}.
\end{aligned}
\]	
Now, if $\othervertex \in \Nhb{\ve}$, we recall \eqref{eq:bdry-gve-u} to obtain 
\[
\begin{aligned}
  &|\uve(\othervertex) - \uve^{-}(\othervertex)|
  = |\wt{g}_\ve(\othervertex) - \uve^{-}(\othervertex)|
  \le |\wt{g}_\ve(\othervertex) - u(\othervertex)|  + |u(\othervertex) - \uve^{-}(\othervertex)| \\
  &	\lesssim   \l( (M - m)^2 + \diam(\Omega)\r) \sqrt{\beta} + \ve^{\alpha} \l(  |\wt{g}_\ve|_{C^{0,\alpha}(\Oc)} + |u|_{C^{0,\alpha}(\Oc)} \r) + \Vert g - \wt{g}_{\ve} \Vert_{L^{\infty}(\Omega)}.
\end{aligned}
\]
Plugging the inequalities above into \eqref{eq:homo-error-proof3} implies
\[
  \uve(\vertex) \ge u(\vertex) -  C\l( (M - m)^2 + \diam(\Omega) \r) \sqrt{\beta} - \ve^{\alpha} \l(  |\wt{g}_{\ve}|_{C^{0,\alpha}(\Oc)} + |u|_{C^{0,\alpha}(\Oc)} \r) - \Vert g - \wt{g}_{\ve} \Vert_{L^{\infty}(\Omega)}.
\]
We can obtain a similar upper bound for $\uve$, and hence we conclude that
\[
\begin{aligned}
  \Vert u-\uve \Vert_{L^{\infty}(\Omega_h)} \lesssim &\ve^{\alpha} \l(  |g|_{C^{0,\alpha}(\Oc)} + |u|_{C^{0,\alpha}(\Oc)} \r) + 
  \Vert g - \wt{g}_{\ve} \Vert_{L^{\infty}(\Omega)} \\ 
  &+ \frac{\sqrt{2h + \ve \theta}}{\ve} \l( \l( \max_{x \in \Oc} u(x) - \min_{x \in \Oc} u(x) \r)^2 + \diam(\Omega) \r),
\end{aligned}
\]
as we intended to show.
\end{proof}

Once again, we can properly scale the parameters to obtain explicit rates of convergence.

\begin{Corollary}[convergence rates ($f\equiv0$)]\label{cor:rates.f.zero}
Under the same assumptions of \Cref{Thm:homo-error}, we can choose
\[
  \ve \approx h^{\frac1{2(\alpha+1)}}, \qquad \ve\theta \approx h, 
\]
to obtain
\[
  \Vert u - \uve \Vert_{L^\infty(\Omega_h)} \lesssim h^{\frac\alpha{2(\alpha+1)}}.
\]
In particular, if $u \in C^{0,1}(\Oc)$, we have the error estimate
\[
  \Vert u - \uve \Vert_{L^\infty(\Omega_h)} \lesssim h^{1/4}.
\]
\end{Corollary}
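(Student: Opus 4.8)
The plan is to obtain the corollary as a direct bookkeeping consequence of \Cref{Thm:homo-error}: substitute the prescribed scalings into the estimate there and check that every smallness hypothesis of that theorem is fulfilled once $h$ is small. So the first thing I would do is record, for $\ve \approx h^{1/(2(\alpha+1))}$ and $\ve\theta \approx h$, the derived asymptotics
\[
  \theta \approx h^{1 - \frac1{2(\alpha+1)}} = h^{\frac{2\alpha+1}{2(\alpha+1)}}, \qquad \frac h\ve \approx h^{1-\frac1{2(\alpha+1)}} \to 0, \qquad \frac\theta\ve \approx h^{1-\frac1{\alpha+1}} \to 0.
\]
Since the exponent $1/(2(\alpha+1)) < 1$ for $\alpha\in(0,1]$, we have $\ve \gg h$, hence $\ve - \ve\theta - h \approx \ve - 2h > 0$ for $h$ small; also $\theta \le 1$ for $h$ small; and $(2h+\ve\theta)/\ve^2 \approx h/h^{1/(\alpha+1)} = h^{\alpha/(\alpha+1)} \to 0$, so this quantity is as small as \Cref{Thm:homo-error} needs. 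Moreover, because $h/\ve \to 0$ and $\ve \to 0$, Assumption~\textbf{M}.\ref{Ass.M.connect} can be arranged for all $h$ small, as noted after its statement. Thus the hypotheses of \Cref{Thm:homo-error} hold along this parameter family.

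Next I would evaluate the two contributions in the bound $\Vert u - \uve \Vert_{L^\infty(\Omega_h)} \lesssim \ve^{\alpha} + \ve^{-1}\sqrt{2h+\ve\theta}$. The first is $\ve^\alpha \approx h^{\alpha/(2(\alpha+1))}$, which is exactly the claimed rate. For the second, $2h + \ve\theta \approx h$, so $\sqrt{2h+\ve\theta} \approx h^{1/2}$ and
\[
  \frac{\sqrt{2h+\ve\theta}}{\ve} \approx \frac{h^{1/2}}{h^{1/(2(\alpha+1))}} = h^{\frac12 - \frac1{2(\alpha+1)}} = h^{\frac{\alpha}{2(\alpha+1)}}.
\]
Adding the two contributions gives $\Vert u - \uve \Vert_{L^\infty(\Omega_h)} \lesssim h^{\alpha/(2(\alpha+1))}$, and specializing to $\alpha = 1$ (where $\alpha/(2(\alpha+1)) = 1/4$, $\ve\approx h^{1/4}$, $\theta\approx h^{3/4}$) yields the stated $h^{1/4}$ estimate.

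I do not expect any real obstacle: all the analytic content lives in \Cref{Thm:homo-error}, and the corollary is just the optimization that balances the boundary-layer error $\ve^\alpha$ against the consistency-type error $\ve^{-1}\sqrt{2h+\ve\theta}$. The only point requiring a little care — and the reason I would dispatch it first — is confirming that a single choice of $(\ve,\theta)$ simultaneously respects all the smallness constraints ($\theta\le1$, $\ve-\ve\theta-h>0$, $(2h+\ve\theta)/\ve^2\to0$, and feasibility of \textbf{M}.\ref{Ass.M.connect}), which the asymptotics above make transparent.
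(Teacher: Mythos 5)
Your proposal is correct and follows the same route as the paper, which simply states that the corollary follows immediately from \Cref{Thm:homo-error} with the prescribed scalings; your substitution of $\ve\approx h^{1/(2(\alpha+1))}$, $\ve\theta\approx h$ into $\ve^\alpha + \ve^{-1}\sqrt{2h+\ve\theta}$ and the verification of the smallness hypotheses are exactly the intended bookkeeping, carried out in more detail than the paper records.
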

\begin{proof}
It immediately follows from \Cref{Thm:homo-error} when using the prescribed scalings.
\end{proof}

\section{Extensions and variations}\label{sec:Variants}

In this section we consider some possible extensions of our numerical scheme, as well as variations on the type of problems that our approach can handle. To avoid unnecessary repetition, and to keep the presentation short, we will mostly state the main results and sketch their proofs, as many of the arguments are repetitions or slight variants of what we have presented before.

\subsection{Meshless discretizations}\label{sub:PointClouds}

The avid reader may have already realized that we made very little use of the fact that our discrete functions came from $\Vh$, a space of piecewise linears subject to the mesh $\Th$. In fact, this was only used so that functions can be evaluated at arbitrary points, and not just the vertices of our grid.

This motivates the following meshless discretization. Let $\calC_h = \{\vertex\}$ be a finite cloud of points. We set 
\[
  h_\vertex = \min\{ |\vertex - \othervertex| : \othervertex \in \calC_h \setminus \{\vertex \} \}, \qquad h = \max_{\vertex \in \calC_h} h_\vertex,
\]
and assume that $\calC_h$ satisfies
\begin{equation}
\label{eq:CloudIsDense}
  \Oc \subset \bigcup_{\vertex \in \calC_h} \Bc_{h/2}(\vertex).
\end{equation}
Moreover, we assume the following quasiuniformity condition on the cloud points: There is $\sigma \in (0,1]$ such that, for all $\calC_h$, we have
\[
  \sigma h \leq \inf_{\vertex \in \calC_h} h_\vertex.
\]

Let $\ve>0$. We define $\Nhb{\ve}$ to be the points in $\calC_h$ that are at a distance at most $2\ve$ of $\partial\Omega$, and $\Nhi{\ve} = \calC_h \setminus \Nhb{\ve}$. For $\vertex \in \calC_h$ we set $\calC_\ve(\vertex) = \calC_h \cap \Bc_\ve(\vertex)$.

The set of functions on $\calC_h$ is
\[
  \Xh = \mR^{\calC_h} = \left\{ w : \calC_h \to \mR \right\}.
\]
Over $\Xh$ we then define our meshless discretization of the operator $\Delta_\infty^\diamond$ as
\begin{equation}
\label{eq:InFLapCloud}
  \Delta_{\infty,\calC_h,\ve}^\diamond w(\vertex) = \frac1{\ve^2} \left( \max_{\othervertex \in \calC_\ve(\vertex)} w(\othervertex) - 2 w(\vertex) + \min_{\othervertex \in \calC_\ve(\vertex)} w(\othervertex) \right), \qquad \vertex \in \Nhi{\ve}.
\end{equation}
The meshless variant of \eqref{eq:inf-laplace-discrete} is: find $u_{\calC_h,\ve} \in \Xh$ such that
\begin{equation}
  \label{eq:inf-laplace-cloud}
  -\Delta_{\infty,\calC_h,\ve}^\diamond u_{\calC_h,\ve}(\vertex) = f(\vertex), \quad \forall \vertex \in \Nhi{\ve}, \qquad u_{\calC_h,\ve}(\vertex) = \wt{g}_\ve(\vertex), \quad \forall \vertex \in \Nhb{\ve}.
\end{equation}

To provide an analysis of scheme \eqref{eq:inf-laplace-cloud} we must assume that the cloud $\calC_h$ is \emph{symmetric at every point}. By this we mean that there is $\ve_0>0$ such that, for all $\ve \leq \ve_0$ and $\vertex \in \Nhi{\ve}$ we must have
\[
  \othervertex \in \calC_\ve(\vertex) \qquad \implies \qquad  2 \vertex -\othervertex \in \calC_\ve(\vertex) \subset \calC_h.
\]
This is the surrogate to the requirement that the set $\mS_\theta$ is symmetric. Let $\mathcal{G}_{h,\ve}$ be the (undirected) graph with vertices $\calC_h$ and edges $\mathcal{E}_\ve$; where, for $\vertex,\othervertex \in \calC_h$, we have
\[
  (\vertex,\othervertex) \in \mathcal{E}_\ve \qquad \iff \qquad |\vertex - \othervertex| \le \ve.
\]
As a proxy for Assumption \textbf{M}.\ref{Ass.M.connect} we require then following condition.
\begin{enumerate}[\textbf{C}.1]
  \item \label{Ass.C.connect} The graph $\mathcal{G}_{h,\ve}$ is connected.
\end{enumerate}

For this variant of our scheme we have the following properties.

\begin{Lemma}[existence, uniqueness, stability]
Assume \textbf{RHS}.\ref{Ass.RHS.cont}--\ref{Ass.RHS.f<>0}, and \textbf{BC}.\ref{Ass.BC.cont}--\ref{Ass.BC.ext}. Moreover, if \textbf{RHS}.\ref{Ass.RHS.zero} holds, suppose that for all $\ve,h>0$ we have \textbf{C}.\ref{Ass.C.connect}. For any choice of parameters $(h,\ve)$, there exists a unique $u_{\calC_h,\ve} \in \Xh$ that solves \eqref{eq:inf-laplace-cloud}. Moreover, this solution satisfies
\[
  \max_{\vertex \in \calC_h} |u_{\calC_h,\ve}(\vertex)| \leq C \max_{\vertex \in \Nhi{\ve}} |f(\vertex)| + \max_{\vertex \in \Nhb{\ve}} | \wt{g}_\ve(\vertex)|.
\]
\end{Lemma}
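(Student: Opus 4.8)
The plan is to transcribe, almost verbatim, the chain of arguments behind \Cref{lemma:monotonicity}, \Cref{thm:simple-DCP}, \Cref{thm:DCP}, \Cref{lemma:stability} and \Cref{lemma:ExistUnique}, replacing the neighborhood $\widetilde{\mathcal N}_\frakh(\vertex)$ by $\calC_\ve(\vertex)$, the Lagrange interpolant by the identity map on $\Xh$, and Assumption~\textbf{M}.\ref{Ass.M.connect} by \textbf{C}.\ref{Ass.C.connect}. First I would record monotonicity of $-\Delta^\diamond_{\infty,\calC_h,\ve}$: if $(w-v)(\vertex)\ge(w-v)(\othervertex)$ for all $\othervertex\in\calC_\ve(\vertex)$, then, picking $\othervertex_0\in\calC_\ve(\vertex)$ realizing $\max_{\calC_\ve(\vertex)}w$, one gets $\max_{\calC_\ve(\vertex)}v-v(\vertex)\ge v(\othervertex_0)-v(\vertex)\ge w(\othervertex_0)-w(\vertex)=\max_{\calC_\ve(\vertex)}w-w(\vertex)$, and similarly for the minimum; hence $-\Delta^\diamond_{\infty,\calC_h,\ve}w(\vertex)\ge-\Delta^\diamond_{\infty,\calC_h,\ve}v(\vertex)$. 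Note that, since the maxima in \eqref{eq:InFLapCloud} run over cloud points, the partition-of-unity reduction needed in \Cref{lemma:monotonicity} is unnecessary here. Monotonicity then yields, by the same proofs, the two discrete comparison principles: the analogue of \Cref{thm:simple-DCP} (with the perturbation $w_h\mapsto(1+\beta)w_h$) covering \textbf{RHS}.\ref{Ass.RHS.sign}, and the analogue of \Cref{thm:DCP} covering \textbf{RHS}.\ref{Ass.RHS.zero}. For the latter one forms $m$, $E\subset\Nhi{\ve}$, $l=\max_E w_h$, $F=\{\vertex\in E:w_h(\vertex)=l\}$; on $E$ one has $S_\frakh^\pm w_h=S_\frakh^\pm v_h$, on $F$ one shows $S_\frakh^+w_h=0$ (here only the ``direct neighbor'' alternative of \eqref{eq:discrete-comparison-proof2} arises, as there is no interpolated evaluation point), then $S_\frakh^-w_h=0$ from $-\Delta^\diamond\le0$, so $w_h\equiv l$ on $\calC_\ve(\vertex)$ and $\calC_\ve(\vertex)\subset F$; finally connectedness of $\mathcal{G}_{h,\ve}$ supplies, for the nonempty proper subset $F\subsetneq\calC_h$, an edge with $\vertex\in F$ and $\othervertex\in\calC_h\setminus F$, i.e.\ $\othervertex\in\calC_\ve(\vertex)\setminus F$, a contradiction.

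Next I would establish the stability bound through a concave barrier, which is the one step that genuinely differs from the mesh case because $\interp$ is no longer available. Assuming $0\in\Omega$, take $\varphi(x)=\bv{p}^\intercal x-\tfrac12|x|^2+A$. A direct computation gives the identity $2\varphi(\vertex)-\varphi(\vertex+\bv{h})-\varphi(\vertex-\bv{h})=|\bv{h}|^2$ for every $\bv{h}\in\mRd$. Fix $\vertex\in\Nhi{\ve}$, put $\bv{v}=(\bv{p}-\vertex)/|\bv{p}-\vertex|$, and apply \eqref{eq:CloudIsDense} to the point $\vertex+(\ve-\tfrac h2)\bv{v}\in\Oc$: it produces $\othervertex\in\calC_\ve(\vertex)$ with $\bv{v}^\intercal(\othervertex-\vertex)\ge\ve-h$, whence $\varphi(\othervertex)-\varphi(\vertex)\ge|\bv{p}-\vertex|(\ve-h)-\tfrac12\ve^2$. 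Choosing $\bv{p}$ so that $|\bv{p}-\vertex|\ge 2\diam(\Omega)$ for all $\vertex\in\Nhi{\ve}$, and assuming $h\le\ve/2$ and $\ve\le\diam(\Omega)$, this is $\ge c_0\ve$ with $c_0>0$ absolute. If $\othervertex^\star=\vertex+\bv{h}^\star$ realizes $\max_{\calC_\ve(\vertex)}\varphi$, then $(\bv{p}-\vertex)^\intercal\bv{h}^\star\ge\varphi(\othervertex^\star)-\varphi(\vertex)\ge c_0\ve$, so $|\bv{h}^\star|\gtrsim\ve$; using that $\calC_h$ is symmetric at $\vertex$ we have $2\vertex-\othervertex^\star=\vertex-\bv{h}^\star\in\calC_\ve(\vertex)$, hence $\min_{\calC_\ve(\vertex)}\varphi\le\varphi(\vertex-\bv{h}^\star)$ and therefore
\[
  -\Delta^\diamond_{\infty,\calC_h,\ve}\varphi(\vertex)
  \ge\frac1{\ve^2}\bigl(2\varphi(\vertex)-\varphi(\vertex+\bv{h}^\star)-\varphi(\vertex-\bv{h}^\star)\bigr)
  =\frac{|\bv{h}^\star|^2}{\ve^2}\ge c>0
\]
with $c$ absolute. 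Choosing $A$ so that $\varphi\ge0$ on $\Oc$, the function $w_h=c^{-1}\max_{\vertex\in\Nhi{\ve}}|f(\vertex)|\,\varphi+\max_{\vertex\in\Nhb{\ve}}|\wt{g}_\ve(\vertex)|$ is a discrete supersolution of \eqref{eq:inf-laplace-cloud}, and $-c^{-1}\max_{\Nhi{\ve}}|f|\,\varphi+\min_{\Nhb{\ve}}\wt{g}_\ve$ is a discrete subsolution. Comparing $u_{\calC_h,\ve}$ against both — via the analogue of \Cref{thm:simple-DCP} under \textbf{RHS}.\ref{Ass.RHS.sign}, or via the analogue of \Cref{thm:DCP} (using \textbf{C}.\ref{Ass.C.connect}) under \textbf{RHS}.\ref{Ass.RHS.zero} — yields $\max_{\calC_h}|u_{\calC_h,\ve}|\le C\max_{\Nhi{\ve}}|f|+\max_{\Nhb{\ve}}|\wt{g}_\ve|$ with $C=c^{-1}\max_{\Oc}\varphi$, which depends only on $\Omega$.

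Finally, uniqueness is immediate from the comparison principles, and existence follows from the discrete Perron method of \Cref{lemma:ExistUnique}: the set $W_h=\{w\in\Xh:\ -\Delta^\diamond_{\infty,\calC_h,\ve}w(\vertex)\le f(\vertex)\ \forall\vertex\in\Nhi{\ve};\ w(\vertex)\le\wt{g}_\ve(\vertex)\ \forall\vertex\in\Nhb{\ve}\}$ is nonempty (it contains the subsolution just constructed) and bounded above (by the supersolution), its pointwise supremum $v_h\in\Xh$ again lies in $W_h$ by monotonicity, and if the equation or the boundary condition failed at some $\vertex$ then replacing $v_h$ by $v_h+\beta\mathbf{1}_{\{\vertex\}}$ for small $\beta>0$ would stay in $W_h$ while exceeding $v_h$ at $\vertex$, contradicting maximality; hence $v_h$ solves \eqref{eq:inf-laplace-cloud}. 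The only step requiring new work is the barrier estimate in the stability proof: without $\interp$ one must extract a strictly positive lower bound for $-\Delta^\diamond_{\infty,\calC_h,\ve}\varphi$ directly from \eqref{eq:CloudIsDense} and the ``symmetric at every point'' hypothesis, tracking the balance between $h$, $\ve$ and $\diam(\Omega)$ so the bound does not degenerate; everything else is a line-by-line adaptation of the mesh-based arguments.
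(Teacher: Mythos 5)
Your proposal is correct and follows essentially the same route as the paper, whose proof is only a bullet-point sketch instructing the reader to repeat the mesh-based arguments; your worked-out barrier estimate (using \eqref{eq:CloudIsDense} and the symmetry of the cloud to show that the maximizer of $\varphi$ over $\calC_\ve(\vertex)$ lies at distance $\gtrsim \ve$ from $\vertex$) is precisely the detail the paper compresses into ``replace the Lagrange interpolant with the projection onto the cloud.'' The only caveat is that your quantitative bound uses $h\le\ve/2$ (so the cloud point found near $\vertex+(\ve-\tfrac h2)\bv{v}$ is genuinely far from $\vertex$), a mild separation of scales not stated in the lemma but already implicit in the paper's framework, since $\ve$ is the coarse scale and \textbf{C}.\ref{Ass.C.connect} itself forces $\ve\gtrsim h$.
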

\begin{proof}
We immediately see that:
\begin{enumerate}[$\bullet$]
  \item The analogue of \Cref{lemma:monotonicity} follows after little modification to the presented proof.
  
  \item The previous item implies that the needed variants of \Cref{thm:simple-DCP} and \Cref{thm:DCP} are valid as well. To prove \Cref{thm:DCP} one needs to use Assumption \textbf{C}.\ref{Ass.C.connect}.
  
  \item Having comparison immediately implies the claimed stability estimate. To obtain it, one repeats the proof of \Cref{lemma:stability}. We only need to replace the Lagrange interpolant with the projection onto the cloud $\calC_h$.
  
  \item Existence and uniqueness then follows as in \Cref{lemma:ExistUnique}. \qedhere
\end{enumerate}
\end{proof}

The consistency properties of $\Delta_{\infty,\calC_h,\ve}^\diamond$ are similar to \Cref{lemma:consistency} and \Cref{lemma:consistency-p0} with $\theta = h$. For that, we use the symmetry of $\calC_\ve(\vertex)$ and \eqref{eq:CloudIsDense}. This allows us to assert convergence for solutions of \eqref{eq:inf-laplace-cloud}. Define the envelopes in an analogous way to \eqref{eq:def-uo-uu}. We have the following result.

\begin{Theorem}[convergence]\label{theorem:cloud}
Assume that the right hand side $f$ satisfies Assumptions \textbf{RHS}.\ref{Ass.RHS.cont}--\ref{Ass.RHS.f<>0}. Assume that the boundary datum $g$ satisfies Assumptions \textbf{BC}.\ref{Ass.BC.cont}--\ref{Ass.BC.ext}. Let $\{ (h_j, \ve_j) \}_{j=1}^\infty$ be a sequence of discretization parameters such that $(h_j, \ve_j) \to (0,0)$ as $j \uparrow \infty $, it satisfies \eqref{eq:para-condition} with $\theta_j = h_j$; and if \textbf{RHS}.\ref{Ass.RHS.zero} holds, it additionally satisfies \textbf{C}.\ref{Ass.C.connect} for all $j \in \mathbb{N}$. In this framework we have that the sequence $\{u_{\calC_{h_j},\ve_j} \in \mathbb{X}_{h_j}\}_{j=1}^\infty$ of solutions to \eqref{eq:inf-laplace-cloud} converges uniformly to $u$, the solution of \eqref{eq:BVPNinfLaplace} as $j \uparrow \infty$.
\end{Theorem}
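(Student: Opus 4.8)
The plan is to reproduce, in the meshless setting, the Barles--Souganidis program \cite{MR1115933} that gave \Cref{thm:convergence}: re-establish for $-\Delta_{\infty,\calC_h,\ve}^\diamond$ the four ingredients of monotonicity, stability (hence well-posedness), interior consistency, and correct boundary behavior, and then close the argument with the comparison principle for the continuous problem \eqref{eq:BVPNinfLaplace}. The first three ingredients are already collected in the lemma preceding this statement, so the genuinely new piece is the boundary estimate.

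First I would observe that the analogue of \Cref{lemma:monotonicity} is in fact easier here than before: a function $w \in \Xh$ is evaluated only at cloud points and no Lagrange interpolant enters \eqref{eq:InFLapCloud}, so the partition-of-unity step disappears; if $(w-v)(\vertex) \ge (w-v)(\othervertex)$ for every $\othervertex \in \calC_\ve(\vertex)$, then evaluating at a maximizer of $w$ over $\calC_\ve(\vertex)$ reads off $-\Delta_{\infty,\calC_h,\ve}^\diamond w(\vertex) \ge -\Delta_{\infty,\calC_h,\ve}^\diamond v(\vertex)$. The meshless counterparts of \Cref{thm:simple-DCP} and \Cref{thm:DCP} then follow as in those proofs, the second invoking \textbf{C}.\ref{Ass.C.connect} exactly where \Cref{thm:DCP} uses \textbf{M}.\ref{Ass.M.connect}. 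Stability and well-posedness are obtained by rerunning \Cref{lemma:stability} and \Cref{lemma:ExistUnique} with the concave paraboloid $\varphi(x) = \bv{p}^\intercal x - \tfrac12 |x|^2 + A$ as barrier (using the pointwise symmetry of $\calC_\ve(\vertex)$ where the symmetry of $\St$ was used) and the discrete Perron method.

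For consistency, the key is that \eqref{eq:CloudIsDense} makes $\calC_h$ an $h/2$-net of $\Oc$: for any unit vector $\bv{v}$ the point $\vertex + (\ve - h/2)\bv{v}$ lies in $B_\ve(\vertex)$, so there is $\othervertex \in \calC_\ve(\vertex)$ with $|\othervertex - (\vertex + \ve\bv{v})| \lesssim h$. Plugging this into the Taylor expansions of \Cref{lemma:consistency} and \Cref{lemma:consistency-p0}, and using the symmetry of $\calC_\ve(\vertex)$ for the even second differences, yields the same estimates with the role of $\theta$ played by $h$. Consequently the envelope lemma, i.e. the analogue of \Cref{lemma:consistency-combined}, goes through verbatim; in particular Case~3, where $\De\varphi(x_0)=\boldsymbol0$ and $\De^2\varphi(x_0)$ is strictly negative definite and the scheme fails to be consistent, is again circumvented by perturbing the test function to $\varphi_y(x) = \tfrac12 (x-y)^\intercal \De^2\varphi(x_0)(x-y)$, whose gradient no longer vanishes at the shifted contact point, so that Case~1 applies, and then letting $y \to 0$ using $f \in C(\Oc)$.

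The boundary estimate is the transcription of \Cref{lemma:bdry}: reduce to smooth $\wt{g}_\ve$ by density and the discrete comparison principle, and for $x_0 \in \pO$ with an exterior ball use the barrier $\varphi(x) = a + b|x-z_0| - \tfrac{c}{2}|x-z_0|^2$ with $c = \sup_{x\in\Omega}f(x)+1$ and $b$ large; since $-\Delta_\infty^\diamond\varphi \equiv c$ away from $z_0$, the meshless consistency estimate (with $\theta = h$, under \eqref{eq:para-condition}) gives $-\Delta_{\infty,\calC_h,\ve}^\diamond\varphi(\vertex) \ge \sup_{x\in\Omega}f(x) \ge f(\vertex)$ for $h,\ve$ small, and choosing $a,b$ so that $\varphi \ge \wt{g}_\ve$ on $\Omega$ and applying comparison yields $u_{\calC_h,\ve} \le \varphi$ near $x_0$, hence $\overline{u}(x_0) \le g(x_0)$; the reverse inequality and the points without an exterior ball are handled exactly as in \Cref{lemma:bdry} using the continuity and compactness of $\pO$. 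Then, as in the proof of \Cref{thm:convergence}, monotonicity, stability and consistency place us in the Barles--Souganidis framework, so $\overline{u}$ and $\underline{u}$ are a sub- and a supersolution of \eqref{eq:BVPNinfLaplace} attaining $g$ classically on $\pO$; the comparison principle for \eqref{eq:BVPNinfLaplace} with classical boundary data forces $\overline{u} \le \underline{u}$, and since $\overline{u} \ge \underline{u}$ by construction, $\overline{u} = \underline{u} = u$ and the convergence is uniform. The only step requiring real care is the consistency transcription --- verifying that the absence of a mesh and of exact points on $\partial B_\ve(\vertex)$ does not degrade the rate, i.e. that \eqref{eq:CloudIsDense} and quasiuniformity genuinely allow replaying \Cref{lemma:consistency} and \Cref{lemma:consistency-p0} with $\theta$ replaced by $h$; everything else mirrors the mesh-based arguments line by line.
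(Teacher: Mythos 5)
Your proposal is correct and follows essentially the same route as the paper, which proves this theorem by repeating the arguments of \Cref{lemma:consistency-combined}, \Cref{lemma:bdry}, and \Cref{thm:convergence}, with the meshless well-posedness already supplied by the preceding lemma and consistency obtained as in \Cref{lemma:consistency} and \Cref{lemma:consistency-p0} with $\theta = h$, using the pointwise symmetry of $\calC_\ve(\vertex)$ and \eqref{eq:CloudIsDense}. Your careful justification of the $\theta \mapsto h$ substitution via the $h/2$-net property is exactly the point the paper leaves implicit.
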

\begin{proof}
Repeat the proofs of \Cref{lemma:consistency-combined}, \Cref{lemma:bdry}, and \Cref{thm:convergence}.
\end{proof}

Rates of convergence, analogous to those of \Cref{cor:RatesInhom} and \Cref{cor:rates.f.zero}, can also be obtained. We omit the details.

\subsection{Obstacle problems}\label{sub:Obstacle}

In \cite{MR3299035}, see also \cite[Section 5.2]{RossiBook}, the following variant of the game described in \Cref{sub:TugOfWar} was proposed and analyzed. In addition to the already stated rules, Player I has the option, after every move, to stop the game. In this case, Player II will pay Player I the amount
\[
  Q = \chi(x_{k+1}).
\]
Here $\chi : \Oc \to \mR$ is a given function with $\chi < g$ on $\pO$. It is shown that, in this case, the game also has a value $u^\ve$ which satisfies
\[
  u^\ve(x) \geq \chi(x), \qquad 2u^\varepsilon(x) - \left( \sup_{y \in \Bc_\ve(x) \cap \bar\Omega} u^\varepsilon(y) + \inf_{y \in \Bc_\ve(x) \cap \bar\Omega} u^\varepsilon(y)\right) \geq \varepsilon^2 f(x),
\]
and, if the first inequality is strict, the second is an equality. In the limit $\ve \downarrow 0$ we obtain the following obstacle problem
\begin{equation}
\label{eq:ObstacleProblem}
  \bT[u;f,\chi](x) = 0, \text{ in } \Omega, \qquad u = g, \text{ on } \pO;
\end{equation}
where
\[
  \bT[w;f,\chi](x) = \min\left\{ -\Delta_\infty^\diamond w(x) - f(x), w(x) - \chi(x) \right\}.
\]

For convenience of presentation, given $w \in C(\Oc)$, we define the coincidence, non-coincidence sets, and free boundary to be
\[
  \mathcal{C}(w) = \l\{ x \in \Omega: w(x) = \chi(x) \r\}, \quad \Omega^+(w) = \Omega \setminus \mathcal{C}(w), \quad
  \Gamma(w) = \partial\Omega^+(w) \cap \Omega.
\]
Notice that, if $u$ is sufficiently smooth, \eqref{eq:ObstacleProblem} is equivalent to the following complementarity conditions: $u(x) \geq \chi(x)$ for all $x \in \Omega$, and,
\[
  -\Delta_\infty^\diamond u(x) = f(x), \text{ in } \Omega^+(u), \quad -\Delta_\infty^\diamond u(x) \geq  f(x), \text{ in } \mathcal{C}(u).
\]

To analyze the problem we introduce an assumption on the obstacle.
\begin{enumerate}[\textbf{OBS}.1]
  \item \label{Ass.OBS.cont} The obstacle $\chi \in C(\Oc)$ is such that $\chi < g$ on $\partial\Omega$.
\end{enumerate}

Under assumptions \textbf{RHS}.\ref{Ass.RHS.cont}, \textbf{BC}.\ref{Ass.BC.cont}, and \textbf{OBS}.\ref{Ass.OBS.cont} existence of viscosity solutions to \eqref{eq:ObstacleProblem} was established in \cite{MR3299035}. In addition, if \textbf{RHS}.\ref{Ass.RHS.f<>0}, then uniqueness is guaranteed. Finally, it is shown that if $f \in C^{0,1}(\Oc)$, $g \in C^{0,1}(\pO)$, and $\chi \in C^2(\Oc)$ then $u \in C^{0,1}(\Oc)$. Further properties of the solution to \eqref{eq:ObstacleProblem} were explored in \cite{MR3421912}.


\subsubsection{Comparison principle}\label{sub:CompareObstacle}
In addition to the properties described above, to analyze numerical schemes, we will need a comparison principle for semicontinuous sub- and supersolutions of \eqref{eq:ObstacleProblem}. Since we were not able to locate one in the form that is suited for our purposes, we present one here.

We first point out that a comparison principle with strict inequality, like the one we give below, follows directly from \cite[Section 5.C]{MR1118699}.

\begin{Lemma}[strict comparison]\label{lemma:obstacle-CP-strict}
Let $\delta_w, \delta_v \in \mR$. Assume that $w \in \USC(\Oc)$ and $ v \in \LSC(\Oc)$ satisfy, in the viscosity sense,
\[
  \bT[w; f, \chi] \le \delta_w < \delta_v \le \bT[v; f, \chi] \quad \text{ in } \Omega.
\]
Further assume that $f, \chi$ satisfy Assumptions \textbf{RHS}.\ref{Ass.RHS.cont} and \textbf{OBS}.\ref{Ass.OBS.cont}. If $w \le v$ on $\pO$, then
\[
  w \le v \quad \text{ in } \Omega.
\]
\end{Lemma}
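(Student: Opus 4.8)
The plan is to run the classical doubling-of-variables argument for degenerate elliptic equations with a discontinuous operator; see \cite[Section 5.C]{MR1118699}. Since $w\mapsto-\Delta_\infty^\diamond w$ is discontinuous exactly where the gradient vanishes, I interpret $\bT[\,\cdot\,;f,\chi]=0$ through the semicontinuous envelopes of the paper: a USC function $w$ satisfies $\bT[w;f,\chi]\le\delta_w$ in the viscosity sense if, for every $(p,X)$ in the closure of the second-order superjet of $w$ at $x_0$, one has $\min\{-p^\intercal Xp/|p|^2-f(x_0),\ w(x_0)-\chi(x_0)\}\le\delta_w$ when $p\ne\boldsymbol0$, and $\min\{-\lambda_{\max}(X)-f(x_0),\ w(x_0)-\chi(x_0)\}\le\delta_w$ when $p=\boldsymbol0$; the supersolution notion for $v$ is dual, with $\lambda_{\min}$ in place of $\lambda_{\max}$. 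One checks at once that $\bT$, and its two envelope operators, are proper and degenerate elliptic, and that the sub/supersolution inequalities extend to the closures of the jets, so the Crandall--Ishii lemma is available.

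I argue by contradiction and suppose $M:=\sup_{\Oc}(w-v)>0$. As $w-v\in\USC(\Oc)$ and $\Oc$ is compact, $M$ is attained, and $w\le v$ on $\pO$ forces every maximizer into $\Omega$. Because $-\Delta_\infty^\diamond$ is singular at $\De w=\boldsymbol0$, I double variables with a \emph{quartic} penalization,
\[
  M_\alpha=\sup_{(x,y)\in\Oc\times\Oc}\Bigl(w(x)-v(y)-\tfrac{\alpha}{4}|x-y|^4\Bigr),
\]
attained at some $(x_\alpha,y_\alpha)$. The standard compactness argument (\cite[Lemma 3.1]{MR1118699}, with $|z|^2$ replaced by $|z|^4$) yields $M\le M_\alpha\to M$, $\alpha|x_\alpha-y_\alpha|^4\to0$, $|x_\alpha-y_\alpha|\to0$, and, along a subsequence, $(x_\alpha,y_\alpha)\to(\hat x,\hat x)$ with $\hat x$ an interior maximizer of $w-v$; hence $x_\alpha,y_\alpha\in\Omega$ for all large $\alpha$. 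The Crandall--Ishii lemma then produces symmetric matrices $X,Y$ with $(p_\alpha,X)$ in the closure of the superjet of $w$ at $x_\alpha$, $(p_\alpha,Y)$ in the closure of the subjet of $v$ at $y_\alpha$, $p_\alpha=\alpha|x_\alpha-y_\alpha|^2(x_\alpha-y_\alpha)$, and $X\le Y$; moreover, because the Hessian of the quartic penalization vanishes on the diagonal, if $x_\alpha=y_\alpha$ one obtains the sharper $X\le0\le Y$.

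Now I substitute these jets into the inequalities for $w$ and $v$ and distinguish cases. If $p_\alpha\ne\boldsymbol0$, the supersolution property gives $v(y_\alpha)-\chi(y_\alpha)\ge\delta_v$ and $-p_\alpha^\intercal Yp_\alpha/|p_\alpha|^2-f(y_\alpha)\ge\delta_v$, while the subsolution property forces one of the two terms of $\min\{-p_\alpha^\intercal Xp_\alpha/|p_\alpha|^2-f(x_\alpha),\ w(x_\alpha)-\chi(x_\alpha)\}$ to be $\le\delta_w$. In the obstacle case, $w(x_\alpha)-v(y_\alpha)\le\delta_w-\delta_v+\omega_\chi(|x_\alpha-y_\alpha|)$, which contradicts $w(x_\alpha)-v(y_\alpha)\ge M_\alpha\ge M>0$ for large $\alpha$; in the PDE case, subtracting the two inequalities and using $X\le Y$ gives $0\le p_\alpha^\intercal(Y-X)p_\alpha/|p_\alpha|^2\le\delta_w-\delta_v+\omega_f(|x_\alpha-y_\alpha|)$, again impossible for large $\alpha$. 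If $p_\alpha=\boldsymbol0$, so $x_\alpha=y_\alpha=:z_\alpha\in\Omega$, the same split using the envelope operators gives $-\lambda_{\min}(Y)-f(z_\alpha)\ge\delta_v$ together with either $w(z_\alpha)-\chi(z_\alpha)\le\delta_w$ (contradicting $w(z_\alpha)-v(z_\alpha)=M_\alpha>0$) or $-\lambda_{\max}(X)-f(z_\alpha)\le\delta_w$, whence $\lambda_{\min}(Y)<\lambda_{\max}(X)$, which contradicts $X\le0\le Y$. Here $\omega_\chi$ is the modulus of continuity of $\chi$ on $\Oc$, and $\omega_f$ a local modulus of continuity of $f$ near $\hat x$; both exist by \textbf{OBS}.\ref{Ass.OBS.cont} and \textbf{RHS}.\ref{Ass.RHS.cont}. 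The one genuinely delicate point is the choice of penalization: quadratic doubling yields only $X\le Y$, which is insufficient when $p_\alpha=\boldsymbol0$, so the quartic penalization (or an equivalent device from \cite[Section 5.C]{MR1118699}) is essential to recover $X\le0\le Y$ there.
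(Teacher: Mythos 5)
Your proof is correct and is in essence the same route the paper takes: the paper disposes of this lemma by citing \cite[Section 5.C]{MR1118699}, and your doubling-of-variables argument is exactly the standard machinery behind that citation, carried out for the obstacle operator $\bT$ with the envelope definitions at vanishing gradient. The added value of your write-up is that it makes the citation concrete at the only delicate point, namely the quartic penalization $\tfrac{\alpha}{4}|x-y|^4$, whose gradient and Hessian vanish on the diagonal and hence give $X \le \boldsymbol0 \le Y$ when $x_\alpha = y_\alpha$; this is precisely what is needed to close the case $p_\alpha = \boldsymbol0$ for the singular operator, where a quadratic penalty would only yield $X \le Y$.
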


With the aid of \Cref{lemma:obstacle-CP-strict}, we can now prove a comparison principle.

\begin{Theorem}[comparison]\label{thm:obstacle-CP}
Assume that $w \in \USC(\Oc), v \in \LSC(\Oc)$ satisfy, in the viscosity sense,
\[
  \bT[w; f, \chi] \le 0 \le \bT[v; f, \chi] \quad \text{ in } \Omega.
\]
Assume, in addition, that $f, \chi$ satisfy Assumptions \textbf{RHS}.\ref{Ass.RHS.cont}--\ref{Ass.RHS.f<>0} and \textbf{OBS}.\ref{Ass.OBS.cont}.
If $w \le v$ on $\pO$, then
\[
  w \le v \quad \text{ in } \Omega.
\]
\end{Theorem}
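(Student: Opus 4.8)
The plan is to deduce the non-strict comparison principle from the strict one (\Cref{lemma:obstacle-CP-strict}) by a perturbation argument, exactly mirroring the way \Cref{thm:simple-DCP} was obtained from its strict version. The case distinction will follow how \textbf{RHS}.\ref{Ass.RHS.f<>0} is satisfied, so let me treat the two cases in turn. Throughout, set $M = \max_{x \in \Oc}\bigl(w(x) - v(x)\bigr)$, which is attained since $w - v \in \USC(\Oc)$ and $\Oc$ is compact; the goal is to show $M \le 0$, knowing $w \le v$ on $\pO$.

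First I would handle Assumption \textbf{RHS}.\ref{Ass.RHS.sign}, say $\inf_\Omega f = f_0 > 0$ (the case $\sup_\Omega f < 0$ being symmetric). The idea is to perturb $v$ multiplicatively, or $w$ additively, so as to strictly separate the two inequalities. Concretely, fix $\beta \in (0,1)$ small and consider $v_\beta = (1-\beta)v + \beta\|{v}\|_{L^\infty(\Oc)}$ or, more simply, a downward vertical shift combined with scaling: since $-\Delta_\infty^\diamond$ is positively $1$-homogeneous in the Hessian and the equation in the non-coincidence set reads $-\Delta_\infty^\diamond v = f \ge f_0$, replacing $v$ by $(1-\beta)v$ produces $-\Delta_\infty^\diamond\bigl((1-\beta)v\bigr) = (1-\beta)f \le f - \beta f_0$ there, so $\bT[(1-\beta)v; (1-\beta)f, (1-\beta)\chi] \le 0$ with a right-hand side bounded above by $-\beta f_0/2$ relative to the original data once $\beta$ is small; meanwhile $\bT[w; f,\chi] \le 0$. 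One then checks the obstacle term is also compatible after adding a suitable constant so the boundary ordering is preserved, applies \Cref{lemma:obstacle-CP-strict}, and sends $\beta \downarrow 0$. The delicate point here is that the perturbation must simultaneously respect the obstacle constraint $v \ge \chi$ and the equation in $\Omega^+(v)$; the cleanest route is probably to perturb $w$ to $w - \eta$ for $\eta>0$ (which keeps $\bT[w-\eta; f,\chi] \le \bT[w;f,\chi] + $ a controlled error, since the obstacle term only improves) while perturbing the equation via $f$. I would state the precise perturbed data and verify $\bT[w_\eta; f^-,\chi^-] < \bT[v; f,\chi]$ in the viscosity sense for $\eta$ small.

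Next, the harder case is \textbf{RHS}.\ref{Ass.RHS.zero}, i.e., $f \equiv 0$. Here multiplicative scaling of $v$ does not help because $-\Delta_\infty^\diamond$ annihilates the scaling, so there is no slack to exploit from the equation alone. This is precisely the situation where, in the Dirichlet case, one needed the more delicate \Cref{thm:DCP}; for the obstacle problem the standard trick is to use the comparison function $v + \eta\,\phi$ where $\phi$ is a smooth strict supersolution of $-\Delta_\infty^\diamond \phi > 0$ on $\Oc$ — for instance $\phi(x) = |x - x_*|^2$ for a suitable center $x_*$, rescaled so that $-\Delta_\infty^\diamond\phi = 2$ wherever the gradient is nonzero, together with $\phi$ being subharmonic-like near its minimum. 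Then $v_\eta = v + \eta\phi$ satisfies, in the non-coincidence set of $v$, $-\Delta_\infty^\diamond v_\eta \ge $ something strictly positive (using an inf-convolution / sum rule for viscosity supersolutions, or directly testing), and $v_\eta \ge v \ge \chi$ so the obstacle constraint is preserved; hence $\bT[v_\eta; f, \chi] \ge \delta_v > 0 = \delta_w \ge \bT[w; f,\chi]$ in the viscosity sense, after possibly also pushing $w$ down by a constant to keep $w \le v_\eta$ on $\pO$ (which costs at most $\eta\max_{\pO}\phi$). Apply \Cref{lemma:obstacle-CP-strict} to get $w \le v_\eta$ on $\Oc$, then let $\eta \downarrow 0$. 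The main obstacle I anticipate is verifying rigorously, in the viscosity sense and including points where gradients vanish, that adding $\eta\phi$ genuinely strictly increases $\bT[\cdot; f,\chi]$ on all of $\Omega$ — one must argue via test functions touching $v_\eta$ from below, split off the $\eta\phi$ piece, and handle the coincidence set (where the obstacle term is $0$ and we only need $w \le v \le v_\eta$) separately from the non-coincidence set (where the equation part is active). Once that structural lemma is in hand, the limit $\eta \downarrow 0$ is immediate and completes the proof.
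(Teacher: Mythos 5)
Your treatment of the sign case (\textbf{RHS}.2a) is essentially the right idea, though as written it needs repair: an additive shift $w-\eta$ gives no strict slack in the equation part (constants are invisible to $-\Delta_\infty^\diamond$), and scaling $v$ to $(1-\beta)v$ changes the data to $(1-\beta)f$, $(1-\beta)\chi$, so \Cref{lemma:obstacle-CP-strict}, which compares sub- and supersolutions of $\bT[\cdot;f,\chi]$ with the \emph{same} $f$ and $\chi$, no longer applies directly. The paper avoids both issues by perturbing only the function while keeping the data fixed: with $M$ an upper bound of $w$, the function $w_\beta=(1+\beta)w-(M+1)\beta\le w-\beta$ satisfies $\bT[w_\beta;f,\chi]\le\max\{-\beta f_0,-\beta\}<0$ (the affine shift gives strictness in the obstacle term, the scaling gives strictness in the equation term because $|f|\ge f_0$), after which the strict lemma and $\beta\downarrow0$ finish that case.

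The genuine gap is in the homogeneous case $f\equiv0$. Your key step---that $v_\eta=v+\eta\phi$, with $\phi$ a smooth strict supersolution, satisfies $\bT[v_\eta;0,\chi]\ge\delta>0$ in the viscosity sense---is not something that can be "verified with test functions"; it is false in general for the normalized infinity Laplacian. If $P$ touches $v_\eta$ from below at $x_0$, then $P-\eta\phi$ touches $v$ from below, and superharmonicity of $v$ only controls the second derivative of $P-\eta\phi$ in the direction of \emph{its own} gradient; the operator applied to $P$ uses the direction of $\De P$, which is rotated by an amount of order $\eta|\De\phi|/|\De P|$, and since $|\De^2P|$ is not controlled (and $|\De P|$ may be small), the sign is destroyed. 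In short, viscosity supersolutions of $-\Delta_\infty^\diamond$ are not closed under addition of smooth strict supersolutions, which is exactly why this case is delicate. The paper's route is structurally different: first truncate $v$ by $\max_{\Oc}\chi$ and use that bounded infinity-superharmonic functions are locally Lipschitz; then build a \emph{non-flat} approximation $v_\gamma$ with $L(v_\gamma,\cdot)\ge\gamma$, $v\le v_\gamma\le v+C\gamma$, $v_\gamma\ge v$ on $\pO$ (this construction must be done by hand, since $v$ need not be continuous up to $\pO$ and \cite[Theorem 2.1]{MR2372480} cannot be invoked directly); and finally compose with an increasing concave quadratic $q_a$, for which the chain rule gives $-\Delta_\infty^\diamond q_a(v_\gamma)\ge -q_a''|\De v_\gamma|^2\ge 2a\gamma^2>0$, hence $\bT[q_a(v_\gamma);0,\chi]\ge\min\{2a\gamma^2,a\}>0$. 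Composition with monotone functions is compatible with the normalized operator (addition is not), and the non-flatness $|\De v_\gamma|\ge\gamma$ is precisely what makes the concavity of $q_a$ yield strictness. Then \Cref{lemma:obstacle-CP-strict} applies, and one sends $a\downarrow0$ and $\gamma\downarrow0$. Without replacing your additive perturbation by such a non-flat-approximation-plus-composition argument (or another device of comparable strength), the proof of the $f\equiv0$ case does not close.
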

\begin{proof}
Based on how Assumption \textbf{RHS}.\ref{Ass.RHS.f<>0} is satisfied, we split the proof into three cases.

\emph{Case 1:} $\sup\{f(x): x \in \Omega\} \le -f_0 < 0$. Since $w \in \USC(\Oc)$, we have that $w$ is bounded from above. Let $M$ be an upper bound of $w$. For any $\beta > 0$, consider the function
\[
  w_{\beta}(x) = (1 + \beta) w(x) - (M + 1) \beta \le w(x) - \beta.
\]
From $\bT[w; f, \chi] \le 0$, we argue that $w_{\beta}$ satisfies, in the viscosity sense,
\[
  \bT[w_{\beta}; f, \chi] \le \max\{ -\beta f_0, -\beta \} < 0.
\]
Let us, at least formally, provide an explanation. If $x \in \mathcal{C}(w)$, we have
\[
  \bT[w_{\beta}; f, \chi](x) \le w_{\beta}(x) - \chi(x)
  \le w(x) - \beta - \chi(x) = -\beta.
\]
On the other hand, for $x \in \Omega^+(w)$, we have
\[
  \bT[w_{\beta}; f, \chi](x) \le -\Delta_\infty^\diamond w_{\beta}(x) - f(x)
  = -(1 + \beta) \Delta_\infty^\diamond w(x) - f(x)
  = -\beta \Delta_\infty^\diamond w(x) \le -\beta f_0.
\]
Now, since we have $\bT[w_{\beta}; f, \chi] \le \max\{ -\beta f_0, -\beta \} < 0 \le \bT[v; f, \chi]$, we can apply \Cref{lemma:obstacle-CP-strict} to obtain that
\[
w_{\beta}(x) = (1 + \beta) w(x) - (M + 1) \beta \le v(x), \qquad \forall x \in \Omega, \quad \forall \beta>0.
\]
Letting $\beta \downarrow 0$ finishes the proof.

\emph{Case 2:} $\inf\{f(x): x \in \Omega\} \ge f_0 > 0$. Similar to the Case 1, let $m$ be a lower bound of $v$ and, for $\beta>0$, define the function
\[
  v_{\beta}(x) = (1 + \beta) v(x) - (m - 1) \beta \ge v(x) + \beta.
\]  
It is easy to verify that 
\[
  \bT[v_{\beta}; f, \chi] \ge \min\{ \beta f_0, \beta \} > 0
\]
in the viscosity sense. Thus, we use \Cref{lemma:obstacle-CP-strict} to arrive at
\[
  v_{\beta}(x) = (1 + \beta) v(x) - (m - 1) \ge w(x), \qquad \forall x \in \Omega, \quad \forall \beta>0.
\]
Letting $\beta \downarrow 0$ finishes the proof.

\emph{Case 3:} $f \equiv 0$. As usual, this case is more delicate. Let $M = \max_{x \in \Oc} \chi(x)$, we first claim that it is enough to prove the result in the case that $v$ satisfies $v(x) \le M$. To see this, define
\[
  \check{v}(x) = \min\{ v(x), M \} \le M.
\]
This function verifies $\bT[\check{v}; 0, \chi] \ge 0$ if $\bT[v; 0, \chi] \ge 0$. Consequently, if we know the comparison holds between $w$ and $\check{v}$, then we can conclude that $w \le \check{v} \le v$.

Let us assume then that $v$ is bounded from above. Owing to the fact that $v \in \LSC(\Oc)$, it is also bounded from below and as a consequence $v \in L^{\infty}(\Omega)$. Now, since $0 \le \bT[v;0, \chi]$, we must have that $v(x) \ge \chi(x)$ and $-\Delta_\infty^\diamond v(x) \ge 0$ in the viscosity sense. In other words, we have that $v$ is bounded and infinity superharmonic. This implies that $v \in \LSC(\Oc) \cap C(\Omega) \cap W^{1,\infty}_{\mathrm{loc}}(\Omega)$; see \cite[Lemma 4.1]{MR1605682}.

We now wish to construct, in the spirit of \cite[Theorem 2.1]{MR2372480}, an approximation to this function which is not flat, i.e., its local Lipschitz constant (defined in \eqref{eq:local-Lip}) is bounded from below. The main difficulty here, and the reason why we cannot simply invoke \cite[Theorem 2.1]{MR2372480} is that $v$ may not be continuous near the boundary. Nevertheless, we claim that, for any $\gamma > 0$, there exists $v_{\gamma} \in \LSC(\Oc) \cap W^{1,\infty}_{\mathrm{loc}}(\Omega)$ which satisfies following properties:
\begin{equation}\label{eq:obstacle-CP-proof-vg}
  -\Delta_{\infty}^{\diamond} v_{\gamma} \ge 0, \text{ in } \Omega, \quad
  v_{\gamma} \ge v, \text{ on } \partial \Omega, \quad
  L(v_{\gamma},\cdot) \ge \gamma, \text{ in } \Omega, \quad
  v \le v_{\gamma} \le v + C\gamma , \text{ in } \Omega,
\end{equation}
where the constant $C>0$ depends only on $\Omega$.
%

Let us sketch the construction of this $v_\gamma$. For $\gamma>0$ define
\[
  V_{\gamma} = \{x \in \Omega: L(v, x) < \gamma \},
\]
which, by definition, is open. Write $V_{\gamma} = \bigcup_{i} \mathcal{O}_i$ where each $\mathcal{O}_i$ is a connected component of $V_{\gamma}$. On each $\mathcal{O}_i$ the function $v$ is uniformly Lipschitz and, consequently, we can define
\[
  \wt{v}_i(x) = \lim_{\mathcal{O}_i \ni y \to x} v(y), \qquad x \in \overline{\mathcal{O}_i}.
\]
Clearly, $\wt{v}_i = v$ in $\overline{\mathcal{O}_i} \cap \Omega$, and for $x \in \overline{\mathcal{O}_i} \cap \pO$, we have $\wt{v}_i(x) \ge v(x)$ because of $v \in \LSC(\Oc)$. We then define the function $v_{\gamma}$ through
\[
  v_{\gamma}(x) = 
  \begin{dcases}
    \inf_{y \in \partial \mathcal{O}_i} \l[ \wt{v}_i(y) + \gamma \dist_{\mathcal{O}_i}(x,y) \r],  & x \in \mathcal{O}_i, \\
    v(x), & x \in \Oc \setminus V_{\gamma},
  \end{dcases}
\]
where by $\dist_{\mathcal{O}_i}(x,y)$ we denote the path distance, relative to $\mathcal{O}_i$, between the points $x$ and $y$. Clearly $L(v_\gamma,x) \geq \gamma$ for all $x \in \Omega$. Following the proof in \cite{MR2372480} one can verify all remaining properties in \eqref{eq:obstacle-CP-proof-vg}.

Once we have $v_{\gamma}$, for any $a > 0$, we define the quadratic function
\[
  q_a(t) = a \l(\Vert v_{\gamma} \Vert_{L^{\infty}(\Omega)}^2 + 1\r) + t - a t^2.
\]
It is possible to choose $a$ small enough, so that $q_a'(t) = 1 - 2at > 0$ for all $t \in[0, M + C \gamma]$. Consider now the function $q_a(v_{\gamma}) \in \LSC(\Oc)$. We have that $q_a(v_{\gamma}) \ge v_{\gamma}$, and that
\[
  \bT[q_a(v_{\gamma}); 0, \chi] \ge \min\{ 2a \gamma^2 , a \} > 0.
\]
To see this, let us do a formal calculation, which can be justified with the usual arguments. If $v_{\gamma}$ is smooth and with $|\De v_{\gamma}(x)| = L(v_{\gamma}, x) \ge \gamma$, we have that
\[
  \De q_a(v_{\gamma}) = q_a'(v_{\gamma}) \De v_{\gamma}, \quad
  \De^2 q_a(v_{\gamma}) = q_a'(v_{\gamma}) \De^2 v_{\gamma} + q_a''(v_{\gamma}) \De v_{\gamma} \otimes \De v_{\gamma},
\]
and consequently
\[
  -\Delta_{\infty}^{\diamond} q_a(v_{\gamma}) = -q_a'(v_{\gamma}) \Delta_{\infty}^{\diamond} v_{\gamma} - q_a''(v_{\gamma}) |\De v_{\gamma}|^2
    \ge - q_a''(v_{\gamma}) |\De v_{\gamma}|^2 \ge 2a |\De v_{\gamma}|^2 \geq 2a \gamma^2,
\]
where we used $q_a'(v_{\gamma}) > 0$ and $-\Delta_{\infty}^{\diamond} v_{\gamma} \ge 0$. Therefore, we can apply \Cref{lemma:obstacle-CP-strict} between $w$ and $q_a(v_{\gamma})$ to conclude that, for $a > 0$ but small enough,
\[
  w(x) \le q_a(v_{\gamma}) = a \l(\Vert v_{\gamma} \Vert_{L^{\infty}(\Omega)}^2 + 1\r) + v_{\gamma}(x) - a v_{\gamma}(x)^2.
\]
Letting $a \downarrow 0$ implies that
\[
  w(x) \le v_{\gamma}(x),
\]
for any $\gamma > 0$. Finally, taking $\gamma \downarrow 0$ finishes our proof.
\end{proof}

\subsubsection{The numerical scheme}\label{sub:SchemeObstacle}

Following the notation and constructions of \Cref{sec:Notation}, the discretization simply reads
\begin{equation}\label{eq:obstacle-discrete}
  \bT_{\frakh}[u_\frakh;f,\chi](\vertex)  = 0, \quad \forall \vertex \in \Nhi{\ve}, \qquad u_\frakh(\vertex) = \wt{g}_\ve(\vertex), \quad \forall \vertex \in \Nhb{\ve},
\end{equation}
where
\begin{equation}
\label{eq:DefOfDiscreteTObstacle}
  \bT_{\frakh}[u_\frakh;f,\chi](\vertex) = \min\l\{ -\Delta_{\infty, \frakh}^\diamond u_\frakh (\vertex) - f(\vertex), u_\frakh (\vertex) - \chi(\vertex)  \r\}, \quad \vertex \in \Nhi{\ve}.
\end{equation}

Let us now prove a discrete comparison principle for this scheme.

\begin{Theorem}[discrete comparison]\label{thm:obstacle-DCP}
Assume that the obstacle $\chi$ satisfies \textbf{OBS}.\ref{Ass.OBS.cont}, and the right hand side $f$ satisfies \textbf{RHS}.\ref{Ass.RHS.f<>0}. Moreover, assume that if \textbf{RHS}.\ref{Ass.RHS.zero} holds, then the mesh $\Th$ satisfies \textbf{M}.\ref{Ass.M.connect}. Let $w_h, v_h \in \Vh$ be such that
\begin{equation}\label{eq:assump-thm-obstacle-DCP}
  \bT_{\frakh}[w_h;f,\chi](\vertex) \le 0 \le \bT_{\frakh}[v_h;f,\chi](\vertex), \quad \forall \vertex \in \Nhi{\ve},
\end{equation}
and $w_h(\vertex) \le v_h(\vertex)$ for any $\vertex \in \Nhb{\ve}$. Then we have
\[
  w_h(\vertex) \le v_h(\vertex) \quad \forall \vertex \in \Nh.
\]	
\end{Theorem}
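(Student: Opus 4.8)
The proof will mirror the structure of the continuous comparison principle, Theorem~\ref{thm:obstacle-CP}, but now use the discrete comparison principles (Theorems~\ref{thm:simple-DCP} and~\ref{thm:DCP}) in place of Lemma~\ref{lemma:obstacle-CP-strict}, and a discrete-Perron-type perturbation in place of the approximation $v_\gamma$. The key observation is that the discrete obstacle operator $\bT_\frakh$ has the same algebraic structure as the continuous one: at any $\vertex \in \Nhi{\ve}$, $\bT_\frakh[w_h;f,\chi](\vertex) \le 0$ means \emph{either} $-\Delta^\diamond_{\infty,\frakh} w_h(\vertex) \le f(\vertex)$ \emph{or} $w_h(\vertex) \le \chi(\vertex)$ (in fact both, since it is a min), and similarly $\bT_\frakh[v_h;f,\chi](\vertex) \ge 0$ forces $w_h(\vertex) \ge \chi(\vertex)$ \emph{and}, at every node where $v_h(\vertex) > \chi(\vertex)$ is strict, the equation $-\Delta^\diamond_{\infty,\frakh} v_h(\vertex) = f(\vertex)$; more importantly we always get $-\Delta^\diamond_{\infty,\frakh}v_h(\vertex) \ge f(\vertex)$ wherever it matters, because the min being nonnegative forces the first slot to be nonnegative as well. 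So on the whole of $\Nhi{\ve}$ we have $-\Delta^\diamond_{\infty,\frakh} w_h(\vertex) \le f(\vertex) \le -\Delta^\diamond_{\infty,\frakh} v_h(\vertex)$, \emph{provided} we can rule out that $w_h$ touches the obstacle in a bad way; the point is to reduce to a setting where one of the one-sided sign hypotheses of Theorem~\ref{thm:simple-DCP} or Theorem~\ref{thm:DCP} applies.

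\textbf{Main steps.} I would argue by contradiction: suppose $m := \max_{\vertex \in \Nh}[w_h(\vertex) - v_h(\vertex)] > 0 \ge \max_{\vertex \in \Nhb{\ve}}[w_h(\vertex) - v_h(\vertex)]$, and let $\vertex$ be a maximizing node, so $\vertex \in \Nhi{\ve}$. At such a node, $w_h(\vertex) > v_h(\vertex) \ge \chi(\vertex)$, so $\bT_\frakh[w_h;f,\chi](\vertex) \le 0$ gives $-\Delta^\diamond_{\infty,\frakh} w_h(\vertex) \le f(\vertex)$; and $\bT_\frakh[v_h;f,\chi](\vertex) \ge 0$ gives $-\Delta^\diamond_{\infty,\frakh} v_h(\vertex) \ge f(\vertex)$. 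Hence $-\Delta^\diamond_{\infty,\frakh} w_h(\vertex) \le -\Delta^\diamond_{\infty,\frakh} v_h(\vertex)$, which by Lemma~\ref{lemma:monotonicity} (applied to the node $\vertex$ using \eqref{eq:simple-DCP-proof0}-type inequalities) is fine, but we need a strict contradiction. To get it, split into the three cases of \textbf{RHS}.\ref{Ass.RHS.f<>0} exactly as in the proof of Theorem~\ref{thm:obstacle-CP}: in the case $\sup f \le -f_0 < 0$, replace $w_h$ by $w_{h,\beta} = (1+\beta)w_h - (M+1)\beta$ with $M$ an upper bound for $w_h$ on $\Nh$; one checks $\bT_\frakh[w_{h,\beta};f,\chi] \le \max\{-\beta f_0,-\beta\} < 0$ at every interior node (the obstacle slot drops by $\beta$, and in the non-coincidence slot $-\Delta^\diamond_{\infty,\frakh}$ scales by $(1+\beta)$ off a negative-valued quantity), so either \eqref{eq:assump-thm-discrete-comparison2} is strict or $-\Delta^\diamond_{\infty,\frakh} w_{h,\beta}(\vertex) < 0$, putting us in case~1 of Theorem~\ref{thm:simple-DCP}; apply it and let $\beta \downarrow 0$. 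The case $\inf f \ge f_0 > 0$ is symmetric, perturbing $v_h$ and using case~2 of Theorem~\ref{thm:simple-DCP}. For $f \equiv 0$, the perturbation must instead be a quadratic composition: mimic the use of $q_a$ from the proof of Theorem~\ref{thm:obstacle-CP} together with a discrete ``non-flat'' surrogate, but in the discrete setting it is cleaner to note that $\bT_\frakh[w_h;0,\chi] \le 0$ already gives $-\Delta^\diamond_{\infty,\frakh} w_h(\vertex) \le 0$ at every interior node, so the hypothesis of Theorem~\ref{thm:DCP} (namely $-\Delta^\diamond_{\infty,\frakh} w_h(\vertex) \le 0$ for all $\vertex \in \Nhi{\ve}$) is satisfied outright; combined with $-\Delta^\diamond_{\infty,\frakh} w_h(\vertex) \le 0 \le -\Delta^\diamond_{\infty,\frakh} v_h(\vertex)$ on the relevant set and Assumption~\textbf{M}.\ref{Ass.M.connect}, Theorem~\ref{thm:DCP} applies directly. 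One must be slightly careful that the inequality $-\Delta^\diamond_{\infty,\frakh} w_h \le -\Delta^\diamond_{\infty,\frakh} v_h$ only holds \emph{where} $w_h > \chi$ and $v_h > \chi$, but the contradiction argument only ever invokes it at the maximizing set $E$, where indeed $w_h(\vertex) > v_h(\vertex) \ge \chi(\vertex)$, so this suffices to reproduce the proof of Theorem~\ref{thm:DCP} verbatim.

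\textbf{Expected obstacle.} The delicate point is the $f\equiv0$ case: unlike in the continuous theorem, we cannot literally invoke the $v_\gamma$ construction, but the good news is that the discrete operator gives us $-\Delta^\diamond_{\infty,\frakh}w_h \le 0$ \emph{for free} from the obstacle inequality, so Theorem~\ref{thm:DCP} already has its hypothesis met and no non-flat regularization is needed --- the only care required is that the chain of equalities \eqref{eq:discrete-comparison-proof1}--\eqref{eq:discrete-comparison-proof4} in the proof of Theorem~\ref{thm:DCP} is run only over nodes of $E$, where the obstacle constraint is inactive for $w_h$, so that the comparison $-\Delta^\diamond_{\infty,\frakh}w_h(\vertex) \le -\Delta^\diamond_{\infty,\frakh}v_h(\vertex)$ is genuinely available. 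I would therefore write the $f\equiv0$ case as a short adaptation of the proof of Theorem~\ref{thm:DCP}, and the two sign cases as short reductions to Theorem~\ref{thm:simple-DCP} via the linear perturbations above; the sign-case perturbation bookkeeping (verifying $\bT_\frakh$ decreases strictly under $w_h \mapsto (1+\beta)w_h - (M+1)\beta$) is routine but is the one computational check worth spelling out.
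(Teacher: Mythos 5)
Your core idea---work at the maximizing set, where $w_h>v_h\ge\chi$ makes the obstacle slot of $w_h$ strictly positive so that the genuine PDE inequalities $-\Delta^\diamond_{\infty,\frakh}w_h\le f\le-\Delta^\diamond_{\infty,\frakh}v_h$ become available---is exactly the paper's, and your treatment of the case $f\equiv0$ (rerun the argument of \Cref{thm:DCP} with the maximizing set $E$ playing the role of $\Nhi{\ve}$, noting the obstacle is inactive for $w_h$ there) coincides with the paper's proof, which applies \Cref{thm:simple-DCP} or \Cref{thm:DCP} with $E$ as the interior node set and observes that \textbf{M}.\ref{Ass.M.connect} is inherited by subsets of $E$. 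The gap is in your two sign cases. After the perturbation $w_{h,\beta}=(1+\beta)w_h-(M+1)\beta$ you only know $\bT_\frakh[w_{h,\beta};f,\chi](\vertex)<0$, which says that \emph{one} of the two slots is negative at each node; at a node where $w_{h,\beta}$ lies below the obstacle this gives no control at all on $-\Delta^\diamond_{\infty,\frakh}w_{h,\beta}(\vertex)$ (for instance $w_h$ may have a sharp discrete local maximum strictly below $\chi$, making $-\Delta^\diamond_{\infty,\frakh}w_h$ large and positive while $-\Delta^\diamond_{\infty,\frakh}v_h=f<0$), so the standing hypothesis \eqref{eq:assump-thm-discrete-comparison2} of \Cref{thm:simple-DCP} can genuinely fail at some $\vertex\in\Nhi{\ve}$ and that theorem cannot be invoked as a black box, nor does your node-by-node ``either strict or negative'' check hold at such nodes. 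The same objection undoes your claim that for $f\equiv0$ the inequality $-\Delta^\diamond_{\infty,\frakh}w_h\le0$ holds ``at every interior node'' (it holds only where $w_h>\chi$), and the parenthetical ``in fact both, since it is a min'' is false: a nonpositive minimum does not make both arguments nonpositive. Your closing caveat shows you see the issue, but for the sign cases the written reduction still relies on the global application.

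The repair is to drop the perturbation and treat the sign cases exactly as you treat $f\equiv0$: on $E=\{\vertex\in\Nh:(w_h-v_h)(\vertex)=m\}\subset\Nhi{\ve}$ one has $-\Delta^\diamond_{\infty,\frakh}w_h(\vertex)\le f(\vertex)\le-\Delta^\diamond_{\infty,\frakh}v_h(\vertex)$, and the sign of $f$ supplies the missing one-sided condition ($-\Delta^\diamond_{\infty,\frakh}w_h<0$ on $E$ when $\sup f<0$, case 1 of \Cref{thm:simple-DCP}; $-\Delta^\diamond_{\infty,\frakh}v_h>0$ on $E$ when $\inf f>0$, case 2), so \Cref{thm:simple-DCP} applied with $E$ in the role of the interior set gives $\max_{\Nh}(w_h-v_h)=\max_{\Nh\setminus E}(w_h-v_h)<m$, a contradiction; this is precisely the paper's argument, where all three cases are handled uniformly by this localization. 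Alternatively, if you wish to keep the perturbation, you must prove a strict comparison principle for the discrete obstacle operator $\bT_\frakh$ itself rather than cite the one for $-\Delta^\diamond_{\infty,\frakh}$: at an interior maximizer of $w_{h,\beta}-v_h$ with positive maximum, \Cref{lemma:monotonicity} gives $-\Delta^\diamond_{\infty,\frakh}w_{h,\beta}\ge-\Delta^\diamond_{\infty,\frakh}v_h$ there while the obstacle slots differ by the (positive) maximum, hence $\bT_\frakh[w_{h,\beta};f,\chi]\ge\bT_\frakh[v_h;f,\chi]\ge0$, contradicting strict negativity; then let $\beta\downarrow0$. Either route closes the gap; note, however, that the perturbation route does not extend to $f\equiv0$, where strict negativity of the perturbed $\bT_\frakh$ is unavailable and the localization via \Cref{thm:DCP} and \textbf{M}.\ref{Ass.M.connect} is indispensable.
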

\begin{proof}
The proof proceeds in a similar way to \Cref{thm:simple-DCP} or \Cref{thm:DCP}, depending on how \textbf{RHS}.\ref{Ass.RHS.f<>0} is satisfied. We assume, for the sake of contradiction, that
\[
  m = \max_{\vertex \in \Nh} \left[ w_h(\vertex) - v_h(\vertex) \right] = \max_{\vertex \in \Nhi{\ve}} \left[ w_h(\vertex) - v_h(\vertex) \right] > 0.
\]
Consider the set of points
\[
  E = \{ \vertex \in \Nh: (w_h - v_h)(\vertex) = m \} \subset \Nhi{\ve}.
\]
Clearly, the set $E$ is nonempty because of the definition of $m$. For any $\vertex \in E$, we have
\begin{equation}\label{eq:obstacle-DCP-proof0}
  w_h(\vertex) - v_h(\vertex) \ge w_h(\othervertex) - v_h(\othervertex)\quad \forall \othervertex \in \Nh. 
\end{equation}
By \Cref{lemma:monotonicity}, this implies that
\[
  S_{\frakh}^+ w_h(\vertex) \le S_{\frakh}^+ v_h(\vertex), \qquad
  S_{\frakh}^- w_h(\vertex) \ge S_{\frakh}^- v_h(\vertex).
\]
and thus $\bT_{\frakh}[w_h;f,\chi](\vertex) \ge \bT_{\frakh}[v_h;f,\chi](\vertex)$. Combining with \eqref{eq:assump-thm-obstacle-DCP} we see that
\[
  \bT_{\frakh}[w_h;f,\chi](\vertex) = \bT_{\frakh}[v_h;f,\chi](\vertex).
\]

Now we claim that
\begin{equation}\label{eq:obstacle-DCP-proof1}
  \bT_{\frakh}[w_h;f,\chi](\vertex) = -\Delta_{\infty, \frakh}^\diamond w_\frakh (\vertex) - f(\vertex).
\end{equation}
If this is not the case, then
\[
  \bT_{\frakh}[w_h;f,\chi](\vertex) = w_h(\vertex) - \chi(\vertex) = v_h(\vertex) + m - \chi(\vertex) > v_h(\vertex) - \chi(\vertex) \ge \bT_{\frakh}[v_h;f,\chi](\vertex),
\]	
which is a contradiction. From \eqref{eq:obstacle-DCP-proof1} then we have
\[
  -\Delta_{\infty, \frakh}^\diamond w_h (\vertex) \le f(\vertex) \le -\Delta_{\infty, \frakh}^\diamond v_h (\vertex), \quad \forall \vertex \in E.
\]

Applying \Cref{thm:simple-DCP} or \Cref{thm:DCP} where $\Nhi{\ve}$ is our set $E$ here we obtain the contradiction
\[
  m \le \max_{\vertex \in \Nh} \left[ w_h(\vertex) - v_h(\vertex) \right] = \max_{\vertex \in \Nh \setminus E}\left[ w_h(\vertex) - v_h(\vertex) \right] < m.
\]
We point out that the Assumption~\textbf{M}.\ref{Ass.M.connect} required in \Cref{thm:DCP} still holds since $E \subset \Nhi{\ve}$ . This finishes our proof.
\end{proof}

Similar to the discussions in \Cref{sub:ExistenceUniqueness}, using the discrete comparison principle we obtain the following properties for the obstacle problem.

\begin{Lemma}[existence, uniqueness, stability]
	Assume \textbf{RHS}.\ref{Ass.RHS.cont}--\ref{Ass.RHS.f<>0},  \textbf{BC}.\ref{Ass.BC.cont}--\ref{Ass.BC.ext} and \textbf{OBS}.\ref{Ass.OBS.cont}. Moreover, if \textbf{RHS}.\ref{Ass.RHS.zero} holds, suppose that for all $\frakh$ we have \textbf{M}.\ref{Ass.M.connect}. For any choice of parameters $\frakh$, there exists a unique $u_{\frakh} \in \Vh$ that solves \eqref{eq:obstacle-discrete}. Moreover, this solution satisfies
\begin{equation}\label{eq:obstacle-stability}
\Vert \uve \Vert_{L^{\infty}(\Oh)} \le C \Vert f \Vert_{L^{\infty}(\Oh)} + \Vert \chi \Vert_{L^{\infty}(\Oh)} + \max_{\vertex \in \Nhb{\ve}} |\wt{g}_\ve(\vertex)|,
\end{equation}
\end{Lemma}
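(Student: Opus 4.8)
The plan is to follow the template of \Cref{lemma:stability} and \Cref{lemma:ExistUnique}, using the discrete comparison principle of \Cref{thm:obstacle-DCP} in place of \Cref{thm:simple-DCP} and \Cref{thm:DCP}. \emph{Uniqueness} is then immediate: if $u_\frakh^1, u_\frakh^2 \in \Vh$ both solve \eqref{eq:obstacle-discrete} then $\bT_{\frakh}[u_\frakh^1;f,\chi](\vertex) = 0 = \bT_{\frakh}[u_\frakh^2;f,\chi](\vertex)$ for every $\vertex \in \Nhi{\ve}$ and $u_\frakh^1 = u_\frakh^2 = \wt{g}_\ve$ on $\Nhb{\ve}$, so \Cref{thm:obstacle-DCP} applied in both directions gives $u_\frakh^1 \le u_\frakh^2$ and $u_\frakh^2 \le u_\frakh^1$.

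The next step is to produce a discrete supersolution. Reusing the concave barrier $\varphi$ built in the proof of \Cref{lemma:stability}, for which $-\Delta^\diamond_{\infty, \frakh}\varphi(\vertex) \ge 1$ for all $\vertex \in \Nhi{\ve}$ and $\varphi \ge 0$ on $\Oc$, set
\[
  \overline{w}_h = \Vert f \Vert_{L^{\infty}(\Oh)}\, \interp\varphi + \Vert \chi \Vert_{L^{\infty}(\Oh)} + \max_{\othervertex \in \Nhb{\ve}} |\wt{g}_\ve(\othervertex)| \in \Vh .
\]
Since $-\Delta^\diamond_{\infty, \frakh}$ is positively homogeneous of degree one and invariant under the addition of constants, $-\Delta^\diamond_{\infty, \frakh}\overline{w}_h(\vertex) \ge \Vert f \Vert_{L^{\infty}(\Oh)} \ge f(\vertex)$ and $\overline{w}_h(\vertex) \ge \Vert \chi \Vert_{L^{\infty}(\Oh)} \ge \chi(\vertex)$ for every $\vertex \in \Nhi{\ve}$, hence $\bT_{\frakh}[\overline{w}_h;f,\chi](\vertex) \ge 0$; moreover $\overline{w}_h \ge \wt{g}_\ve$ on $\Nhb{\ve}$, so $\overline{w}_h$ is a discrete supersolution. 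No lower barrier is needed, because $\bT_{\frakh}[u_\frakh;f,\chi](\vertex)=0$ forces $u_\frakh(\vertex) \ge \chi(\vertex) \ge -\Vert \chi \Vert_{L^\infty(\Oh)}$ at interior nodes, while $u_\frakh(\vertex) = \wt{g}_\ve(\vertex)$ at boundary nodes; as $u_\frakh$ is piecewise linear this already yields $\min_{\Oh} u_\frakh \ge -\Vert \chi\Vert_{L^\infty(\Oh)} - \max_{\vertex \in \Nhb{\ve}}|\wt{g}_\ve(\vertex)|$.

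\emph{Existence} I would obtain by Perron's method, exactly as in \Cref{lemma:ExistUnique}. Let $W_h = \{ w_h \in \Vh : \bT_{\frakh}[w_h;f,\chi](\vertex) \le 0 \ \forall \vertex \in \Nhi{\ve}, \ w_h(\vertex) \le \wt{g}_\ve(\vertex) \ \forall \vertex \in \Nhb{\ve}\}$. This set is nonempty --- any sufficiently negative constant belongs to it, since then the obstacle leg is $\le 0$ everywhere --- and by \Cref{thm:obstacle-DCP} every $w_h \in W_h$ satisfies $w_h \le \overline{w}_h$, so $W_h$ is bounded above. Set $v_h(\vertex) = \sup_{w_h \in W_h} w_h(\vertex)$; since a pointwise maximum of finitely many elements of $W_h$ is again in $W_h$ (by \Cref{lemma:monotonicity}, tracking the two legs of the $\min$ defining $\bT_\frakh$), a standard argument based on discrete monotonicity shows $v_h \in W_h$. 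If $v_h$ did not solve \eqref{eq:obstacle-discrete}, then either $\bT_{\frakh}[v_h;f,\chi](\vertex) < 0$ at some interior node $\vertex$, or $v_h(\vertex) < \wt{g}_\ve(\vertex)$ at some boundary node $\vertex$; in either case I would replace $v_h$ by $v_h + \beta\,\widehat{\varphi}_{\vertex}$ with $\beta>0$ small. The key point, and the one place where the argument genuinely departs from \Cref{lemma:ExistUnique}, is that $\widehat{\varphi}_{\vertex}$ vanishes at every node $\othervertex \neq \vertex$, so the obstacle leg of $\bT_{\frakh}$ at any such $\othervertex$ is unchanged, while $-\Delta^\diamond_{\infty, \frakh}$ at $\othervertex$ can only decrease by \Cref{lemma:monotonicity}; combined with a continuity-in-$\beta$ check at $\vertex$ itself, this shows $v_h + \beta\widehat{\varphi}_{\vertex} \in W_h$, contradicting the definition of $v_h$. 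This verification --- keeping track of the two legs of the $\min$ under the bump --- is the main, and only mildly delicate, obstacle; everything else is routine.

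Finally, \emph{stability} follows by combining $u_\frakh \le \overline{w}_h$, which gives $\max_{\Oh} u_\frakh \le C\Vert f\Vert_{L^\infty(\Oh)} + \Vert\chi\Vert_{L^\infty(\Oh)} + \max_{\vertex \in \Nhb{\ve}}|\wt{g}_\ve(\vertex)|$ with $C = \max_{\Oc}\varphi$ depending only on $\Omega$, with the lower bound recorded above; this is precisely \eqref{eq:obstacle-stability}.
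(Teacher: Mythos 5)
Your proposal is correct and follows essentially the same route as the paper: uniqueness and the upper bound come from the obstacle comparison principle (\Cref{thm:obstacle-DCP}) applied against the concave barrier of \Cref{lemma:stability} shifted to dominate the obstacle (you add $\Vert\chi\Vert_{L^\infty}$ where the paper enlarges the constant $A$ so that $\varphi\ge\chi$, which is the same device), the lower bound is automatic from the obstacle and the boundary data, and existence is obtained by the same discrete Perron construction as in \Cref{lemma:ExistUnique}, with your bump argument correctly tracking both legs of the $\min$ in $\bT_\frakh$.
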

\begin{proof}
We can prove the stability result \eqref{eq:obstacle-stability} which is an analogue of \Cref{lemma:monotonicity}:
\begin{enumerate}[$\bullet$]
  \item The lower bound of $\uve$ is automatically given by $\chi$. 

  \item The upper bound of $\uve$ is obtained using the discrete comparison principle of \Cref{thm:obstacle-DCP} and the barrier function $\varphi(x) = \bv{p}^\intercal x - \frac12 |x|^2 + A$ constructed in the proof of \Cref{lemma:monotonicity}. The only modification needed here is to choose the constant $A$ large enough so that $\varphi \ge \chi$.
\end{enumerate}
The existence and uniqueness then follows from the discrete comparison principle and the stability result as in \Cref{lemma:ExistUnique}.
\end{proof}

The consistency properties proved in \Cref{lemma:consistency} and \Cref{lemma:consistency-p0} imply the corresponding consistency results of the operator $\bT_{\frakh}$. Based on this, we have the following convergence result.

\begin{Theorem}[convergence]\label{thm:obstacle-convergence}
	Assume that the right hand side $f$ satisfies Assumptions \textbf{RHS}.\ref{Ass.RHS.cont}--\ref{Ass.RHS.f<>0}. Assume that the boundary datum $g$ satisfies Assumptions \textbf{BC}.\ref{Ass.BC.cont}--\ref{Ass.BC.ext}. 
	Assume the obstacle $\chi$ satisfies Assumption \textbf{OBS}.\ref{Ass.OBS.cont}.
	Let $\{\frakh_j\}_{j=1}^\infty$ be a sequence of discretization parameters that satisfies \eqref{eq:para-condition}; and, if \textbf{RHS}.\ref{Ass.RHS.zero} holds, it additionally satisfies \textbf{M}.\ref{Ass.M.connect} for all $j \in \mathbb{N}$. In this framework we have that the sequence $\{u_{\frakh_j} \in \mathbb{V}_{h_j}\}_{j=1}^\infty$ of solutions to \eqref{eq:obstacle-discrete} converges uniformly, as $j \uparrow \infty$, to $u$, the solution of \eqref{eq:ObstacleProblem}.
\end{Theorem}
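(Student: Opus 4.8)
The plan is to run the Barles--Souganidis programme exactly as in the proof of \Cref{thm:convergence}, with the comparison principle \Cref{thm:obstacle-CP} for \eqref{eq:ObstacleProblem} replacing the one for \eqref{eq:BVPNinfLaplace}. First I would record that scheme \eqref{eq:obstacle-discrete} is monotone, stable, and consistent. Monotonicity of $\bT_\frakh$ is immediate from \eqref{eq:DefOfDiscreteTObstacle}: it is a minimum of $w_h\mapsto -\Delta_{\infty,\frakh}^\diamond w_h(\vertex)-f(\vertex)$, monotone by \Cref{lemma:monotonicity}, and of the trivially monotone $w_h\mapsto w_h(\vertex)-\chi(\vertex)$. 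Stability, i.e.\ the uniform $L^\infty$ bound on the family $\{u_{\frakh_j}\}$, is the content of the preceding lemma. Finally, since $\bT[w;f,\chi]=\min\{-\Delta_\infty^\pm w-f,\ w-\chi\}$ and the obstacle term is reproduced by the discretization without any error, the interior consistency estimates \Cref{lemma:consistency} and \Cref{lemma:consistency-p0} transfer verbatim to $\bT_\frakh$.

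\textbf{Envelopes are sub/supersolutions.} I would define $\overline u\in\USC(\Oc)$, $\underline u\in\LSC(\Oc)$ from the family $u_{\frakh_j}$ as in \eqref{eq:def-uo-uu}, and show $\bT[\overline u;f,\chi]\le 0$ and $\bT[\underline u;f,\chi]\ge 0$ in $\Omega$. For the subsolution property, let $\varphi$ be a quadratic polynomial with $\overline u-\varphi$ having a strict local maximum at $x_0\in\Omega$, and choose interior nodes $\mathtt z_{h_j}\to x_0$ maximizing $u_{\frakh_j}-\interpj\varphi$ over $\mathcal N_{h_j}\cap\Bc_\delta(x_0)$, so that $u_{\frakh_j}(\mathtt z_{h_j})\to\overline u(x_0)$. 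If $\overline u(x_0)=\chi(x_0)$, then $\bT[\varphi;f,\chi](x_0)\le\varphi(x_0)-\chi(x_0)=\overline u(x_0)-\chi(x_0)=0$. If instead $\overline u(x_0)>\chi(x_0)$, then by continuity of $\chi$ we have $u_{\frakh_j}(\mathtt z_{h_j})>\chi(\mathtt z_{h_j})$ for $j$ large, so $\bT_{\frakh_j}[u_{\frakh_j}](\mathtt z_{h_j})=0$ forces $-\Delta_{\infty,\frakh_j}^\diamond u_{\frakh_j}(\mathtt z_{h_j})\le f(\mathtt z_{h_j})$; monotonicity then yields the same inequality for $\interpj\varphi$, and the three-case consistency analysis of \Cref{lemma:consistency-combined}, together with \eqref{eq:para-condition}, gives $-\Delta_\infty^+\varphi(x_0)\le f(x_0)$, hence $\bT[\varphi;f,\chi](x_0)\le 0$. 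The supersolution property in $\Omega$ is easier: $u_{\frakh_j}\ge\chi$ at every interior node by \eqref{eq:obstacle-discrete}, so $\underline u\ge\chi$ in $\Omega$; and at a node realizing a local minimum of $u_{\frakh_j}-\interpj\varphi$, $\bT_{\frakh_j}[u_{\frakh_j}]=0$ gives $-\Delta_{\infty,\frakh_j}^\diamond u_{\frakh_j}\ge f$ irrespective of the obstacle, so the consistency argument delivers $-\Delta_\infty^-\varphi(x_0)\ge f(x_0)$.

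\textbf{Boundary values and conclusion.} Mirroring \Cref{lemma:bdry}, I would obtain $\overline u=\underline u=g$ on $\pO$ by barrier arguments; the only new element is that the barriers must also dominate the obstacle in order to be discrete sub-/supersolutions of \eqref{eq:obstacle-discrete}. For the subsolution barrier this is automatic, since $\bT_\frakh[\psi]\le 0$ as soon as the infinity-Laplacian part is. For the supersolution barrier one uses that $u_{\frakh_j}$ is a discrete supersolution of the obstacle-free scheme \eqref{eq:inf-laplace-discrete} (because $\bT_{\frakh_j}[u_{\frakh_j}]=0$ forces $-\Delta_{\infty,\frakh_j}^\diamond u_{\frakh_j}\ge f$), hence dominates the discrete Dirichlet solution, which by \Cref{thm:convergence} converges uniformly to $g$ and therefore, by \textbf{OBS}.\ref{Ass.OBS.cont}, lies strictly above $\chi$ in a fixed neighbourhood of $\pO$; thus the discrete coincidence set stays away from $\pO$ and the barrier of \Cref{lemma:bdry} may be used there. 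Once $\overline u=\underline u=g$ on $\pO$, \Cref{thm:obstacle-CP} applied to the subsolution $\overline u$ and supersolution $\underline u$ gives $\overline u\le\underline u$ in $\Omega$; since $\overline u\ge\underline u$ by construction, $\overline u=\underline u=:u$ is the viscosity solution of \eqref{eq:ObstacleProblem}, and equality of the relaxed limits amounts, in the usual way, to the uniform convergence of $u_{\frakh_j}$ to $u$ on $\Oc$.

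\textbf{Main obstacle.} The delicate points are in the last two steps. In the consistency step one must cope with the fact that the discrete coincidence set $\{\vertex:u_{\frakh_j}(\vertex)=\chi(\vertex)\}$ is not known a priori, and verify that at the relevant extremal node the obstacle is either inactive --- so the estimate reduces to the Dirichlet-problem analysis of \Cref{lemma:consistency-combined} --- or exactly active --- so the obstacle term of $\bT$ closes the inequality by itself. In the boundary step one must build boundary barriers that are simultaneously discrete supersolutions of \eqref{eq:inf-laplace-discrete} near $\pO$ and lie above $\chi$; this is precisely where the separation $\chi<g$ on $\pO$ and the domination of $u_{\frakh_j}$ by the discrete Dirichlet solution are needed. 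Every other ingredient is a direct transcription of the corresponding argument for \eqref{eq:inf-laplace-discrete}.
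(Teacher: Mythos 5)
Most of your plan coincides with the paper's (very brief) proof: monotonicity, stability, and consistency transfer to $\bT_\frakh$, the envelope argument of \Cref{lemma:consistency-combined} adapts by splitting according to whether the obstacle is active at the touching point, and the conclusion follows from \Cref{thm:obstacle-CP} once $\overline u=\underline u=g$ on $\pO$. The one place where you diverge is also the one place where your argument has a genuine gap: the upper barrier at the boundary. To bound $u_{\frakh_j}$ from above by the barrier $\varphi$ of \Cref{lemma:bdry} you must invoke the discrete comparison principle \Cref{thm:obstacle-DCP}, which requires $\bT_\frakh[\interp\varphi](\vertex)\ge 0$ at \emph{every} interior node, i.e.\ $\interp\varphi\ge\chi$ throughout $\Omega$, not only near $\pO$. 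Your substitute --- showing that the discrete coincidence set stays in a fixed compact subset away from $\pO$ and then claiming "the barrier of \Cref{lemma:bdry} may be used there" --- does not close the argument: comparison is a global statement, and if you try to localize it to a neighbourhood $N$ of $\pO$ you must control $u_{\frakh_j}$ on the inner boundary of $N$, where you have no pointwise bound by $\varphi$. One could repair this with a steeper barrier dominating the a priori bound of $u_{\frakh_j}$ on that inner boundary together with a comparison principle restricted to the nodes of $N$ (in the spirit of the restriction used inside the proof of \Cref{thm:obstacle-DCP}), but none of this is in your sketch, and it is considerably heavier than what is needed. A further slip in the same step: \Cref{thm:convergence} gives uniform convergence of the discrete Dirichlet solutions to the solution of \eqref{eq:BVPNinfLaplace}, not to $g$; only its boundary trace is $g$.

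The paper's fix is much simpler and you should adopt it: keep the barrier $\varphi(x)=q(|x-z_0|)$ of \Cref{lemma:bdry} and additionally require $\varphi\ge\chi$ on all of $\Omega$. This is achievable by the same computation that produced \eqref{eq:bdry-behavior-proof2}, because by \textbf{OBS}.\ref{Ass.OBS.cont} one has $\chi(x_0)<g(x_0)$ at the boundary point $x_0$, and (assuming WLOG that $\chi$ is smooth, by uniform approximation and the comparison principle, exactly as one reduces to smooth $\wt g_{\ve}$) the bound $\chi(x)\le\chi(x_0)+L_\chi|x-x_0|$ is dominated by the linear growth of $\varphi$ once $b_0$ is taken large enough, in complete analogy with \eqref{eq:bdry-behavior-proof3}--\eqref{eq:bdry-behavior-proof5}. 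With this modified barrier, $\interp\varphi$ is a discrete supersolution of \eqref{eq:obstacle-discrete}, \Cref{thm:obstacle-DCP} applies globally, and the rest of your argument goes through unchanged. The lower barrier is, as you correctly note, unaffected by the obstacle.
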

\begin{proof}
	We could almost repeat the proofs of \Cref{lemma:bdry} and \Cref{thm:convergence}. The only minor modification needed is in the proof of the boundary behavior $\overline{u}(x) = g(x)$ for all $x \in \pO$. To be more specific, in the construction of the discrete supersolution $\varphi$, one has to further require that $\varphi(x) \ge \chi(x)$ for any $x \in \Omega$. 
	To this aim, we use a similar argument as in the proof of \Cref{lemma:bdry} and assume without loss of generality that
	$\chi$ is smooth. Then the same calculations we did to guarantee \eqref{eq:bdry-behavior-proof2} give a similar sufficient condition needed for $\varphi$ and thus concludes the proof.
\end{proof}

We obtain the following error estimates as analogues of \Cref{Thm:inhomo-error} and \Cref{Thm:homo-error}.

\begin{Theorem}[error estimate for the inhomogeneous problem]\label{Thm:obstacle-inhomo-error}
	Let $u$ be the viscosity solution of \eqref{eq:ObstacleProblem} and $\uve$ be the solution of \eqref{eq:obstacle-discrete}. Under our running assumptions suppose that \textbf{RHS}.\ref{Ass.RHS.sign} holds, and  that $\frakh$ is sufficiently small, and such that
	\[
	\frac{ (\ve\theta)^\alpha + h^\alpha}{\ve^2},
	\]
	is small enough. Then
	\[
\Vert u-\uve \Vert_{L^{\infty}(\Omega_h)} \lesssim \ve^{\alpha} + \frac{\theta^{\alpha}}{\ve^{2-\alpha}} + \frac{h^{\alpha}}{\ve^2},
	\]
	where the implied constant depends on the dimension $d$, the domain $\Omega$, the shape regularity of the mesh $\Th$, $\min_{x \in \Oc} |f(x)|$, and the $C^{0,\alpha}$ norms of the data $f$, $g$, $\chi$ and the solution $u$.
\end{Theorem}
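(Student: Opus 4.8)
The plan is to transcribe the proof of \Cref{Thm:inhomo-error}, using the obstacle discrete comparison principle \Cref{thm:obstacle-DCP} in place of \Cref{thm:simple-DCP}. For definiteness take $\min_{\Oc}f>0$, and let $I_1$ be as in \eqref{eq:inhomo-error-proof-I1}, so that $I_1<1$ and the consistency error $C\ve^\alpha|f|_{C^{0,\alpha}(\Oc)}+C(\theta^\alpha/\ve^{2-\alpha}+h^\alpha/\ve^2)|u|_{C^{0,\alpha}(\Oc)}$ is at most $I_1\min_{\Oc}f$. The goal is to produce a discrete subsolution $\uve^-$ and a discrete supersolution $\uve^+$ of $\bT_\frakh[\cdot;f,\chi]$ with $|u-\uve^\pm|\lesssim\ve^\alpha+\theta^\alpha/\ve^{2-\alpha}+h^\alpha/\ve^2$ on $\Nh$, and then to close the estimate with \Cref{thm:obstacle-DCP} and the boundary bound \eqref{eq:bdry-gve-u}, exactly as in \Cref{Thm:inhomo-error}.

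For the subsolution I would set $\uve^-=(1-I_1)u^\ve-C_1\ve^\alpha|\chi|_{C^{0,\alpha}(\Oc)}$, where $u^\ve$ is the sup-convolution \eqref{eq:DefOfUSuperEps} and $C_1\ge1$. Fix $\vertex\in\Nhi{\ve}$ and let $y\in\Bc_\ve(\vertex)$ realize $u(y)=u^\ve(\vertex)$; note $y\in\Omega$ since $\vertex\in\Omega^{(2\ve)}$. If $\uve^-(\vertex)\le\chi(\vertex)$ then $\bT_\frakh[\uve^-;f,\chi](\vertex)\le 0$ trivially. If $\uve^-(\vertex)>\chi(\vertex)$, I claim $y\in\Omega^+(u)$: otherwise $u(y)=\chi(y)\le\chi(\vertex)+|\chi|_{C^{0,\alpha}(\Oc)}\ve^\alpha$, hence $\uve^-(\vertex)\le u^\ve(\vertex)-C_1\ve^\alpha|\chi|_{C^{0,\alpha}(\Oc)}\le\chi(\vertex)$, a contradiction. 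Since $y\in\Omega^+(u)$, $u$ solves $-\Delta_{\infty}^\diamond u=f$ near $y$, so the chain \eqref{eq:thm-error1-proof1}--\eqref{eq:thm-error1-proof3} (a slope comparison through $y$, in the spirit of \cite[Lemma 5.2]{MR2846345}) gives $-\Delta_{\infty,\frakh}^\diamond u^\ve(\vertex)\le f(\vertex)+I_1\min_{\Oc}f\le(1+I_1)f(\vertex)$, whence $-\Delta_{\infty,\frakh}^\diamond\uve^-(\vertex)=(1-I_1)\bigl(-\Delta_{\infty,\frakh}^\diamond u^\ve(\vertex)\bigr)\le(1-I_1^2)f(\vertex)\le f(\vertex)$, i.e. $\bT_\frakh[\uve^-;f,\chi](\vertex)\le 0$. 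Thus $\uve^-$ is a discrete subsolution, and from $u^\ve\ge u$ and $I_1\lesssim\ve^\alpha+\theta^\alpha/\ve^{2-\alpha}+h^\alpha/\ve^2$ one gets $\uve^-\ge u-C(\ve^\alpha+\theta^\alpha/\ve^{2-\alpha}+h^\alpha/\ve^2)$ on $\Nh$. The supersolution is built symmetrically from the inf-convolution $u_\ve(x)=\min_{y\in\Bc_\ve(x)}u(y)$: one scales by $(1+I_1)$ to absorb the (now lower) consistency error in $-\Delta_{\infty,\frakh}^\diamond u_\ve\ge f-(\cdots)$, and \emph{adds} a constant of size $C_1\ve^\alpha(|\chi|_{C^{0,\alpha}(\Oc)}+|u|_{C^{0,\alpha}(\Oc)})$ so that $\uve^+\ge\chi$ on all of $\Nh$ (using $u_\ve\ge u-|u|_{C^{0,\alpha}(\Oc)}\ve^\alpha\ge\chi-|u|_{C^{0,\alpha}(\Oc)}\ve^\alpha$); adding a constant does not alter $-\Delta_{\infty,\frakh}^\diamond$, so $\uve^+$ is a discrete supersolution with $\uve^+\le u+C(\ve^\alpha+\theta^\alpha/\ve^{2-\alpha}+h^\alpha/\ve^2)$.

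Applying \Cref{thm:obstacle-DCP} to the pairs $(\uve^-,\uve)$ and $(\uve,\uve^+)$ (subsolution and supersolution properties are preserved under subtracting, resp. adding, a nonnegative constant, which one uses to absorb the boundary discrepancies $\max_{\vertex\in\Nhb{\ve}}|\uve^\pm(\vertex)-\wt{g}_\ve(\vertex)|\lesssim\ve^\alpha$, estimated exactly as in \eqref{eq:bdry-gve-u}, now also through $|\chi|_{C^{0,\alpha}(\Oc)}$) yields $\uve^--C\ve^\alpha\le\uve\le\uve^++C\ve^\alpha$, hence the asserted bound with the implied constant depending additionally on $|\chi|_{C^{0,\alpha}(\Oc)}$. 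The one genuinely new point — and the main obstacle — is the verification of the discrete subsolution inequality at nodes $\vertex$ whose $\ve$-neighbourhood meets the free boundary $\Gamma(u)$: on $\mathcal{C}(u)$ the solution is only a \emph{super}solution of $-\Delta_{\infty}^\diamond u=f$, so the slope comparison underlying \eqref{eq:thm-error1-proof1} is unavailable there. The dichotomy above is precisely what circumvents this — whenever the discrete obstacle constraint is not already active at $\vertex$, the maximiser $y$ of $u$ over $\Bc_\ve(\vertex)$ is forced into $\Omega^+(u)$, so the argument of \Cref{Thm:inhomo-error} applies on $\Omega^+(u)$; the price is the $\mathcal{O}(\ve^\alpha)$ shift $C_1\ve^\alpha|\chi|_{C^{0,\alpha}(\Oc)}$, which is of the order already present in the error. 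One must still track, as in \Cref{Thm:inhomo-error}, that the auxiliary points $y,y'$ remain inside $\Omega$ (guaranteed by $\vertex\in\Nhi{\ve}=\Nh\cap\Omega^{(2\ve)}$) and that the obstacle affects the relevant maxima of $u$ over $\ve$-balls around $y$ only up to $\mathcal{O}(\ve^\alpha)$, which again follows from the dichotomy together with the Hölder continuity of $u$ and $\chi$.
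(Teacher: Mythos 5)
Your overall strategy coincides with the paper's: build a discrete subsolution from the sup-convolution $u^\ve$ and a discrete supersolution from the inf-convolution $u_\ve$, scaled by $(1\mp I_1)$ and shifted by $\mathcal{O}(\ve^\alpha)$ constants, and conclude with \Cref{thm:obstacle-DCP} and \eqref{eq:bdry-gve-u}. The supersolution half is fine, since $-\Delta_\infty^\diamond u\ge f$ and $u\ge\chi$ hold on all of $\Omega$. The gap is in the subsolution half, precisely at the point you flag as ``the main obstacle.'' Your dichotomy is: either the discrete obstacle constraint is active at $\vertex$, or (by contradiction) the maximizer $y$ of $u$ over $\Bc_\ve(\vertex)$ lies in $\Omega^+(u)$, and in the latter case you invoke the chain \eqref{eq:thm-error1-proof1}--\eqref{eq:thm-error1-proof3}. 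But that chain rests on the slope estimates of \cite[Lemma 5.2]{MR2846345}, $S_\ve^+u(\vertex)-\tfrac\ve2 f^\ve(\vertex)\le L(u,y)\le S_\ve^+u(y)+\tfrac\ve2 f^\ve(y)$, which are comparison-with-cones (subsolution) estimates requiring $-\Delta_\infty^\diamond u\le f$ on the whole balls $B_\ve(\vertex)$ and $B_\ve(y)$, i.e.\ essentially on $B_{2\ve}(\vertex)$. Knowing only that the single point $y$ lies in the open set $\Omega^+(u)$ gives the equation in some neighborhood of $y$ of uncontrolled size; the balls $B_\ve(\vertex)$ and $B_\ve(y)$ can still meet the coincidence set $\mathcal{C}(u)$, where $u$ is only a supersolution, so the quantitative slope comparison is not available and the step ``$-\Delta_{\infty,\frakh}^\diamond u^\ve(\vertex)\le f(\vertex)+\dots$'' is unjustified. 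Your closing remark that the obstacle affects the relevant maxima only up to $\mathcal{O}(\ve^\alpha)$ does not repair this: the issue is not the size of the maxima but the validity of the subsolution-based estimates on balls that touch the contact set.

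The paper's proof splits instead on whether $B_{2\ve}(\vertex)\subset\Omega^+(u)$ or $B_{2\ve}(\vertex)\cap\mathcal{C}(u)\neq\emptyset$. In the second case, H\"older continuity of \emph{both} $u$ and $\chi$ gives $u^\ve(\vertex)-\chi(\vertex)\le(2\ve)^\alpha|u|_{C^{0,\alpha}(\Oc)}+\ve^\alpha|\chi|_{C^{0,\alpha}(\Oc)}$ (through a contact point $z\in B_{2\ve}(\vertex)$), so a downward shift of size $\ve^\alpha\bigl(|u|_{C^{0,\alpha}(\Oc)}+|\chi|_{C^{0,\alpha}(\Oc)}\bigr)$ in the definition of $u_h^-$ forces $u_h^-(\vertex)\le\chi(\vertex)$, and the obstacle branch of $\bT_\frakh$ closes that case. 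Your construction can be repaired along the same lines, but then your shift must include the $|u|_{C^{0,\alpha}}$ contribution (yours carries only $|\chi|_{C^{0,\alpha}}$), and the contradiction argument must be strengthened so that the ``PDE case'' guarantees $B_{2\ve}(\vertex)\cap\mathcal{C}(u)=\emptyset$ rather than merely $y\notin\mathcal{C}(u)$ --- at which point it is the paper's dichotomy in contrapositive form. A secondary bookkeeping point: the inequality $\uve^-(\vertex)\le u^\ve(\vertex)-C_1\ve^\alpha|\chi|_{C^{0,\alpha}(\Oc)}$ is not automatic from $\uve^-=(1-I_1)u^\ve-C_1\ve^\alpha|\chi|_{C^{0,\alpha}(\Oc)}$ when $u^\ve$ is negative; the paper adds the term $I_1\min_{\vertex\in\Nh}u(\vertex)$ to handle exactly this, and you should do likewise.
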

\begin{proof}
For convenience, we assume that $\min_{x \in \Oc}f(x) > 0$. To get a lower bound for $\uve$, we consider
\[
u_h^- = (1 - I_1) u^{\ve} + I_1 \min_{\vertex \in \Nh } u(\vertex) 
- (2\ve)^{\alpha} |\wt{g}_\ve|_{C^{0,\alpha}(\Oc)} - \Vert g - \wt{g}_{\ve} \Vert_{L^{\infty}(\pO)}
- (3\ve)^{\alpha} |u|_{C^{0,\alpha}(\Oc)} - \ve^{\alpha} |\chi|_{C^{0,\alpha}(\Oc)},
\]
where $u^\ve$ is defined in \eqref{eq:DefOfUSuperEps}, and $I_1$ is defined in \eqref{eq:inhomo-error-proof-I1}. To show that $u_h^-$ is a discrete subsolution in the sense
\[
  \bT_{\frakh}[u_h^-;f,\chi](\vertex) \le 0, \quad \forall \vertex \in \Nhi{\ve}, \qquad u_h^-(\vertex) \le \wt{g}_\ve(\vertex), \quad \forall \vertex \in \Nhb{\ve},
\]
we discuss two different scenarios based on the distance between $\vertex$ and $ \mathcal{C}(u)$.

If $B_{2\ve}(\vertex) \subset \Omega^+(u)$, then $-\Delta_\infty^\diamond u(x) \le f(x)$ for any $x \in B_{2\ve}(\vertex)$. 
The proof of \Cref{Thm:inhomo-error} immediately implies that
\[
\bT_{\frakh}[u_h^-;f,\chi](\vertex) \le -\Delta_{\infty, \frakh}^\diamond u_h^-(\vertex) - f(\vertex) \le 0.
\]
	
If $B_{2\ve}(\vertex) \cap \mathcal{C}(u) \neq \emptyset$, then there exists $z \in B_{2\ve}(\vertex)$ such that $u(z) = \chi(z)$. Hence,
\[
\begin{aligned}
u^{\ve}(\vertex) - \chi(\vertex)
&\le u(y) - u(z) + \chi(z) - \chi(\vertex)
\le |y - z|^{\alpha} |u|_{C^{0,\alpha}(\Oc)} + |z - \vertex| |\chi|_{C^{0,\alpha}(\Oc)} \\
& \le (2 \ve)^{\alpha} |u|_{C^{0,\alpha}(\Oc)}
+ \ve^{\alpha} |\chi|_{C^{0,\alpha}(\Oc)},
\end{aligned}
\]
where $y \in \overline{B}_\ve(\vertex)$ such that $u(y) = u^{\ve}(\vertex)$. From this we simply see that
\[
\bT_{\frakh}[u_h^-;f,\chi](\vertex) \le u_h^-(\vertex) - \chi(\vertex)
\le u^{\ve}(\vertex) -  (2 \ve)^{\alpha} |u|_{C^{0,\alpha}(\Oc)}
- \ve^{\alpha} |\chi|_{C^{0,\alpha}(\Oc)} - \chi(\vertex)
 \le 0.
\]
To show the inequality on the boundary nodes, we recall \eqref{eq:bdry-gve-u} and get
\[
\begin{aligned}
u_h^-(\vertex) - \wt{g}_{\ve}(\vertex)
&\le u^{\ve}(\vertex) - \wt{g}_{\ve}(\vertex) - (2\ve)^{\alpha} |\wt{g}_\ve|_{C^{0,\alpha}(\Oc)} - \Vert g - \wt{g}_{\ve} \Vert_{L^{\infty}(\pO)}
- (3\ve)^{\alpha} |u|_{C^{0,\alpha}(\Oc)}
\le 0,
\end{aligned}
\]
The discussions above prove that $u_h^-$ is a discrete subsolution and thus gives a lower bound of $u_\frakh$ by \Cref{thm:obstacle-DCP}.

To obtain the upper bound of $u_h$, we consider
\[
u_h^+ = \frac{1}{1 - I_1} u_{\ve} - \frac{I_1}{1 - I_1} \min_{\vertex \in \Nh } u(\vertex) + (2\ve)^{\alpha} |\wt{g}_\ve|_{C^{0,\alpha}(\Oc)} + \Vert g - \wt{g}_{\ve} \Vert_{L^{\infty}(\pO)}
+ (3\ve)^{\alpha} |u|_{C^{0,\alpha}(\Oc)}
\]
where, in full analogy to \eqref{eq:DefOfUSuperEps}, we defined $u_{\ve}(x) = \min_{y \in \overline{B}_\ve(x)} u(y)$. A similar calculation shows that $u_h^+(\vertex) \ge \wt{g}_{\ve}(\vertex)$ for any $\vertex \in \Nhb{\ve}$.
From the facts that $-\Delta_\infty^\diamond u(x) \ge f(x)$ and $u(x) \ge \chi(x)$ for any $x\in \Omega$ we similarly derive that
\[
-\Delta_{\infty, \frakh}^\diamond u_h^+(\vertex) - f(\vertex) \ge 0, \quad u_h^+(\vertex) - \chi(\vertex) \ge 0
\]
for any $\vertex \in \Nhi{\ve}$. These inequalities prove that $u_h^+$ is a discrete supersolution and thus it provides an upper bound for $\uve$. Combining the lower and upper bound together we have
\[
\Vert u-\uve \Vert_{L^{\infty}(\Omega_h)} \lesssim \ve^{\alpha} + \frac{\theta^{\alpha}}{\ve^{2-\alpha}} + \frac{h^{\alpha}}{\ve^2}. \qedhere
\]
\end{proof}

\begin{Theorem}[error estimate for the homogeneous problem]\label{Thm:obstacle-homo-error}
	Let $u$ be the viscosity solution of \eqref{eq:ObstacleProblem} and $\uve$ be the solution of \eqref{eq:obstacle-discrete}. Under our running assumptions suppose that \textbf{RHS}.\ref{Ass.RHS.zero} and, for all $h>0$, \textbf{M}.\ref{Ass.M.connect} holds. If $\frakh$ is sufficiently small, such that $\ve - \ve \theta - h > 0$, and
	\[
	(2h + \ve \theta)/\ve^2,
	\]
	can be made sufficiently small, then we have
	\[
	\Vert u- \uve \Vert_{L^{\infty}(\Omega_h)} \lesssim \ve^{\alpha}  + \frac{\sqrt{2h + \ve \theta}}{\ve},
	\]
	where the implied constant depends on the dimension $d$, the domain $\Omega$, the shape regularity of the mesh $\Th$, and the $C^{0,\alpha}$ norms of the data $g$, $\chi$ and the solution $u$.
\end{Theorem}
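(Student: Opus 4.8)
The plan is to follow the proof of \Cref{Thm:homo-error} closely, while borrowing the free-boundary bookkeeping of \Cref{Thm:obstacle-inhomo-error}. Two structural remarks guide the construction. First, since $f\equiv0$, the identity $\bT[u;0,\chi]=0$ forces $-\Delta_\infty^\diamond u\ge0$ in $\Omega$ in the viscosity sense, so $u$ is globally infinity superharmonic and, in particular, belongs to $C^{0,\alpha}(\Oc)\cap W^{1,\infty}_{\mathrm{loc}}(\Omega)$ by \cite[Lemma 4.1]{MR1605682}; however, $-\Delta_\infty^\diamond u=0$ only holds in the open set $\Omega^+(u)$. Second, a discrete obstacle supersolution must satisfy \emph{both} $-\Delta_{\infty,\frakh}^\diamond u_h^+(\vertex)\ge0$ and $u_h^+(\vertex)\ge\chi(\vertex)$, while a discrete obstacle subsolution needs only $-\Delta_{\infty,\frakh}^\diamond u_h^-(\vertex)\le0$ \emph{or} $u_h^-(\vertex)\le\chi(\vertex)$ at each node. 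I would exploit this asymmetry: the upper bound will not require splitting the nodes, but the lower bound will.

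For the upper bound I would apply, with $v:=u$, the construction of $v_\gamma$ carried out in Case~3 of the proof of \Cref{thm:obstacle-CP}: for every $\gamma>0$ it produces $\tilde v_\gamma\in\LSC(\Oc)\cap W^{1,\infty}_{\mathrm{loc}}(\Omega)$ with $-\Delta_\infty^\diamond\tilde v_\gamma\ge0$ in $\Omega$, $L(\tilde v_\gamma,\cdot)\ge\gamma$ in $\Omega$, $\tilde v_\gamma\ge u\ge\chi$, and $u\le\tilde v_\gamma\le u+C\gamma$. Put $\tilde v_\gamma^{\ve+h}(x)=\min_{y\in\Bc_{\ve+h}(x)\cap\Oc}\tilde v_\gamma(y)$ and $u_h^+=q_a(\tilde v_\gamma^{\ve+h})+c_\ve$, where $q_a$ is the increasing quadratic from the proof of \Cref{thm:obstacle-CP} (so $q_a\ge\mathrm{id}$), $a=\gamma=\sqrt{3\beta}$ with $\beta=(2h+\ve\theta)/\ve^2$, and $c_\ve$ is a sum of $\mathcal O(\ve^\alpha)$ and $\mathcal O(\Vert g-\wt{g}_\ve\Vert_{L^\infty})$ correction constants. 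Running the computation of \Cref{Thm:homo-error} with $\max$ and $\min$ (equivalently $S_\frakh^+$ and $S_\frakh^-$) interchanged, and using $\delta=\ve-\ve\theta-h>0$, gives $-\Delta_{\infty,\frakh}^\diamond u_h^+(\vertex)\ge0$ for every $\vertex\in\Nhi{\ve}$; adding the constant $c_\ve$ does not affect this. The bound $u_h^+\ge\tilde v_\gamma^{\ve+h}+c_\ve\ge\chi-(\ve+h)^\alpha|\chi|_{C^{0,\alpha}(\Oc)}+c_\ve\ge\chi$ holds once $c_\ve$ is large enough, and $u_h^+\ge\wt{g}_\ve$ on $\Nhb{\ve}$ follows from \eqref{eq:bdry-gve-u}; hence $u_h^+$ is a discrete supersolution with $|u_h^+-u|\lesssim\sqrt\beta+\ve^\alpha$, and \Cref{thm:obstacle-DCP} yields $u_\frakh\le u_h^+$ at every node.

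For the lower bound, $u$ is infinity harmonic on the (possibly disconnected) open set $\Omega^+(u)$, so \cite[Theorem 2.1]{MR2372480}, applied on each connected component, yields $v_\gamma\in C(\overline{\Omega^+(u)})\cap W^{1,\infty}_{\mathrm{loc}}(\Omega^+(u))$ with $-\Delta_\infty^\diamond v_\gamma\le0$ and $L(v_\gamma,\cdot)\ge\gamma$ in $\Omega^+(u)$, $v_\gamma=u$ on $\partial\Omega^+(u)$, and $u-\gamma\diam(\Omega)\le v_\gamma\le u$; extending $v_\gamma:=u$ on $\mathcal C(u)$ keeps $v_\gamma\in C(\Oc)$ with $u-\gamma\diam(\Omega)\le v_\gamma\le u$ throughout. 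With $v_\gamma^{\ve+h}(x)=\max_{y\in\Bc_{\ve+h}(x)\cap\Oc}v_\gamma(y)$, set $u_h^-=q(v_\gamma^{\ve+h})-c_\ve'$, where $q(t)=t+a(t-m_\gamma)^2$ with $a=\gamma=\sqrt{3\beta}$ as in \Cref{Thm:homo-error} and $c_\ve'$ is a sum of $\mathcal O(\ve^\alpha)$, $\mathcal O(\sqrt\beta)$, and $\mathcal O(\Vert g-\wt{g}_\ve\Vert_{L^\infty})$ corrections, including the $|\chi|_{C^{0,\alpha}(\Oc)}$ term appearing in the proof of \Cref{Thm:obstacle-inhomo-error}. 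There is an absolute constant $R$ such that the whole stencil used to compute $-\Delta_{\infty,\frakh}^\diamond u_h^-(\vertex)$ lies in $\Bc_{R\ve}(\vertex)$. If $\Bc_{R\ve}(\vertex)\subset\Omega^+(u)$, every $v_\gamma$-value entering that computation is the one produced by \cite[Theorem 2.1]{MR2372480}, so the argument of \Cref{Thm:homo-error} applies verbatim and gives $-\Delta_{\infty,\frakh}^\diamond u_h^-(\vertex)\le0$; subtracting $c_\ve'$ does not change this. If instead $\Bc_{R\ve}(\vertex)\cap\mathcal C(u)\neq\emptyset$, pick $z$ in it with $u(z)=\chi(z)$ and estimate, exactly as in the second scenario of \Cref{Thm:obstacle-inhomo-error}, $q(v_\gamma^{\ve+h})(\vertex)\le v_\gamma^{\ve+h}(\vertex)+C\sqrt\beta\le u^{\ve+h}(\vertex)+C\sqrt\beta\le u(\vertex)+C(\ve^\alpha+\sqrt\beta)\le\chi(\vertex)+C(\ve^\alpha+\sqrt\beta)$, so that $u_h^-(\vertex)\le\chi(\vertex)$ once $c_\ve'$ is large enough; a similar estimate at boundary nodes gives $u_h^-\le\wt{g}_\ve$ on $\Nhb{\ve}$. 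Thus $u_h^-$ is a discrete subsolution with $|u_h^--u|\lesssim\sqrt\beta+\ve^\alpha$, and \Cref{thm:obstacle-DCP} yields $u_h^-\le u_\frakh$ at every node. Combining the two bounds gives $\Vert u-u_\frakh\Vert_{L^\infty(\Omega_h)}\lesssim\ve^\alpha+\sqrt\beta=\ve^\alpha+\sqrt{2h+\ve\theta}/\ve$, with the constant depending only on $d$, $\Omega$, the shape regularity, and the $C^{0,\alpha}$ norms of $g$, $\chi$, $u$; here the smallness of $\beta$ is used, as in \Cref{Thm:homo-error}, to legitimize the choices $a=\gamma=\sqrt{3\beta}$.

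I expect the main obstacle to be the lower bound near the free boundary $\Gamma(u)$: one must confirm that $u$ is genuinely infinity harmonic on $\Omega^+(u)$ so that \cite[Theorem 2.1]{MR2372480} is available there, that the extension $v_\gamma:=u$ on $\mathcal C(u)$ is continuous across $\Gamma(u)$ (it is, since $u=\chi$ on $\Gamma(u)\subset\mathcal C(u)$), and, the genuinely delicate point, that a single correction constant $c_\ve'$ of size $\mathcal O(\ve^\alpha+\sqrt\beta)$ can be chosen so that simultaneously $-\Delta_{\infty,\frakh}^\diamond u_h^-\le0$ holds at nodes far from $\mathcal C(u)$, $u_h^-\le\chi$ holds at nodes near $\mathcal C(u)$, and $u_h^-\le\wt{g}_\ve$ holds on $\Nhb{\ve}$ (with the symmetric requirement for $u_h^+$). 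Every individual ingredient is a transcription of \Cref{Thm:homo-error} or \Cref{Thm:obstacle-inhomo-error}; the work lies in orchestrating the competing inequalities and, as a minor technicality, in handling the max/min-over-balls near $\partial\Omega$ by extending the data to $\mR^d$ with preserved $C^{0,\alpha}$ norms (cf.\ \textbf{BC}.\ref{Ass.BC.ext}).
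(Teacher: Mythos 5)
Your proposal is correct and takes essentially the same route as the paper: a discrete supersolution built from a non-flat approximation from above valid in all of $\Omega$ (possible since $-\Delta_{\infty}^{\diamond} u \ge 0$ holds globally), a discrete subsolution built from the non-flat approximation of \cite[Theorem 2.1]{MR2372480} used only on $\Omega^+(u)$, with nodes whose stencils meet the coincidence set handled through the obstacle constraint exactly as in \Cref{Thm:obstacle-inhomo-error}, and \Cref{thm:obstacle-DCP} closing both bounds. Your componentwise construction of $v_\gamma$ on $\Omega^+(u)$ and the explicit stencil-radius node splitting are faithful elaborations of steps the paper's proof leaves implicit.
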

\begin{proof}
We have already shown in the proof of \Cref{Thm:obstacle-inhomo-error}
how to modify the error estimates in \Cref{sec:Rates} to obtain the error estimates for the obstacle problem. One different thing here is that in the construction of the discrete subsolution $u_h^-$, one needs to employ the approximation in $\Omega^+(u)$ instead of all of $\Omega$. To be more specific, we introduce $v_{\gamma} \in C(\overline{\Omega}) \cap W^{1,\infty}_{\mathrm{loc}}(\Omega^+(u))$ with the following properties:
\begin{align*}
  -\Delta_{\infty}^{\diamond} v_{\gamma} &\le 0, \text{ in } \Omega^+(u), 
  &v_{\gamma} &= u, \text{ on } \partial \Omega^+(u), \\
  L(v_{\gamma},\cdot) &\ge \gamma, \text{ in } \Omega^+(u),
  &u - \gamma \diam(\Omega) &\le v_{\gamma} \le u, \text{ in } \Omega^+(u).
\end{align*}
This is because we only have $-\Delta_{\infty}^{\diamond} u \le 0$ in $\Omega^+(u)$. However, since $-\Delta_{\infty}^{\diamond} u \ge 0$ in $\Omega$, the approximation needed in the construction of the supersolution is still in $\Omega$ as before.
Repeating some arguments in the proof of \Cref{Thm:homo-error} and \Cref{Thm:obstacle-inhomo-error} we know that
\[
\begin{aligned}
u_h^- &= q(v_{\gamma}^{\ve + h}) - C\l( (M - m)^2 + \diam(\Omega)\r) \sqrt{\beta} \\
&- C\ve^{\alpha} \l(  |\wt{g}_\ve|_{C^{0,\alpha}(\Oc)} + |u|_{C^{0,\alpha}(\Oc)} + |\chi|_{C^{0,\alpha}(\Oc)} \r)
- C\Vert g - \wt{g}_{\ve} \Vert_{L^{\infty}(\Omega)}
\end{aligned}
\]
is a discrete subsolution where the definitions of $m, M, q, \beta$ can be found in the proof of \Cref{Thm:homo-error}. This gives a lower bound of $\uve$. Combining it with the upper bound obtained in a similar fashion as before proves the desired error estimate.
\end{proof}

Rates of convergence, analogous to those of \Cref{cor:RatesInhom} and \Cref{cor:rates.f.zero}, can also be obtained. We omit the details. 

\subsection{Symmetric Finsler norms}\label{sub:Finsler}

Let us consider one last variation of the game described in \Cref{sub:TugOfWar}. Namely, the points where the token can be moved to do not need to constitute an (Euclidean) ball. Indeed, we can more generally consider rescaled translations of a convex set $B$ with $0\in B$. In this case, the game value should satisfy
\begin{equation}
\label{eq:FinslerGame}
  2u^\ve(x) -  \left( \sup_{y \in x + \ve B} u^\ve(y) + \inf_{y \in x + \ve B} u^\ve(y) \r) = \ve^2 f(x),
\end{equation}
where
\[
  x+\ve B = \l\{ x + \ve \bv{v}: \bv{v} \in B \r\}.
\]
It is then of interest to understand what is the ensuing differential equation, and its properties.

To proceed further with the discussion of what the limiting differential equation is, we must restrict the class of admissible convex sets to those that can be characterized as the unit ball of a (symmetric) Finsler--Minkowski norm.

\begin{Definition}[Minkowski norm]\label{def:MinkNorm}
A (symmetric) Finsler--Minkowski norm on $\mRd$ is a function $F: \mRd \to \mR$ that satisfies
\begin{enumerate}[$\bullet$]
  \item \textbf{Smoothness}: $F \in C^2(\mR^d\setminus\{\boldsymbol0\})$.
  
  \item \textbf{Absolute homogeneity}: For all $\bv{v} \in \mRd$ and $\lambda \in \mR$ we have $F(\lambda\bv{v}) = |\lambda| F(\bv{v})$.
  
  \item \textbf{Strong convexity:} For all $\boldsymbol0 \neq \bv{v} \in \mR^d$ the matrix $\De^2 F(\bv{v})$ is positive definite.
\end{enumerate}
\end{Definition}

\begin{Remark}[symmetry]
In \Cref{def:MinkNorm}, the qualifier symmetric comes from the fact that, as a consequence of absolute homogeneity we have $F(-\bv{v})= F(\bv{v})$. A general Minkowski norm needs not to satisfy absolute homogeneity, but only positive homogeneity, i.e., $F(t\bv{v}) = t F(\bv{v})$ for all $\bv{v} \in \mR^d$ and $t>0$. Notice that, in this case, the function $F $ is not necessarily symmetric. While to a certain extent it is possible, see \cite{MR3433957,MR3992461,MR3995808,MR4095548,MR4279064}, to develop a theory for nonsymmetric Finsler norms we will not consider this case here. The reason for this restriction is detailed below.
\end{Remark}

We refer the reader to \cite{MR1747675} for a thorough treatise regarding Finsler structures and Finsler geometry. Here we will just mention, in addition to \Cref{def:MinkNorm}, the so--called dual function of $F$, which is given by
\[
  F^*(\bv{w}) = \sup_{\boldsymbol{0} \neq \bv{v}  \in\mRd} \frac{ \bv{w}^\intercal \bv{v}}{F(\bv{v})}.
\]
Notice that the dual is also a symmetric Finsler--Minkowski norm. Moreover, by positive homogeneity
\[
  \bv{v}^\intercal \De F(\bv{v}) = F(\bv{v}), \qquad \forall \bv{v} \in \mRd.
\]
From this, several important relations between $F$ and its dual $F^*$ follow; see, for instance, \cite[Lemma 3.5]{MR3995808}.

With these definitions at hand we can now describe the equation that ensues from \eqref{eq:FinslerGame} in, at least, a particular case. Given a symmetric Finsler--Minkowski norm $F$, assume that the set of admissible moves is given by
\begin{equation}
\label{eq:FinslerBall}
  B_F = \left\{ \bv{v} \in \mRd: F^*(\bv{v}) \leq 1 \right\},
\end{equation}
where $F^*$ is the dual of $F$. Then, as \cite[Theorem 1.3 and Corollary 1.4]{MR4279064} show, 
equation \eqref{eq:FinslerGame}, in the formal limit $\ve \downarrow0$, becomes
\begin{equation}
\label{eq:DirProblFinslerInfLap}
  -\Delta_{F,\infty}^\diamond u = f, \text{ in } \Omega, \qquad u_{|\pO} = g.
\end{equation}
The operator $-\Delta_{F,\infty}^\diamond$, called for obvious reasons the (normalized) Finsler infinity Laplacian, is
\[
  -\Delta_{F,\infty}^\diamond \varphi(x) =-\De F(\De \varphi(x) ) \otimes \De F(\De \varphi(x) ): \De^2 \varphi(x) .
\]
Notice that, once again, this operator is singular in the case that $\De \varphi(x) = \boldsymbol0$. For completeness we present the upper and lower semicontinuous envelopes of this operator:
\begin{align*}
  \Delta_{F,\infty}^+ \varphi(x) &= \begin{dcases}
                                      \De F(\De \varphi(x) ) \otimes \De F(\De \varphi(x) ): \De^2 \varphi(x), & \De \varphi(x) \neq \boldsymbol0, \\
                                      \max\l\{ \bv{v}^\intercal \De^2 \varphi(x) \bv{v} : F^*(\bv{v}) = 1 \r\}, & \De \varphi(x) = \boldsymbol0,
                                    \end{dcases}
   \\
  \Delta_{F,\infty}^- \varphi(x) &= \begin{dcases}
                                      \De F(\De \varphi(x) ) \otimes \De F(\De \varphi(x) ): \De^2 \varphi(x), & \De \varphi(x) \neq \boldsymbol0, \\
                                      \min\l\{ \bv{v}^\intercal \De^2 \varphi(x) \bv{v} : F^*(\bv{v}) = 1 \r\}, & \De \varphi(x) = \boldsymbol0.
                                    \end{dcases}
\end{align*}

Problem \eqref{eq:DirProblFinslerInfLap} has been studied in \cite{MR3433957,MR3992461,MR3995808,MR4095548,MR4279064} where, under assumptions \textbf{RHS}.\ref{Ass.RHS.cont}--\ref{Ass.RHS.f<>0} and \textbf{BC}.\ref{Ass.BC.cont}, several results have been obtained. In particular, a comparison principle for semicontinuous functions is obtained  in \cite[Theorem 6.4]{MR3995808}. Existence of solutions is shown in \cite[Theorem 6.1]{MR4095548}. Uniqueness, under assumption \textbf{RHS}.\ref{Ass.RHS.f<>0} is also obtained. In addition to the game theoretic interpretation of \eqref{eq:DirProblFinslerInfLap}, we comment that a Finsler variant of the minimization problem of \Cref{sub:MotivateLinf} was studied in \cite{MR3433957}. Namely, the authors study the minimal Lipschitz extension problem, where the Lipschitz constant is measured with respect to the Finsler norm $F$, i.e.,
\[
  \Lip_F(w,E) = \sup\l\{ \frac{|w(x) - w(y)|}{F(x-y)} : x,y \in E, x\neq y \r\}.
\]
Reference \cite{MR3433957} shows the existence and uniqueness of solutions. A comparison principle for continuous functions is also obtained. Interestingly, the proofs of all the results mentioned here follow by adapting the techniques of \cite{MR2846345}. One needs to use, as semidiscrete scheme, relation \eqref{eq:FinslerGame} with $B$ given as in \eqref{eq:FinslerBall}. Finally, some applications of \eqref{eq:DirProblFinslerInfLap} to global geometry are explored in \cite{MR4216961}.

It is at this point that we are able to describe why we must assume that our Finsler--Minkowski norm is symmetric. The reason is because one of the main tools in the analysis is a comparison principle with quadratic Finsler cones which, in the nonsymmetric case, only holds with a restriction on the coefficients that define the cone; see \cite[Theorem 4.1]{MR4095548}. However, as \cite[Remark 4.3 item (2)]{MR4095548} shows, such a restriction is not needed for the symmetric case. This comparison principle, essentially, allows one to translate all the results of \cite{MR2846345} to the present case.

The investigations detailed above motivate us to present a numerical scheme for \eqref{eq:DirProblFinslerInfLap}. To do so, we follow the ideas of \Cref{sub:TwoScale}. We need then to discretize $\partial B_F$, given in \eqref{eq:FinslerBall}. In full analogy to $\mS_\theta$, we let $\mS_{F,\theta} = \{\bv{v}_\theta\}$ be a finite and symmetric set of vectors such that
\[
  \bv{v}_\theta \in \mS_{F,\theta} \quad \implies \quad F^*(\bv{v}_\theta ) =1 ,
\]
and, for every $\bv{v} \in \mRd$ such that $F^*(\bv{v}) = 1$, there is $\bv{v}_\theta \in \mS_{F,\theta}$ with
\[
  F^*(\bv{v}- \bv{v}_\theta) \leq \theta.
\]
For $\vertex \in \Nhi{\ve}$ we then define
\[
  \mathcal{N}_{F,\frakh}(\vertex) = \{\vertex\}\cup \l\{ \vertex + \ve \bv{v}_\theta : \bv{v}_\theta \in \mS_{F,\theta} \r\},
\]
and, for $w\in C(\Oc)$,
\[
  S_{F,\frakh}^+ w(\vertex) = \frac{1}{\ve} \l( \max_{x \in \mathcal{N}_{F,\frakh}(\vertex)} w(x) - w(\vertex) \r),  \quad 
  S_{F,\frakh}^- w(\vertex) = \frac{1}{\ve} \l( w(\vertex) - \min_{x \in \mathcal{N}_{F,\frakh}(\vertex)} w(x)  \r).
\]
Our fully discrete Finsler infinity Laplacian is defined, for $w \in C(\Oc)$, by
\begin{equation}\label{eq:def-fullydiscrete-op1-Finsler}
  -\Delta_{F,\infty, \frakh}^\diamond w(\vertex) = -\frac1\ve \left( S_{F,\frakh}^+ \interp w(\vertex) - S_{F,\frakh}^- \interp w(\vertex) \right), \qquad \forall \vertex \in \Nhi{\ve}.
\end{equation}
Under Assumption~\textbf{BC}.\ref{Ass.BC.ext} the numerical scheme to approximate the solution to \eqref{eq:DirProblFinslerInfLap} is: Find $u_\frakh \in \Vh$ such that
\begin{equation}\label{eq:inf-laplace-discrete-Finsler}
  -\Delta_{F,\infty, \frakh}^\diamond u_\frakh (\vertex) = 
  f(\vertex), \quad \forall \vertex \in \Nhi{\ve}, \qquad u_\frakh(\vertex) = \wt{g}_\ve(\vertex), \quad \forall \vertex \in \Nhb{\ve}.
\end{equation}

We will not dwell on the properties of solutions to \eqref{eq:inf-laplace-discrete-Finsler}, as these merely repeat all of the arguments we presented in \Cref{sec:ExistenceAndSuch}. We only comment that, during the analysis, one can use in several places the fact that $F$ and $F^*$ are indeed norms on $\mRd$. Norm equivalence with the standard Euclidean norm is useful in obtaining many estimates. In short, the proof of convergence repeats \Cref{thm:convergence}.

\section{Solution schemes}\label{sec:Solution}

Let us now make some comments regarding the practical solution of \eqref{eq:inf-laplace-discrete}. Since this involves at every point computing maxima and minima, it is only natural then to, as a first attempt, apply one of the variants of Howard's algorithm presented in \cite{MR2551155}, or equivalently active set strategies \cite{MR1972219}. All these approaches are related to the fact that computing maxima and minima are slantly differentiable operations \cite[Lemma 3.1]{MR1972219} and, thus, a semismooth Newton method can be formulated. If this is to converge we then have, at least locally, that this convergence is superlinear \cite[Theorem 5.31]{MR3653852}.

However, convergence of such schemes can only be guaranteed if, at every iteration step $n \in \mathbb{N}_0$, the ensuing matrices are nonsingular and monotone. While monotonicity is not an issue, for the problem at hand it is possible to end with singular matrices. The reason behind this is as follows. At each step one needs to solve a linear system of equations which reads
\begin{align*}
  2 u_\frakh^{n+1}(\vertex) - \sum_{\othervertex \in \sigma_+^n(\vertex)} \alpha_\othervertex^n u_\frakh^{n+1}(\othervertex) - \sum_{\othervertex \in \sigma_-^n(\vertex)} \beta_\othervertex^n u_\frakh^{n+1}(\othervertex) &= \ve^2 f(\vertex), & \vertex &\in \Nhi{\ve}, \\
  u_\frakh^{n+1}(\vertex) &= \wt{g}_\ve(\vertex), & \vertex &\in \Nhb{\ve}.
\end{align*}
Here $\sigma_+^n(\vertex),\sigma_-^n(\vertex) \subset \Nh$ are such that
\[
  u_\frakh^n(x^+) = \sum_{\othervertex \in \sigma_+^n(\vertex)} \alpha_\othervertex^n u_\frakh^{n}(\othervertex) = \max_{x \in \mathcal{N}_\frakh(\vertex)} u_\frakh^n(x),
\]
i.e., $\widehat{\varphi}_{\othervertex}(x^+) = \alpha_{\othervertex}^n$ for $\othervertex \in \wt{\mathcal N}_\frakh(\vertex)$ and $x^+ \in \Oc$ is the point where this local maximum is attained. A similar characterization can be made for $\sigma_-^n$. These considerations show that
\[
  \alpha_\othervertex^n, \beta_\othervertex^n \in [0,1], \qquad \sum_{\othervertex \in \sigma_+^n(\vertex)} \alpha_\othervertex^n = \sum_{\othervertex \in \sigma_+^n(\vertex)} \beta_\othervertex^n =1.
\]
In other words, the ensuing system matrix is diagonally dominant, but not strictly diagonally dominant, and thus it may be singular. To conclude nonsingularity the usual argument in this scenario involves invoking boundary nodes: For $\vertex \in \Nhi{\ve}$ that is close to $\pO$ the stencil must contain boundary nodes. However, there is no guarantee that this will be the case in our setting. As a consequence, we cannot guarantee that the matrices will be nonsingular.

\begin{algorithm}
\caption{Fixed point iteration to solve \eqref{eq:inf-laplace-discrete}.}
\label{alg:fixedpt}
  \KwIn{The initial guess: $u_\frakh^0 \in \Vh$ such that $u_\frakh^0 = \wt{g}_\ve$ in $\Nhb{\ve}$.}
  \KwOut{$u_\frakh$ the solution to \eqref{eq:inf-laplace-discrete}.}
  $n=0$\;
  \Repeat{\texttt{false}}{
    \ForEach{ $\vertex \in \Nhi{\ve}$}{
      \[
        u_\frakh^{n+1}(\vertex) = \frac12 \left[ \ve^2 f(\vertex) + \max_{x \in \mathcal{N}_\frakh(\vertex)} u_\frakh^n(x) + \min_{x \in \mathcal{N}_\frakh(\vertex)} u_\frakh^n(x)\right];
      \]
    }
    \ForEach{ $\vertex \in \Nhb{\ve}$}{
      \[
        u_\frakh^{n+1}(\vertex) = \wt{g}_\ve(\vertex);
      \]
    }
    $n\rightarrow n+1$\;
  }
\end{algorithm}

To circumvent this difficulty we, instead, propose the fixed point iteration presented in Algorithm~\ref{alg:fixedpt}. As the following result shows, this fixed point iteration is globally convergent.

\begin{Theorem}[convergence]\label{thm:ConvergenceFixedPoint}
For every $u_\frakh^0 \in \Vh$ such that $u_\frakh^0 = \wt{g}_\ve$ in $\Nhb{\ve}$ the sequence $\{u_\frakh^n\}_{n \in \mathbb{N}_0}$ generated by Algorithm~\ref{alg:fixedpt} converges to $u_\frakh$, the solution to \eqref{eq:inf-laplace-discrete}.
\end{Theorem}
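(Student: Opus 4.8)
The plan is to show that the iteration map is a monotone, order-preserving operator on $\Vh$ (restricted to functions agreeing with $\wt{g}_\ve$ on $\Nhb{\ve}$) whose fixed points are exactly the solutions to \eqref{eq:inf-laplace-discrete}, and then to squeeze the iterates between a monotonically increasing sub-iterate and a monotonically decreasing super-iterate, both of which converge. Concretely, denote by $\Phi : \Vh \to \Vh$ the map defined nodewise by the two \texttt{ForEach} loops of Algorithm~\ref{alg:fixedpt}, so that $u_\frakh^{n+1} = \Phi(u_\frakh^n)$. First I would record the elementary facts: (i) $\Phi$ is well defined and $u_\frakh$ solves \eqref{eq:inf-laplace-discrete} if and only if $\Phi(u_\frakh) = u_\frakh$, directly from the definition \eqref{eq:def-fullydiscrete-op1} of $-\Delta^\diamond_{\infty,\frakh}$; and (ii) $\Phi$ is monotone, i.e. if $w_h \le v_h$ pointwise on $\Nh$ then $\Phi(w_h) \le \Phi(v_h)$ on $\Nh$, which is immediate because $\max$ and $\min$ over $\mathcal N_\frakh(\vertex)$ are themselves monotone (using, as in \Cref{lemma:monotonicity}, that pointwise inequality at the nodes $\wt{\mathcal N}_\frakh(\vertex)$ passes to all points of $\mathcal N_\frakh(\vertex)$ via the partition of unity).

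Next I would build a sub- and a supersolution that frame everything. From \Cref{lemma:stability} (more precisely from the barrier $\varphi$ constructed in its proof) there is $\underline{w}_h \in \Vh$ with $\underline{w}_h = \wt{g}_\ve$ on $\Nhb{\ve}$ and $-\Delta^\diamond_{\infty,\frakh}\underline{w}_h(\vertex) \le f(\vertex)$ for all $\vertex \in \Nhi{\ve}$, i.e. $\Phi(\underline{w}_h) \ge \underline{w}_h$; symmetrically a $\overline{w}_h$ with $\Phi(\overline{w}_h) \le \overline{w}_h$ and $\underline{w}_h \le \overline{w}_h$. Define the auxiliary iterates $\underline{w}_h^{n+1} = \Phi(\underline{w}_h^n)$, $\underline{w}_h^0 = \underline{w}_h$, and likewise $\overline{w}_h^n$ starting from $\overline{w}_h$. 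By monotonicity of $\Phi$ and the one-sided starting inequalities, $\{\underline{w}_h^n\}$ is nondecreasing, $\{\overline{w}_h^n\}$ is nonincreasing, and $\underline{w}_h^n \le \overline{w}_h^n$ for all $n$; since $\Vh$ is finite dimensional these bounded monotone sequences converge to limits $\underline{w}_h^\infty \le \overline{w}_h^\infty$, each of which is a fixed point of $\Phi$ by continuity of $\Phi$, hence a discrete solution of \eqref{eq:inf-laplace-discrete}. By the uniqueness part of \Cref{lemma:ExistUnique} we conclude $\underline{w}_h^\infty = \overline{w}_h^\infty = u_\frakh$. Finally, for an arbitrary starting $u_\frakh^0$ with $u_\frakh^0 = \wt{g}_\ve$ on $\Nhb{\ve}$, I can enlarge the barriers (multiply the stability barrier by a larger constant, or add a constant) so that $\underline{w}_h \le u_\frakh^0 \le \overline{w}_h$ on all of $\Nh$; monotonicity of $\Phi$ then gives $\underline{w}_h^n \le u_\frakh^n \le \overline{w}_h^n$ for every $n$, and the squeeze yields $u_\frakh^n \to u_\frakh$.

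The main obstacle, and the only genuinely nonroutine point, is verifying that the \emph{sandwiching} barriers $\underline{w}_h, \overline{w}_h$ satisfying $\Phi(\underline{w}_h) \ge \underline{w}_h \ge$ (a lower bound for any initial datum) — and their reflections — can be chosen simultaneously as discrete sub/supersolutions \emph{and} as lower/upper bounds for the given $u_\frakh^0$. This is handled by noting that the quadratic barrier $\varphi$ from \Cref{lemma:stability} satisfies $-\Delta^\diamond_{\infty,\frakh}\interp\varphi(\vertex) \ge 1$ and is concave, so for any constant $\kappa \ge \|f\|_{L^\infty(\Oh)}$ and any additive constant large enough to dominate $u_\frakh^0 - \wt{g}_\ve$ on $\Nh$, the function $\kappa\,\interp\varphi + C$ is a supersolution lying above $u_\frakh^0$, and its negative counterpart a subsolution lying below; here one must be slightly careful that adjusting the additive constant does not destroy the sub/supersolution inequality, which is true because $-\Delta^\diamond_{\infty,\frakh}$ annihilates constants. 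One should also double-check that $\Phi$ maps the affine slice $\{w_h \in \Vh : w_h|_{\Nhb{\ve}} = \wt{g}_\ve\}$ into itself (immediate from the boundary \texttt{ForEach} loop), so that all iterates — original and auxiliary — stay in this slice where uniqueness of the fixed point applies. No rate of convergence is claimed, so linear-rate estimates via a contraction argument are unnecessary; the monotone-squeeze argument suffices.
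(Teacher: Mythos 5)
Your proposal is correct and follows essentially the same route as the paper: monotone iteration started from discrete sub- and supersolutions built from the stability barrier, a squeeze of the general iterate between these auxiliary sequences, and identification of the common limit with $u_\frakh$ by comparison/uniqueness. Your explicit observation that the limit is a fixed point of the (continuous, monotone) iteration map, combined with \Cref{lemma:ExistUnique}, is just a slightly more detailed rendering of the paper's ``standard Perron--like reasoning.''
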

\begin{proof}
We divide the proof in three steps.

First, let us assume that $u_\frakh^0$ is a supersolution, i.e.,
\[
  -\Delta_{\infty,\frakh}^\diamond u_h^0(\vertex) \geq f(\vertex), \quad \forall \vertex \in \Nhi{\ve}, \qquad u_h^0(\vertex) = \wt{g}_\ve(\vertex), \quad \forall \vertex \in \Nhb{\ve}.
\]
Then, by induction, we see that, for all $n \in \mathbb{N}_0$, we have $u_\frakh \leq u_\frakh^{n+1} \leq u_\frakh^n$, and that $\{u_\frakh^n\}_{n \in \mathbb{N}_0}$ is a family of supersolutions. We observe first that if $u_\frakh^n$ is a supersolution, by comparison we must have $u_\frakh \leq u_\frakh^n$. Next, since $u_\frakh^n$ is a supersolution
\[
  u_\frakh^n(\vertex) \geq \frac12 \left[ \ve^2 f(\vertex) + \max_{x \in \mathcal{N}_\frakh(\vertex)} u_\frakh^n(x) + \min_{x \in \mathcal{N}_\frakh(\vertex)} u_\frakh^n(x)\right] = u_\frakh^{n+1}(\vertex), \qquad \forall \vertex \in \Nhi{\ve},
\]
with equality on $\Nhb{\ve}$. Notice that this implies that
\begin{align*}
  u_\frakh^{n+1}(\vertex) &=\frac12 \left[ \ve^2 f(\vertex) + \max_{x \in \mathcal{N}_\frakh(\vertex)} u_\frakh^n(x) + \min_{x \in \mathcal{N}_\frakh(\vertex)} u_\frakh^n(x)\right] \\
    &\geq \frac12 \left[ \ve^2 f(\vertex) + \max_{x \in \mathcal{N}_\frakh(\vertex)} u_\frakh^{n+1}(x) + \min_{x \in \mathcal{N}_\frakh(\vertex)} u_\frakh^{n+1}(x)\right],
\end{align*}
so that $-\Delta_{\infty,\frakh}^\diamond u_\frakh^{n+1}(\vertex) \geq f(\vertex)$, and $u_\frakh^{n+1}$ is a supersolution as well. Consequently, $u_\frakh^{n+1} \geq u_\frakh$. Finally, a standard Perron--like reasoning involving monotonicity and comparison yields then that the limit must be a solution.

Next, we assume that $u_\frakh^0$ is a subsolution. In a similar manner we obtain that the sequence $\{u_\frakh^n\}_{n \in \mathbb{N}_0}$ is a family of subsolutions, that $u_\frakh^n \leq u_\frakh^{n+1} \leq u_\frakh$, and that $u_\frakh^n \to u_\frakh$.

Consider now the general case, i.e., we only assume that $u_\frakh^0 = \wt{g}_\ve$ in $\Nhb{\ve}$. With an argument similar to that of the proof of \Cref{lemma:stability} we can construct $w_h^\pm \in \Vh$ such that $w_h^\pm = \wt{g}_\ve$ in $\Nhb{\ve}$, 
\[
  w_h^- \leq u_\frakh^0 \leq w_h^+,
\]
$w_h^-$ is a subsolution, and $w_h^+$ is a supersolution. The monotonicity arguments of the previous two steps show that
\[
  w_h^{-,n} \leq u_\frakh^n \leq w_h^{+,n},
\]
where $\{w_h^{\pm,n}\}_{n \in \mathbb{N}_0}$ are the sequences obtained by applying Algorithm~\ref{alg:fixedpt} with initial guess $w_h^{\pm,0} = w_h^\pm$, respectively. The previous steps then show that $w_h^{\pm,n} \to u_\frakh$, and this shows convergence in the general case.
\end{proof}

\section{Numerical experiments}\label{sec:Numerics}

In this section we present some simple numerical examples to validate our analysis. All the computations were done with an in-house code that was written in MATLAB$^\copyright$. In practice, we choose a tolerance $\text{TOL} > 0$ and stop the iteration in Algorithm \ref{alg:fixedpt} when
\[
\Vert \{ -\Delta^\diamond_{\infty, \frakh} u_{\frakh}^n(\vertex) - f(\vertex) \} _{\vertex \in \Nhi{\ve} } \Vert_{\ell^{\infty}} < \text{TOL}.
\] 

\begin{Example}[$f \equiv 0$, Aronsson's example]\label{Ex-Aronsson}
Let $\Omega = (-1, 1)^2$, $f \equiv 0$ and $u(x, y) = |x|^{4/3} - |y|^{4/3}$. Notice that, in this example, the right hand side $f$ is smooth and $u \in C^{1,1/3}(\Oc)$.

We first wish to study the error induced by the discretization of the operator. To do so, we use an exact boundary condition, i.e., $\wt{g}_{\ve} = u$. \Cref{fig:Ex-Aronsson-uh} shows the computed solution $u_\frakh$, together with the error $u_\frakh - u$ for $h = 2^{-8}, \ve = 2^{-4.75}$, and $ \theta = 2^{-3.25}$. From the pictures we see that a larger error appears near the coordinate axes, where the solution $u$ is not smooth. One can also observe the radial pattern of the error, which might be a result from the discretization, $\St$, of the unit sphere $\mathbb{S}$.


\begin{figure}[!htb]
	\includegraphics[width=0.45\linewidth]{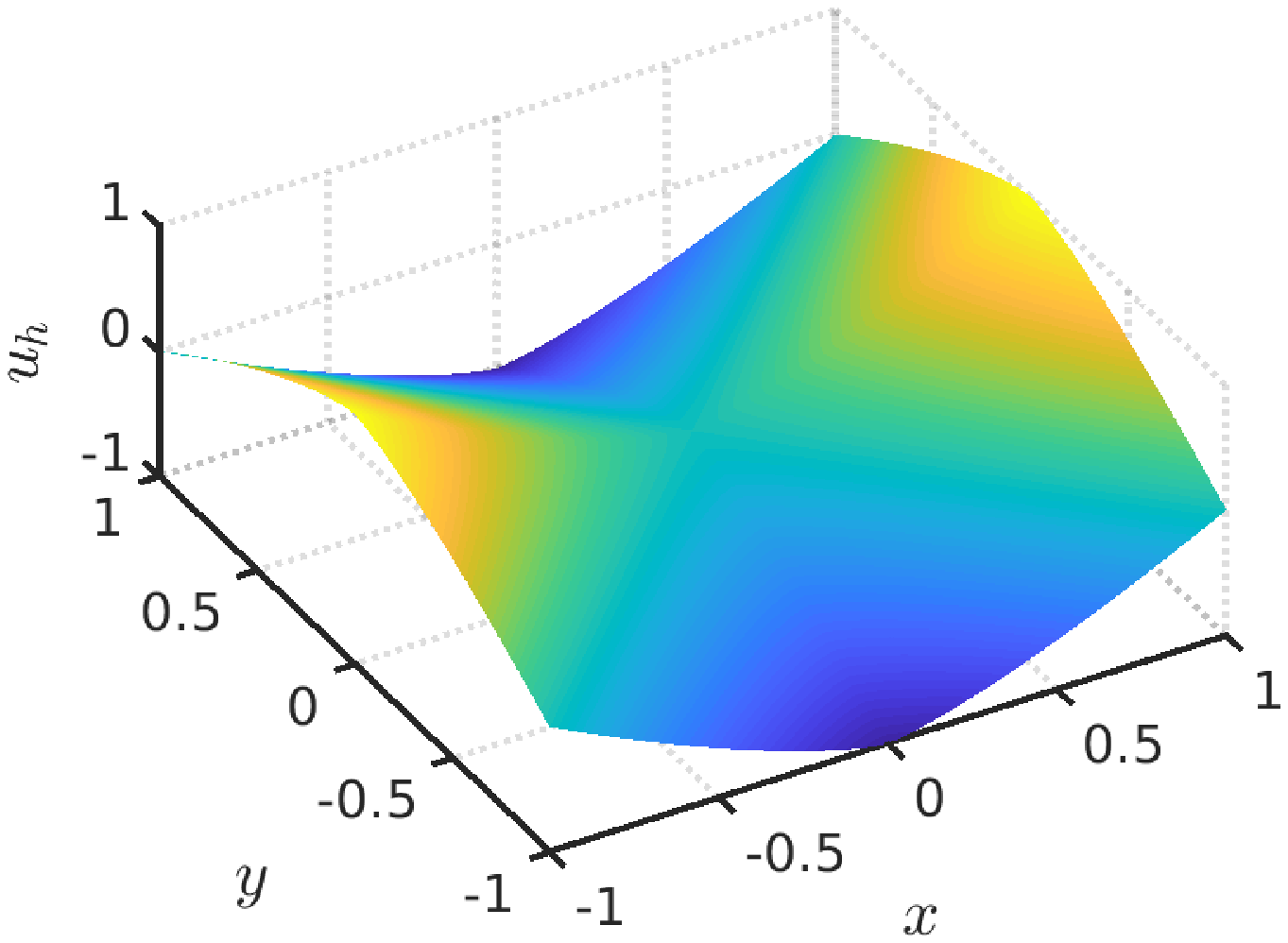}
	\includegraphics[width=0.45\linewidth]{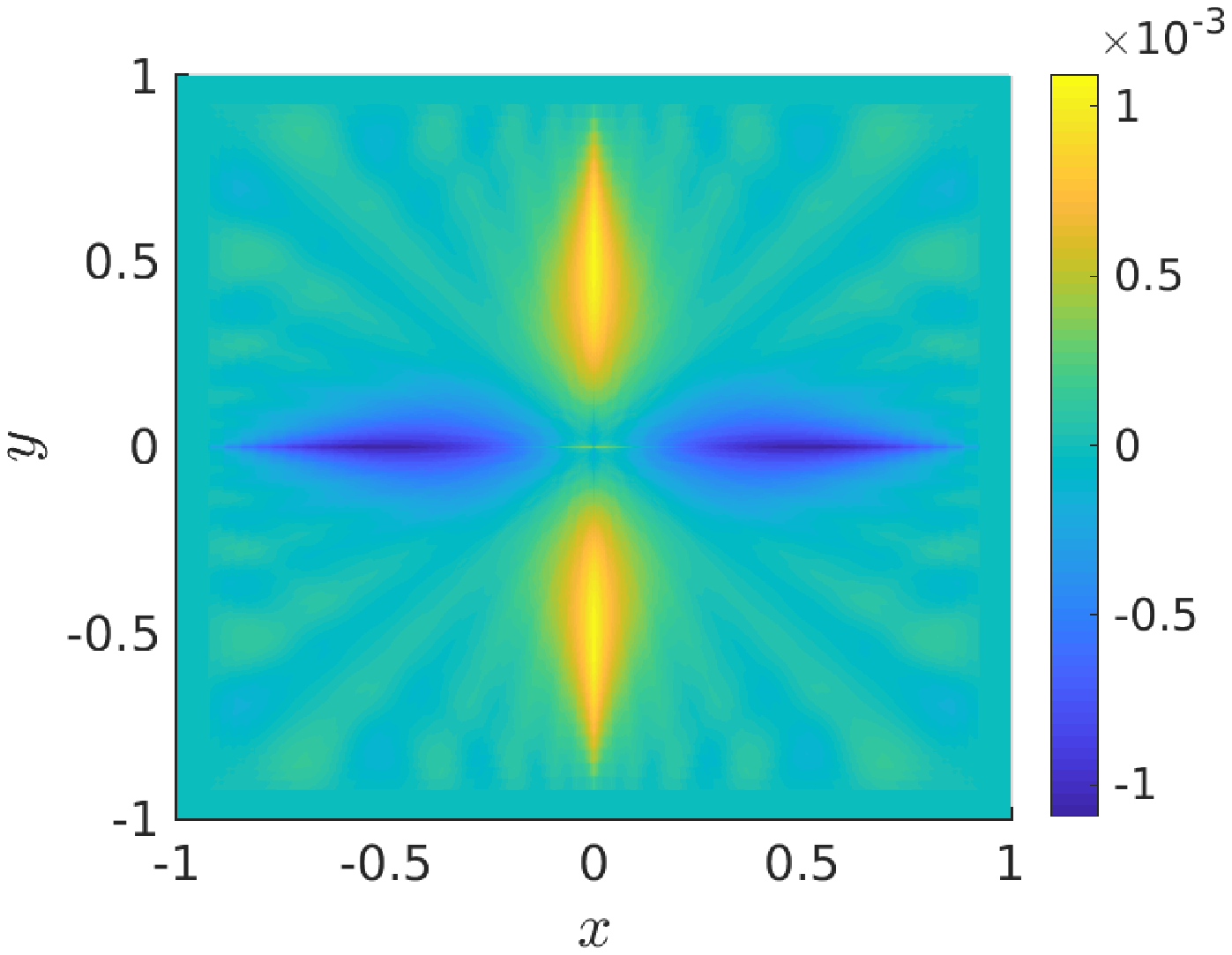}
	\caption{\small \Cref{Ex-Aronsson}. Left: Plot of $u_\frakh$. Right: Plot of $u_\frakh - u$.}
	\label{fig:Ex-Aronsson-uh}
\end{figure}

We now turn our attention to rates of convergence. We compute the approximate solution for a sequence of discretization parameters $\frakh =(h, \ve, \theta)$ satisfying
\begin{equation}\label{eq:exp-parameters}
\ve = C_{\ve} h^{\beta}, \quad \theta = C_{\theta} \frac{h}\ve,
\end{equation}
where the positive constants $C_{\ve}, C_{\theta}$ and the power $\beta$ are chosen manually. We set $C_{\theta} = 1$ and display, in \Cref{fig:Ex-Aronsson-rates}, plots of the $L^{\infty}$ errors vs.~meshsizes $h$ for $\beta \in \{ 0, \tfrac14, \tfrac13, \tfrac12, 1\}$. The corresponding values of $C_{\ve}$ are chosen to guarantee that when $h = 2^{-5}$ we have $\ve = 2^{-4}$ for all the choices of $\beta$. From the plot, we observe that the errors get stuck for small $h$ when $\beta = 0$ or $1$. This is consistent with our theory because in either case the consistency error of our discretization may not tend to zero as $h \to 0$; see \Cref{lemma:consistency}. For $\beta \in \{ \tfrac14, \tfrac13, \tfrac12 \}$, we also measure the convergence orders using a least squares fit for the errors of $h = 2^{-7}, 2^{-8}, 2^{-9}$. The orders are about $0.83, 1.45, 1.02$ respectively, which are much better than the theoretical error estimates \Cref{Thm:homo-error}. This may be caused by the fact that, although $u \in C^{1,1/3}(\Oc)$ only, it is smooth away from the coordinate axes.

\begin{figure}[!htb]
	\includegraphics[width=0.75\linewidth]{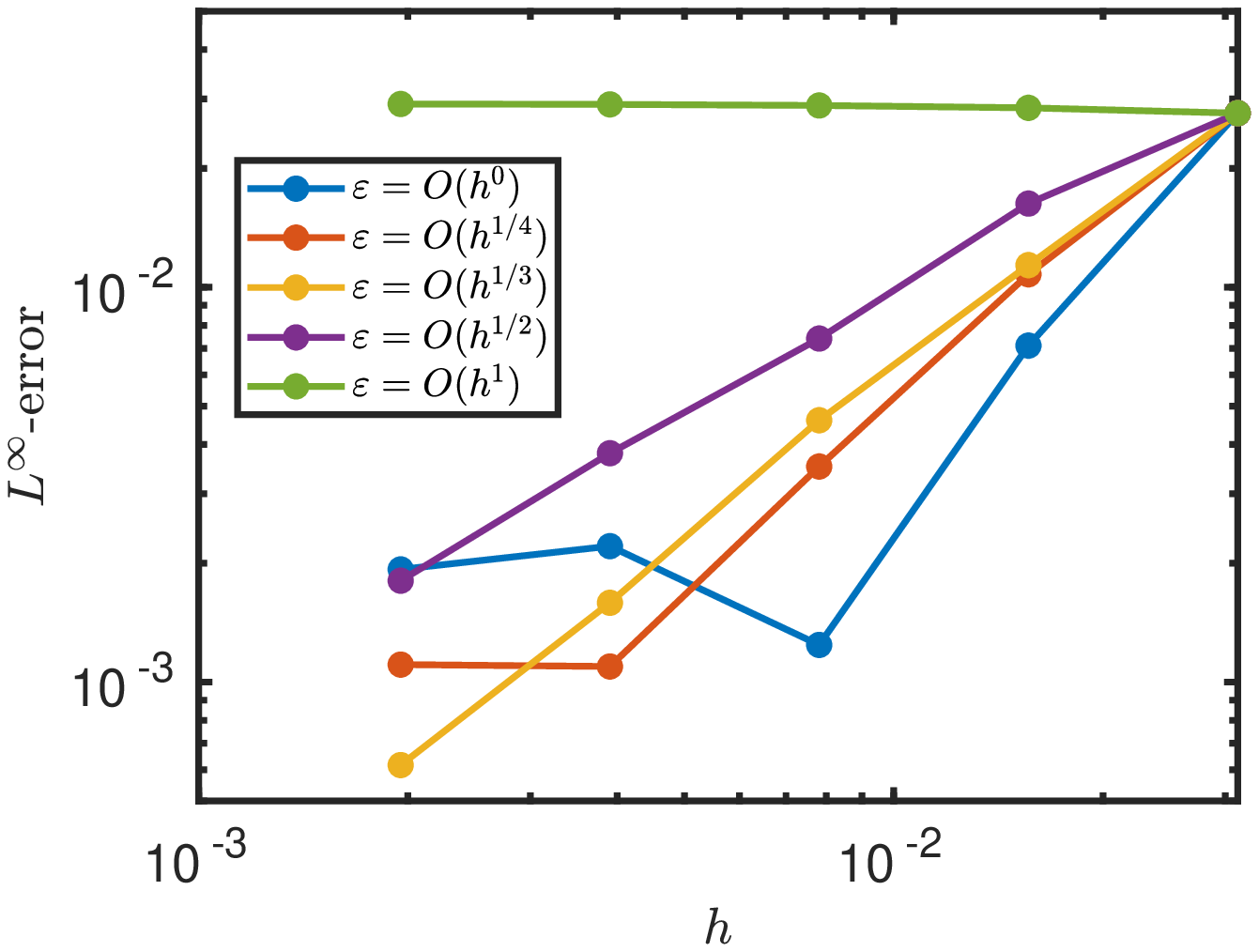}
	\caption{\small \Cref{Ex-Aronsson}. Experimental rates of convergence for $L^{\infty}$ errors with $\beta \in \{ 0, \tfrac14, \tfrac13, \tfrac12, 1\}$.
	}
	\label{fig:Ex-Aronsson-rates}
\end{figure}



Finally, we investigate the error induced by the extension of the boundary datum $\wt{g}_\ve$, we define
\[
\wt{g}_{\ve}(x,y) = g(T(x,y)), \qquad T(x,y) = \frac{(x,y)}{\|(x,y) \|_{\ell^\infty} } \in \partial \Omega.
\]
Clearly, this choice of function $\wt{g}_{\ve}$ satisfies Assumption~\textbf{BC}.\ref{Ass.BC.ext}. For this inexact boundary condition, in \Cref{fig:Ex-Aronsson-inexact}, we plot the error $u_\frakh - u$  for $h = 2^{-8}, \ve = 2^{-4.75}$, and $ \theta = 2^{-3.25}$. Due to the inexact boundary, the error is larger
than the one shown in \Cref{fig:Ex-Aronsson-uh} (left). We also plot the $L^{\infty}$ errors for the same choices of parameters with $\beta \in \{ 0, \tfrac14, \tfrac13, \tfrac12, 1\}$. We observe again that, in accordance to the theory, the solutions do not seem to converge for $\beta = 0$ or $1$. For $\beta \in \{ \tfrac14, \tfrac13, \tfrac12\}$, we measure the convergence orders using least square fit for the errors of $h = 2^{-7}, 2^{-8}, 2^{-9}$. The orders are about $0.55, 0.76, 0.30$ respectively, which are worse than the ones obtained for exact boundary, but still better than the theoretical ones in \Cref{Thm:homo-error}. Notice that, in fact, \Cref{Thm:homo-error} does not guarantee convergence for $\beta = \tfrac12$.


\begin{figure}[!htb]
	\includegraphics[width=0.45\linewidth]{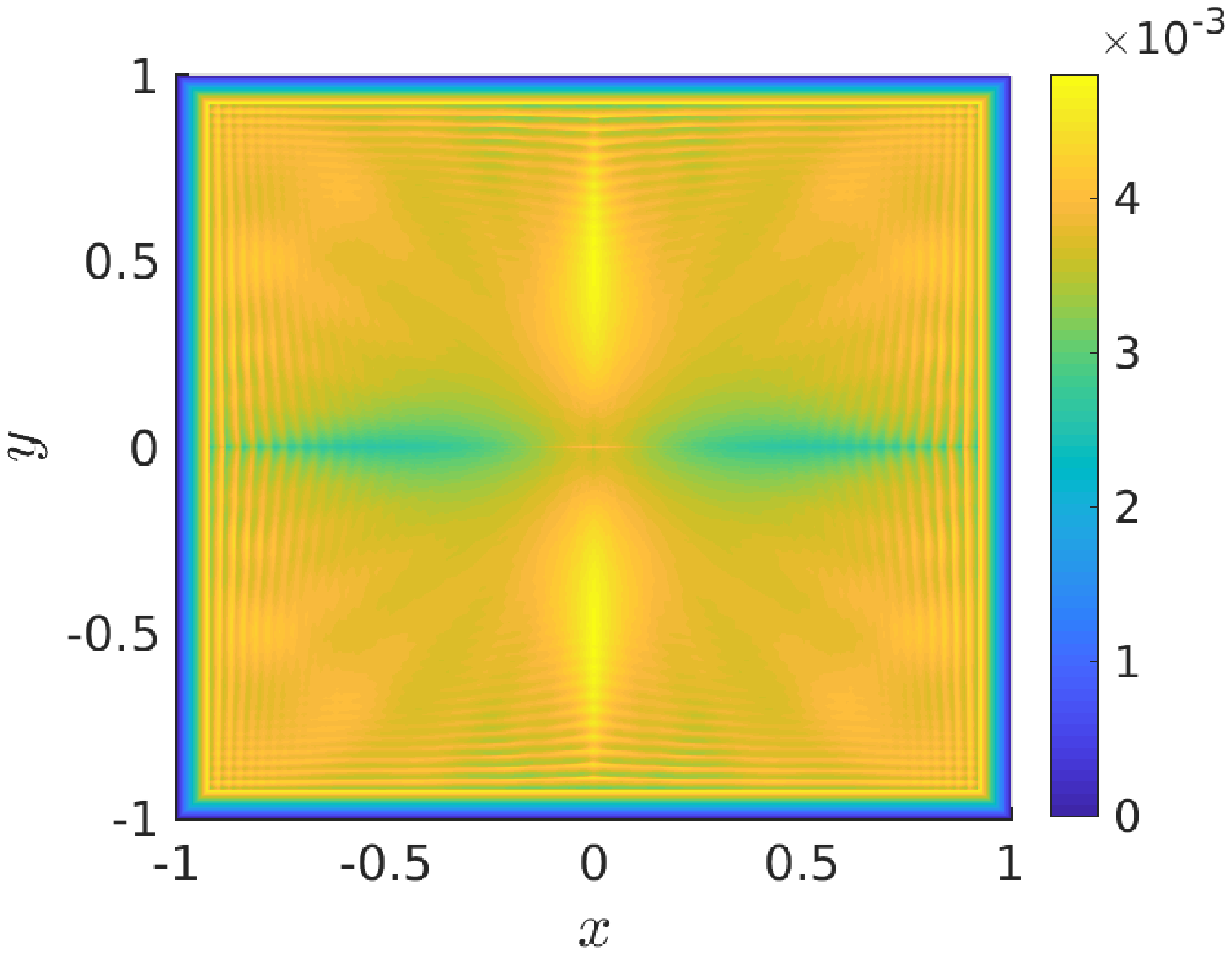}
	\includegraphics[width=0.45\linewidth]{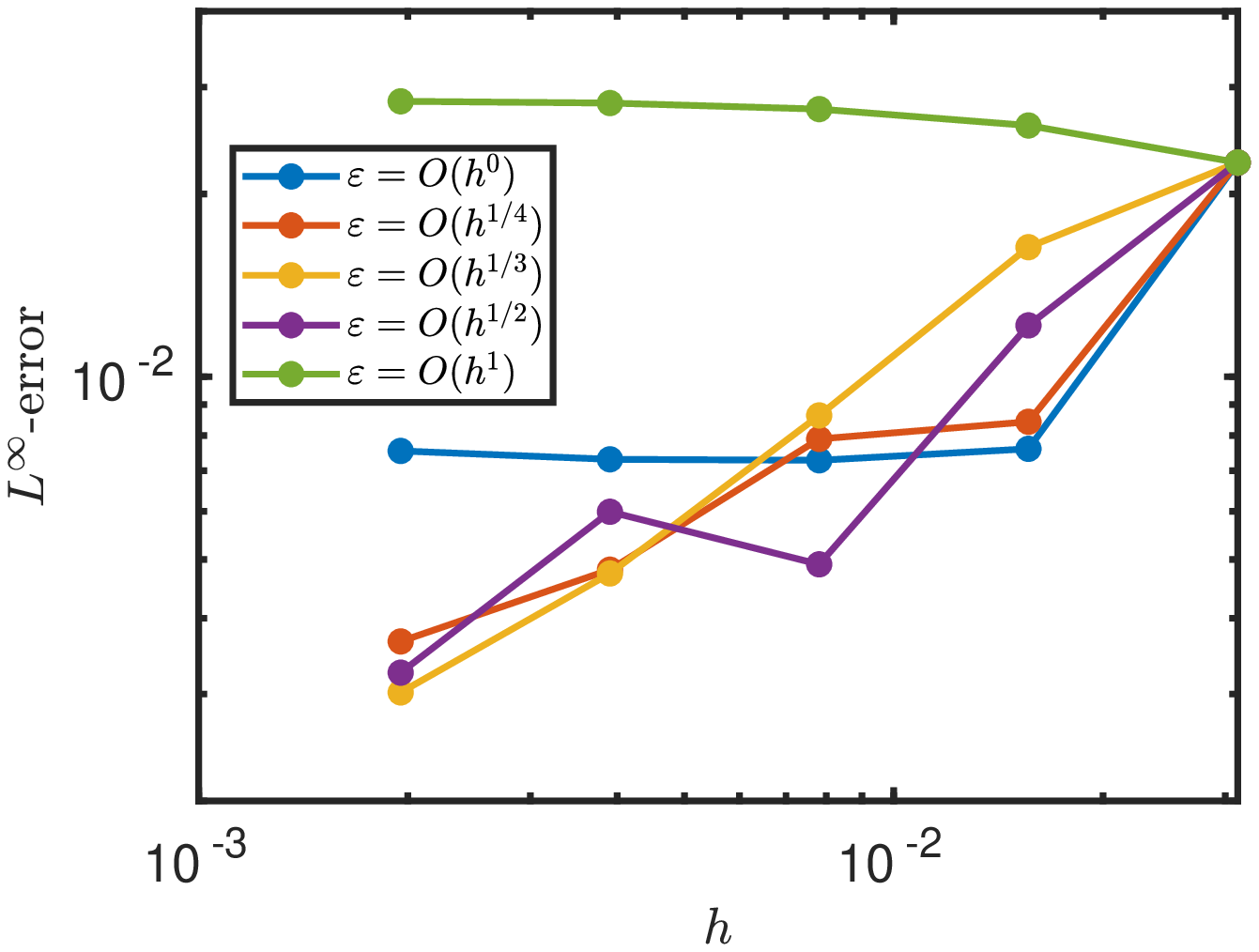}
	\caption{\small \Cref{Ex-Aronsson} with inexact boundary condition $\wt{g}_{\ve}$. Left: Plot of $u_\frakh - u$. Right: Experimental rates of convergence for $L^{\infty}$ errors with $\beta \in \{ 0, \tfrac14, \tfrac13, \tfrac12, 1\}$.}
	\label{fig:Ex-Aronsson-inexact}
\end{figure}

\end{Example}

%
%
%
%

\begin{Example}[$f > 0$]\label{Ex-quad-x}
Let $\Omega = B_1$ be the unit ball, $f \equiv 1$ and $u(x, y) = (1 - x^2)/2$. In this example, we have a smooth right hand side $f$ and solution  $u$.

	\begin{figure}[!htb]
		\includegraphics[width=0.45\linewidth]{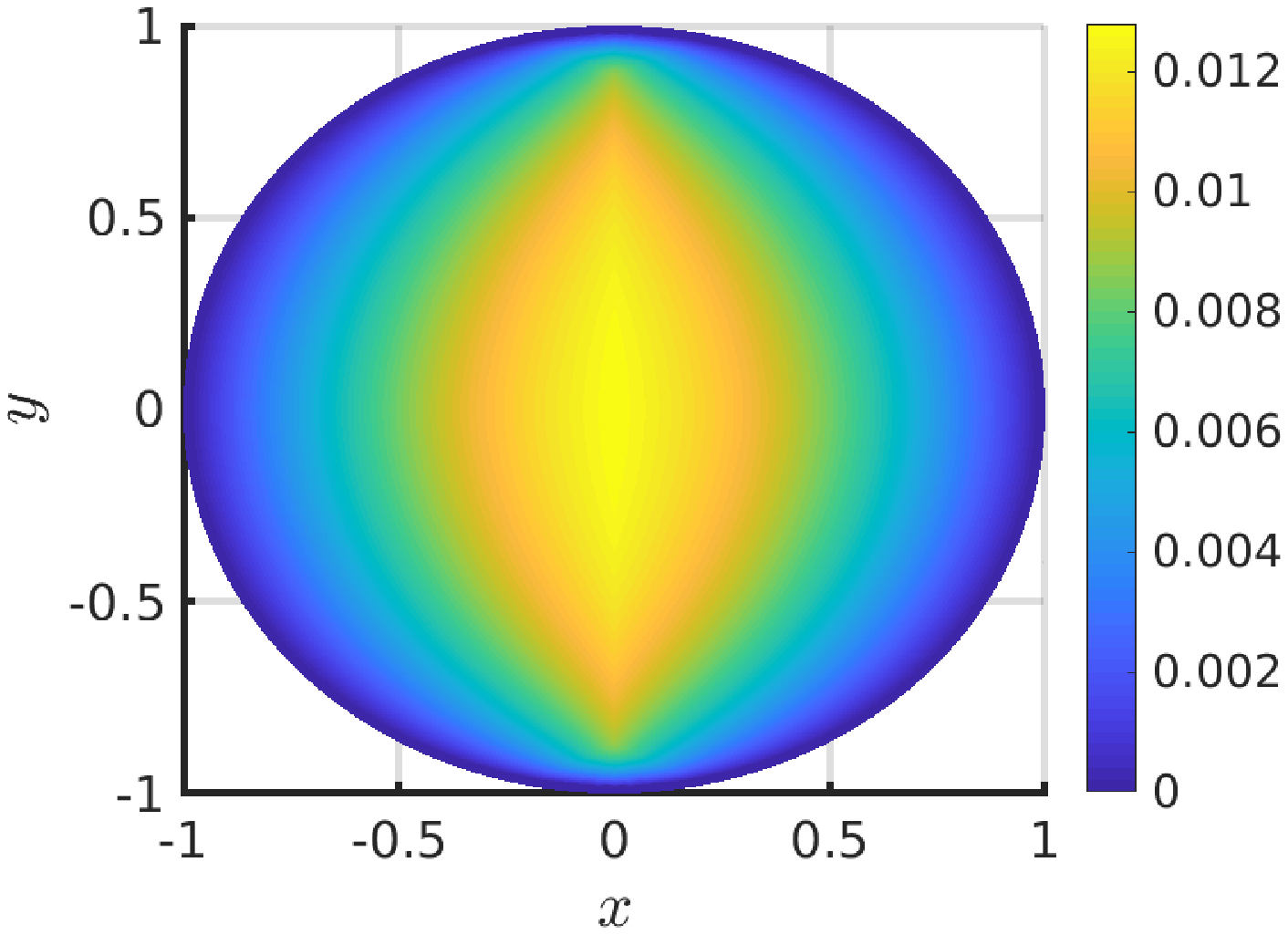}
		\includegraphics[width=0.45\linewidth]{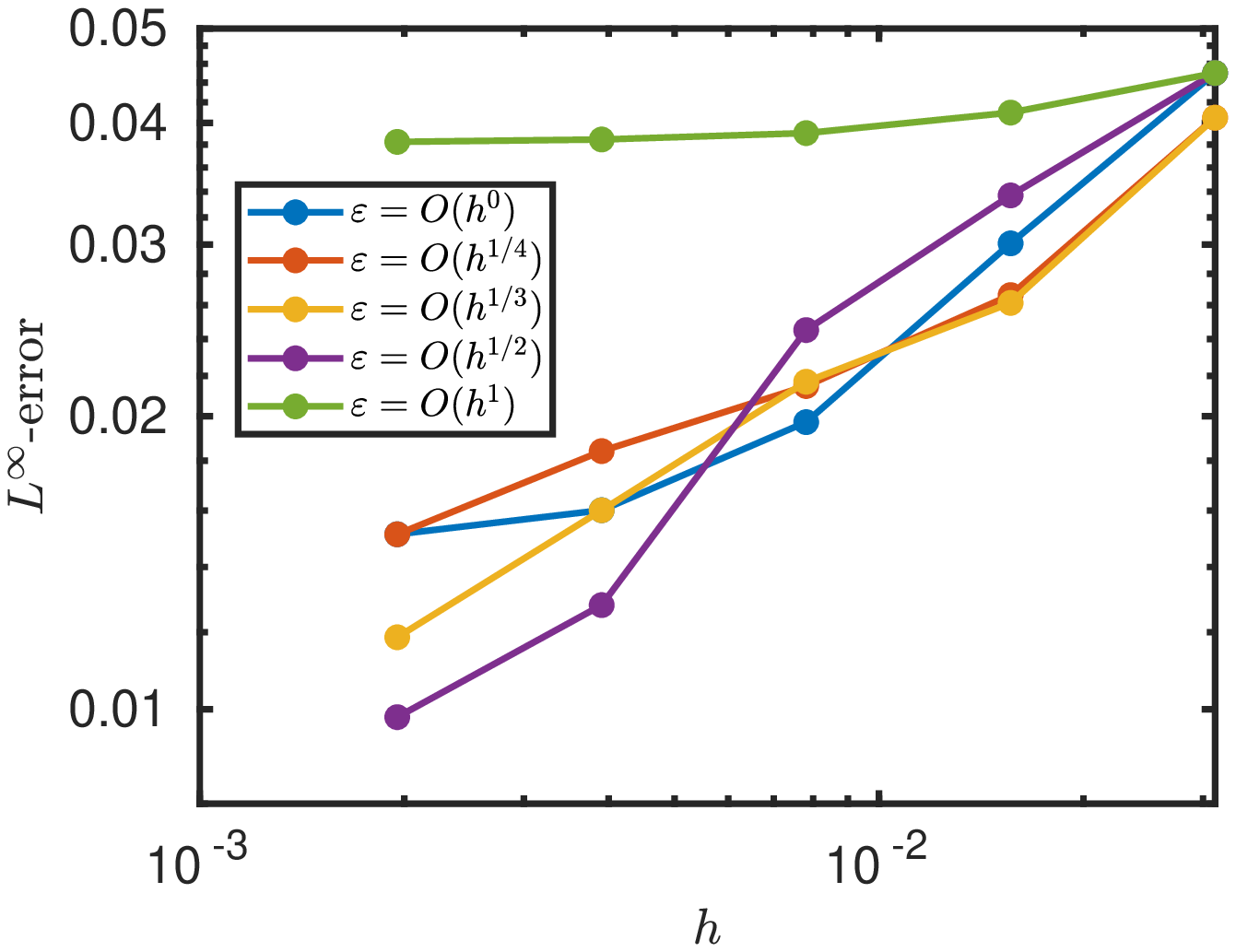}
		\caption{\small \Cref{Ex-quad-x} with exact boundary condition $\wt{g}_{\ve} = u$. Left: Plot of $u_\frakh - u$. Right: Experimental rates of convergence for $L^{\infty}$ errors with $\beta \in \{ 0, \tfrac14, \tfrac13, \tfrac12, 1\}$.}
		\label{fig:Ex-quad-x}
	\end{figure}

	We choose $\wt{g}_{\ve} = u$ and compute the numerical solution $u_\frakh$ for $h = 2^{-8}, \ve = 2^{-6.5}$, and $ \theta = 2^{-1.5}$. The error $u_\frakh - u$ is displayed in \Cref{fig:Ex-quad-x} (left), where the largest error appears near the $x$--axis. To measure the orders of convergence of the $L^{\infty}$ error, we let  $\frakh = (h, \ve, \theta)$ satisfy \eqref{eq:exp-parameters} with $\beta \in \{ 0, \tfrac14, \tfrac13, \tfrac12, 1\}$. We observe again that the solutions do not seem to converge for $\beta = 0$ 
	or $1$ as expected from \Cref{Thm:inhomo-error}. For $\beta \in \{ \tfrac14, \tfrac13, \tfrac12\}$, the convergence orders measured for errors of $h = 2^{-7}, 2^{-8}, 2^{-9}$ using least squares are about $0.25, 0.44, 0.66$ respectively. The orders for $\beta \in \{ \tfrac13, \tfrac12\}$ are better than the theoretical ones in \Cref{Thm:inhomo-error}. In fact, \Cref{Thm:inhomo-error} does not guarantee convergence for $\beta = \tfrac12$.
\end{Example}

%
%
%
%

In this section we have shown several numerical examples of the numerical solution to \eqref{eq:BVPNinfLaplace}, both with homogeneous, and positive right hand side. In all of the considered examples, the assumptions of either \Cref{Thm:inhomo-error} or \Cref{Thm:homo-error} are satisfied. We observe the convergence of the numerical scheme when $\ve$ and $\theta$ are properly scaled with respect to $h$. From the experiments, the rates of convergences are usually better than the predicted rates in \Cref{Thm:inhomo-error} and \Cref{Thm:homo-error}. This may be due to the fact that, in our analysis, we only exploited up to Lipschitz regularity of the solution. How to account for a smoother solution, like the ones we presented is a matter for future investigation.

\section*{Acknowledgement}

The work of the authors is partially supported by NSF grant DMS-2111228.

\bibliographystyle{amsplain}
\bibliography{inf_laplacian}

\providecommand{\bysame}{\leavevmode\hbox to3em{\hrulefill}\thinspace}
\providecommand{\MR}{\relax\ifhmode\unskip\space\fi MR }
\providecommand{\MRhref}[2]{%
  \href{http://www.ams.org/mathscinet-getitem?mr=#1}{#2}
}
\providecommand{\href}[2]{#2}
\begin{thebibliography}{10}

\bibitem{MR2194585}
B.~Andreianov, F.~Boyer, and F.~Hubert, \emph{Besov regularity and new error
  estimates for finite volume approximations of the {$p$}-{L}aplacian}, Numer.
  Math. \textbf{100} (2005), no.~4, 565--592. \MR{2194585}

\bibitem{MR4216961}
D.J. Ara\'{u}jo, L.~Mari, and L.F. Pessoa, \emph{Detecting the completeness of
  a {F}insler manifold via potential theory for its infinity {L}aplacian}, J.
  Differential Equations \textbf{281} (2021), 550--587. \MR{4216961}

\bibitem{MR2592977}
S.N. Armstrong and C.K. Smart, \emph{An easy proof of {J}ensen's theorem on the
  uniqueness of infinity harmonic functions}, Calc. Var. Partial Differential
  Equations \textbf{37} (2010), no.~3-4, 381--384. \MR{2592977}

\bibitem{MR2846345}
\bysame, \emph{A finite difference approach to the infinity {L}aplace equation
  and tug-of-war games}, Trans. Amer. Math. Soc. \textbf{364} (2012), no.~2,
  595--636. \MR{2846345}

\bibitem{MR0196551}
G.~Aronsson, \emph{Minimization problems for the functional {${\rm
  sup}_{x}\,F(x,\,f(x),\,f^{\prime} (x))$}}, Ark. Mat. \textbf{6} (1965),
  33--53 (1965). \MR{0196551}

\bibitem{MR0203541}
\bysame, \emph{Minimization problems for the functional {${\rm sup}_{x}\, F(x,
  f(x),f\sp\prime (x))$}. {II}}, Ark. Mat. \textbf{6} (1966), 409--431 (1966).
  \MR{0203541}

\bibitem{MR0217665}
\bysame, \emph{Extension of functions satisfying {L}ipschitz conditions}, Ark.
  Mat. \textbf{6} (1967), 551--561 (1967). \MR{0217665}

\bibitem{MR0237962}
\bysame, \emph{On the partial differential equation {$u_{x}{}^{2}\!u_{xx}
  +2u_{x}u_{y}u_{xy}+u_{y}{}^{2}\!u_{yy}=0$}}, Ark. Mat. \textbf{7} (1968),
  395--425 (1968). \MR{0237962}

\bibitem{MR0240690}
\bysame, \emph{Minimization problems for the functional {${\rm
  sup}_{x}\,F(x,\,f(x),\,f^{\prime} \,(x))$}. {III}}, Ark. Mat. \textbf{7}
  (1969), 509--512. \MR{0240690}

\bibitem{MR1747675}
D.~Bao, S.-S. Chern, and Z.~Shen, \emph{An introduction to {R}iemann-{F}insler
  geometry}, Graduate Texts in Mathematics, vol. 200, Springer-Verlag, New
  York, 2000. \MR{1747675}

\bibitem{MR1115933}
G.~Barles and P.E. Souganidis, \emph{Convergence of approximation schemes for
  fully nonlinear second order equations}, Asymptotic Anal. \textbf{4} (1991),
  no.~3, 271--283. \MR{1115933}

\bibitem{MR1192966}
J.W. Barrett and W.B. Liu, \emph{Finite element approximation of the
  {$p$}-{L}aplacian}, Math. Comp. \textbf{61} (1993), no.~204, 523--537.
  \MR{1192966}

\bibitem{MR2911397}
L.~Belenki, L.~Diening, and C.~Kreuzer, \emph{Optimality of an adaptive finite
  element method for the {$p$}-{L}aplacian equation}, IMA J. Numer. Anal.
  \textbf{32} (2012), no.~2, 484--510. \MR{2911397}

\bibitem{MR3992461}
T.~Biset, B.~Mebrate, and A.~Mohammed, \emph{A boundary-value problem for
  normalized {F}insler infinity-{L}aplacian equations with singular
  nonhomogeneous terms}, Nonlinear Anal. \textbf{190} (2020), 111588, 20.
  \MR{3992461}

\bibitem{RossiBook}
P.~Blanc and J.D. Rossi, \emph{Game theory and partial differential equations},
  2019.

\bibitem{MR2551155}
O.~Bokanowski, S.~Maroso, and H.~Zidani, \emph{Some convergence results for
  {H}oward's algorithm}, SIAM J. Numer. Anal. \textbf{47} (2009), no.~4,
  3001--3026. \MR{2551155}

\bibitem{Casas1996}
J.R. Casas and L.~Torres, \emph{Strong edge features for image coding},
  pp.~443--450, Springer US, Boston, MA, 1996.

\bibitem{MR1669524}
V.~Caselles, J.-M. Morel, and C.~Sbert, \emph{An axiomatic approach to image
  interpolation}, IEEE Trans. Image Process. \textbf{7} (1998), no.~3,
  376--386. \MR{1669524}

\bibitem{MR1930132}
P.-G. Ciarlet, \emph{The finite element method for elliptic problems}, Classics
  in Applied Mathematics, vol.~40, Society for Industrial and Applied
  Mathematics (SIAM), Philadelphia, PA, 2002, Reprint of the 1978 original
  [North-Holland, Amsterdam; MR0520174 (58 \#25001)]. \MR{1930132}

\bibitem{MR2408259}
M.G. Crandall, \emph{A visit with the {$\infty$}-{L}aplace equation}, Calculus
  of variations and nonlinear partial differential equations, Lecture Notes in
  Math., vol. 1927, Springer, Berlin, 2008, pp.~75--122. \MR{2408259}

\bibitem{MR2372480}
M.G. Crandall, G.~Gunnarsson, and P.~Wang, \emph{Uniqueness of
  {$\infty$}-harmonic functions and the eikonal equation}, Comm. Partial
  Differential Equations \textbf{32} (2007), no.~10-12, 1587--1615.
  \MR{2372480}

\bibitem{MR1118699}
M.G. Crandall, H.~Ishii, and P.-L. Lions, \emph{User's guide to viscosity
  solutions of second order partial differential equations}, Bull. Amer. Math.
  Soc. (N.S.) \textbf{27} (1992), no.~1, 1--67. \MR{1118699}

\bibitem{MR3477071}
G.~Crasta and I.~Fragal\`a, \emph{A {$C^1$} regularity result for the
  inhomogeneous normalized infinity {L}aplacian}, Proc. Amer. Math. Soc.
  \textbf{144} (2016), no.~6, 2547--2558. \MR{3477071}

\bibitem{MR2361288}
B.~Dacorogna, \emph{Direct methods in the calculus of variations}, second ed.,
  Applied Mathematical Sciences, vol.~78, Springer, New York, 2008.
  \MR{2361288}

\bibitem{MR3424623}
S.~Dahlke, L.~Diening, C.~Hartmann, B.~Scharf, and M.~Weimar, \emph{Besov
  regularity of solutions to the {$p$}-{P}oisson equation}, Nonlinear Anal.
  \textbf{130} (2016), 298--329. \MR{3424623}

\bibitem{MR2383205}
L.~Diening and C.~Kreuzer, \emph{Linear convergence of an adaptive finite
  element method for the {$p$}-{L}aplacian equation}, SIAM J. Numer. Anal.
  \textbf{46} (2008), no.~2, 614--638. \MR{2383205}

\bibitem{MR2317830}
L.~Diening and M.~Ru\v{z}i\v{c}ka, \emph{Interpolation operators in
  {O}rlicz-{S}obolev spaces}, Numer. Math. \textbf{107} (2007), no.~1,
  107--129. \MR{2317830}

\bibitem{MR2135783}
C.~Ebmeyer and W.B. Liu, \emph{Quasi-norm interpolation error estimates for the
  piecewise linear finite element approximation of {$p$}-{L}aplacian problems},
  Numer. Math. \textbf{100} (2005), no.~2, 233--258. \MR{2135783}

\bibitem{MR2393071}
L.C. Evans and O.~Savin, \emph{{$C^{1,\alpha}$} regularity for infinity
  harmonic functions in two dimensions}, Calc. Var. Partial Differential
  Equations \textbf{32} (2008), no.~3, 325--347. \MR{2393071}

\bibitem{MR3049920}
X.~Feng, R.~Glowinski, and M.~Neilan, \emph{Recent developments in numerical
  methods for fully nonlinear second order partial differential equations},
  SIAM Rev. \textbf{55} (2013), no.~2, 205--267. \MR{3049920}

\bibitem{MR775683}
P.~Grisvard, \emph{Elliptic problems in nonsmooth domains}, Monographs and
  Studies in Mathematics, vol.~24, Pitman (Advanced Publishing Program),
  Boston, MA, 1985. \MR{775683}

\bibitem{MR3433957}
C.-Y. Guo, C.-L. Xiang, and D.~Yang, \emph{{$L^\infty$}-variational problems
  associated to measurable {F}insler structures}, Nonlinear Anal. \textbf{132}
  (2016), 126--140. \MR{3433957}

\bibitem{MR2777537}
Q.~Han and F.~Lin, \emph{Elliptic partial differential equations}, second ed.,
  Courant Lecture Notes in Mathematics, vol.~1, Courant Institute of
  Mathematical Sciences, New York; American Mathematical Society, Providence,
  RI, 2011. \MR{2777537}

\bibitem{MR3764541}
C.~Hartmann and M.~Weimar, \emph{Besov regularity of solutions to the
  {$p$}-{P}oisson equation in the vicinity of a vertex of a polygonal domain},
  Results Math. \textbf{73} (2018), no.~1, Art. 41, 28. \MR{3764541}

\bibitem{MR1972219}
M.~Hinterm\"{u}ller, K.~Ito, and K.~Kunisch, \emph{The primal-dual active set
  strategy as a semismooth {N}ewton method}, SIAM J. Optim. \textbf{13} (2002),
  no.~3, 865--888 (2003). \MR{1972219}

\bibitem{MR3117143}
G.~Hong, \emph{Boundary differentiability of infinity harmonic functions},
  Nonlinear Anal. \textbf{93} (2013), 15--20. \MR{3117143}

\bibitem{MR3193978}
\bysame, \emph{Boundary differentiability for inhomogeneous infinity {L}aplace
  equations}, Electron. J. Differential Equations (2014), No. 72, 6.
  \MR{3193978}

\bibitem{MR3823876}
M.~Jensen and I.~Smears, \emph{On the notion of boundary conditions in
  comparison principles for viscosity solutions}, Hamilton-{J}acobi-{B}ellman
  equations, Radon Ser. Comput. Appl. Math., vol.~21, De Gruyter, Berlin, 2018,
  pp.~143--154. \MR{3823876}

\bibitem{MR1017328}
R.~Jensen, \emph{Uniqueness criteria for viscosity solutions of fully nonlinear
  elliptic partial differential equations}, Indiana Univ. Math. J. \textbf{38}
  (1989), no.~3, 629--667. \MR{1017328}

\bibitem{MR934877}
R.~Jensen, P.-L. Lions, and P.E. Souganidis, \emph{A uniqueness result for
  viscosity solutions of second order fully nonlinear partial differential
  equations}, Proc. Amer. Math. Soc. \textbf{102} (1988), no.~4, 975--978.
  \MR{934877}

\bibitem{MR1886623}
Petri Juutinen, Peter Lindqvist, and Juan~J. Manfredi, \emph{The infinity
  {L}aplacian: examples and observations}, Papers on analysis, Rep. Univ.
  Jyv\"{a}skyl\"{a} Dep. Math. Stat., vol.~83, Univ. Jyv\"{a}skyl\"{a},
  Jyv\"{a}skyl\"{a}, 2001, pp.~207--217. \MR{1886623}

\bibitem{MR3289084}
N.~Katzourakis, \emph{An introduction to viscosity solutions for fully
  nonlinear {PDE} with applications to calculus of variations in {$L^\infty$}},
  SpringerBriefs in Mathematics, Springer, Cham, 2015. \MR{3289084}

\bibitem{MR3569561}
N.~Katzourakis and T.~Pryer, \emph{On the numerical approximation of
  {$\infty$}-harmonic mappings}, NoDEA Nonlinear Differential Equations Appl.
  \textbf{23} (2016), no.~6, Art. 51, 23. \MR{3569561}

\bibitem{MR2084272}
S.~Koike, \emph{A beginner's guide to the theory of viscosity solutions}, MSJ
  Memoirs, vol.~13, Mathematical Society of Japan, Tokyo, 2004. \MR{2084272}

\bibitem{MR3617088}
O.~Lakkis and T.~Pryer, \emph{An adaptive finite element method for the
  infinity {L}aplacian}, Numerical mathematics and advanced
  applications---{ENUMATH} 2013, Lect. Notes Comput. Sci. Eng., vol. 103,
  Springer, Cham, 2015, pp.~283--291. \MR{3617088}

\bibitem{MR3280579}
M.~Lewicka and J.J. Manfredi, \emph{Game theoretical methods in {PDE}s}, Boll.
  Unione Mat. Ital. \textbf{7} (2014), no.~3, 211--216. \MR{3280579}

\bibitem{MR3119202}
E.~Lindgren, \emph{On the regularity of solutions of the inhomogeneous infinity
  {L}aplace equation}, Proc. Amer. Math. Soc. \textbf{142} (2014), no.~1,
  277--288. \MR{3119202}

\bibitem{MR3467690}
P.~Lindqvist, \emph{Notes on the infinity {L}aplace equation}, SpringerBriefs
  in Mathematics, BCAM Basque Center for Applied Mathematics, Bilbao; Springer,
  [Cham], 2016. \MR{3467690}

\bibitem{MR1605682}
P.~Lindqvist and J.~Manfredi, \emph{Note on {$\infty$}-superharmonic
  functions}, Rev. Mat. Univ. Complut. Madrid \textbf{10} (1997), no.~2,
  471--480. \MR{1605682}

\bibitem{MR2475319}
G.~Lu and P.~Wang, \emph{A {PDE} perspective of the normalized infinity
  {L}aplacian}, Comm. Partial Differential Equations \textbf{33} (2008),
  no.~10-12, 1788--1817. \MR{2475319}

\bibitem{MR3299035}
J.J. Manfredi, J.D. Rossi, and S.J. Somersille, \emph{An obstacle problem for
  tug-of-war games}, Commun. Pure Appl. Anal. \textbf{14} (2015), no.~1,
  217--228. \MR{3299035}

\bibitem{MR3995808}
B.~Mebrate and A.~Mohammed, \emph{Comparison principles for infinity-{L}aplace
  equations in {F}insler metrics}, Nonlinear Anal. \textbf{190} (2020), 111605,
  26. \MR{3995808}

\bibitem{MR4095548}
\bysame, \emph{Infinity-{L}aplacian type equations and their associated
  {D}irichlet problems}, Complex Var. Elliptic Equ. \textbf{65} (2020), no.~7,
  1139--1169. \MR{4095548}

\bibitem{MR4279064}
\bysame, \emph{Harnack inequality and an asymptotic mean-value property for the
  {F}insler infinity-{L}aplacian}, Adv. Calc. Var. \textbf{14} (2021), no.~3,
  365--382. \MR{4279064}

\bibitem{MR3653852}
M.J. Neilan, A.J. Salgado, and W.~Zhang, \emph{Numerical analysis of strongly
  nonlinear {PDE}s}, Acta Numer. \textbf{26} (2017), 137--303. \MR{3653852}

\bibitem{MR2137000}
A.M. Oberman, \emph{A convergent difference scheme for the infinity
  {L}aplacian: construction of absolutely minimizing {L}ipschitz extensions},
  Math. Comp. \textbf{74} (2005), no.~251, 1217--1230. \MR{2137000}

\bibitem{MR3061067}
\bysame, \emph{Finite difference methods for the infinity {L}aplace and
  {$p$}-{L}aplace equations}, J. Comput. Appl. Math. \textbf{254} (2013),
  65--80. \MR{3061067}

\bibitem{MR2449057}
Y.~Peres, O.~Schramm, S.~Sheffield, and D.B. Wilson, \emph{Tug-of-war and the
  infinity {L}aplacian}, J. Amer. Math. Soc. \textbf{22} (2009), no.~1,
  167--210. \MR{2449057}

\bibitem{MR3841500}
T.~Pryer, \emph{On the finite-element approximation of {$\infty$}-harmonic
  functions}, Proc. Roy. Soc. Edinburgh Sect. A \textbf{148} (2018), no.~4,
  819--834. \MR{3841500}

\bibitem{MR3421912}
J.D. Rossi, E.V. Teixeira, and J.M. Urbano, \emph{Optimal regularity at the
  free boundary for the infinity obstacle problem}, Interfaces Free Bound.
  \textbf{17} (2015), no.~3, 381--398. \MR{3421912}

\bibitem{MR3014456}
T.~Roub\'{\i}\v{c}ek, \emph{Nonlinear partial differential equations with
  applications}, second ed., International Series of Numerical Mathematics,
  vol. 153, Birkh\"{a}user/Springer Basel AG, Basel, 2013. \MR{3014456}

\bibitem{MR2185662}
O.~Savin, \emph{{$C^1$} regularity for infinity harmonic functions in two
  dimensions}, Arch. Ration. Mech. Anal. \textbf{176} (2005), no.~3, 351--361.
  \MR{2185662}

\bibitem{MR3592681}
J.~Siljander, C.~Wang, and Y.~Zhou, \emph{Everywhere differentiability of
  viscosity solutions to a class of {A}ronsson's equations}, Ann. Inst. H.
  Poincar\'{e} Anal. Non Lin\'{e}aire \textbf{34} (2017), no.~1, 119--138.
  \MR{3592681}

\bibitem{MR3008417}
C.~Wang and Y.~Yu, \emph{{$C^1$}-boundary regularity of planar infinity
  harmonic functions}, Math. Res. Lett. \textbf{19} (2012), no.~4, 823--835.
  \MR{3008417}

\end{thebibliography}

\end{document}